\documentclass[reqno, huge]{amsart}
\addtolength{\oddsidemargin}{-.675in}
	\addtolength{\evensidemargin}{-.675in}
	\addtolength{\textwidth}{1.35in}

	\addtolength{\topmargin}{-.875in}
	\addtolength{\textheight}{1.35in}

\usepackage{comment}
\usepackage[dvipsnames, table]{xcolor}
\usepackage{amsthm}
\theoremstyle{plain}
\usepackage{amssymb}
\usepackage{marvosym}
\usepackage{bm}
\usepackage{mathrsfs}
\usepackage{enumerate}  
\usepackage{mathtools}
\usepackage{tikz-cd}
\usepackage{graphicx}
\usepackage{float}
\usepackage[titletoc,title]{appendix}
\usepackage{marginnote}
\usepackage{todonotes}
\usepackage{etoolbox}
\usepackage[all]{xy}
\makeatletter
\patchcmd{\Ginclude@eps}{"#1"}{#1}{}{}
\makeatother
\usepackage[outdir=./]{epstopdf}
\usepackage[numbers]{natbib}
\setlength{\bibsep}{0.5pt}
\usepackage[utf8]{inputenc}
\usepackage[english]{babel}
\usepackage{changepage}
\usepackage{makecell}
\usepackage{enumitem}
\usepackage{comment}

\usepackage[colorlinks,citecolor=blue]{hyperref}

\usepackage{tikz}
\usetikzlibrary{calc,trees,positioning,arrows,chains,shapes.geometric,%
    decorations.pathreplacing,decorations.pathmorphing,shapes,%
    matrix,shapes.symbols}

\tikzset{
>=stealth',
  punktchain/.style={
    rectangle,
    rounded corners,
    draw=black, thick,
    minimum height=3em,
    text centered,
    on chain},
  line/.style={draw, thick, <-},
  element/.style={
    tape,
    top color=white,
    bottom color=blue!50!black!60!,
    minimum width=8em,
    draw=blue!40!black!90, very thick,
    text width=10em,
    minimum height=3.5em,
    text centered,
    on chain},
  every join/.style={->, thick,shorten >=1pt},
  decoration={brace},
  tuborg/.style={decorate},
  tubnode/.style={midway, right=2pt},
}


\definecolor{lightblue}{HTML}{1F88CD}
\definecolor{lightgrey}{HTML}{727272}
\definecolor{lightblue2}{HTML}{009EC1}
\definecolor{mypink}{HTML}{FD00B0}
\definecolor{lightred}{HTML}{ff4d4d}


\usepackage{hyperref}
\hypersetup{
	colorlinks=true,
    linkcolor={OliveGreen},
    citecolor={lightred},
	urlcolor={black}
}


\newtheorem*{theorem*}{Theorem}
\newtheorem{theorem}{Theorem}[section]
\newtheorem{corollary}[theorem]{Corollary}
\newtheorem{lemma}[theorem]{Lemma}
\newtheorem{conjecture}[theorem]{Conjecture}

\newtheorem{proposition}[theorem]{Proposition}
\theoremstyle{definition}
\newtheorem{example}[theorem]{Example}
\theoremstyle{definition}
\newtheorem{definition}[theorem]{Definition}
\theoremstyle{definition}
\newtheorem{remark}[theorem]{Remark}
\theoremstyle{definition}

\theoremstyle{definition}

\theoremstyle{definition}

\theoremstyle{definition}

\theoremstyle{definition}

\theoremstyle{definition}
\newtheorem{question!}[theorem]{Question!}
\theoremstyle{definition}


\makeatletter
\newcommand*\sbt{\mathpalette\sbt@{.75}}
\newcommand*\sbt@[2]{\mathbin{\vcenter{\hbox{\scalebox{#2}{$\m@th#1\bullet$}}}}}
\makeatother


\newcommand{\xra}{\xrightarrow}

\newcommand{\sst}{\subset}

\let\emptyset\varnothing


\newcommand{\bR}{\bm{\mathrm{R}}}
\newcommand{\bL}{\bm{\mathrm{L}}}
\newcommand{\bO}{\bm{\mathrm{O}}}


\newcommand{\D}{\mathrm{D}}

\newcommand{\R}{\mathbb{R}}
\newcommand{\ZZ}{\mathbb{Z}}

\newcommand{\CC}{\mathbb{C}}
\newcommand{\PP}{\mathbb{P}}



\newcommand{\ch}{\mathrm{ch}}

\newcommand{\pr}{\mathrm{pr}}
\newcommand{\ev}{\mathrm{ev}}
\newcommand{\cok}{\mathrm{coker}}
\newcommand{\C}{\mathbb{C}}

\renewcommand{\Re}{\operatorname{Re}}
\renewcommand{\Im}{\operatorname{Im}}

\DeclareMathOperator{\Aut}{Aut}

\DeclareMathOperator{\identity}{id}
\DeclareMathOperator{\im}{im}

\DeclareMathOperator{\rk}{rk}

\DeclareMathOperator{\Coh}{\mathrm{Coh}}

\DeclareMathOperator{\Ext}{Ext}

\DeclareMathOperator{\Hom}{Hom}
\DeclareMathOperator{\RHom}{RHom}

\DeclareMathOperator{\ext}{ext}

\DeclareMathOperator{\Pic}{Pic}

\DeclareMathOperator{\cone}{cone}
\DeclareMathOperator{\Stab}{Stab}
\DeclareMathOperator{\Gr}{Gr}


\newcommand{\cC}{\mathcal{C}}
\newcommand{\cA}{\mathcal{A}}
\newcommand{\cE}{\mathcal{E}}

\newcommand{\cH}{\mathcal{H}}

\newcommand{\cK}{\mathcal{K}}
\newcommand{\cI}{\mathcal{I}}
\newcommand{\cJ}{\mathcal{J}}
\newcommand{\cT}{\mathcal{T}}
\newcommand{\cQ}{\mathcal{Q}}
\newcommand{\Ku}{\mathcal{K}u}
\newcommand{\cP}{\mathcal{P}}
\newcommand{\cD}{\mathcal{D}}

\newcommand{\cN}{\mathcal{N}}
\newcommand{\cM}{\mathcal{M}}

\DeclareMathOperator{\oh}{\mathcal{O}}
\DeclareMathOperator{\QY}{\mathcal{Q}_Y}
\newcommand{\bv}{\mathbf{v}}
\newcommand{\bw}{\mathbf{w}}
\newcommand{\bs}{\mathbf{s}}
\newcommand{\bt}{\mathbf{t}}

\newcommand{\sol}[1]{\textcolor{red}{#1}}
\newcommand{\zy}[1]{\textcolor{blue}{#1}}
\newcommand{\sz}[1]{\textcolor{green}{#1}}

\usetikzlibrary{decorations.pathmorphing}
\begin{document}

\title[Categorical Torelli theorems for del Pezzo threefolds]{
new perspectives on categorical Torelli theorems \\ for del Pezzo threefolds}

\subjclass[2010]{Primary 14F05; secondary 14J45, 14D20, 14D23}
\keywords{Derived categories, Brill--Noether locus, Bridgeland moduli spaces, Kuznetsov components, Categorical Torelli theorems, Fano threefolds, Auto-equivalences}

\author{Soheyla Feyzbakhsh, Zhiyu Liu and Shizhuo Zhang}

\address{}
\email{}

\address{Department of Mathematics, Imperial College, London SW7 2AZ, United Kingdom}
\email{s.feyzbakhsh@imperial.ac.uk}

\address{Institute for Advanced Study in Mathematics, Zhejiang University, Hangzhou, Zhejiang Province 310030, P. R. China}
\email{jasonlzy0617@gmail.com}

\address{Max Planck Institute for Mathematics, Vivatsgasse 7, 53111 Bonn, Germany}
\address{Institut de Mathématiqes de Toulouse, UMR 5219, Université de Toulouse, Université Paul Sabatier, 118 route de
Narbonne, 31062 Toulouse Cedex 9, France}
\email{shizhuozhang@mpim-bonn.mpg.de,shizhuo.zhang@math.univ-toulouse.fr}

\begin{abstract}

Let $Y_d$ be a del Pezzo threefold of Picard rank one and degree $d\geq 2$. In this paper, we apply two different viewpoints to study $Y_d$ via a particular admissible subcategory of its bounded derived category, called the Kuznetsov component:  

(i) Brill--Noether reconstruction. We show that $Y_d$ can be uniquely recovered as a Brill--Noether locus of Bridgeland stable objects in its Kuznetsov component.

(ii) Exact equivalences. We prove that, up to composing with an explicit auto-equivalence, any Fourier--Mukai type equivalence of Kuznetsov components of two del Pezzo threefolds of degree $2\leq d\leq 4$ can be lifted to an equivalence of their bounded derived categories. As a result, we obtain a complete description of the group of Fourier--Mukai type auto-equivalences of the Kuznetsov component of $Y_d$.   

We also describe the group of Fourier--Mukai type auto-equivalences of Kuznetsov components of index one prime Fano threefolds $X_{2g-2}$ of genus $g=6$ and $8$. As an application, first we identify the group of automorphisms of $X_{14}$ and its associated $Y_3$. Then we give a new disproof of Kuznetsov's Fano threefold conjecture by assuming Gushel--Mukai threefolds are general.

In an appendix, we classify instanton sheaves on quartic double solids, generalizing a result of Druel. 
\end{abstract}



\maketitle


\setcounter{tocdepth}{1}
\tableofcontents







\section{Introduction}
Let $Y$ be a del Pezzo threefold of Picard rank one, which is an index two prime Fano threefold. By \cite{iskovskih:Fano-threefolds-I}, it belongs to one of the five families of threefolds classified by their degree $1\leq d\leq 5$, see Section~\ref{survey_weak_stabilitycon}. By a series of papers of Bondal--Orlov and Kuznetsov, the bounded derived category $\D^b(Y)$ of these Fano threefolds admit a semiorthogonal decomposition 
$$\D^b(Y)=\langle\Ku(Y),\oh_Y,\oh_Y(1)\rangle=\langle\Ku(Y),\cQ_Y,\oh_Y\rangle,$$
where $\cQ_Y\cong\bL_{\oh_Y}\oh_Y(1)[-1]$ is a rank $d+1$ vector bundle for $d\geq 2$.
This paper aims to employ two different viewpoints to 
extract the critical information of $Y$ from its admissible subcategory $\Ku(Y)$, called the Kuznetsov component. Before that, we give the following theorem which is crucial in both directions. 
Recall that the \emph{rotation functor} $\bO$ is an auto-equivalence of $\Ku(Y)$ sending $E\in\Ku(Y)$ to $\bL_{\oh_Y}(E\otimes\oh_Y(H))$. We denote by $i \colon \Ku(Y) \hookrightarrow \D^b(Y)$ the inclusion functor with the right and left adjoints $i^!$ and $i^*$, respectively.

In the following, we consider the object $i^!\cQ_Y\in\Ku(Y)$, which is the \emph{gluing object} of the semiorthogonal decomposition in the sense of \cite{kuz:cat-resolution-irrational}. As explained in \cite[Section 2.2]{kuz:cat-resolution-irrational}, the \emph{gluing object} together with $\Ku(Y)$ encode the information of $\oh_Y^{\perp}$. We first show that this \emph{gluing object} $i^!\cQ_Y$ is preserved by any exact equivalence between Kuznetsov components of $Y$ and $Y'$, up to some natural auto-equivalences.

\begin{theorem}[{Theorem \ref{prop_gluing_data_fixed}}]\label{gluing_objects_fixed}
 Let $Y$ and $Y'$ be del Pezzo threefolds of Picard rank one and degree $2\leq d\leq 4$, and $\Phi \colon \Ku(Y) \xrightarrow{\simeq}\Ku(Y')$ be an exact equivalence. 
\begin{enumerate}
    \item [(i)] If $2\leq d \leq 3$, there exist a unique pair of integers $m_1, m_2 \in \ZZ$ with $0\leq m_1\leq 3$ when $d=2$ and $0\leq m_1\leq 5$ when $d=3$, so that    $$\Phi(i^!\QY)\cong\bO^{m_1}({i'}^!\mathcal{Q}_{Y'})[m_2].$$
    \item [(ii)] If $d=4$, there exists a unique pair of integers $m_1,m_2$ and a unique auto-equivalence $T_{\mathcal{L}_0} \in \mathrm{Aut}^0(\Ku(Y'))$ (see Section \ref{degree 4-subsection} for definition) so that 
 $$\Phi(i^!\cQ_Y)\cong \bO^{m_1}\circ T_{\mathcal{L}_0}({i'}^!\mathcal{Q}_{Y'})[m_2]. $$
\end{enumerate}
 Here $i' \colon \Ku(Y') \hookrightarrow \D^b(Y')$ is the inclusion functor.
\end{theorem}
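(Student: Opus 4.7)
The plan is to reduce the statement to a numerical / moduli-theoretic comparison, then appeal to stability. First I would identify the gluing object $i^!\mathcal{Q}_Y$ intrinsically inside $\Ku(Y)$ by two pieces of data: its numerical class $v_0 := [i^!\mathcal{Q}_Y] \in \mathcal{N}(\Ku(Y))$, and its Bridgeland stability with respect to a Serre-invariant stability condition $\sigma$ on $\Ku(Y)$ (available from results of Bayer--Lahoz--Macr\`{\i}--Stellari, which one expects to have been invoked earlier in the paper). The aim is to show that the moduli space of $\sigma$-stable objects with class $v_0$ is a single point, so that $i^!\mathcal{Q}_Y$ is characterized up to isomorphism by being $\sigma$-stable of class $v_0$, up to applying $\bO^{m_1}[m_2]$ (and $T_{\mathcal{L}_0}$ when $d=4$).

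Next I would analyze the action of the rotation functor $\bO$ on the numerical lattice. For $d=2,3$, the lattice $\mathcal{N}(\Ku(Y))$ has rank $2$, and a direct computation using the triangles defining $\bO$ and $i^!$ (applied to the SOD $\langle\Ku(Y),\mathcal{O}_Y,\mathcal{Q}_Y\rangle$) shows that the induced $\ZZ$-linear action $[\bO]$ has order $4$ for $d=2$ and order $6$ for $d=3$ on the orbit of $v_0$ — this is exactly what produces the ranges $0\leq m_1\leq 3$ and $0\leq m_1\leq 5$. Any exact equivalence $\Phi$ induces an isometry $[\Phi]\colon \mathcal{N}(\Ku(Y))\to\mathcal{N}(\Ku(Y'))$ of lattices with Euler pairing; and since the Serre functor $S_{\Ku(Y)}$ is intrinsic and agrees with a shift of a power of $\bO$, the map $[\Phi]$ intertwines the $[\bO]$-actions up to shift. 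Hence $[\Phi](v_0)$ lies in the $[\bO]$-orbit of $v_0'=[{i'}^!\mathcal{Q}_{Y'}]$, picking out a unique pair $m_1,m_2$ (modulo the orbit length and an overall shift).

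Now consider $\Phi(i^!\mathcal{Q}_Y)$. Since $\Phi$ sends $\sigma$-stable objects to $\sigma'$-stable objects for $\sigma' := \Phi_\ast\sigma$ (and since Serre-invariant stability conditions on $\Ku(Y')$ are unique up to the $\widetilde{\mathrm{GL}}_2^+(\R)$-action on a single orbit), the object $\bO^{-m_1}\Phi(i^!\mathcal{Q}_Y)[-m_2]$ is stable of class $v_0'$; by the uniqueness of the stable object with that class it is isomorphic to ${i'}^!\mathcal{Q}_{Y'}$, proving (i). For $d=4$ one replaces the bare $\bO$-orbit by the orbit of $v_0$ under the subgroup generated by $[\bO]$ and the translations $[T_{\mathcal{L}_0}]$ coming from the action of $\Pic^0(C)$ on $\Ku(Y_4)\simeq\D^b(C)$; the same stability argument, combined with the fact that for a genus-two curve any line bundle is determined by its numerical class plus a point of $\Pic^0(C)$, yields (ii) with the additional factor $T_{\mathcal{L}_0}$.

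The principal obstacle will be the degree-four case: there one must (a) explicitly describe $\mathcal{N}(\Ku(Y_4))$ and the orbits of $v_0$ under the enlarged group of auto-equivalences containing $T_{\mathcal{L}_0}$, and (b) upgrade the numerical matching into an object-level statement even though $\mathrm{Aut}^0(\Ku(Y_4))$ now has a continuous component. The key will be to use that, after correcting by a suitable $T_{\mathcal{L}_0}$, the image $\Phi(i^!\mathcal{Q}_Y)$ is $\sigma'$-stable of class $v_0'$ and therefore rigid, pinning it down uniquely. The remaining work — the numerical lattice computations and verifying $\sigma$-stability of $i^!\mathcal{Q}_Y$ — should follow from results already established in the preceding sections of the paper.
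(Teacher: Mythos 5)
Your argument rests on the claim that the moduli space of $\sigma$-stable objects of class $v_0=[i^!\cQ_Y]=d\bv$ is a single point, so that stability plus the numerical class pins down $i^!\cQ_Y$ up to $\bO^{m_1}[m_2]$. This is false, and it is precisely where the difficulty of the theorem lies. By Propositions \ref{prop_bridgeland_Y2} and \ref{prop_Y3_classify}, a $\sigma$-stable object of class $d\bv$ in $\Ku(Y)$ is, up to shift, \emph{either} $i^!\cQ_Y$ \emph{or} a Gieseker-stable sheaf, and the latter form a positive-dimensional family (for $d=2$ the sheaf locus is the five-dimensional instanton moduli space of Appendix \ref{section_classification_instanton_sheaves_d=2}; for $d=3$ it is ten-dimensional). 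Relatedly, $i^!\cQ_Y$ is not rigid: $\RHom(i^!\cQ_Y,i^!\cQ_Y)=\CC\oplus\CC^{6}[-1]\oplus\CC[-2]$ for $d=2$ (Lemma \ref{lem_ext_gluing}), and for $d=4$ it corresponds to a rank-four semistable bundle on a genus-two curve, so the rigidity appeal in your last paragraph also fails. An arbitrary exact equivalence could a priori send $i^!\cQ_Y$ to one of the Gieseker-stable sheaves of the same class, and nothing in your proposal rules this out.

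The paper's proof supplies exactly the missing ingredient: a property of $i^!\cQ_Y$ that distinguishes it from every Gieseker-stable sheaf of class $d\bv$ \emph{and} is manifestly preserved by any equivalence. Concretely, one first normalizes $\Phi$ so that it acts as the identity on $\mathcal{N}(\Ku(Y))$ (Lemma \ref{numerical_lemma} — note that even here, for $d=2$, deducing $\phi(\bw)=\bw'$ from $\phi(\bv)=\bv'$ requires a geometric argument with ideal sheaves of lines, not just lattice arithmetic). Then one tests objects of class $d\bv$ against a canonically defined sublocus of $\cM_\sigma(\Ku(Y), d\bv-\bw)$: the two-dimensional singular locus when $d=2$, the union of rational curves (which is $\mathcal{BN}_Y\cong Y$) when $d=3$. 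For every $\cE$ in that sublocus, $\RHom(\cE, i^!\cQ_Y)$ is a two-term complex, whereas for a torsion-free sheaf $E'$ of class $d\bv$ the complex $\RHom(\oh_{p'},E')$ can have three terms only on the non-locally-free locus of $E'$, which has dimension at most one — a contradiction if $\Phi(i^!\cQ_Y)$ were a sheaf (Proposition \ref{prop_bundle} handles the remaining singular points for $d=2$). For $d=4$ the argument is different again, resting on the uniqueness of the second Raynaud bundle up to twist by $\Pic^0(C)$. Your numerical orbit computation for $[\bO]$ is sound and matches the ranges of $m_1$, but without the Brill--Noether/singular-locus step the object-level conclusion does not follow.
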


 To prove degree $2\leq d\leq 3$ cases, we identify the object $i^!\cQ_Y$ via the uniqueness property\footnote{The object $i^!\cQ_Y$ can be viewed as a generalization of the classical notion of second Rayanud bundle over a genus two curve, which is unique up to twisting a line bundle over the curve.}
 of it. Up to rotations and shifts, we can assume any exact equivalence $\Phi\colon \Ku(Y) \xrightarrow{\simeq}\Ku(Y')$ acts trivially on the numerical Grothendieck group. 
Take a stable object $E$ in $\Ku(Y)$ of the same class as $i^!\cQ_Y$, then we show $\mathrm{RHom}(i^*\oh_p, E)$ is a two-term complex for all points $p \in Y$ if and only if $E \cong i^!\cQ_Y$. Combining it with analysis of the moduli space of stable objects in $\Ku(Y)$ of class $[i^*\oh_p]$ gives Theorem \ref{gluing_objects_fixed}. For $d=4$ case, we use the property of \emph{second Raynaud bundles}.  

\bigskip

Then we discuss our two perspectives on categorical Torelli theorems. 
\subsection*{I. Brill--Noether reconstruction} \label{sub_section_BN}
In \cite{feyz:desing,rota:moduli-space-index-two}, authors apply stability conditions on $\Ku(Y)$ for degree $2\leq d\leq 3$ to show that one can uniquely recover $Y$ as a subscheme of a moduli space of stable objects in $\Ku(Y)$. 
The following theorem shows that we can describe this subscheme explicitly as a Brill--Noether locus. This generalises the classical picture for degree $d=4$, as discussed in Section~\ref{classical_d=4}.

\vspace{.2 cm}

By \cite{pertusi:some-remarks-fano-threefolds-index-two}, \cite{feyzbakhsh2021serre} and \cite{jacovskis2021categorical}, there is a unique Serre-invariant stability condition on $\Ku(Y)$ up to the action of $\widetilde{\mathrm{GL}}^+_2(\mathbb{R})$ for $d\geq 2$, see Section~\ref{survey_weak_stabilitycon}. Denote by $\mathcal{M}_{\sigma}(\Ku(Y),v)$ the moduli space\footnote{Let $\sigma = (Z, \mathcal{A})$, then up to a shift we may assume $\Im[Z(v)] \geq 0$, then we only consider stable objects in the heart $\cA$ to define the moduli space $\cM_{\sigma}(\Ku(Y), v)$} of stable objects of a numerical class $v\in\mathcal{N}(\Ku(Y))$ with respect to a stability condition $\sigma$ on the Kuznetsov component $\Ku(Y)$. 

\begin{theorem}[{Theorem \ref{Brill--Noether reconstruction}}]
\label{main_theorem_BN_reconstruction_del_Pezzo_3fold}
Let $Y$ be a del Pezzo threefold of Picard rank one and degree $d\geq 2$, and let $\sigma$ be a Serre-invariant stability condition on $\Ku(Y)$. Then $Y$ is isomorphic to the \emph{Brill--Noether locus}\footnote{Note that for any $F \in \mathcal{M}_{\sigma}(\mathcal{K}u(Y),\ d\bv -\bw 
)$, we prove $\RHom(F, i^{!}\QY) = \CC^{\delta}[k+1] \oplus \CC^{d+\delta}[k]$ where $\delta$ is either zero or one. Hence there exists at most one $k \in \ZZ$ so that $\dim_{\mathbb{C}}\Hom(F[k],\ i^{!}\QY) \geq d+1$. } 
$$\mathcal{BN}_Y\coloneqq \{F\in\mathcal{M}_{\sigma}(\mathcal{K}u(Y), \ [i^*\oh_p]
) \, \colon\ \exists k \in \mathbb{Z} \ \text{such that} \ \dim_{\mathbb{C}}\Hom(F[k],\ i^{!}\cQ_Y) \geq d+1
\}.
$$
where $\oh_p$ is the skyscraper sheaf supported at a point $p\in Y$. 
\end{theorem}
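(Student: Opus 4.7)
The plan is to construct the natural morphism
\[
    \varphi\colon Y\lra\mathcal{M}_{\sigma}(\mathcal{K}u(Y),\ [i^*\oh_p]),\qquad p\longmapsto i^*\oh_p,
\]
verify that its image is contained in the Brill--Noether locus $\mathcal{BN}_Y$, and then prove the reverse inclusion $\mathcal{BN}_Y\sse\varphi(Y)$ pointwise. That each $i^*\oh_p$ is $\sigma$-stable and that $\varphi$ is a closed immersion of $Y$ into $\mathcal{M}_\sigma$ is already known from \cite{feyz:desing,rota:moduli-space-index-two} for $d=2,3$ and from the classical picture recalled in Section~\ref{classical_d=4} for $d=4$ (the $d=5$ case being analogous). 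Thus the heart of the theorem is the identification of $\varphi(Y)$ with $\mathcal{BN}_Y$.

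The first step is to check $\varphi(p)\in\mathcal{BN}_Y$ for every $p\in Y$. Using the adjunction $\RHom_{\mathcal{K}u(Y)}(A,i^!B)\cong\RHom_{\D^b(Y)}(iA,B)$, I reduce $\RHom(i^*\oh_p, i^!\cQ_Y)$ to $\RHom(i(i^*\oh_p),\cQ_Y)$. Then the mutation triangle $i(i^*\oh_p)\to\oh_p\to A$ with $A\in\langle\cQ_Y,\oh_Y\rangle$, combined with the elementary Serre-duality computations on the Fano threefold $Y$
\[
    \RHom(\oh_p,\cQ_Y)\cong\CC^{d+1}[-3],\qquad \RHom(\oh_p,\oh_Y)\cong\CC[-3],
\]
produces a copy of $\CC^{d+1}$ in a single cohomological degree of $\RHom(i^*\oh_p, i^!\cQ_Y)$, giving $\dim_{\CC}\Hom(i^*\oh_p[k_0], i^!\cQ_Y)\geq d+1$ for the appropriate $k_0\in\ZZ$.

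The second step is to establish the dichotomy justifying the footnote: for every $\sigma$-stable $F\in\mathcal{M}_{\sigma}(\mathcal{K}u(Y),[i^*\oh_p])$, the complex $\RHom(F,i^!\cQ_Y)$ is concentrated in at most two adjacent degrees with dimensions $d+\delta$ and $\delta$ for some $\delta\in\{0,1\}$. This rests on three inputs: (a) the Euler pairing $\chi(F,i^!\cQ_Y)$ is a numerical invariant equal, up to sign, to $d$; (b) Serre invariance of $\sigma$, together with $\sigma$-stability of both $F$ and $i^!\cQ_Y$ (the latter established earlier in the paper), forces $\Ext^j(F,i^!\cQ_Y)$ to vanish outside a short range of $j$ via Serre duality in $\mathcal{K}u(Y)$; (c) a direct stability argument bounding $\delta\leq 1$. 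The Brill--Noether inequality then forces $\delta=1$, which simultaneously yields the uniqueness of $k$ mentioned in the footnote.

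The main obstacle is promoting $\delta=1$ to $F\cong i^*\oh_p$ for a unique $p\in Y$. Given $\delta=1$, I would take the extra non-zero morphism $F\to i^!\cQ_Y[k_0+1]$ and lift it via the adjunction to a non-zero morphism $iF\to\cQ_Y[k_0+1]$ in $\D^b(Y)$, then exploit Bridgeland stability in a tilted heart of $\D^b(Y)$ to show that its cone has the numerical class of a skyscraper; the stability of $F$ together with the explicit geometry of the rank-$(d+1)$ bundle $\cQ_Y$ should force the cone to be (a shift of) $\oh_p$ for a unique $p\in Y$, so that $F\cong i^*\oh_p$. An alternative route is to import the Hom-vanishing uniqueness characterisation of $i^!\cQ_Y$ described immediately before Theorem~\ref{gluing_objects_fixed} and apply it in reverse to characterise $i^*\oh_p$ among $\sigma$-stable objects of its numerical class. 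Finally, smoothness of $Y$ and the matching of tangent spaces via the infinitesimal deformations of $i^*\oh_p$ upgrade the resulting bijection $\varphi\colon Y\to\mathcal{BN}_Y$ to an isomorphism of schemes.
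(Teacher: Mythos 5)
Your forward inclusion $\varphi(Y)\subseteq\mathcal{BN}_Y$ is essentially the paper's computation in case (i): by adjunction one reduces to $\RHom(i^*\oh_p,\cQ_Y)$, and the mutation triangle together with $\RHom(\oh_p,\cQ_Y)=\CC^{d+1}[-3]$ gives the required $d+1$ morphisms. That part is fine.

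The reverse inclusion is where the real content of the theorem lies, and there your proposal has a genuine gap. The paper's proof does not try to show directly that a stable $F$ of class $d\bv-\bw$ satisfying the Brill--Noether inequality must be $i^*\oh_p$. Instead it first invokes a complete classification of \emph{all} $\sigma$-stable objects of this class (Proposition \ref{prop_moduli_w}, via wall-crossing for the class $\bw$ and the $\bO$-invariance of $\sigma$): every such object is, up to shift, either $i^*\oh_p$ or $\bO(j_*T)$ for a Gieseker-stable reflexive sheaf $T$ on a hyperplane section $j\colon S\hookrightarrow Y$. The theorem then reduces to showing $\RHom(\bO(j_*T),\cQ_Y)=\CC^d[-2]$, which requires the nontrivial Lemma \ref{lem_restriction_semistable} (the extension $0\to\oh_S\to\cQ_Y|_S\to\cQ_S\to 0$ with $\cQ_S$ $\mu_{H|_S}$-semistable) to kill the Hom-spaces in degrees $1$ and $3$ by slope comparison and Serre duality on $S$. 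Your proposal contains neither the classification nor any substitute for this restriction lemma; the entire second family of stable objects is never addressed. Moreover, the two concrete claims you do make are shaky: the asserted a priori bound $\delta\le 1$ ``by a direct stability argument'' is exactly what the case analysis is needed to prove, not an input to it; and your plan to show that the cone of the lifted morphism $iF\to\cQ_Y[k_0+1]$ is a skyscraper is numerically off, since $[\cQ_Y]-[F]=(1,0,0,-\tfrac{1}{d}H^3)$ is the class of an ideal sheaf of a point rather than of $\oh_p$, and a single nonzero Ext class gives no distinguished triangle with recognisable stable pieces. As written, the argument from ``$\delta=1$'' to ``$F\cong i^*\oh_p$'' does not go through without importing the classification you were trying to avoid.
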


\vspace{.2 cm}

By \cite{pertusi:some-remarks-fano-threefolds-index-two}, Serre-invariant stability conditions on $\Ku(Y)$ for degree $d \geq 2$ are $\bO$-invariant as well. Thus combining Theorem \ref{main_theorem_BN_reconstruction_del_Pezzo_3fold} and \ref{gluing_objects_fixed} gives a new proof for \emph{Categorical Torelli Theorem}.

\begin{corollary}[{Corollary \ref{coro_Torelli_from_BN}}]\label{main_theorem_categorical_Torelli_theorem}
Let $Y$ and $Y'$ be del Pezzo threefolds of Picard rank one and degree $2\leq d\leq 4$ such that $\Ku(Y)\simeq\Ku(Y')$, then $Y\cong Y'$.
\end{corollary}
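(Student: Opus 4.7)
The plan is to combine Theorems \ref{gluing_objects_fixed} and \ref{main_theorem_BN_reconstruction_del_Pezzo_3fold}: the reconstruction theorem lets us replace the problem ``$Y\cong Y'$'' by ``$\mathcal{BN}_Y\cong \mathcal{BN}_{Y'}$'', and the gluing-object statement provides enough rigidity on a given equivalence $\Phi\colon\Ku(Y)\xrightarrow{\simeq}\Ku(Y')$ to match the defining data of the two Brill--Noether loci.

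First I would normalise $\Phi$. By Theorem \ref{gluing_objects_fixed} there exist $m_1,m_2\in\ZZ$ and, for $d=4$, the auto-equivalence $T_{\mathcal{L}_0}$, such that $\Phi(i^!\mathcal{Q}_Y)\cong \bO^{m_1}\circ T_{\mathcal{L}_0}(i'^!\mathcal{Q}_{Y'})[m_2]$. Pre-composing $\Phi$ with the inverse of the bracketed auto-equivalence of $\Ku(Y')$ yields an equivalence $\Psi$ with $\Psi(i^!\mathcal{Q}_Y)\cong i'^!\mathcal{Q}_{Y'}$. Since Serre-invariant stability conditions on $\Ku(Y')$ are $\bO$-invariant (and $T_{\mathcal{L}_0}$ preserves them as well, being an element of the identity component of the auto-equivalence group), $\Psi$ sends a Serre-invariant stability condition $\sigma$ on $\Ku(Y)$ to one on $\Ku(Y')$, so that, up to $\widetilde{\mathrm{GL}}^+_2(\mathbb{R})$-action on the target, we may assume $\Psi\cdot\sigma=\sigma'$. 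The functor $\Psi$ then induces an isomorphism of moduli spaces
$$\mathcal{M}_{\sigma}\bigl(\Ku(Y),[i^*\oh_p]\bigr)\xrightarrow{\ \sim\ }\mathcal{M}_{\sigma'}\bigl(\Ku(Y'),v\bigr),$$
where $v\coloneqq\Psi([i^*\oh_p])$, and the isomorphism $\Psi(i^!\mathcal{Q}_Y)\cong i'^!\mathcal{Q}_{Y'}$ transports the Brill--Noether condition verbatim, so that $\mathcal{BN}_Y$ is carried isomorphically onto the analogous locus $\mathcal{BN}_{Y'}^{(v)}$ defined with class $v$ in place of $[i'^*\oh_{p'}]$.

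The main obstacle is identifying $v$ with $[i'^*\oh_{p'}]$. Since $\Psi$ preserves the Euler pairing on the rank-two lattice $\mathcal{N}(\Ku(Y'))\cong\ZZ^2$, the class $v$ is pinned down by the pairings $\chi(v,[i'^!\mathcal{Q}_{Y'}])=\chi([i^*\oh_p],[i^!\mathcal{Q}_Y])$ and $\chi(v,v)=\chi([i^*\oh_p],[i^*\oh_p])$, whose values depend only on the degree $d$ and hence agree on the two sides. This constrains $v$ to finitely many candidates; combining the requirement that $\mathcal{M}_{\sigma'}(\Ku(Y'),v)$ carry a three-dimensional Brill--Noether subvariety with the explicit description of the $\bO$-action on the numerical K-theory should force $v=[i'^*\oh_{p'}]$. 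Once this is verified, $Y\cong\mathcal{BN}_Y\cong\mathcal{BN}_{Y'}\cong Y'$ by Theorem \ref{main_theorem_BN_reconstruction_del_Pezzo_3fold}.
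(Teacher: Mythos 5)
Your overall strategy is the same as the paper's: normalise $\Phi$ using Theorem \ref{gluing_objects_fixed} so that the gluing objects match, deduce that the induced isometry of numerical Grothendieck groups sends $[i^*\oh_p]$ to $[i'^*\oh_{p'}]$, and then conclude via the Brill--Noether reconstruction. The gap is in the middle step, and it is concentrated in the case $d=2$. Writing $v=a\bv'+b\bw'$, the constraints $\chi(v,\bv')$, $\chi(\bv',v)$ and $\chi(v,v)$ pin down $(a,b)=(d,-1)$ uniquely when $d=3,4$, but for $d=2$ they leave exactly two solutions, $v=2\bv'-\bw'$ and $v=\bw'$ (the map fixing $\bv$ and sending $\bw\mapsto 2\bv-\bw$ is an isometry of the $d=2$ lattice). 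Your proposed tie-breaker --- that only the correct class supports a three-dimensional Brill--Noether subvariety --- does not discriminate between them: the two classes are exchanged by $\bO$, which preserves Serre-invariant stability conditions, so $\cM_{\sigma'}(\Ku(Y'),\bw')\cong\cM_{\sigma'}(\Ku(Y'),2\bv'-\bw')$ and both moduli spaces contain a copy of $Y'$ (Proposition \ref{prop_moduli_w}); moreover, since $\bO$ does not fix the class $2\bv'$ of the gluing object, you cannot use a further rotation to repair the discrepancy after having normalised $\Psi(i^!\cQ_Y)\cong i'^!\cQ_{Y'}$. The phrase ``should force'' is therefore covering a genuine missing argument.

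The paper closes this gap in Lemma \ref{numerical_lemma}(a) by a geometric, not numerical, argument: assuming the wrong isometry $\phi(\bv-\bw)=-(\bv'-\bw')$, it takes two intersecting lines $l_1,l_2$ on the quartic double solid, uses $\Hom(\cI_{l_1},\cJ_{l_2})\neq 0$ together with $\chi(\cI_{l_1},\cJ_{l_2})=0$ to force a nonvanishing $\Hom$ in two consecutive degrees, and derives a parity contradiction from the classification of stable objects of the classes $\bv$ and $\bv-\bw$ under any Serre-invariant stability condition. Some input of this kind (using actual objects in the moduli spaces, not just the lattice and dimensions of loci) is needed to complete your argument for $d=2$; for $d=3,4$ your Euler-pairing computation suffices.
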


\subsection*{II. Exact equivalences.} The second viewpoint is to combine the categorical techniques developed in \cite{li2021refined} with geometric analysis of stable objects in $\Ku(Y)$ to show that any Fourier--Mukai type exact equivalence of Kuznetsov components of two del Pezzo threefolds of degree $2\leq d\leq 4$ 
can be lifted to an equivalence of their bounded derived categories.

\begin{theorem}[{Theorem \ref{prop_gluing_data_fixed}}] \label{thm-FM}
 Let $Y$ and $Y'$ be del Pezzo threefolds of Picard rank one and degree $2\leq d\leq 4$, and let $\Phi \colon \Ku(Y) \to \Ku(Y')$ be an exact equivalence of Fourier--Mukai type such that  $\Phi(i^{!}\cQ_Y) = i'^{!}\cQ_{Y'}$. Then $\Phi=f_*|_{\Ku(Y)}$ for a unique isomorphism $f \colon Y\to Y'$. 
\end{theorem}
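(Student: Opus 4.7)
The plan is to extend the given equivalence $\Phi$ step-by-step to an exact equivalence
\[
\tilde{\Phi} \colon \D^b(Y) \xrightarrow{\simeq} \D^b(Y')
\]
of Fourier--Mukai type satisfying $\tilde{\Phi}(\cQ_Y) \cong \cQ_{Y'}$ and $\tilde{\Phi}(\oh_Y) \cong \oh_{Y'}$, and then to invoke the Bondal--Orlov reconstruction theorem. Because $Y$ is Fano, $\omega_Y^{-1}$ is ample, so such a $\tilde{\Phi}$ must be of the form $f_{\ast}\circ (-\otimes \cL)[n]$ for a unique isomorphism $f\colon Y\xrightarrow{\simeq} Y'$, a line bundle $\cL\in\Pic(Y)$ and an integer $n$. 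The two normalizations will pin down $\cL\cong \oh_Y$ and $n=0$, which gives $\Phi=f_{\ast}|_{\Ku(Y)}$.

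The first and main step is to extend $\Phi$ to an FM-type equivalence $\Phi_1\colon \oh_Y^{\perp}\xrightarrow{\simeq} \oh_{Y'}^{\perp}$ with $\Phi_1(\cQ_Y)\cong \cQ_{Y'}$. By the gluing formalism of \cite{kuz:cat-resolution-irrational}, the admissible subcategory $\oh_Y^{\perp}=\langle \Ku(Y),\cQ_Y\rangle$ is reconstructed from the triple consisting of $\Ku(Y)$, the exceptional object $\cQ_Y$, and the gluing object $i^{!}\cQ_Y\in\Ku(Y)$ which encodes the morphism spaces $\RHom(F,\cQ_Y)\cong\RHom_{\Ku(Y)}(F,i^{!}\cQ_Y)$ for every $F\in\Ku(Y)$. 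The hypothesis $\Phi(i^{!}\cQ_Y)\cong {i'}^{!}\cQ_{Y'}$ is exactly the compatibility needed to match the gluing data on the two sides. Taking an FM kernel $\cE\in\D^b(Y\times Y')$ for $\Phi$ and fixing an isomorphism $\alpha\colon \Phi(i^{!}\cQ_Y)\xrightarrow{\simeq} {i'}^{!}\cQ_{Y'}$, one constructs an extended kernel $\tilde{\cE}$ as a cone of an $\alpha$-induced morphism between the $\cE$-determined exceptional arrows; this is the categorical machinery carried out in \cite{li2021refined}.

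The next step is to extend $\Phi_1$ further to $\tilde{\Phi}\colon\D^b(Y)\xrightarrow{\simeq} \D^b(Y')$ with $\tilde{\Phi}(\oh_Y)\cong \oh_{Y'}$. The gluing here is dictated by the second gluing object controlling $\RHom(-,\oh_Y)$ on $\oh_Y^{\perp}$. On the $\Ku(Y)$-factor this vanishes by semiorthogonality, and on $\cQ_Y$ Serre duality computes
\[
\RHom(\cQ_Y,\oh_Y)\cong \RHom(\oh_Y,\cQ_Y\otimes\omega_Y)^{\vee}[-3],
\]
whose dimensions depend only on numerical invariants of the semiorthogonal decomposition and hence agree on $Y$ and $Y'$. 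The gluing formalism then produces the desired FM-type equivalence $\tilde{\Phi}$. A direct application of Bondal--Orlov to $\tilde{\Phi}$ supplies a unique isomorphism $f\colon Y\xrightarrow{\simeq} Y'$, a line bundle $\cL$, and an integer $n$ with $\tilde{\Phi}\cong f_{\ast}\circ(-\otimes \cL)[n]$; checking on $\oh_Y$ and $\cQ_Y$ forces $\cL\cong\oh_Y$ and $n=0$, and restricting to $\Ku(Y)$ gives $\Phi=f_{\ast}|_{\Ku(Y)}$.

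The main obstacle in this program is the first extension step: converting the abstract preservation of the gluing object into an actual FM kernel on $Y\times Y'$ lifting $\Phi$ to $\oh_Y^{\perp}\to\oh_{Y'}^{\perp}$. Although the underlying idea is natural and is precisely the motivation for identifying $i^{!}\cQ_Y$ as a gluing object in \cite{kuz:cat-resolution-irrational}, the actual construction requires a dg-enhancement argument to produce a functorial extended kernel, which is the technical input imported from \cite{li2021refined}. Once this lifting is in place, the second extension is a routine verification and the conclusion follows from Bondal--Orlov.
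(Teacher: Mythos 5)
Your overall strategy---extend $\Phi$ in two stages through the semiorthogonal decomposition using the gluing formalism of \cite{li2021refined}, then invoke Orlov's classification of equivalences between derived categories of Fano varieties and normalize away the twist and shift---is exactly the architecture of the paper's proof (the paper uses the decomposition $\langle \Ku(Y),\oh_Y,\oh_Y(1)\rangle$ and first passes to $\oh_Y(1)^{\perp}=\langle\Ku(Y),\oh_Y\rangle$, but that choice is immaterial). The first extension step is fine as you describe it: the hypothesis $\Phi(i^{!}\cQ_Y)\cong i'^{!}\cQ_{Y'}$ is precisely the gluing compatibility needed for \cite[Proposition 2.5]{li2021refined}.

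The genuine gap is in your second extension step, which you dismiss as ``a routine verification.'' To extend $\Phi_1\colon\oh_Y^{\perp}\to\oh_{Y'}^{\perp}$ to all of $\D^b(Y)$ via the same machinery, you must show that $\Phi_1$ sends the gluing \emph{object} of the second decomposition (an actual object of $\oh_Y^{\perp}$, namely the projection of the remaining exceptional object; in the paper's variant this is $j^{!}\oh_Y(1)$, the unique nontrivial extension of $\oh_Y(1)$ by $\oh_Y(-1)[2]$) to the corresponding object on the $Y'$ side. Matching the \emph{dimensions} of $\RHom(\cQ_Y,\oh_Y)$ via Serre duality says nothing about this: two objects of $\oh_{Y'}^{\perp}$ can have the same $\RHom$ against the exceptional object without being isomorphic, and there is no a priori reason $\Phi_1(j^{!}\oh_Y(1))$ is even a two-term complex with the right cohomology sheaves. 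This identification is the technical heart of the paper's proof: it first uses the Brill--Noether reconstruction (Theorem \ref{Brill--Noether reconstruction}, which needs the numerical normalization from Lemma \ref{numerical_lemma}) to show that $\Phi$ permutes the projected skyscraper sheaves $i^{*}\oh_p$; this yields $\RHom(\oh_{p'},\Phi(j^{!}\oh_Y(1)))=\CC[-1]\oplus\CC[-3]$ for \emph{every} point $p'\in Y'$, and then a three-term locally free resolution argument (via \cite[Proposition 5.4]{bridgeland:K3-and-elliptic-fibration}) together with the exact sequence involving $i'^{!}\cQ_{Y'}[1]$ pins down the cohomology sheaves as $\oh_{Y'}(-1)$ and $\oh_{Y'}(1)$, whence $\Phi(j^{!}\oh_Y(1))\cong j'^{!}\oh_{Y'}(1)$ by uniqueness of the nontrivial extension. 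Without some argument of this kind your second application of \cite{li2021refined} does not go through.

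A secondary point: the uniqueness asserted in the theorem is that $f$ is determined by $\Phi=f_{*}|_{\Ku(Y)}$ alone, i.e.\ $f_{*}|_{\Ku(Y)}=g_{*}|_{\Ku(Y)}$ forces $f=g$. Uniqueness of $f$ given the full extension $\tilde{\Phi}$ (which is what Orlov's theorem provides) is weaker, since a priori the extension itself need not be unique. The paper handles this by a separate argument (Lemma \ref{unique_extend}) using the embedding of $Y$ into the moduli space $\cM_{\sigma}(\Ku(Y),d\bv-\bw)$ via $p\mapsto i^{*}\oh_p$; you should supply something equivalent.
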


Clearly, combining Theorem \ref{gluing_objects_fixed} with Theorem \ref{thm-FM} provides an alternative proof of \emph{Categorical Torelli theorem} for del Pezzo threefold of degree $2\leq d\leq 4$. Furthermore, we obtain a complete description of the group $\mathrm{Aut}_{\mathrm{FM}}(\Ku(Y))$ of exact auto-equivalences of $\Ku(Y)$ of Fourier--Mukai type. For a group $G$ and a subset $S\subset G$, we denote by $\langle S \rangle$ the subgroup of $G$ generated by $S$.

\begin{corollary}[{Corollary \ref{cor_auto_equi_cubic}}]\label{cor_auto_equi_cubic-introdcution}
If $Y$ is a del Pezzo threefolds of Picard rank one and degree $d$. Then we have\footnote{By \cite[Theorem 1.3]{li2022derived}, any exact equivalence between Kuznetsov components of quartic double solids is of Fourier--Mukai type. The same also holds for del Pezzo threefolds of degree $d=4$ as $\Ku(Y)\simeq \D^b(C)$ for a smooth curve $C$.}

\begin{enumerate}
    \item $\mathrm{Aut}_{\mathrm{FM}}(\Ku(Y))=  \langle \mathrm{Aut}(Y), \bO, [1] \rangle$ when $2\leq d\leq 3$, and
    
    \item $\mathrm{Aut}_{\mathrm{FM}}(\Ku(Y))=  \langle \mathrm{Aut}(Y), \Aut^0(\Ku(Y)),  \bO, [1] \rangle$ when $d=4$.
    
\end{enumerate}
Here the subgroup $\mathrm{Aut}^0(\Ku(Y))$ is  defined in Section \ref{degree 4-subsection}. 
\end{corollary}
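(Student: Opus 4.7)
The plan is to combine Theorem~\ref{gluing_objects_fixed}, which controls how any exact equivalence acts on the gluing object $i^!\cQ_Y$, with Theorem~\ref{thm-FM}, which says that any Fourier--Mukai type auto-equivalence fixing $i^!\cQ_Y$ must come from an automorphism of $Y$. Given any $\Phi\in\mathrm{Aut}_{\mathrm{FM}}(\Ku(Y))$, I would modify $\Phi$ by post-composing with powers of $\bO$, shifts, and (when $d=4$) an element of $\mathrm{Aut}^0(\Ku(Y))$ until the resulting auto-equivalence fixes $i^!\cQ_Y$, at which point Theorem~\ref{thm-FM} identifies it with the restriction of some $f_*$.

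For the inclusion $\supseteq$, each generator is visibly a Fourier--Mukai auto-equivalence preserving $\Ku(Y)$: the shift $[1]$ is trivial; for $f\in\mathrm{Aut}(Y)$, $f_*$ preserves the semiorthogonal decomposition because $\mathrm{Pic}(Y)=\ZZ\cdot H$ forces $f^*H\sim H$, so $f_*\oh_Y\cong\oh_Y$ and $f_*\oh_Y(1)\cong\oh_Y(1)$; the rotation $\bO$ is built from tensoring with $\oh_Y(H)$ followed by mutation through $\oh_Y$; and elements of $\mathrm{Aut}^0(\Ku(Y))$ are, by their construction in Section~\ref{degree 4-subsection}, of Fourier--Mukai type. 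For the non-trivial inclusion $\subseteq$, take $\Phi\in\mathrm{Aut}_{\mathrm{FM}}(\Ku(Y))$ and apply Theorem~\ref{gluing_objects_fixed} with $Y'=Y$. This furnishes integers $m_1,m_2$ (and, when $d=4$, an element $T_{\mathcal{L}_0}\in\mathrm{Aut}^0(\Ku(Y))$) such that
\[
\Phi(i^!\cQ_Y)\cong \bO^{m_1}(i^!\cQ_Y)[m_2]
\quad\text{or, for } d=4,\quad
\Phi(i^!\cQ_Y)\cong \bO^{m_1}\circ T_{\mathcal{L}_0}(i^!\cQ_Y)[m_2].
\]
Setting $\Psi \coloneqq [-m_2]\circ \bO^{-m_1}\circ \Phi$ (resp.\ $\Psi \coloneqq [-m_2]\circ T_{\mathcal{L}_0}^{-1}\circ \bO^{-m_1}\circ \Phi$), $\Psi$ is a composition of Fourier--Mukai auto-equivalences, hence itself of Fourier--Mukai type, and satisfies $\Psi(i^!\cQ_Y)\cong i^!\cQ_Y$. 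Theorem~\ref{thm-FM} now produces a unique $f\in\mathrm{Aut}(Y)$ with $\Psi=f_*|_{\Ku(Y)}$, so unwinding the definition of $\Psi$ writes $\Phi$ as a composition of generators listed in the claimed subgroups.

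The main subtlety is not the algebraic rearrangement but ensuring that the correcting auto-equivalences $\bO^{-m_1}$, $[-m_2]$, and (in degree $4$) $T_{\mathcal{L}_0}^{-1}$ are themselves of Fourier--Mukai type, so that the modified $\Psi$ remains in $\mathrm{Aut}_{\mathrm{FM}}(\Ku(Y))$ and Theorem~\ref{thm-FM} genuinely applies; and, relatedly, that $\Psi(i^!\cQ_Y)$ is isomorphic to $i^!\cQ_Y$ on the nose, not merely numerically. Both points follow from the explicit construction of the correcting functors and the precise form of Theorem~\ref{gluing_objects_fixed}, but they are the single nonformal input to the argument. Once they are in place, the corollary is a direct consequence of the two input theorems.
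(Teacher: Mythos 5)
Your proposal is correct and is essentially the paper's argument: the paper's proof of Corollary \ref{cor_auto_equi_cubic} is precisely "apply Theorem \ref{prop_gluing_data_fixed} to correct $\Phi$ by $\bO^{m_1}$, $[m_2]$ (and $T_{\mathcal{L}_0}$ when $d=4$) until it fixes $i^!\cQ_Y$, then apply Theorem \ref{thm-conj}." Your added checks — that the correcting functors are themselves of Fourier--Mukai type and that the generators preserve $\Ku(Y)$ — are exactly the routine verifications the paper leaves implicit.
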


We may write elements of $\Aut_{\mathrm{FM}}(\Ku(Y))$ in a more explicit way, see Corollary \ref{cor_auto_equi_cubic}. 

\bigskip

Next, let $X$ be an index one prime Fano threefold of genus $g\geq 6$. Then we have a semi-orthogonal decomposition
\[\D^b(X)=\langle \Ku(X), \cE_X, \oh_X \rangle,\]
where $\cE_X$ is the tautological subbundle obtained by pulling back the one on correspondent Grassmannian. When the genus $g$ of $X$ is either $7,9,10$ or $12$, its Kuznetsov component $\Ku(X)$ is either equivalent to the derived category of a smooth projective curve or the derived category of representations of the 3-Kronecker quiver, whose group of auto-equivalences is known (see \cite{miyachi2001derived} for $g=12$ case). So in the following, we focus on $g=6$ and $8$ cases.

As is shown in the del Pezzo threefold case, the Brill--Noether reconstruction plays an important role in computing the group of auto-equivalences of their Kuznetsov components. For the case of index one prime Fano threefolds of genus $g\geq 6$, the Brill--Noether reconstruction is established in \cite{jacovskis2022brill}. Thus we have a complete description on $\Aut_{\mathrm{FM}}(\Ku(X))$.

\begin{theorem}[{Corollary \ref{cor_aut_genus_8}, Corollary \ref{cor_aut_genus_6}}] \label{thm_1.6}
Let $X$ be an index one prime Fano threefold of genus $g=6$ or $8$. When $g=6$, we further assume that $X$ is general. Then we have
\[\Aut_{\mathrm{FM}}(\Ku(X))= \langle \Aut(X), S_{\Ku(X)}, [1] \rangle.\]
\end{theorem}

Theorem \ref{thm_1.6} has two nice applications. First, we identify groups of automorphisms of two different prime Fano threefolds, generalizing a classical result in \cite[Corollary 4.2.3 (1), Corollary 4.3.5 (2)]{kuznetsov2018hilbert}.

\begin{corollary}[{Corollary \ref{cor_aut_variety}}] \label{intro-cor-1.7}
Let $X$ be an index one prime Fano threefold of genus $g=8$ and $Y$ be the Phaffian cubic threefold associated with $X$. Then 
    \[\Aut(X)\cong\Aut(Y).\]
\end{corollary}

The second application is on Kuznetsov's Fano threefold conjecture \cite[Conjecture 3.7]{kuznetsov:derived-category-fano-threefold}. It was disproved in \cite{bayer2022kuznetsov} and \cite{zhang:kuznetsov-conjecture}. Using Corollary \ref{cor_auto_equi_cubic-introdcution} and Theorem \ref{thm_1.6}, we compare groups of auto-equivalences of Kuznetsov components of Gushel--Mukai threefolds and quartic double solids. As a result, we give a simple disproof of \cite[Conjecture 3.7]{kuznetsov:derived-category-fano-threefold} by assuming Gushel--Mukai threefolds are general.

\begin{corollary}[{Corollary \ref{cor_ku_conj}}] \label{intro-cor-1.8}
    Let $X$ be a general Gushel--Mukai threefold and $Y$ a quartic double solid. Then $\Ku(X)$ is not equivalent to $\Ku(Y)$. 
\end{corollary}

\subsection*{Related work}
Here is the list of relevant results for del Pezzo threefolds $Y_d$ of degree $d$:

\begin{enumerate}
    \item [$d=2$.]  In \cite{bernardara:from-semi-orthogonal-decomposition} and \cite{rota:moduli-space-index-two}, the categorical Torelli theorem (Corollary \ref{main_theorem_categorical_Torelli_theorem}) has been proved for \emph{generic} quartic double solids. It has been proved for non-generic cases in \cite{bayer2022kuznetsov} 
    via Hodge theory for K3 categories. In Theorem \ref{main_theorem_BN_reconstruction_del_Pezzo_3fold},  we give an explicit expression for $Y$ as a Brill--Noether locus of stable objects in $\Ku(Y_2)$, and so provide a new proof for the categorical Torelli theorem.  

    \item [$d=3$.] In \cite{macri:categorical-invarinat-cubic-threefolds} and \cite{pertusi:some-remarks-fano-threefolds-index-two}, the categorical Torelli theorem has been proved for cubic threefolds by reducing it to classical Torelli theorem. In \cite{liu2023autoeq}, the author computes the  group of auto-equivalences of Kuznetsov components of cubic threefolds of Fourier--Mukai type via a completely different method and provides a new proof of categorical Torelli theorem for cubic threefold by constructing a Hodge isometry between cubic threefolds. In \cite{feyz:desing}, the cubic threefold $Y_3$ has been described geometrically as a sub-locus of a moduli space of stable objects in $\Ku(Y_3)$. Theorem \ref{main_theorem_BN_reconstruction_del_Pezzo_3fold} gives a point-wise description of it as a Brill--Noether locus. 
    \item [$d=4$.] We know $Y_4$ is the intersection of two quadrics in $\mathbb{P}^5$, and by \cite{newstead1968stable}, it can be reconstructed as the moduli space $M$ of stable vector bundles of rank two with fixed determinant of an odd degree over the associated genus two curve $C_2$. We have $\Ku(Y_4) \simeq \D^b(C_2)$. As discussed in Section \ref{classical_d=4}, our categorical Brill--Noether locus in Theorem \ref{main_theorem_BN_reconstruction_del_Pezzo_3fold} matches with the classical moduli space $M$. 
\end{enumerate}
Other than del Pezzo threefolds, various versions of categorical Torelli theorems are also obtained, see \cite{pertusi2022categorical} for recent development. In particular, in \cite{jacovskis2022brill} the authors provide a Brill--Noether reconstruction for index one prime Fano threefolds, and as a result, the refined categorical Torelli theorem is proved. 

In \cite{druel:instanton-sheaves-cubic-threefold, qin2021compactification, qin:instanton-sheaves-degree-four, liu-zhang:moduli-space-cubic-x14}, a classification of rank two instanton sheaves and the corresponding moduli space in the Kuznetsov component have been discussed for del Pezzo threefolds of degree $d\geq 3$. In Appendix \ref{section_classification_instanton_sheaves_d=2}, we discuss degree $d= 2$ case. 

\subsection*{Organization of the article}
In Section~\ref{survey_weak_stabilitycon}, we recall the basic definitions and properties of (weak) stability conditions on del Pezzo threefolds of Picard rank one $Y_d$ of degree $d$ and their Kuznetsov components $\Ku(Y_d)$. In particular, we introduce Serre-invariant stability conditions on $\Ku(Y_d)$ and describe $\Ku(Y_d)$ for each $d\geq 2$. In Section~\ref{results_general_Wallcross}, we collect results of general wall-crossing for del Pezzo threefolds which will be used in later sections. In Section~\ref{section_Y2}, we describe the moduli space of $\sigma$-stable objects of the same class as twice of ideal sheaves of lines in the Kuznetsov component of a quartic double solid. In Section~\ref{section_Y3} we classify $\sigma$-stable objects of the same class as three times of ideal sheaves of line in the Kuznetsov component of a cubic threefold. In Section~\ref{Brill_Noether_reconstruction_dP} we prove Theorem~\ref{gluing_objects_fixed}. 
In Section~\ref{section_categorical_Torelli_theorem} we provide a \emph{Brill--Noether reconstruction} for a del Pezzo threefold of Picard rank one $Y_d$ with respect to $\Ku(Y_d)$ and its gluing object $i^!\cQ_{Y_d}$, proving Theorem~\ref{main_theorem_BN_reconstruction_del_Pezzo_3fold}. Then we prove \emph{categorical Torelli theorem}~\ref{main_theorem_categorical_Torelli_theorem}. In Section~\ref{section_autoeq_dP} we prove Corollary~\ref{cor_auto_equi_cubic-introdcution}. In Section \ref{sec-index-one}, we prove Theorem \ref{thm_1.6} and Corollary \ref{intro-cor-1.7} and \ref{intro-cor-1.8}.
In Appendix~\ref{section_classification_instanton_sheaves_d=2} we classify semistable sheaves of rank two, $c_1=0, c_2=2, c_3=0$ on quartic double solids. 


\subsection*{Acknowledgements}
We would like to thank Arend Bayer, Daniele Faenzi, Sasha Kuznetsov, Ziqi Liu, Franco Rota and Richard Thomas for useful conversations. We thank Arend Bayer, Sasha Kuznetsov, Pieter Moree, Richard Thomas and Xiaolei Zhao for their comments on the first draft of the paper. The first author acknowledges the support of EPSRC postdoctoral fellowship EP/T018658/1. The third author is supported by ERC Consolidator Grant WallCrossAG, no. 819864 , ANR project FanoHK, grant ANR-20-CE40-0023 and partially supported by GSSCU2021092. The second author would like to thank Institute for Advanced Study in Mathematics at Zhejiang University for financial support and wonderful research environment. Part of the work was finished when the third author is visiting Max-Planck institute for mathematics, IASM--Zhejiang University, Sichuan University and MCM--China Academy of Science. He is grateful for excellent working condition and hospitality. 

\section{Background: (weak) Bridgeland stability conditions}\label{survey_weak_stabilitycon}
In this section, we briefly review the notion of (weak) stability condition on $\D^b(Y)$ and $\Ku(Y)$ when $Y:=Y_d$ is a del Pezzo threefold of Picard rank one and degree $d$. By \cite{iskovskih:Fano-threefolds-I}, every del Pezzo threefold of Picard rank one belongs to the  following five families, indexed by their degree $d:=H^3\in\{1,2,3,4,5\}\colon$
\begin{itemize}
    \item $Y_5=\mathbb{P}^6\cap\mathrm{Gr}(2,5)$ is a codimension 3 linear section of Grassmannian $\mathrm{Gr}(2,5)$.
    \item $Y_4=Q\cap Q'$ is intersection of two quadric hypersurfaces in $\mathbb{P}^5$.
    \item $Y_3\subset\mathbb{P}^4$ is cubic threefold.
    \item $Y_2$ is a quartic double solid, i.e.~a double cover of $\mathbb{P}^3$ with smooth branch divisor $R\in |\oh_{\mathbb{P}^3}(4)|$.
    \item $Y_1$ is a degree 6 hypersurface of weighted projective space $\mathbb{P}(1,1,1,2,3)$.
\end{itemize}




\subsection{Weak stability conditions on $\D^b(Y)$} For any $b \in \mathbb{R}$, consider the full subcatgeory of complexes
\begin{equation}\label{Abdef}
\Coh^b(Y)\ =\ \big\{E^{-1} \xrightarrow{\,d\,} E^0 \ \colon\ \mu_H^{+}(\ker d) \leq b \,,\  \mu_H^{-}(\cok d) > b \big\} \subset\D^b(Y)
\end{equation}
Then $\Coh^b(Y)$ is the heart of a bounded t-structure on $\D^b(Y)$ by \cite[Lemma 6.1]{bridgeland:K3-surfaces}. For any pair $(b,w) \in \R^2$, we define a group homomorphism $Z_{b,w} \colon K(Y) \to \C$ by
\begin{equation} \label{eq:Zab}
Z_{b,w}(E) := -\ch_2(E)H + w \ch_0(E)H^3 + b(H^2\ch_1(E) -bH^3\ch_0(E))  + \mathfrak{i} \bigg(H^2\ch_1(E) -b H^3\ch_0(E) \bigg).
\end{equation}
In \cite{li:fano-picard-number-one}, the author defined an open region $\widetilde{U} \subset \mathbb{R}^2$ as the set of points $(b,w) \in \mathbb{R}^2$ above the curve $w = \frac{1}{2}b^2 - \frac{3}{8d}$ and above tangent lines of the curve $w = \frac{1}{2}b^2$ at $(k, \frac{k^2}{2})$ for all $k \in \ZZ$. 
 
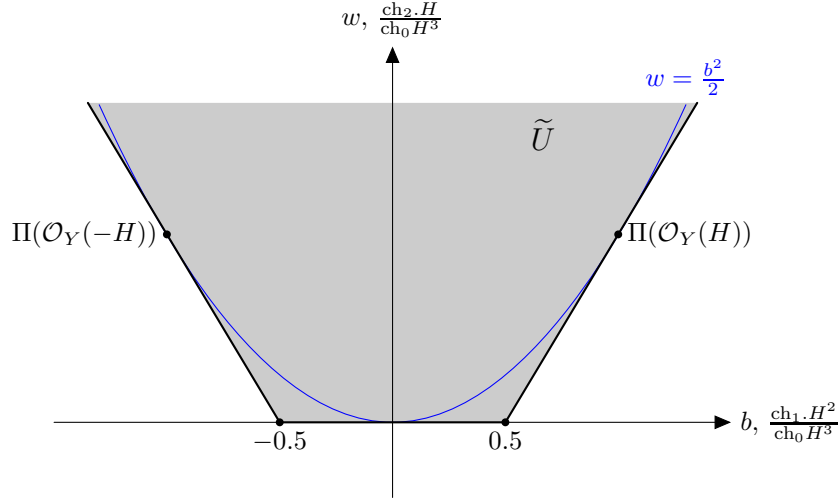
\begin{figure}[h]
		\definecolor{zzttqq}{rgb}{0.27,0.27,0.27}
		\definecolor{qqqqff}{rgb}{0.33,0.33,0.33}
		\definecolor{uququq}{rgb}{0.25,0.25,0.25}
		\definecolor{xdxdff}{rgb}{0.66,0.66,0.66}
		
		\begin{tikzpicture}[line cap=round,line join=round,>=triangle 45,x=1.0cm,y=1.0cm]
		
		\draw[->,color=black] (-4.5,0) -- (4.5,0);
		\draw  (4.5, 0) node [right ] {$b,\,\frac{\ch_1.H^2}{\ch_0H^3}$};

		\fill [fill=gray!40!white] (4.05, 4.25) -- (1.5,0) --(-1.5,0)--(-4.05, 4.25)--(4.05, 4.25);
		
		\draw[blue]  (0,0) parabola (3.9,4.225); 
		\draw[blue]  (0,0) parabola (-3.9,4.225); 
		\draw  (3.9 , 4.223) node [above, color=blue] {$w= \frac{b^2}{2}$};
		
		\draw[->,color=black] (0,-1) -- (0,5);
		\draw  (0, 5) node [above ] {$w,\,\frac{\ch_2.H}{\ch_0H^3}$};
		
		\draw [color=black, thick] (4.05, 4.25) -- (1.5,0);
		\draw [color=black, thick] (-4.05, 4.25) -- (-1.5,0);
		\draw [color=black, thick] (1.5, 0) -- (-1.5,0);
		
		\draw  (1.5, 0) node [below] {$0.5$};
		\draw  (-1.5, 0) node [below] {$-0.5$};
		\draw  (2, 3.5) node [above] {\Large{$\widetilde{U}$}};
		\draw  (3, 2.5) node [right] {$\Pi(\oh_Y(H))$};
		\draw  (-3, 2.5) node [left] {$\Pi(\oh_Y(-H))$};
		\begin{scriptsize}
		\fill (3, 2.5) circle (1.5pt);
		\fill (-3,2.5) circle (1.5pt);
		\fill (1.5,0) circle (1.5pt);
		\fill (-1.5,0) circle (1.5pt);
		\end{scriptsize}
		
		\end{tikzpicture}
		\caption{The space $\widetilde{U}$ when $d \leq 3$}
		\label{fig.widetilde{U}}
\end{figure}

\begin{figure}[h]
		\definecolor{zzttqq}{rgb}{0.27,0.27,0.27}
		\definecolor{qqqqff}{rgb}{0.33,0.33,0.33}
		\definecolor{uququq}{rgb}{0.25,0.25,0.25}
		\definecolor{xdxdff}{rgb}{0.66,0.66,0.66}
		
		\begin{tikzpicture}[line cap=round,line join=round,>=triangle 45,x=1.0cm,y=1.0cm]
        
        \fill [fill=gray!40!white] (0,-.4) parabola (1.75,.44) parabola [bend at end] (-1.75,.44) parabola [bend at end] (0,-.4);	
         \fill [white] (0,-.4) parabola (1.24,.01) parabola [bend at end] (-1.24,.01) parabola [bend at end] (0,-.4);
	
		\fill [fill=gray!40!white] (3.75, 3.62) -- (1.75,.43) --(-1.75,.43)--(-3.75, 3.62)--(3.75, 3.62);

		\draw[->,color=black] (-4.5,0) -- (4.5,0);
		\draw  (4.5, 0) node [right ] {$b,\,\frac{\ch_1.H^2}{\ch_0H^3}$};
		
		\draw[red]  (0,-.4) parabola (3.9,3.625); 
		\draw[red]  (0,-.4) parabola (-3.9,3.625); 
	\draw  (4.4 , 3.63) node [above, color=red] {$w= \frac{b^2}{2} - \frac{3}{8d}$};
		
		\draw[->,color=black] (0,-1) -- (0,4);
		\draw  (0, 4) node [above ] {$w,\,\frac{\ch_2.H}{\ch_0H^3}$};
		
		\draw[->,color=black] (1.24,0) -- (2.5,-1);
		\draw  (2.5, -1) node [right] {$\sqrt{\frac{3}{4d}}$};
		
		\draw[->,color=black] (-1.75,.43) -- (-2,-.43);
		\draw   (-2,-.53) node [left] {$1-\sqrt{\frac{3}{4d}}$};
		
		\draw [color=black, thick] (3.75, 3.62) -- (1.75,.43);
		\draw [color=black, thick] (-3.75, 3.62) -- (-1.75,.43);
		\draw [color=black, thick] (1.5, 0) -- (-1.5,0);
		
		\draw  (2, 2) node [above] {\Large{$\widetilde{U}$}};
		\draw  (3.05, 2.5) node [right] {$\Pi(\oh_Y(H))$};
		\draw  (-3.05, 2.5) node [left] {$\Pi(\oh_Y(-H))$};
		\begin{scriptsize}
		\fill (3.05, 2.5) circle (1.5pt);
		\fill (-3.05,2.5) circle (1.5pt);
		\fill (1.75,.43) circle (1.5pt);
		\fill (-1.75,.43) circle (1.5pt);
		
		\fill (1.24,0) circle (1.5pt);
		\fill (-1.24,0) circle (1.5pt);
		\end{scriptsize}
		
		\end{tikzpicture}
		\caption{The space $\widetilde{U}$ when $d =4, 5$}
		\label{fig..widetilde{U}}
\end{figure}
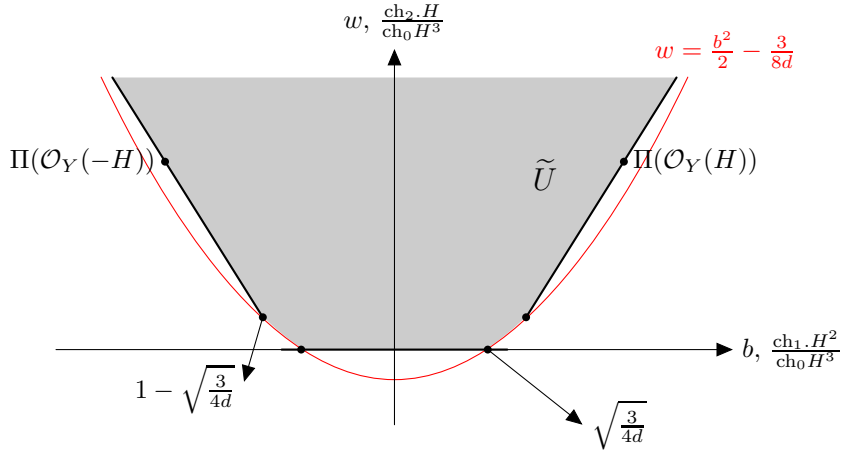

In Figures, we plot the $(b,w)$-plane simultaneously with the image of the projection map
\begin{eqnarray*}
	\Pi\colon\ K(Y) \setminus \big\{E \colon \ch_0(E) = 0\big\}\! &\longrightarrow& \R^2, \\
	E &\ensuremath{\shortmid\joinrel\relbar\joinrel\rightarrow}& \!\!\bigg(\frac{\ch_1(E).H^2}{\ch_0(E)H^3}\,,\, \frac{\ch_2(E).H}{\ch_0(E)H^3}\bigg).
\end{eqnarray*}

\begin{proposition}[{\cite[Proposition B.2]{bayer:the-space-of-stability-conditions-on-abelian-threefolds}}]
\label{first-tilting-wsc}
There is a continuous family of weak stability conditions on $\D^b(Y)$  parametrized by $\widetilde{U} \subset \R^2$, given by\footnote{We replaced the pair $(\alpha, \beta)$ with $(w = \frac{1}{2}\alpha^2 + \frac{1}{2}\beta^2, b= \beta)$.}
$$(b,w) \in \widetilde{U}\ \mapsto\ (\emph{Coh}^{b}(Y),\ Z_{b,w}). $$
\end{proposition}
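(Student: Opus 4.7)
The plan is to construct $(\Coh^b(Y), Z_{b,w})$ as a tilt of slope stability and then verify the axioms of a weak stability condition. First I would observe that $\Coh^b(Y)$ arises from the standard tilting construction: the torsion pair $(\mathcal{T}^b, \mathcal{F}^b)$ on $\Coh(Y)$, consisting of sheaves whose torsion-free $\mu_H$-HN factors have slope $>b$ (with all torsion sheaves placed in $\mathcal{T}^b$), respectively of slope $\leq b$, has tilted heart $\langle \mathcal{F}^b[1], \mathcal{T}^b\rangle$ which coincides with the category defined in \eqref{Abdef} upon inspecting cohomology sheaves. That this is a heart of a bounded t-structure is the content of \cite[Lemma 6.1]{bridgeland:K3-surfaces}, applied to the initial weak stability condition $\mu_H$ on $\Coh(Y)$.

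Next I would verify the positivity of $Z_{b,w}$ on $\Coh^b(Y)$, namely that $Z_{b,w}(E) \in \mathbb{H} \cup \mathbb{R}_{<0}$ for every nonzero $E$. By additivity this reduces to $\mu_H$-semistable generators: a torsion-free $\mu_H$-semistable $F$ with $\mu_H(F) > b$, where $\Im Z_{b,w}(F) = H^2\ch_1(F) - b H^3\ch_0(F) > 0$; a shift $F[1]$ with $F$ torsion-free $\mu_H$-semistable of slope $\leq b$; and pure torsion sheaves, which are immediate. The delicate case is $\mu_H(F) = b$, where $\Im Z_{b,w}(F[1]) = 0$ and one must show $\Re Z_{b,w}(F[1]) < 0$. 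Using the classical Bogomolov inequality $(\ch_1 F)^2 H \geq 2 \ch_0(F) \ch_2(F) H$, this reduces to the case of line bundles $\oh_Y(kH)$ at $b=k \in \mathbb{Z}$, where a direct Riemann--Roch style computation shows the required strict negativity amounts to $w > k^2/2$. This is exactly the condition that $(b,w)$ lies strictly above the tangent line to the parabola $w = b^2/2$ at $(k, k^2/2)$, which is built into the definition of $\widetilde U$.

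Finally, existence of Harder--Narasimhan filtrations follows from noetherianity of $\Coh^b(Y)$ together with discreteness of the image of $\Im Z_{b,w}$ on the lattice of Chern characters, by the standard argument of Bridgeland. Continuity of the family in $(b,w) \in \widetilde U$ then follows from Bridgeland's deformation theorem, once a support property (or its weak analogue) is in place. In the Fano threefold setting this support property is exactly the generalized Bogomolov--Gieseker inequality for tilt-semistable objects proved by Li in \cite{li:fano-picard-number-one}; the bounding parabola $w = b^2/2 - 3/(8d)$ of $\widetilde U$ is precisely the locus where Li's inequality applies. \emph{The main obstacle} is therefore Li's theorem, which I would invoke as a black box: the tilting and positivity arguments are formal, but the precise shape of $\widetilde U$ --- both the lower parabola and the excluded tangent lines --- is entirely dictated by it.
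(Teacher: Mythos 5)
The paper offers no proof of this statement; it is quoted from \cite{bayer:the-space-of-stability-conditions-on-abelian-threefolds}, and your sketch follows the standard route (tilting, positivity, HN filtrations, support property), so the architecture is the right one. There are, however, two concrete problems with the positivity step as you have written it. First, the requirement you state, $Z_{b,w}(E)\in\mathbb{H}\cup\mathbb{R}_{<0}$ for every nonzero $E$, is false: any sheaf supported in dimension zero has $\ch_0=\ch_1=\ch_2=0$ and hence $Z_{b,w}(E)=0$. The correct axiom for a \emph{weak} stability condition is $Z_{b,w}(E)\in\mathbb{H}\cup\mathbb{R}_{\leq 0}$, and the existence of these $Z=0$ objects is precisely why the proposition produces only weak stability conditions; your treatment of "pure torsion sheaves" as "immediate" glosses over the one case that forces the weakening.

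Second, and more substantively, your argument in the delicate case $\mu_H(F)=b$ does not cover the whole of $\widetilde U$. There one needs $\ch_2(F)H/(\ch_0(F)H^3)\leq w$, and the classical Bogomolov inequality only gives $\ch_2(F)H/(\ch_0(F)H^3)\leq b^2/2$; this settles the matter when $w\geq b^2/2$, i.e.\ on or above the parabola, but $\widetilde U$ contains points strictly \emph{below} the parabola (for non-integer $b$ between the tangent lines and the parabola, and for $d\geq 4$ also near the lower boundary $w=\tfrac{1}{2}b^2-\tfrac{3}{8d}$). For those points the reduction "classical Bogomolov plus tangent lines at integers" proves nothing: what is needed is exactly the statement that $\Pi(F)$ lies outside $\widetilde U$ for every $\mu_H$-semistable sheaf $F$, which is Li's refined Bogomolov--Gieseker-type inequality \cite[Proposition 3.2]{li:fano-picard-number-one}. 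In other words, Li's theorem enters already at the positivity (see-saw) step, not only in the support property and deformation argument where you place it. Since you invoke Li's result as a black box anyway, the repair is immediate, but as written the positivity claim is not established on all of $\widetilde U$. (A smaller point of the same flavour: discreteness of the image of $\Im Z_{b,w}$ holds only for rational $b$, so the HN property at irrational $b$ also requires the deformation/limiting argument rather than the noetherian one alone.)
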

We now expand upon the above statements. The function $-\frac{\Re[Z_{b,w}(E)]}{\Im[Z_{b,w}(E)]}$ for objects $E \in \Coh^b(Y)$ gives the same ordering as 
\begin{equation}\label{noo}
\nu_{b,w}(E)\ =\ \left\{\!\!\begin{array}{cc} \frac{\ch_2(E).H - w\ch_0(E)H^3}{\ch_1^{bH}(E).H^2}
 & \text{if }\ch_1^{bH}(E).H^2\ne0, \\
+\infty & \text{if }\ch_1^{bH}(E).H^2=0, \end{array}\right.
\end{equation}
where $\ch^{bH}(E):=\exp(-bH).\ch(E)$.
\begin{definition}
Fix a pair $(b,w) \in \widetilde{U}$. We say $E\in\D^b(Y)$ is $\nu_{b,w}$-(semi)stable if and only if
\begin{itemize}
\item $E[k]\in \Coh^b(Y)$ for some $k\in\mathbb{Z}$, and
\item $\nu_{b,w}(F)\,(\le)\,\nu_{b,w}\big(E[k]/F\big)$ for all non-trivial subobjects $F\hookrightarrow E[k]$ in $\Coh^b(Y)$.
\end{itemize}
Here $(\le)$ denotes $<$ for stability and $\le$ for semistability.
\end{definition}

\noindent The image $\Pi(E)$ of $\nu_{b,w}$-semistable objects $E$ with $\ch_0(E)\ne0$ is \emph{outside} $\widetilde{U}$ by \cite[Proposition 3.2]{li:fano-picard-number-one}, so in particular,  
\begin{equation}\label{discr}
	\Delta_H(E)\ =\  \big(\ch_1(E).H^2\big)^2 -2 (\ch_0(E)H^3)(\ch_2(E).H)\ \ge\ 0.
\end{equation}
\begin{proposition}[\textbf{Wall and chamber structure}]\label{locally finite set of walls}
	Fix $v\in K(Y)$ with $\Delta_H(v)\ge0$ and $\ch_{\le2}(v)\ne0$. There exists a set of lines $\{\ell_i\}_{i \in I}$ in $\mathbb{R}^2$ such that the segments $\ell_i\cap \widetilde{U}$ (called ``\emph{walls of instability}") are locally finite and satisfy 
	\begin{itemize}
	    \item[\emph{(}a\emph{)}] If $\ch_0(v)\ne0$ then all lines $\ell_i$ pass through $\Pi(v)$.
	    \item[\emph{(}b\emph{)}] If $\ch_0(v)=0$ then all lines $\ell_i$ are parallel of slope $\frac{\ch_2(v).H}{\ch_1(v).H^2}$.
	   		\item[\emph{(}c\emph{)}] The $\nu_{b,w}$-(semi)stability of any $E\in\D^b(Y)$ of class $v$ is unchanged as $(b,w)$ varies within any connected component (called a ``\emph{chamber}") of  $\widetilde{U} \setminus \bigcup_{i \in I}\ell_i$.
		\item[\emph{(}d\emph{)}] For any wall $\ell_i\cap \widetilde{U}$, there is an integer $k_i$ and a map $f\colon F\to E[k_i]$ in $\D^b(Y)$ such that
\begin{itemize}
\item for any $(b,w) \in \ell_i \cap \widetilde{U}$, the objects $E[k_i],\,F$ lie in the heart $\Coh^{b}(X)$,
\item $E$ is $\nu_{b,w}$-semistable of class $v$ with $\nu_{b,w}(E)=\nu_{b,w}(F)=\,\mathrm{slope}\,(\ell_i)$ constant on the wall $\ell_i \cap \widetilde{U}$, and
\item $f$ is an injection $F\hookrightarrow E[k_i] $ in $\Coh^{b}(Y)$ which strictly destabilises $E[k_i]$ for $(b,w)$ in one of the two chambers adjacent to the wall $\ell_i$.
\hfill$\square$
\end{itemize} 
	\end{itemize}
\end{proposition}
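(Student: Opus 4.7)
The plan is to adapt the standard Bridgeland wall-crossing framework to the weak stability setting of Proposition \ref{first-tilting-wsc}, along the lines of \cite{bayer:the-space-of-stability-conditions-on-abelian-threefolds} and \cite{li:fano-picard-number-one}. For each class $u \in K(Y)$ with $\ch_{\leq 2}(u) \neq 0$, define the \emph{numerical wall}
\[
  \ell_u \;=\; \bigl\{(b,w) \in \widetilde{U} \,:\, \nu_{b,w}(u) = \nu_{b,w}(v)\bigr\}.
\]
Using \eqref{noo} and clearing denominators, the equation $\nu_{b,w}(u) = \nu_{b,w}(v)$ becomes a polynomial equation that is linear both in $b$ and in $w$, with coefficients determined by $\ch(u)$ and $\ch(v)$; hence $\ell_u$ is the intersection with $\widetilde{U}$ of an affine line in $\R^2$. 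Direct substitution shows that when $\ch_0(v) \neq 0$ the point $\Pi(v)$ satisfies the equation for every admissible $u$, giving (a); when $\ch_0(v) = 0$ the coefficient of $w$ in the equation becomes proportional to $\ch_1(v).H^2$ and the coefficient of $b$ to $\ch_2(v).H$, forcing the slope of $\ell_u$ to equal $\ch_2(v).H / \ch_1(v).H^2$ independently of $u$, giving (b).

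For the local finiteness, which is the technical crux, fix a compact subset $K \subset \widetilde{U}$ and suppose $\ell_u \cap K \neq \emptyset$ is realised by a destabilising short exact sequence $0 \to F \to E \to G \to 0$ in $\Coh^b(Y)$ with $\ch(F) = u$, for some $E$ of class $v$ and some $(b,w) \in K$. On the wall, both $F$ and $G$ are $\nu_{b,w}$-semistable, so \eqref{discr} gives $\Delta_H(F), \Delta_H(G) \geq 0$. The Hodge-index-type inequality for sums of classes on a wall (as in \cite{bayer:the-space-of-stability-conditions-on-abelian-threefolds}) then yields $\Delta_H(F), \Delta_H(G) \leq \Delta_H(v)$. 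Combined with the slope constraint $\nu_{b,w}(u) = \nu_{b,w}(v)$ at some $(b,w) \in K$, this bounds $\ch_0(F)$ and $\ch_1(F).H^2$, and hence $\ch_2(F).H$, in terms of $K$ and $\Delta_H(v)$; only finitely many classes $u$ satisfy these bounds.

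Items (c) and (d) then follow from standard arguments. For (c), within a chamber of $\widetilde{U} \setminus \bigcup_{i \in I}\ell_i$ every potential destabiliser has strictly different slope from $v$ at every $(b,w)$, and the stability condition is open. For (d), at a point $(b,w)$ on the wall $\ell_i$, any $\nu_{b,w}$-semistable $E$ of class $v$ admits a Jordan--H\"older factor $F$ of the same slope, giving the map $f \colon F \to E[k_i]$; the linearity of $Z_{b,w}$ in $(b,w)$ forces the sign of $\nu_{b,w}(F) - \nu_{b,w}(E[k_i]/F)$ to reverse across $\ell_i$, producing strict destabilisation on one side. The main obstacle is the local-finiteness step, in particular handling subobjects $F$ with $\Im Z_{b,w}(F) = 0$ (the ``vertical'' case $\nu_{b,w}(F) = +\infty$); these must be treated separately, using that $\Pi(F)$ still lies outside $\widetilde{U}$ by \cite[Proposition 3.2]{li:fano-picard-number-one}.
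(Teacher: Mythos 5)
The paper records this proposition without proof (note the terminal $\square$ in the statement itself): it is the standard wall-and-chamber structure for tilt stability, quoted from the literature on weak stability conditions such as \cite{bayer:the-space-of-stability-conditions-on-abelian-threefolds} and \cite{li:fano-picard-number-one}. Your sketch correctly reconstructs that standard argument — the cancellation of the $bw$-terms making numerical walls affine lines, the verification of (a) and (b) by direct substitution, local finiteness via the bound $\Delta_H(F)\le\Delta_H(v)$ on semistable factors of equal slope, and the separate treatment of subobjects with $\Im Z_{b,w}=0$ — so nothing further is needed.
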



\subsection{Kuznetsov component}
The Kuznetsov component $\Ku(Y)$ is the right orthogonal complement of the exceptional collection $\oh_Y, \oh_Y(1)$ in $\D^b(Y)$ sitting in the semiorthogonal decomposition
$$\D^b(Y)= \langle \Ku(Y), \oh_Y, \oh_Y(H) \rangle=\langle\Ku(Y),\cQ_Y,\oh_Y\rangle,$$
where $\cQ_Y:=\bL_{\oh_Y}\oh_Y(1)[-1]$ is a rank $d+1$ vector bundle for $d\geq 2$
(see Section~\ref{def_gluing_object} for more details).  
We can identify the numerical Grothendieck group $\mathcal{N}(\Ku(Y))$ of $\Ku(Y)$ with the image of Chern character map $$\ch \colon K(\Ku(Y)) \rightarrow H^*(X, \mathbb{Q}).$$
It is a rank 2 lattice spanned by the classes 
\begin{equation*}
    \bv = \left(1,\ 0,\ -\frac{1}{d}H^2,\ 0\right) \qquad \text{and} \qquad \bw = \left(0,\ H,\ -\frac{1}{2}H^2 ,\ \left(\frac{1}{6} -\frac{1}{d}\right)H^3\right). 
\end{equation*}
With respect to this basis, the Euler form on $\mathcal{N}(\Ku(Y))$ is represented by the matrix
\begin{equation}
\label{eq_eulerform}
\begin{pmatrix}
 -1 & -1\\1-d & -d   
\end{pmatrix}.
\end{equation}

Consider any admissible subcategory $i \colon \mathcal{C} \hookrightarrow \D^b(Y)$. It has left and right adjoints $i^*$ and $i^{!}$. Similarly, the embedding $l \colon \mathcal{C}^{\perp} \hookrightarrow \D^b(Y)$ and $r \colon \prescript{\perp}{}{\mathcal{C}} \hookrightarrow \D^b(Y)$ has left and right adjoints. We know that any object $E \in \D^b(Y)$ lies in the exact triangles 
    \begin{equation*}
    r \circ r^{!}(E) \rightarrow    E \rightarrow i \circ i^{*}(E) \quad , \quad
  i \circ i^{!}(E) \rightarrow    E \rightarrow   l \circ l^{*}(E) . 
    \end{equation*}
    We define the right mutation along $\mathcal{C}$ to be the functor 
    \begin{equation*}
        \bR_{\mathcal{C}} \coloneqq r \circ r^{!} \colon \D^b(Y) \rightarrow r(\prescript{\perp}{}{\mathcal{C}})
    \end{equation*}
    and the left mutation along $\mathcal{C}$ to be 
    \begin{equation*}
        \bL_{\mathcal{C}} \coloneqq l \circ l^{*} \colon \D^b(Y) \rightarrow l({\mathcal{C}}^{\perp}).
    \end{equation*}
By \cite[Propostion 3.8]{kuznetsov-derived-category-cubic-3folds-V14}, we know
$\bL_{\cC}|_{r(\prescript{\perp}{}{\cC})}$ and $\bR_{\cC}|_{l({\cC}^{\perp})}$ are mutually inverse equivalence between the two orthogonal $\prescript{\perp}{}{\cC} \rightarrow \cC^{\perp}$ and $\cC^{\perp} \rightarrow \prescript{\perp}{}{\cC}$. Moreover,
\begin{equation*}
    (\bL_{\cC})|_{r(\prescript{\perp}{}{\cC})} = S_{\D^b(Y)} \circ r \circ S^{-1}_{\prescript{\perp}{}{\cC}} \circ r^* \quad , 
    \quad
    (\bR_{\cC})|_{l(\cC^{\perp})} = S_{\D^b(Y)}^{-1} \circ l \circ S_{\cC^{\perp}} \circ l^*. 
\end{equation*} 
Here $S_{\cT}$ denotes the Serre functor of a triangulated category $\cT$ (if it exists). 

Let $E \in \D^b(Y)$ be an exceptional object. Then the triangulated subcategory $\langle E \rangle$ generated by $E$ is an admissible subcategory. The embedding functor $i \colon \langle E \rangle \rightarrow \mathcal{T}$ has the left and right adjoints
    \begin{align*}
        i^{*} = E \otimes \RHom(F, E)^*, \quad  \quad i^{!}(F) = E \otimes \RHom(E, F). 
    \end{align*}
We will abuse notations and write $ \bR_{E}$ and $\bL_{E}$ for the corresponding right and left  mutations, respectively. 

We finish this section by defining the rotation functor. \cite[Lemma 4.1, Lemma 4.2]{kuznetsov-derived-category-cubic-3folds-V14} implies that the functor
\begin{equation}\label{rotation}
    \bO \colon \D^b(Y) \to \D^b(Y), \quad \bO(-)=\bL_{\oh_Y}(- \otimes \oh_Y(H))
\end{equation}
is an auto-equivalence of $\Ku(Y)$, called rotation functor. 
By \cite[Lemma 4.1]{kuznetsov-derived-category-cubic-3folds-V14}, we have
\[S^{-1}_{\Ku(Y)}=\bO^2[-3].\]
The rotation functor $\bO$ induces an auto-isometry of the numerical Grothendieck group $\mathcal{N}(\Ku(Y_d))$ for each $d$. In particular for $d=3$, we have 
\[\begin{tikzcd}
	\bv & {-2\bv+\bw} & {\bv-\bw} & \bv.
	\arrow["\bO", from=1-1, to=1-2]
	\arrow["\bO", from=1-2, to=1-3]
	\arrow["\bO", from=1-3, to=1-4]
\end{tikzcd}\]
And for $d=2$, we have 
\[\begin{tikzcd}
	\bv & {-\bv+\bw} & {-\bv.}
	\arrow["\bO", from=1-1, to=1-2]
	\arrow["\bO", from=1-2, to=1-3]
\end{tikzcd}\]

\subsection{Bridgeland stability conditions on $\Ku(Y)$}
For any pair $(b,w) \in \widetilde{U}$, consider the tilted heart $\Coh^0_{b,w}(Y) = \langle \mathcal{F}_{b,w}[1] , \cT_{b,w} \rangle$ where $\mathcal{F}_{b,w}$ ($\cT_{b,w}$) is the subcategory of objects in $\Coh^{b}(X)$ with $\nu^+_{b,w} \leq b$ ( $\nu^-_{b,w} > b$). By \cite[Proposition 2.14]{bayer:stability-conditions-kuznetsov-component}, the pair $\sigma^0_{b,w} \coloneqq \left(\Coh^0_{b,w}(X) , Z^0_{b,w} \right)$ is a weak stability condition on $\D^b(Y)$, where $Z^0_{b,w}:=-\mathfrak{i}Z_{b,w}$. We denote the corresponding slope function by 
\[\mu^0_{b,w}(-):=-\frac{\Re[Z^0_{b,w}(-)]}{\Im[Z^0_{b,w}(-)]}.\]

\begin{lemma}[{\cite[Proposition 4.1]{feyzbakhsh2021serre}}] \label{compare_stability}
    Any $\sigma^0_{b,w}$-(semi)stable object $E \in \Coh^0_{b,w}(Y)$ is $\nu_{b,w}$-(semi)stable if it does not lie in an exact triangle of the form
\begin{equation*}
F[1] \rightarrow E \rightarrow T    
\end{equation*}
where $F \in \mathcal{F}_{b,w}$ is $\nu_{b,w}$-(semi)stable and $T \in \Coh_0(X)$. Conversely, take a $\nu_{b,w}$-(semi)stable object $E$ such that either
\begin{enumerate}
    \item $E \in \cT_{b,w}$ and $\Hom(\Coh_0(X), E) = 0$, or
    \item $E \in \mathcal{F}_{b,w}$ and $\Hom(\Coh_0(X), E[1]) = 0$. 
\end{enumerate}
Then $E$ is $\sigma^0_{b,w}$-(semi)stable.  
\end{lemma}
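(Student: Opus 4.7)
The plan is to prove both directions via a case analysis based on the canonical torsion-pair triangle $F[1] \to E \to T$ in $\Coh^0_{b,w}(Y)$, with $F \in \mathcal{F}_{b,w}$ and $T \in \cT_{b,w}$, combined with an explicit comparison between $\nu_{b,w}$-slopes and $\mu^0_{b,w}$-slopes. Since $Z^0_{b,w} = -\mathfrak{i}\, Z_{b,w}$ corresponds to a $-\pi/2$ rotation of the central charge, a direct computation gives
\[
\mu^0_{b,w}(T) \;=\; -\frac{1}{\nu_{b,w}(T)}\;\leq\;0 \quad \text{and} \quad \mu^0_{b,w}(F[1]) \;=\; -\frac{1}{\nu_{b,w}(F)}\;\geq\;0,
\]
for $T \in \cT_{b,w}$ and $F \in \mathcal{F}_{b,w}$ with nonzero central charge, and $\mu^0_{b,w}$ is strictly increasing in $\nu_{b,w}$ on each of these regions. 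Objects of $\Coh_0(Y)$ have $Z^0_{b,w}=0$ and act as ``phantom'' pieces unaffected by slope.

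For the forward direction, if both $F$ and $T$ in the torsion-pair triangle are nonzero, then $F[1] \hookrightarrow E \twoheadrightarrow T$ in $\Coh^0_{b,w}(Y)$ together with $\sigma^0_{b,w}$-semistability forces the chain $\mu^0_{b,w}(F[1]) \leq \mu^0_{b,w}(E) \leq \mu^0_{b,w}(T)$, which combined with the inequalities above collapses to equality and $Z^0_{b,w}(T)=0$, identifying $T$ with an object of $\Coh_0(Y)$; the same semistability also forces $F$ to be $\nu_{b,w}$-semistable, so $E$ falls into the excluded triangle. Outside this exception, $E$ lies in $\Coh^b(Y)$ or $\Coh^b(Y)[1]$. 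In the first case $E \in \cT_{b,w}$, if $E$ were $\nu_{b,w}$-unstable the maximal $\nu_{b,w}$-destabilizing subobject $A \subset E$ in $\Coh^b(Y)$ still lies in $\cT_{b,w}$ (its HN factors inherit $\nu_{b,w} > \nu^-_{b,w}(E) > 0$), so it embeds into $E$ inside $\Coh^0_{b,w}(Y)$, and slope-inversion promotes $\nu_{b,w}(A) > \nu_{b,w}(E)$ into $\mu^0_{b,w}(A) > \mu^0_{b,w}(E)$, a contradiction. The second case $E = F[1]$ is handled symmetrically using a $\nu_{b,w}$-destabilizing quotient.

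For the converse, assume $E$ is $\nu_{b,w}$-semistable; take case (1), $E \in \cT_{b,w}$ with $\Hom(\Coh_0(Y), E) = 0$. A potential $\sigma^0_{b,w}$-destabilizer $A \hookrightarrow E$ in $\Coh^0_{b,w}(Y)$ with cone $B$ gives a long exact sequence in $\Coh^b(Y)$; using $\mathcal{H}^{-1}_{\Coh^b}(E) = 0$ I obtain $A \in \cT_{b,w} \subset \Coh^b(Y)$, and the image $A' := A/\mathcal{H}^{-1}_{\Coh^b}(B)$ injects into $E$ in $\Coh^b(Y)$ with $A' \in \cT_{b,w}$. Slope-inversion transports $\mu^0_{b,w}(A) > \mu^0_{b,w}(E)$ into $\nu_{b,w}(A') > \nu_{b,w}(E)$, contradicting $\nu$-semistability of $E$---unless the missing piece $\mathcal{H}^{-1}_{\Coh^b}(B)$ is absorbed by zero-dimensional torsion, which would produce a nonzero map from $\Coh_0(Y)$ into $E$ and contradict the hypothesis $\Hom(\Coh_0(Y), E)=0$. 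Case (2), $E = F[1]$ with $F \in \mathcal{F}_{b,w}$, is handled symmetrically by analysing a destabilizing quotient of $E$ in $\Coh^0_{b,w}(Y)$ and invoking $\Hom(\Coh_0(Y), E[1])=0$.

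The main technical obstacle is the careful bookkeeping of zero-dimensional torsion under the tilt: since $Z^0_{b,w}$ vanishes on $\Coh_0(Y)$, such sheaves can mediate non-generic sub/quotient relations invisible to slope comparison, and both the excluded triangle in the forward direction and the $\Hom$-vanishing hypotheses in the converse are crafted precisely to exclude these phantom destabilizations. The boundary cases $\nu_{b,w} \in \{0, +\infty\}$ require a routine separate check via limiting semistability, but do not change the structure of the argument.
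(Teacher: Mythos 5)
This lemma is not proved in the paper at all: it is imported verbatim from \cite[Proposition 4.1]{feyzbakhsh2021serre}, so there is no internal argument to compare against. Your reconstruction follows the same route as the cited proof --- decompose $E$ by the torsion pair $(\mathcal{F}_{b,w}[1],\cT_{b,w})$, use that $Z^0_{b,w}=-\mathfrak{i}Z_{b,w}$ inverts the tilt-slope so that $\mu^0_{b,w}$ is negative and increasing in $\nu_{b,w}$ on $\cT_{b,w}$ and positive and increasing on $\mathcal{F}_{b,w}[1]$, and isolate $\Coh_0(Y)=\ker Z^0_{b,w}\cap\Coh^0_{b,w}(Y)$ as the only source of destabilizers invisible to the slope comparison --- and it is correct in outline. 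Two points need tightening. First, with the paper's conventions the second tilt is taken at $\nu_{b,w}=b$ rather than at $0$, so the inversion formula is $\mu^0_{b,w}=-1/(\nu_{b,w}-b)$; your sign and monotonicity claims are unaffected, but the formula as written is off by the shift $b$. Second, in the converse direction the phrase ``slope-inversion transports $\mu^0(A)>\mu^0(E)$ into $\nu(A')>\nu(E)$'' elides the essential see-saw step: one has $Z^0_{b,w}(A)=Z^0_{b,w}(\cH^{-1}_{\Coh^b}(B))+Z^0_{b,w}(A')$ with $\cH^{-1}_{\Coh^b}(B)\in\mathcal{F}_{b,w}$ of slope $\nu_{b,w}\le b<\nu_{b,w}(E)$, so this extra piece can only pull $\nu_{b,w}(A)$ \emph{down} relative to $\nu_{b,w}(A')$ and therefore cannot manufacture a destabilizer; the hypothesis $\Hom(\Coh_0(X),E)=0$ is used precisely to exclude subobjects $A$ with $Z^0_{b,w}(A)=0$, for which the slope comparison is vacuous, rather than to control $\cH^{-1}_{\Coh^b}(B)$ as your wording suggests. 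With these two adjustments (and the routine degenerate cases where $\ch_1^{bH}\cdot H^2$ of a factor vanishes), the argument is complete and agrees with the proof in the cited source.
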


By restricting weak stability conditions $\sigma^0_{b,w}$ to the Kuznetsov component $\Ku(Y)$, we obtain stability conditions on it.   
\begin{theorem}[{\cite[Theorem 6.8]{bayer:stability-conditions-kuznetsov-component}}]
\label{thm_stabcondinduced}
For every pair $(b,w)$ in the subset 
\begin{equation*}
    V \coloneqq \left\{(b,w) \in \widetilde{U} \colon  -\frac{1}{2} \leq b < 0, \ w < b^2 \ \ \text{or} \ \  -1 < b < -\frac{1}{2}, \ w \leq  b^{2}+b+\frac{1}{2}   \right\} \ \subset \  \widetilde{U}, 
\end{equation*}
the pair $\sigma(b,w)=(\cA(b,w), Z(b,w))$ is a Bridgeland stability condition on $\Ku(Y_d)$ where
\begin{equation*}
    \cA(b,w) \coloneqq \Coh_{b,w}^{0}(Y_d)\cap \Ku(Y_d) \qquad \text{and} \qquad
    Z(b,w) \coloneqq Z^0_{b,w}|_{\Ku(Y_d)}.
\end{equation*}
\end{theorem}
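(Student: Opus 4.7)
\medskip

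\noindent\textbf{Proof plan for Theorem \ref{thm_stabcondinduced}.}

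The plan is to apply the general criterion from \cite{bayer:stability-conditions-kuznetsov-component} for inducing a Bridgeland stability condition on an admissible subcategory of a triangulated category from a weak stability condition on the ambient category. In the present setting, the admissible subcategory is $\Ku(Y_d) = \langle \oh_Y, \oh_Y(H)\rangle^{\perp}$, the ambient weak stability condition is $\sigma^0_{b,w} = (\Coh^0_{b,w}(Y), Z^0_{b,w})$ (obtained from Proposition \ref{first-tilting-wsc} by a further tilt and rotation of the central charge by $-\mathfrak{i}$), and the induced pair is $\sigma(b,w) = (\cA(b,w), Z(b,w))$. The key inputs are that the exceptional objects $\oh_Y$ and $\oh_Y(H)$ are $\sigma^0_{b,w}$-stable with controlled phases, together with the classical Bogomolov--Gieseker inequality \eqref{discr}.

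First I would verify that for every $(b,w) \in V$ both exceptional objects $\oh_Y$ and $\oh_Y(H)$ (and their shifts $\oh_Y[1]$, $\oh_Y(-H)[2]$, etc.\ obtained by the Serre functor $S_{\D^b(Y)} = \otimes\,\oh_Y(-2H)[3]$) lie in $\Coh^0_{b,w}(Y)$ and are $\sigma^0_{b,w}$-stable. This amounts to a $(b,w)$-dependent computation using Lemma \ref{compare_stability} to pass from $\nu_{b,w}$-stability of line bundles (which follows from Proposition \ref{first-tilting-wsc}) to $\sigma^0_{b,w}$-stability, and to check that their phases all differ by strictly less than $1$. The two inequalities defining $V$ (corresponding to the cases $-\tfrac12 \leq b < 0$ and $-1 < b < -\tfrac12$) will appear precisely as the conditions under which these phase bounds are achieved.

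Next, I would show that $\cA(b,w) = \Coh^0_{b,w}(Y) \cap \Ku(Y_d)$ is the heart of a bounded t-structure on $\Ku(Y_d)$. Abstractly, this follows from the BLMS criterion: because the exceptional objects generating the orthogonal complement of $\Ku(Y_d)$ are $\sigma^0_{b,w}$-stable and lie (after appropriate shifts) in a common slice of $\sigma^0_{b,w}$ of length less than $1$, the intersection with $\Ku(Y_d)$ inherits the structure of an abelian subcategory and the semiorthogonal decomposition restricts to a filtration of the t-structure. I would then verify that the restricted central charge $Z(b,w)$ satisfies $Z(b,w)(E)\in\mathbb{H}\cup\R_{<0}$ for every nonzero $E\in\cA(b,w)$, using the explicit classes $\bv,\bw$ and the form of $Z_{b,w}$.

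Finally, I would establish the support property. The strategy is to exhibit a quadratic form on the rank-$2$ lattice $\cN(\Ku(Y_d))$ that is negative semidefinite on $\ker Z(b,w)$ and non-negative on Chern characters of $\sigma(b,w)$-semistable objects. For this I would restrict the Bogomolov--Gieseker inequality \eqref{discr} (and a mixed refined discriminant involving $\ch_3$) for $\nu_{b,w}$-semistable objects in $\D^b(Y)$ to the lattice $\cN(\Ku(Y_d))$; the Gram matrix \eqref{eq_eulerform} of the Euler form provides a natural candidate quadratic form there. The main obstacle is the first step: locating the precise region $V$ where the exceptional collection $(\oh_Y, \oh_Y(H))$ admits a $\sigma^0_{b,w}$-stable filtration with phases fitting into a slice of length $<1$, and conversely ruling out phase coincidences that would force $Z(b,w)(E)=0$ for some nonzero $E\in\cA(b,w)$.
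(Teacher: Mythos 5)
Your proposal is correct and follows essentially the same route as the paper: the paper's proof simply invokes the argument of \cite[Theorem 6.8]{bayer:stability-conditions-kuznetsov-component} and records that it applies whenever $-1<b<0$ and $\nu_{b,w}(\oh_{Y_d}(-2H)[1]) \leq \nu_{b,w}(\oh_{Y_d}(-H)[1])\leq  b < \nu_{b,w}(\oh_{Y_d}) \leq \nu_{b,w}(\oh_{Y_d}(H))$, which is precisely the chain of slope conditions on the exceptional objects and their Serre twists that your first step sets out to verify and that cuts out the region $V$.
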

\begin{proof}
    Applying the same argument as in the proof of {\cite[Theorem 6.8]{bayer:stability-conditions-kuznetsov-component}} shows that $\sigma(b,w)$ is a Bridgeland stability condition on $\Ku(Y_d)$ if $-1 < b< 0$ and 
    \begin{equation*}
        \nu_{b,w}(\oh_{Y_d}(-2H)[1]) \leq \nu_{b,w}(\oh_{Y_d}(-H)[1])\leq  b < \nu_{b,w}(\oh_{Y_d}) \leq \nu_{b,w}(\oh_{Y_d}(H)). 
    \end{equation*}
\end{proof}
On the stability manifold which we denote by $\Stab(\Ku(Y))$ we have: 
\begin{enumerate} 
\item  \emph{a right action of the universal covering space $\widetilde{\mathrm{GL}}^+_2(\mathbb{R})$ of $\mathrm{GL}^+_2(\mathbb{R})$}: for a stability condition $\sigma=(\cP,Z) \in \Stab(\Ku(Y))$ and $\tilde{g}=(g,M) \in \widetilde{\mathrm{GL}}^+_2(\mathbb{R})$, where $g: \mathbb{R} \to \mathbb{R}$ is an increasing function such that $g(\phi+1)=g(\phi)+1$ and $M \in \mathrm{GL}^+_2(\mathbb{R})$, we define $\sigma \cdot \tilde{g}$  to be the stability condition $\sigma'=(\cP',Z')$ with $Z'=M^{-1} \circ Z$ and $\cP'(\phi)=\cP(g(\phi))$ (see \cite[Lemma 8.2]{bridgeland:space-of-stability-conditions}).
\item \emph{a left action of the group of exact auto-equivalences $\Aut(\Ku(Y))$ of $\Ku(Y)$}: for $\Phi \in \Aut(\Ku(Y))$ and $\sigma \in \Stab(\Ku(Y))$, we define $\Phi \cdot \sigma=(\Phi(\cP), Z \circ \Phi_*^{-1})$, where $\Phi_*$ is the automorphism of $K(\Ku(Y))$ induced by $\Phi$.
\end{enumerate}

\begin{remark}\label{remark-the same orbit}
    Note that all stability conditions $\sigma(b,w)$ for $(b,w) \in V$ lie in the same orbit with respect to the action of $\widetilde{\mathrm{GL}}^+_2(\mathbb{R})$ by \cite[Proposition 3.5]{pertusi:some-remarks-fano-threefolds-index-two} \footnote{This is proved for $V \cap U$, but the same proof is valid for $V$.}. Hence if $E \in \Ku(Y_d)$ is $\sigma(b,w)$-(semi)stable with respect to some $(b,w) \in V$, then it is $\sigma(b,w)$-(semi)stable with respect to any $(b,w) \in V$.  
\end{remark}


We now give a case-by-case investigation of the category $\Ku(Y_d)$ when $d \geq 2$:

\begin{enumerate}
    \item [$d=5$.] $Y_5$ is a linear section of codimension 3 of $\Gr(2, 5)$. Let $\mathcal{U}$ be the restriction of the tautological rank $2$ subbundle from $\Gr(2, 5)$ to $Y_5$, and let $\mathcal{U}^{\perp} = \ker(\oh_Y \otimes \Hom(\oh_Y, \mathcal{U}^*) \to \mathcal{U}^*)$, then \cite[Lemma 4.1]{Kuznetsov:instanton-bundle-Fano-threefold} gives
    $$\Ku(Y_5) = \langle \mathcal{U}, \ \mathcal{U}^{\perp}  \rangle . $$
    \item [$d=4$.] $Y_4$ is an intersection of $2$ quadrics in $\PP^5$. By \cite[Theorem 5.1]{Kuznetsov:instanton-bundle-Fano-threefold}, there exists a curve $C$ of genus $2$ such that we have an equivalence $\Ku(Y_4) \cong \D^b(C_2)$. Hence, there is a unique Bridgeland stability condition on $\Ku(Y_4)$ up to the action of $\widetilde{\mathrm{GL}}^+_2(\mathbb{R})$ by \cite{macri:stability-conditions-on-curves}. 
    \item [$d=3$.] $Y_3$ is a cubic 3-fold, and $\Ku(Y_3)$ is a fractional Calabi--Yau category of dimension $\frac{5}{3}$, i.e. $S_{\Ku(Y_3)}^3 =[5]$. Note that by \cite[Lemma 4.1, Lemma 4.2]{kuznetsov-derived-category-cubic-3folds-V14}, we have $S^{-1}_{\Ku(Y_3)} = \bO^2[-3]$. In this case, we only consider {Serre-invariant stability conditions} on $\Ku(Y_3)$, i.e. those $\sigma \in \Stab(\Ku(Y_3))$ so that $S_{\Ku(Y_3)}. \sigma = \sigma . \tilde{g}$ for some $\tilde{g} \in \widetilde{\text{GL}}_2^+(\R)$. By \cite{pertusi:some-remarks-fano-threefolds-index-two}, all stability conditions constructed in Theorem \ref{thm_stabcondinduced} are Serre-invariant. And it is proved in \cite[Sections 4 \& 5]{feyzbakhsh2021serre} and \cite[Theorem 4.25]{jacovskis2021categorical} that all Serre-invariant stability conditions on $\Ku(Y_3)$ lie in the same orbit with respect to the action of $\widetilde{\mathrm{GL}}^+_2(\mathbb{R})$. 
    \item [$d=2$.] $Y_2$ is a double cover of $\PP^3$ ramified in a quartic surface, called a quartic double solid. By \cite[Corollary 4.6]{Kuz:calabi}, the Serre functor of $\Ku(Y_2)$ is $S_{\Ku(Y_3)} = \tau [2]$ where $\tau$ is the auto-equivalence of $\Ku(Y_2)$ induced by the involution $\tau$ of the double covering. As the involution $\tau$ preserves $\Coh(X)$ and Chern characters, the stability conditions $\sigma(b,w)$ constructed in Theorem \ref{thm_stabcondinduced} are Serre-invariant, see \cite[Lemma 6.1]{pertusi:some-remarks-fano-threefolds-index-two}. Moreover, \cite[Theorem 3.2 \& Remark 3.8]{feyzbakhsh2021serre} and \cite[Theorem 4.25]{jacovskis2021categorical} implies that all Serre-invariant stability conditions on $\Ku(Y_2)$ lie in the same orbit with respect to action of $\widetilde{\mathrm{GL}}^+_2(\mathbb{R})$.   
\end{enumerate}

\section{Del Pezzo threefolds of Picard rank one}\label{results_general_Wallcross}
In this section, we gather all results which are valid for del Pezzo threefold $Y$ of Picard rank one and degree $d$. By \cite{kuznetsov:derived-category-fano-threefold}, for any $E \in \D^b(Y)$, we know 
\begin{equation*}
    \chi(\oh_Y, E) = \ch_0(E)+H^2\ch_1(E) \frac{d+3}{3d} +H\ch_2(E) +\ch_3(E).
\end{equation*}

\subsection{Instanton bundles and their acyclic extensions}
\label{subsection.instanton}
An instanton of charge $n$ on $Y$ is a Gieseker-stable vector bundle $E$ with $\ch_{\leq 2}(E) = (2, 0, -n\frac{H^2}{d})$ satisfying instanton condition $H^1(Y, E(-1)) = 0$. By \cite[Lemma 3.5]{Kuznetsov:instanton-bundle-Fano-threefold}, for each instanton bundle $E$, we have $h^1(E) = n-2$, thus there exists a unique short exact sequence 
$$0 \to E \to \tilde{E} \to \oh_Y^{n-2} \to 0 $$
such that $\tilde{E}$ is acyclic, i.e. $H^{i}(Y, \tilde{E}) = 0$ for any $i$. Note that $\tilde{E} = \bL_{\oh_Y} E$ and is of Chern character 
\begin{equation*}
    n \bv =  \left(n,\ 0,\ -n\frac{H^2}{d},\ 0\right). 
\end{equation*}
Moreover, it is $\nu_{b,w}$-semistable for $b <0$ and $w \gg 0$. 

\bigskip

Let $\ell_d$ be the line passing through $\Pi(n\bv) = (0, -\frac{1}{d})$ and $\Pi(\oh_Y(-H)) = (-1, \frac{1}{2})$, so it is of equation $w = -\frac{d+2}{2d} b -\frac{1}{d}$. If $d=2$, then $\ell_d$ coincides with the boundary of $\widetilde{U}$, and if $d \geq 3$, then it intersects $\partial \widetilde{U}$ at two points with $b$-values $b^d_1<b^d_2$ so that  
\begin{equation}\label{b-i-d}
    b^d_1 \leq -1 \qquad \text{and} \qquad -\frac{2}{d+2} = b^d_2.
\end{equation}

\begin{lemma} \label{no_wall_lem}
Take a class $\alpha \in K(X)$ with $\ch_{\leq 2}(\alpha) = n\left(1, 0, -\frac{H^2}{d}\right)$ such that $n \leq d+1$. Then there is no wall for class $\alpha$ above $\ell_d$. In particular, an object $E \in \Coh^b(Y)$ of Chern character $\alpha$ which is $\nu_{b,w}$-semistable for $b<0$ and $w\gg 0$ satisfies $\RHom(\oh_Y, E)=\Hom(\oh_Y, E[1])[-1]$ and hence $\ch_3(E)\leq 0$. 
\end{lemma}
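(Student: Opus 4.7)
My plan splits the lemma into two parts: (a) ruling out walls for the class $\alpha$ strictly above $\ell_d$ in $\widetilde{U}$, and (b) extracting the cohomological statement from the resulting $\nu$-semistability in that region. For (a), every wall for $\alpha$ passes through $\Pi(\alpha) = (0, -\tfrac{1}{d})$, so it is determined by its slope $\lambda$. Suppose toward contradiction that a wall $W$ of slope $\lambda$ meets $\widetilde{U}$ strictly above $\ell_d$. Pick $(b_0, w_0) \in W \cap \widetilde{U}$ above $\ell_d$ and a destabilising sequence $0 \to F \to E \to G \to 0$ in $\Coh^{b_0}(Y)$ with $F, G$ both $\nu_{b_0, w_0}$-semistable of the same slope as $E$. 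Write
\[ \ch_{\leq 2}(F) = \big(r_1,\, c_1 H,\, \tfrac{s_1}{2} H^2\big), \qquad \ch_{\leq 2}(G) = \big(r_2,\, -c_1 H,\, \tfrac{s_2}{2} H^2\big), \]
with $r_1 + r_2 = n$, $s_1 + s_2 = -\tfrac{2n}{d}$, and after swapping $F, G$ if necessary, $c_1 \geq 0$. The pure torsion case $r_1 = 0$ produces a horizontal wall at $w = -\tfrac{1}{d}$, which lies below the parabola $w = \tfrac{b^2}{2} - \tfrac{3}{8d}$ and so misses $\widetilde{U}$ entirely.

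Assume therefore $r_1, r_2 \geq 1$. The collinearity of $\Pi(F), \Pi(G), (0, -\tfrac{1}{d})$ on $W$ forces $s_1 = 2\lambda c_1 - \tfrac{2 r_1}{d}$ and $s_2 = -2\lambda c_1 - \tfrac{2 r_2}{d}$. Substituting into the Bogomolov inequalities $\Delta_H(F) \geq 0$ and $\Delta_H(G) \geq 0$ yields the pair of quadratic constraints
\[ d c_1^2 - 2\lambda d c_1 r_1 + 2 r_1^2 \geq 0 \qquad \text{and} \qquad d c_1^2 + 2\lambda d c_1 r_2 + 2 r_2^2 \geq 0. \]
The assumption that $W$ lies above $\ell_d$ forces $\lambda < -\tfrac{d+2}{2d}$ on the side $b < 0$ (the $b > 0$ side is handled symmetrically); plugging this bound into the second inequality gives $d(c_1 - r_2)(c_1 - \tfrac{2 r_2}{d}) \geq 0$, so either $c_1 \geq r_2$ or $c_1 \leq \tfrac{2 r_2}{d}$. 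A matching analysis (after swapping roles of $F, G$) produces an analogous constraint. Combining these with $c_1 \in \ZZ_{\geq 0}$, $r_1 + r_2 = n \leq d+1$, and a short case analysis on $c_1$ yields a contradiction; the extremal case when $\Pi(F)$ coincides with a corner point $\Pi(\oh_Y(-kH))$ of $\partial \widetilde{U}$ is precisely when $W$ collapses onto $\ell_d$, not strictly above it.

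For (b), the absence of walls above $\ell_d$ together with $\nu_{b,w}$-semistability of $E$ in the large-volume limit forces, via Proposition~\ref{locally finite set of walls}, that $E$ is $\nu_{b,w}$-semistable at every $(b,w) \in \widetilde{U}$ above $\ell_d$. Pick such a point with $-1 < b < 0$ and $w$ close to $\ell_d$. A direct computation gives
\[ \nu_{b,w}(\oh_Y) - \nu_{b,w}(E) = -\tfrac{1}{db} > 0, \]
and $\oh_Y, E \in \Coh^b(Y)$ are both $\nu$-semistable, so $\Hom(\oh_Y, E) = 0$. Serre duality $\Hom(\oh_Y, E[j]) \cong \Hom(E, \oh_Y(-2H)[3-j])^{\vee}$, combined with $\oh_Y(-2H)[1] \in \Coh^b(Y)$ and a parallel slope comparison showing $\nu_{b,w}(\oh_Y(-2H)) < \nu_{b,w}(E)$ for $b$ near $0^-$ and $w$ near $\ell_d$, yields the vanishing of $\Hom(\oh_Y, E[2])$ and $\Hom(\oh_Y, E[3])$. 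Hence $\RHom(\oh_Y, E) = \Hom(\oh_Y, E[1])[-1]$, and Riemann--Roch, using $H^3 = d$ so that $H \cdot \ch_2(E) = -n$, gives $\chi(\oh_Y, E) = n + 0 + (-n) + \ch_3(E) = \ch_3(E)$. Thus $\ch_3(E) = -\dim \Hom(\oh_Y, E[1]) \leq 0$.

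The main obstacle is Part (a): the numerical argument must be sharp at the boundary $n = d+1$, since for $n \geq d+2$ walls genuinely appear above $\ell_d$. Ensuring that the extremal destabilising configurations (where $F$ or $G$ degenerates to multiples of $\oh_Y(-H)$) realise $W = \ell_d$ rather than a wall strictly above, is exactly what pins down $n \leq d+1$ as the correct threshold.
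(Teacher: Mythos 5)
Your part (b) matches the paper's argument and is fine, but part (a) — which is where all the content of the lemma lives, since the bound $n\leq d+1$ is sharp — has a genuine gap. You reduce to destabilising factors $F,G$ that are sheaves of positive rank ($r_1,r_2\geq 1$) after dismissing $r_1=0$, but subobjects in the tilted heart $\Coh^{b}(Y)$ are two-term complexes whose rank $\ch_0=\ch_0(\cH^0)-\ch_0(\cH^{-1})$ can be zero or negative without the object being a torsion sheaf. This is not a fringe case: the wall $\ell_d$ itself is produced by $\oh_Y(-H)[1]$, which has rank $-1$, and the candidate walls just above $\ell_d$ that one must rule out involve factors such as $\oh_Y(-2H)[1]$ or complexes with nontrivial $\cH^{-1}$. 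Your own remark about the "extremal case when $\Pi(F)$ coincides with $\Pi(\oh_Y(-kH))$" cannot even be formulated inside your setup, because those objects enter shifted, with $\ch_0\leq -1$. Relatedly, your treatment of $r_1=0$ is off: a rank-zero object of the heart with $\ch_1=c_1H\neq 0$ defines a wall through $\Pi(\alpha)$ of slope $\ch_2(F).H/\ch_1(F).H^2$, not a horizontal line at $w=-\tfrac1d$, so the claim that such walls miss $\widetilde U$ is unsubstantiated. Finally, the "short case analysis on $c_1$" that is supposed to deliver the contradiction is asserted rather than carried out; since for $n\geq d+2$ walls genuinely do appear above $\ell_d$, the inequalities must be used at their sharp threshold and cannot be waved through. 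It is also not clear that the classical discriminant bound $\Delta_H\geq 0$ (i.e.\ $\Pi$ below the parabola $w=b^2/2$) is strong enough here; the paper instead uses the constraint that semistable objects have $\Pi$ outside all of $\widetilde U$, encoded through the boundary intersection points of the wall.

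For comparison, the paper's proof avoids the rank bookkeeping entirely: it lets $b_1<b_2$ be the points where the putative wall meets $\partial\widetilde U$, observes that every cohomology sheaf $\cH^{-1}(E_i)$ (resp.\ $\cH^0(E_i)$) of a destabilising factor has $\mu_H^+\leq b_1$ (resp.\ $\mu_H^-\geq b_2$), and sums $\ch_{\leq 1}$ over the $\cH^{-1}$ parts to get the single inequality $b_2(r+n)\leq c\leq b_1 r$ with $r\geq 0$, which has no integer solution once the wall lies above $\ell_d$ and $n\leq d+1$. If you want to salvage your route, you would need to (i) treat arbitrary $\ch_0(F)\in\ZZ$, (ii) replace bare Bogomolov by the constraint $\Pi(F),\Pi(G)\notin\widetilde U$ from \cite[Proposition 3.2]{li:fano-picard-number-one}, and (iii) actually execute the case analysis at $n=d+1$.
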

\begin{proof}
Suppose for a contradiction that there is such a wall $\ell$ for class $\alpha$ above $\ell_d$ with the destabilising sequence $E_1 \rightarrow E \rightarrow E_2$. Let $b_1 < b_2$ be the intersection points of $\ell$ with the boundary $\partial \widetilde{U}$. Then for $i=1, 2$,
    	\begin{equation*}
    	\mu_H^+(\cH^{-1}(E_i)) \leq \ b_1  \qquad \text{and} \qquad  b_2 \leq \ \mu_H^-(\cH^0(E_i)). 
    	\end{equation*}
    	 Let $(r, cH) = \ch_{\leq 1}(\cH^{-1}(E_1)) + \ch_{\leq 1}(\cH^{-1}(E_2))$, then $(r+n, cH) = \ch_{\leq 1}(\cH^{0}(E_1)) + \ch_{\leq 1}(\cH^{0}(E_2))$, so  
    	 \begin{equation}\label{ineq.slope}
    	 b_2(r+n) \leq  c \ \leq  \ b_1 r.  
    	 \end{equation}
    	 Note that if $\rk(\cH^{-1}(E_i)) = 0$, then $\cH^{-1}(E_i) = 0$. If $d = 2$, then $\ell_d$ lies on the boundary $\partial\widetilde{U}$, so we have $b_1 < -\frac{3}{2}$ and $-\frac{1}{2} < b_2$, so \eqref{ineq.slope} gives $-\frac{1}{2}(r+n) < c < -\frac{3}{2}r$ which has no solution for $n \leq 3$. If $d \geq 3$, then combining \eqref{b-i-d} and \eqref{ineq.slope} gives $ -\frac{2}{d+2}(r+n) <   c  <  -r$ which is not possible for $k \leq d+1$.   

      For the second claim, we know $E$ is semistable at the large volume limit, so $\Hom(\oh_Y, E)=0$. Also the first part implies that $E$ is $\nu_{b,w}$-semistable for all $(b,w) \in \widetilde{U}$ over $\ell_d$. Since the line segment connecting $\Pi(E)$ and $\Pi(\oh_Y(-2))$ is above $\ell_d$, we have $\Hom(E, \oh_Y(-2H)[1])=\Hom(\oh_Y,E[2])=0$. And we know that $\Hom(\oh_Y,E[i])=\Hom(E,\oh_Y(-2)[3-i])=0$ for $i\neq 1$. Thus  $\chi(E)=-\hom(\oh_Y,E[1])=\ch_3(E)\leq 0$, which gives $\ch_3(E)\leq 0$.
\end{proof}

As a result of the above lemma, we may identity Gieseker stable sheaves with the large volume limit stable ones. 

\begin{lemma} \label{Gstability_lvls_coincide}
    Let $E$ be an object of class $\ch(E)=n\bv$ where $1\leq n\leq d+2$. Then $E$ is $\nu_{b,w}$-(semi)stable for $b<0$ and  $w\gg 0$ (or equivalently, 2-Gieseker-(semi)stable) if and only if $E$ is a Gieseker-(semi)stable sheaf.
\end{lemma}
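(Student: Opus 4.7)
Plan. The statement combines two independent equivalences: a standard large-volume-limit identification of $\nu_{b,w}$-(semi)stability with 2-Gieseker-(semi)stability for sheaves, which I would cite as background, and the genuinely new content, namely the coincidence of 2-Gieseker- and Gieseker-(semi)stability for sheaves of class $n\bv$ with $1 \leq n \leq d + 2$. Among the four resulting implications, two are purely formal: on a threefold the reduced Hilbert polynomial is a cubic whose leading and next-leading coefficients are $\mu$ and $\ch_2/\rk$ (up to Todd corrections), so 2-Gieseker stability forces Gieseker stability (strictness in the first two terms suffices) and Gieseker semistability trivially specializes to 2-Gieseker semistability.

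The two substantive implications concern a proper subsheaf $F \subsetneq E$ with matching 2-reduced polynomial, i.e.\ $\ch_{\leq 2}(F) = r(1, 0, -\tfrac{H^2}{d})$ for some $1 \leq r \leq n - 1 \leq d + 1$. For ``2-Gieseker semistable $\Rightarrow$ Gieseker semistable,'' I would observe that the 2-Gieseker HN factors of such an $F$ must all share $F$'s matching 2-reduced polynomial (they are subsheaves of the 2-Gieseker-semistable $E$, so their 2-reduced polynomial is bounded above and below by the common value), hence $F$ itself is 2-Gieseker semistable. Applying the parenthetical equivalence and then Lemma~\ref{no_wall_lem} to $F$ gives $\ch_3(F) \leq 0$, which is precisely $p_F \leq p_E$.

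For ``Gieseker stable $\Rightarrow$ 2-Gieseker stable,'' I would argue by contradiction. Assuming an $F$ as above exists, I replace it with its saturation $F^{\mathrm{sat}}$ in $E$. Using $\mu$-semistability of $E$ (implied by Gieseker stability) one forces $c_1(F^{\mathrm{sat}}) = 0$, and then the already-proven 2-Gieseker semistability of $E$ forces $\ch_2(F^{\mathrm{sat}}) = \ch_2(F)$, so $F^{\mathrm{sat}}/F$ is supported on finitely many points and $\ch_{\leq 2}(F^{\mathrm{sat}}) = r(1, 0, -\tfrac{H^2}{d})$. The torsion-free quotient $Q := E/F^{\mathrm{sat}}$ then has $\ch_{\leq 2}(Q) = (n - r)(1, 0, -\tfrac{H^2}{d})$ and $\ch_3(Q) = -\ch_3(F^{\mathrm{sat}})$, and a parallel argument (any subsheaf of $Q$ pulls back to a subsheaf of $E$ containing $F^{\mathrm{sat}}$) shows $Q$ is 2-Gieseker semistable.

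Since both $r$ and $n - r$ are at most $n - 1 \leq d + 1$, Lemma~\ref{no_wall_lem} applies to both $F^{\mathrm{sat}}$ and $Q$, yielding $\ch_3(F^{\mathrm{sat}}) \leq 0$ and $-\ch_3(F^{\mathrm{sat}}) \leq 0$ simultaneously, hence $\ch_3(F^{\mathrm{sat}}) = 0$. But then $p_{F^{\mathrm{sat}}} = p_E$, contradicting Gieseker \emph{stability} of $E$. The main obstacle is the bookkeeping around the saturation step: making sure the class $\ch_{\leq 2}$ is preserved and that 2-Gieseker semistability of $E$ descends to $F^{\mathrm{sat}}$ and is inherited by $Q$, since it is only after this that the two-sided application of Lemma~\ref{no_wall_lem}, made possible by the numerical slack $n \leq d+2$, closes the argument.
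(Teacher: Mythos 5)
Your proof is correct and follows essentially the same route as the paper: both arguments reduce everything to Lemma~\ref{no_wall_lem} (the vanishing $\ch_3\leq 0$ for 2-Gieseker-semistable sheaves of truncated class $r(1,0,-\tfrac{H^2}{d})$ with $r\leq d+1$), applied two-sidedly to a destabilizing subsheaf and its quotient so that $\ch_3=0$ forces a contradiction with Gieseker (semi)stability. The only difference is bookkeeping — the paper invokes a Jordan–Hölder-type destabilizing sequence and the maximal Gieseker-destabilizing subsheaf where you work with saturations of an arbitrary destabilizer — which does not change the substance.
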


\begin{proof}
By \cite[Proposition 4.8]{feyz:desing}, the 2-Gieseker-(semi)stability for $E$ coincides with $\nu_{b,w}$-(semi)stability for $b<0$ and $w\gg 0$. Then in the following we will show 2-Gieseker-(semi)stability for $E$ coincides with Gieseker-(semi)stability

It is clear that if $E$ is 2-Gieseker-stable, then $E$ is Gieseker-stable. Conversely, if $E$ is Gieseker-stable but strictly 2-Gieseker-semistable, then we can find an exact sequence $0\to E_1\to E \to E_2\to 0$ such that $E_i$ are 2-Gieseker-semistable of classes $\ch(E_i)=(k_i,0,\frac{k_i}{d}H^2,m_i)$, where $1\leq k_i\leq n-1\leq d+1$ and $m_i\in \ZZ_{\leq 0}$. By the stability of $E_i$, we have $m_i\leq 0$ for any $i$ from Lemma \ref{no_wall_lem}. Since $m_1+m_2=0$, we have  $\ch(E_i)=k_i\bv$ and contradicts the Gieseker-stability of $E$.

And it is clear that if $E$ is Gieseker-semistable, then $E$ is 2-Gieseker-semistable. Now assume that $E$ is 2-Gieseker-semistable but not Gieseker-semistable. Then the maximal destabilizing subsheaf $E_1$ of $E$ with respect to Gieseker-semistability has class $\ch(E_1)=(k_1,0,-\frac{k_1}{d}H^2, m_1)$ where $1\leq k_1<n$ and $m_i\in \ZZ_{> 0}$. But this contradicts Lemma \ref{no_wall_lem} as well.
\end{proof}


\subsection{The bundle $\cQ_Y$ and its projection}\label{def_gluing_object} For any smooth Fano threefold $Y$ of index $2$ and degree $d \geq 2$, we define the sheaf $\cQ_Y$ to be the kernel of the following evaluation map
  	\begin{equation}\label{eq. Q}
  	0 \rightarrow \cQ_Y \rightarrow \oh_Y \otimes \Hom(\oh_Y,\oh_Y(1))\xrightarrow{ev} \oh_Y(1) \rightarrow 0. 
  	\end{equation}
   We have 
   \begin{equation}\label{chern-Q-Y}
  	\ch(\cQ_Y) = \left(d+1,\ -H,\ -\frac{1}{2}H^2 ,\ -\frac{1}{6}H^3\right). 
  	\end{equation}
   \begin{lemma}\label{lem-Q-Y}
       The sheaf $\cQ_Y$ is a $\mu_H$-stable locally-free sheaf.  
   \end{lemma}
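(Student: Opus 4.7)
The plan is to treat the two conclusions separately. First, for local freeness: since $\oh_Y(H)$ is globally generated on a smooth del Pezzo threefold of index two (as $-K_Y = 2H$ is very ample), the evaluation map in \eqref{eq. Q} is surjective, so its kernel $\cQ_Y$ is the kernel of a surjection between locally free sheaves, hence locally free of rank $d+1$.

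Next, for $\mu_H$-stability, note $\mu_H(\cQ_Y) = -\tfrac{1}{d+1}$ from \eqref{chern-Q-Y}. Set $V^* := H^0(\oh_Y(H))$, so the defining sequence reads $0 \to \cQ_Y \to V^* \otimes \oh_Y \to \oh_Y(H) \to 0$. Given an arbitrary saturated subsheaf $F \subset \cQ_Y$ of rank $r$ with $1 \leq r \leq d$ and $c_1(F) = cH$ (using $\mathrm{Pic}(Y) = \ZZ \cdot H$), the aim is to show $c \leq -1$; this gives $\mu_H(F) \leq -\tfrac{1}{r} \leq -\tfrac{1}{d} < -\tfrac{1}{d+1} = \mu_H(\cQ_Y)$, as required. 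First I would observe that $F$ remains saturated in $V^* \otimes \oh_Y$: since both $\cQ_Y/F$ and $\oh_Y(H)$ are torsion-free, the extension $(V^* \otimes \oh_Y)/F$ is torsion-free too. Taking the $r$-th exterior power of $F \hookrightarrow V^* \otimes \oh_Y$ yields a nonzero morphism $\oh_Y(cH) = \det F \to \Lambda^r V^* \otimes \oh_Y$, which forces $H^0(\oh_Y(-cH)) \neq 0$ and hence $c \leq 0$.

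To rule out the case $c = 0$, I would use the generic fibre of $F \hookrightarrow V^* \otimes \oh_Y$ to produce an $r$-dimensional subspace $W \subset V^*$, and then argue $F = W \otimes \oh_Y$ as subsheaves of $V^* \otimes \oh_Y$: both are saturated of the same rank with the same generic fibre, and two such subsheaves of a torsion-free sheaf necessarily coincide (their sum has the same generic fibre as each, so the cokernels within each are torsion, hence zero). But then the condition $F \subset \cQ_Y$ amounts to the vanishing of the composition $W \otimes \oh_Y \hookrightarrow V^* \otimes \oh_Y \to \oh_Y(H)$, which on global sections is the tautological inclusion $W \hookrightarrow V^* = H^0(\oh_Y(H))$; this can vanish only for $W = 0$, contradicting $r \geq 1$. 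The main technical point to handle carefully is the identification $F = W \otimes \oh_Y$ in the borderline case $c = 0$, but once the principle on saturated subsheaves with a common generic fibre is in hand, the argument is essentially formal.
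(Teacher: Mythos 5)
Your overall strategy coincides with the paper's: both proofs reduce $\mu_H$-stability of $\cQ_Y$ to the statement that a destabilising saturated subsheaf would sit inside $\oh_Y^{\oplus d+2}$ with slope $\geq 0$ (integrality of $c_1$ upgrades $\mu_H \geq -\tfrac{1}{d+1}$ to $\mu_H\geq 0$), and then to a contradiction with $h^0(\cQ_Y)=0$. Two points need repair. First, a minor one: very ampleness of $-K_Y=2H$ does \emph{not} imply that $\oh_Y(H)$ is globally generated; the correct input is that $|H|$ is base-point free for $d\geq 2$, which is the fact the paper cites from Iskovskikh--Prokhorov, so your conclusion is right but the stated reason is not a valid deduction.

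Second, and more seriously, the borderline case $c=0$ has a genuine gap where you ``use the generic fibre of $F\hookrightarrow V^*\otimes\oh_Y$ to produce an $r$-dimensional subspace $W\subset V^*$.'' The generic fibre $F_\eta$ is an $r$-dimensional subspace of $V^*\otimes_{\CC} k(Y)$ over the function field, and there is no reason a priori that it has the form $W\otimes k(Y)$ for a $\CC$-subspace $W\subset V^*$ (compare $\oh_{\PP^1}(-1)\subset\oh_{\PP^1}^{\oplus 2}$, whose generic fibre is a genuinely varying line); the whole content of the $c=0$ case is precisely to show $F_\eta$ is constant. This can be filled: your nonzero map $\det F\cong\oh_Y\to\Lambda^rV^*\otimes\oh_Y$ is a nonzero element $\omega\in\Lambda^rV^*$ which spans the image of $\Lambda^rF_\eta$, hence is decomposable over $k(Y)$ and therefore over $\CC$ (decomposability is a closed Pl\"ucker condition), and $W$ is then the span of the factors of $\omega$. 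Alternatively --- and this is the paper's route --- one avoids the issue entirely by noting that $\oh_Y^{\oplus d+2}$ is polystable with all stable factors isomorphic to $\oh_Y$, so any saturated subsheaf of slope $0$ contains a copy of $\oh_Y$; a single nonzero section of $\cQ_Y$ already contradicts $h^0(\cQ_Y)=0$, with no need to identify $F$ itself as $W\otimes\oh_Y$. Once either fix is in place, the rest of your argument (saturation of $F$ in $V^*\otimes\oh_Y$, the common-generic-fibre principle, and the final contradiction on global sections) is correct.
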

\begin{proof}
When the  degree $d$ of $Y$ satisfies $d\geq 2$, $\oh_Y(1)$ has no base-point by \cite[Theorem 2.4.5.(i)]{iskovskikh1999algebraic}, hence $\cQ_Y$ is a bundle of rank $d+1$. If it is not $\mu_H$-stable, there is a stable reflexive sheaf $Q' \subset \cQ_Y$ of bigger or equal slope, thus $\mu_H(Q') \geq 0$. Since it is also a subsheaf of $\oh_Y^{\oplus h^0(\oh_Y(1))}$ and all stable factors of the latter are the direct sum of $\oh_Y$, we get $Q'$ is a direct sum of $\oh_Y$ which is not possible as $h^0(\cQ_Y) = 0$ by the definition.         
\end{proof}

Consider the semiorthogonal decomposition $\D^b(Y)= \langle \Ku(Y), \oh_Y, \oh_Y(H) \rangle$. We know $\cQ_Y\cong\bL_{\oh_Y}\oh_Y(1)[-1]$. Consider the embedding $i \colon \Ku(Y) \hookrightarrow \D^b(Y)$. We know $Q_Y \in \langle \oh_Y(-H), \Ku(Y) \rangle$, thus it lies in the exact triangle 
\begin{equation*}
i^{!}\cQ_Y = \bR_{\oh_Y(-H)}(\cQ_Y) \to \cQ_Y \to \oh_Y(-H)\otimes \RHom(\cQ_Y, \oh_Y(-H))^{\vee}.
\end{equation*}
The $\mu_H$-stability of $\cQ_Y$ implies that $\Hom(\cQ_Y, \oh_Y(-H)[i]) = 0$. Taking $\Hom(\oh_Y(H), -)$ from the exact sequence \eqref{eq. Q} implies that $\hom(\cQ_Y, \oh_Y(-H)[1]) = \hom(\oh_Y(H), \cQ_Y[2]) = 0$. Thus 
\begin{equation}\label{ext-1}
\hom(\cQ_Y, \oh_Y(-H)[2]) = \chi(\cQ_Y, \oh_Y(-H)) =1.    
\end{equation}
Hence $i^{!}\cQ_Y$ is a two-term complex lying in the exact triangle 
\begin{equation}\label{exact. i!QY}
\oh_Y(-H)[1] \to i^{!}\cQ_Y  \to \cQ_Y 
\end{equation}
which is of Chern character $\ch(i^{!}\cQ_Y) = d\bv$. In Sections~\ref{section_Y2} and \ref{section_Y3} we show that if $d=2$ and $d=3$, the object $i^{!}\cQ_Y$ is  Bridgeland-stable in $\Ku(Y)$ and it is the only such object which is not Gieseker-stable.

\section{Moduli spaces on quartic double solids }\label{section_Y2}
In this section, we always fix $Y$ to be a del Pezzo threefold of degree two, i.e.~a quartic double solid. We aim to classify Bridgeland semistable objects of class $2\bv$ in $\Ku(Y)$ as described in the following. 


\begin{proposition} \label{prop_bridgeland_Y2}
    Let $\sigma$ be a Serre-invariant stability condition on $\Ku(Y)$ and $E\in \Ku(Y)$ be a $\sigma$-(semi)stable object of class $2\bv$. Then up to a shift, $E$ is either a Gieseker-(semi)stable sheaf or $i^!\cQ_Y$.
\end{proposition}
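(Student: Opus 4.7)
The plan is to analyze $E$ via the weak stability condition $\sigma^0_{b,w}$ on $\D^b(Y)$, of which $\sigma(b,w)$ is a restriction to $\Ku(Y)$. By Remark \ref{remark-the same orbit}, it suffices to treat $\sigma = \sigma(b,w)$ for some fixed $(b,w) \in V$. Since $\cA(b,w) \subset \Coh^0_{b,w}(Y)$, I view $E$ as an object in the ambient heart and split the argument into two cases according to whether $E$ is $\sigma^0_{b,w}$-(semi)stable in $\D^b(Y)$.

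If $E$ is $\sigma^0_{b,w}$-(semi)stable, then by Lemma \ref{compare_stability} either $E$ is $\nu_{b,w}$-(semi)stable, or $E$ fits into a triangle $F[1] \to E \to T$ with $F \in \mathcal{F}_{b,w}$ a $\nu_{b,w}$-semistable sheaf and $T \in \Coh_0(Y)$. The second alternative is numerically impossible for $\ch(E) = 2\bv = (2,0,-H^2,0)$: since $\rk(T)=0$, the triangle would force $\rk(F) = -2$. Hence $E$ is $\nu_{b,w}$-(semi)stable. Lemma \ref{no_wall_lem}, applied with $n=2 \leq d+1$, shows that there is no wall for class $2\bv$ above $\ell_2$; and for $d=2$ the line $\ell_2$ lies on the boundary of $\widetilde{U}$, so $E$ remains $\nu_{b,w}$-(semi)stable throughout $\widetilde{U}$ and in particular at the large-volume limit. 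Lemma \ref{Gstability_lvls_coincide} then identifies $E$ with a Gieseker-(semi)stable sheaf, which completes the first case.

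If instead $E$ is $\sigma^0_{b,w}$-unstable in $\D^b(Y)$, the goal is to show $E \cong i^!\cQ_Y[k]$ up to a shift $k \in \ZZ$. The idea is that, as $E$ is $\sigma$-stable in $\Ku(Y)$, no Harder--Narasimhan factor of $E$ in $\Coh^0_{b,w}(Y)$ can lie inside $\Ku(Y)$; so each factor has a non-trivial projection onto $\langle \oh_Y, \oh_Y(H)\rangle$. Direct slope computations show that $\oh_Y(-H)[1]$ and $\cQ_Y$ share their $\nu_{b,w}$-slope on $\ell_2$ and are separated just above it, so the inclusion $\oh_Y(-H)[1] \hookrightarrow i^!\cQ_Y$ from \eqref{exact. i!QY} really is destabilizing in $\Coh^0_{b,w}(Y)$. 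The numerical constraint that the factor classes sum to $2\bv$, together with the semi-orthogonality of $\Ku(Y)$ with $\oh_Y$ and $\oh_Y(H)$, should pin the HN filtration to an extension of the form $\oh_Y(-H)[1] \hookrightarrow E \twoheadrightarrow \cQ_Y$; by \eqref{ext-1}, $\mathrm{ext}^1(\cQ_Y,\oh_Y(-H)[1]) = \chi(\cQ_Y,\oh_Y(-H)) = 1$, so this extension is unique up to scalar and equals $i^!\cQ_Y$ by \eqref{exact. i!QY}.

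The main obstacle is making the second case rigorous: one must rule out all HN configurations in $\Coh^0_{b,w}(Y)$ other than the expected one. This requires a careful analysis of how the possible destabilising subobjects interact with the projection to $\langle \oh_Y, \oh_Y(H)\rangle$, uses $\RHom(\oh_Y, E) = \RHom(\oh_Y(H), E) = 0$, and crucially exploits the special feature of the $d=2$ geometry, namely that $\ell_2$ lies on $\partial \widetilde{U}$ so that the wall is a \emph{boundary} wall where $\oh_Y(-H)[1]$ and $\cQ_Y$ are the only natural candidates with the right slope.
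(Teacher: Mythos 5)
Your overall strategy --- comparing $\sigma(b,w)$-stability with the weak stability condition $\sigma^0_{b,w}$ on $\D^b(Y)$, and splitting into the cases where $E$ is or is not $\sigma^0_{b,w}$-semistable --- is the same as the paper's, but both halves have problems. In the first case, your numerical exclusion of the triangle $F[1]\to E\to T$ is incorrect. For $(b,w)\in V$ one computes $\Im\bigl[Z^0_{b,w}(2\bv)\bigr]=4b^2-4w-2<0$, so the representative of $E$ lying in the heart $\Coh^0_{b,w}(Y)$ has class $-2\bv$, not $2\bv$ (this is exactly the paper's footnote about the shifted class); the triangle then gives $\ch(F)=2\bv+\ch(T)$, i.e.\ $\rk(F)=+2$, and no contradiction arises from ranks. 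The correct way to kill $T$ is the one the paper uses: $F$ is $\nu_{b,w}$-semistable with $\ch_{\leq 2}(F)=\ch_{\leq 2}(2\bv)$, so Lemma~\ref{no_wall_lem} gives $\ch_3(F)\leq 0$, while $\ch_3(F)=\mathrm{length}(T)\geq 0$, forcing $T=0$. After that, your passage to the large-volume limit and the appeal to Lemma~\ref{Gstability_lvls_coincide} are fine.

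The second case is the crux of the proposition, and you have only sketched it --- you say yourself that making it rigorous is ``the main obstacle.'' The paper's argument is genuinely delicate: it localizes at the corner $(b,w)=(-1,\tfrac{1}{2})$, where $\Im\bigl[Z^0_{-1,1/2}(E)\bigr]=0$ forces the same vanishing on the Harder--Narasimhan factors and hence a one-parameter family of candidate classes $\ch_{\leq 2}(B)=\bigl(x,yH,\tfrac{-x-2y}{2}H^2\bigr)$; the support property for $\sigma^0_{b,w}$ (via \cite[Remark 5.12]{bayer:stability-conditions-kuznetsov-component}) bounds $0\leq -x-y\leq 2$; and the three resulting cases are eliminated or identified using the classification Lemmas~\ref{rule_out} (no $\mu_H$-stable sheaf with $\ch_{\leq 2}=(2,-H,0)$) and~\ref{rk3_deg2} (the unique $\mu_H$-stable sheaf in the relevant class is $\cQ_Y$), together with a separate argument that the subobject $A$ is $\sigma^0_{b,w}$-semistable and isomorphic to $\oh_Y(-1)[2]$. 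None of these steps follows from the heuristics you give (``no HN factor lies in $\Ku(Y)$,'' ``the only natural candidates with the right slope''), so as written the proposal does not establish the statement; the identification $E\cong i^!\cQ_Y[1]$ in the unstable case remains unproved.
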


\begin{proof}

By the uniqueness of Serre-invariant stability condition, we can assume that $E\in \cA(b, w)$ is a $\sigma(b, w)$-(semi)stable object of class\footnote{We put the shifted class $-2\bv$ to get sure $\Im[Z(b,w)] \geq 0$ for $(b,w) \in V$.} $-2\bv$. We divide the proof into several cases.

\textbf{Step 1.} First we assume that $E$ is $\sigma^0_{b_0, w_0}$-semistable for some $(b_0, w_0)\in V$. Then by Lemma \ref{compare_stability}, we have an exact sequence in $\Coh^0_{b_0, w_0}(Y)$
\[F[1]\to E\to T,\]
where $F\in \Coh^{b_0}(Y)$ with $\nu^+_{b_0, w_0}(F)\leq b$ and $T=0$ or supported on points. Now by the $\sigma^0_{b_0, w_0}$-semistability of $E$, we know that $F$ is $\nu_{b_0,w_0}$-semistable. By Lemma \ref{no_wall_lem}, $F$ is $\nu_{b_0, w}$-semistable for $w\gg 0$ and $\ch_3(F)\leq 0$, which implies $T=0$ and $F[1]=E$. Thus $E[-1]$ is $\nu_{b, w}$-semistable for $w\gg 0$, which implies that $E[-1]$ is a Gieseker-semistable sheaf by Lemma \ref{Gstability_lvls_coincide}.

\textbf{Step 2.} Now we assume that $E$ is not $\sigma^0_{b, w}$-semistable for any $(b, w)\in V$. By \cite[Proposition 2.2.2]{bayer:bridgeland-stability-conditions-on-threefolds}, we can assume that there is an open ball $U' \subset \R^2$ containing the point $(b, w)=(-1,\frac{1}{2})$ such that for any $(b, w)\in U_{-1,\frac{1}{2}}:=U'\cap V$, we have $E\in \cA(b, w)$ and the Harder--Narasimhan filtration of $E$ with respect to $\sigma^0_{b, w}$ is constant.

Let $B$ be the destabilizing quotient object of $E$ with minimum slope and $A\to E\to B$ be the destabilizing sequence of $E$ with respect to $\sigma^0_{b,w}$ for $(b,w)\in U_{-1,\frac{1}{2}}$. Hence $A,B\in \Coh^0_{b,w}(Y)$, which gives 
\begin{equation} \label{imaginary_part}
    \Im(Z^0_{b,w}(E))\geq \Im(Z^0_{b,w}(B))>0,\quad \Im(Z^0_{b,w}(E))>\Im(Z^0_{b,w}(A))\geq 0
\end{equation}
for all $(b,w)\in U_{-1,\frac{1}{2}}$. Since $\Im(Z^0_{-1,\frac{1}{2}}(E))=0$, by the continuity, we have $\Im(Z^0_{-1,\frac{1}{2}}(A))=\Im(Z^0_{-1,\frac{1}{2}}(B))=0$. Therefore, if we assume that $\ch_{\leq 2}(B)=(x,yH,\frac{z}{2}H^2)$ for $x,y,z\in \ZZ$, from $\Im(Z^0_{-1,\frac{1}{2}}(B))=0$ we get $z=-x-2y$. Thus we have

\begin{equation}
    \ch_{\leq 2}(B)=\left(x,\ yH,\ \frac{-x-2y}{2}H^2\right),\quad  \ch_{\leq 2}(A)=\left(-2-x,\ -yH,\ \frac{x+2y+2}{2}H^2\right) 
\end{equation}
and by \eqref{imaginary_part} we get

\begin{equation} \label{im_part}
    1-2b^2+2w = \Im(Z^0_{b,w}(E))\ \geq\ \Im(Z^0_{b,w}(B))=  (2b^2-2w-1)\frac{x}{2}-(b+1)y>0
\end{equation}
for all $(b,w)\in U_{-1,\frac{1}{2}}$. Moreover, by definition we have $\mu^{0}_{b,w}(E)>\mu^{0}_{b,w}(B)$ for any $(b,w)\in U_{-1,\frac{1}{2}}$ where $\mu^{0}_{b,w}(-) = -\frac{\Re[Z^0_{b,w}(-)]}{\Im[Z^0_{b,w}(-)]}$, thus 
\begin{equation} \label{slop_ineq}
    \frac{-2b}{1-2b^2+2w} = \mu^{0}_{b,w}(E)\ >\ \mu^{0}_{b,w}(B) = \frac{(b x-y)}{(2b^2-2w-1)\frac{x}{2}-(b+1)y}\,.
\end{equation}
Now by \eqref{im_part}, $b<0$ and \eqref{slop_ineq}, we have
\begin{equation} \label{eq1}
    -2b>b x-y.
\end{equation}

On the other hand, from \cite[Remark 5.12]{bayer:stability-conditions-kuznetsov-component}, we have 
$$\left(\mu^{0}_{b,w}\right)^-(E) \coloneqq \mu^0_{b,w}(B)\geq \min\{\mu^0_{b,w}(E),\ \mu^0_{b,w}(\oh_Y),\ \mu^0_{b,w}(\oh_Y(1))\}$$ 
for any $(b,w) \in V$.
Note that $\mu^0_{-1,\frac{1}{2}}(\oh_Y)=-2$, $\mu^0_{-1,\frac{1}{2}}(\oh_Y(1))=-1$ and $\mu^0_{b,w}(E)>0$ when $(b,w)\in U_{-1,\frac{1}{2}}$ as $\Re[Z_{b,w}^0(E)] = 2b <0$, thus $\mu^0_{b,w}(B) \geq -2$. By taking the limit $b\to -1$ and $w \to \frac{1}{2}$ and combining with \eqref{eq1}, we get 
$$2\geq -x-y\geq 0.$$ 


\textbf{Case 1.} $-x-y=0$. Then 
\eqref{slop_ineq} for $-y=x$ gives
\[\frac{-2b}{1-2b^2+2w}>\frac{(b+1)}{(2b^2-2w-1)\frac{1}{2}+(b+1)},\]
which has no solution for $(b,w)\in V$.

\textbf{Case 2.} $-x-y=1$. Then $\ch_{\leq 2}(B)=(x,(-x-1)H,(\frac{x}{2}+1)H^2)$. Since $B$ is $\sigma^0_{b,w}$-semistable, Lemma \ref{compare_stability} implies that $\ch_{\leq 2}(B)$ is a possible class for $\ch_{\leq 2}$ of a $\nu_{b,w}$-semistable object $B'[1]$ where $B' \in \Coh^b(Y)$. By \cite[Proposition 3.2]{li:fano-picard-number-one}, the only possible cases are $x=\pm 1$ and $\pm 2$. Using \eqref{slop_ineq}, we get $x=-2$ and other cases are ruled out. Then we see $\ch_{\leq 2}(B')=(-2,H, 0)$. But then $\nu_{b,w}$-semistability of $B'$ for $(b,w) \in U_{-1,\frac{1}{2}}$ and wall and chamber structure described in Proposition \ref{locally finite set of walls} implies that $B'$ is $\nu_{b=-1, w}$-semistable when $\frac{1}{2}< w < \frac{1}{2} + \epsilon$. Since there is no wall for $B'$ crossing the vertical line $b=-1$, we get $B'$ is $\nu_{b =-1,w}$-semistable for $w \gg 0$. Thus $B'$ is a $\mu_H$-stable sheaf which is not possible by the following Lemma \ref{rule_out}.


\textbf{Case 3.} $-x-y=2$. Then we have $\ch_{\leq 2}(B)=(x,(-x-2)H,(\frac{x}{2}+2)H^2)$. By \cite[Proposition 3.2]{li:fano-picard-number-one}, we have $|x|\leq 3$. Using \eqref{slop_ineq}, we get $x=-3$ and other cases are ruled out. Then $\ch_{\leq 2}(B)=(-3,H, \frac{1}{2}H^2)$. We claim that $\RHom(\oh_Y, B)=0$, which implies $\ch(B)=(-3,H, \frac{1}{2}H^2,\frac{1}{6}H^3)$. Indeed, since $\oh_Y, \oh_Y(-2)[2]\in \Coh^0_{b,w}(X)$, by Serre duality we have $\Hom(\oh_Y,B[i])=\Hom(B,\oh_Y(-2)[3-i])=0$ for $i\neq 0,1$. We know $\lim_{(b,w) \to (-1, \frac{1}{2})}\mu^0_{b,w}(B) = +\infty$, so by shrinking the open ball $U'$, we may assume  
\begin{equation}\label{equ}
    (\mu^0_{b,w})^-(A) > \mu^0_{b,w}(B) > \mu^0_{b,w}(\oh_Y(-2)[2])
\end{equation}
Then $\sigma^0_{b,w}$-semistability of $B$ and $\oh_Y(-2)[2]$ implies that $\Hom(\oh_Y, B[1])=\Hom(B,\oh_Y(-2)[2]) = 0$
Moreover, using $E\in \Ku(Y)$, we have $\Hom(\oh_Y, B)=\Hom(\oh_Y, A[1])$. Then \eqref{equ} gives $\Hom(\oh_Y, A[1])=\Hom(A,\oh_Y(-2)[2]) = 0$, so the claim follows. Then Lemma \ref{rk3_deg2} implies that $B=\cQ_Y[1]=\bL_{\oh_Y}\oh_Y(1)$. 

\vspace{.2 cm }
We know $\ch(A) = \ch(\oh_Y(-1)[2])$, so $\lim_{(b,w) \to (-1, \frac{1}{2})}Z^0_{b,w}(A) = 0$, thus if $A$ is not $\sigma^0_{b,w}$-semistable for any $(b,w) \in U'$, then the destabilising factors $A_i$ all satisfy $\lim_{(b,w) \to (-1, \frac{1}{2})}\Im[Z^0_{b,w}(A_i)] = 0$. Since by \eqref{equ}, we know $\mu^0_{b,w}(A_i)\geq 0$, we have $\Re[Z^0_{b,w}(A_i)] \leq 0$ for all $i$. This implies that $\lim_{(b,w) \to (-1, \frac{1}{2})}\Re[Z^0_{b,w}](A_i) = 0$, and so $\ch_{\leq 2}(A_i)$ is a multiple of $\ch_{\leq 2}(\oh_Y(-1))$ which is not possible. Thus $A$ is $\sigma^0_{b,w}$-semistable with 
\[\Hom(A,\oh_Y(-1)[2])=\Hom(\oh_Y(1),A[1])=\Hom(\oh_Y(1),B)\neq 0.\]
This shows that $A = \oh_Y(-1)[2]$ and so $E=i^!\cQ_Y[1]$ as $\Hom(\cQ_Y[1], \oh_Y(-1)[3]) = 1$ by \eqref{ext-1}. Finally Lemma \ref{lem-serre-stability-d-2} completes the proof. 

\end{proof}

\begin{lemma}\label{rule_out}
Let $F$ be a slope stable sheaf with $\ch_{\leq 2}(F)=(2,-H,sH^2,tH^3)$. Then $s\leq -\frac{1}{2}$. And if $s=-\frac{1}{2}$, then $t\leq \frac{1}{3}$. Moreover, when $s=-\frac{1}{2}$ and $t=\frac{1}{3}$, $F$ is locally free.
   
\end{lemma}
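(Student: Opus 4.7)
My plan combines restriction of $F$ to a smooth K3 surface $S\in |2H|$ with Riemann--Roch on $Y$ and stability-based Ext vanishings. For $Y_2$ a quartic double solid, $S\in |2H|$ may be taken as the preimage under the double cover $Y_2\to\PP^3$ of a generic plane, giving a smooth K3 of degree $(H|_S)^2 = 4$. I would first show that $F|_S$ is $\mu_{H|_S}$-stable on $S$: Flenner's restriction theorem gives $\mu_{H|_S}$-semistability for generic $S$, and since $\Pic(S) = \mathbb{Z}\cdot H|_S$ by Noether--Lefschetz and the slope $\mu_{H|_S}(F|_S) = -2$ is not realized by any line bundle on $S$ (whose slopes are integer multiples of $(H|_S)^2 = 4$), strict semistability is ruled out. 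Hence $F|_S$ is simple, and Mukai's inequality $\langle v(F|_S),v(F|_S)\rangle \geq -2$ for simple sheaves on K3 yields a bound on $s$. To extract the sharp bound $s\leq -\frac{1}{2}$, I would combine the K3 restriction with wall-crossing in tilt-stability on $Y$, using the strong Bogomolov--Gieseker inequality of \cite{li:fano-picard-number-one} and an analysis of potential walls for $F$ against destabilizers such as $\oh_Y(-2H)[1]$.

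For the refined bound $t\leq \frac{1}{3}$ in the equality case $s=-\frac{1}{2}$, I would compute $\chi(F,F) = 2+8s$ via Riemann--Roch, apply Serre duality to obtain $\Ext^3(F,F) = \Hom(F,F(-2H))^* = 0$ from slope stability (since $F(-2H)$ is stable of strictly smaller slope and is non-isomorphic to $F$), and combine with analogous computations of $\chi(F,\oh_Y)$ and $\chi(F,\cQ_Y)$ together with further Ext-vanishings coming from stability to extract the inequality. Finally, the locally-free claim when $t=\frac{1}{3}$ follows from the Chern class identity $\ch_3 = (c_1^3 - 3c_1 c_2 + 3c_3)/6$: substituting $c_1 = -H$, $c_2 = H^2$ and $\ch_3 = \frac{1}{3}H^3$ forces $c_3(F) = 0$, and a rank-$2$ reflexive sheaf with $c_3 = 0$ on a smooth threefold is locally free by the classical structure theorem; since $F$ is $\mu_H$-stable hence torsion-free and reflexive in codimension two, we conclude $F$ itself is locally free.

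The main obstacle I expect is closing the gap between Mukai's inequality on $S$ (which on its own yields only a weaker bound, naively $s\leq -\frac{1}{8}$) and the sharp inequality $s\leq -\frac{1}{2}$; this likely requires delicately combining the K3 restriction with an additional tilt-stability wall-crossing argument on $Y$, and similarly the bound $t\leq \frac{1}{3}$ will require careful book-keeping of the symmetry $F\mapsto F^\vee(-H)$, which preserves $\ch_{\leq 2}$ while sending $t\mapsto \frac{2}{3}-t$.
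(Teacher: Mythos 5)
Your endgame (Riemann--Roch plus $c_3\geq 0$ for reflexive rank-two sheaves, and $c_3=0\Rightarrow$ locally free) is the same as the paper's, but the two steps carrying the real content have genuine gaps. For $s\leq -\tfrac12$: you concede the K3 restriction only yields $s\leq-\tfrac18$ and defer the jump to an unspecified wall-crossing. Note that $2s=1-c_2(F)\cdot H\in\ZZ$, so any bound $s<0$ would already force $s\leq-\tfrac12$; the only case genuinely requiring work is $s=0$, which your plan never isolates. The paper avoids K3 surfaces entirely: \cite[Proposition 3.2]{li:fano-picard-number-one} gives $s\leq 0$ outright, and $s=0$ is excluded by passing to the reflexive hull, proving $h^0(F)=h^2(F)=h^3(F)=0$, and playing $\chi(F)=\tfrac{c_3+1}{2}\leq 0$ against $c_3\geq 0$. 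Your K3 setup also has concrete defects: the preimage of a plane lies in $|H|$ and is a degree-two del Pezzo surface, not a K3 (you need $|2H|$, whose general member is not a pullback from $\PP^3$), and Flenner's restriction theorem is not obviously applicable since $H$ is not very ample on $Y_2$ and the numerical hypothesis fails for the polarization $2H$ with $k=1$.

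More seriously, for $t\leq\tfrac13$ the quantity $\chi(F,F)$ cannot help: the degree-three component of $\ch(F)^{\vee}\ch(F)$ vanishes identically, so $\chi(F,F)$ is independent of $t$. The relevant Euler characteristic is $\chi(\oh_Y,F)=2t-\tfrac23$, and the entire content of the step is the vanishing $h^0(F)=h^2(F)=h^3(F)=0$. Of these, $h^2(F)\cong\Hom(F,\oh_Y(-2H)[1])^{*}$ is an $\Ext^1$ between non-isomorphic stable sheaves and does \emph{not} follow from slope stability; the paper obtains it from tilt stability, namely that there is no wall for $F$ crossing $b=-1$ and that the segment joining $\Pi(F)$ to $\Pi(\oh_Y(-2H))$ meets the line $b=-1$ inside $\widetilde{U}$, so the two objects are $\nu_{b,w}$-semistable there with comparable slopes. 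Your phrase ``further Ext-vanishings coming from stability'' does not supply this. Finally, before invoking Hartshorne you must show $F$ is actually reflexive; ``torsion-free and reflexive in codimension two'' is automatic and proves nothing. Reflexivity follows by applying the first two parts to $F^{\vee\vee}$: a nonzero quotient $F^{\vee\vee}/F$ supported in dimension $\leq 1$ would push $s$ or $\ch_3$ of $F^{\vee\vee}$ strictly above the bounds just established.
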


\begin{proof}
Assume that $s>-\frac{1}{2}$. By \cite[Proposition 3.2]{li:fano-picard-number-one}, we have $s=0$. Thus $\ch_{\leq 2}(F)=\ch_{\leq 2}(F^{\vee \vee})$ and we can assume that $F$ is reflexive. Since $\ch^{-1}_1(F)=1$, there is no wall for $F$ intersects with $b=-1$. Since the line segment connecting $\Pi(F)$ and $\Pi(\oh_Y(-2))$ intersects with $b=-1$ inside $\widetilde{U}$, we have $\Hom(F,\oh_Y(-2)[1])=H^2(F)=0$. And by the $\mu_H$-stability we have $H^0(F)=0$, which implies $\chi(F)=\frac{c_3(F)+1}{2}<0$. However, since $F$ is reflexive and has rank two, we get $c_3(F)\geq 0$ by \cite[Proposition 2.6]{hartshorne:stable-reflexive-sheaves} \footnote{Although \cite[Proposition 2.6]{hartshorne:stable-reflexive-sheaves} only states for $\mathbb{P}^3$, it is well-known that it also works for any smooth projective threefold of Picard rank one.}, which makes a contradiction.

Now we assume that $s=-\frac{1}{2}$. Since there is no wall for $F$ intersects with $b=-1$ and the line segment connecting $\Pi(F)$ and $\Pi(\oh_Y(-2))$ intersects with $b=-1$ inside $\widetilde{U}$, we have $\Hom(F,\oh_Y(-2)[1])=H^2(F)=0$. Hence by $H^0(F)=0$, we see $\chi(F)=2t-\frac{2}{3}\leq 0$, which implies $t\leq \frac{1}{3}$.

Finally, when $s=-\frac{1}{2}$ and $t=\frac{1}{3}$, we know $F$ is reflexive. By $c_3(F)=0$, $F$ is locally free.
\end{proof}

\begin{lemma}\label{rk3_deg2}
    Let $F$ be a $\mu_H$-stable sheaf of class $\ch_{\leq 2}(F)=(3,-H,sH^2)$, then $s\leq -\frac{1}{2}$. When $s=-\frac{1}{2}$, we have $\ch_3(F)\leq -\frac{1}{6}H^3$. Moreover, $s=-\frac{1}{2}$ and  $\ch_3(F)=-\frac{1}{6}H^3$ if and only if $F=\cQ_Y=\bL_{\oh_Y}\oh_Y(1)[-1]$.
\end{lemma}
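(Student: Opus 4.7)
The proof follows the strategy of Lemma~\ref{rule_out}, combining tilt-stability wall-crossing with Serre duality and the $\mu_H$-stability hypothesis.

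For the bound $s \leq -\tfrac{1}{2}$, suppose $s > -\tfrac{1}{2}$. The Bogomolov inequality \eqref{discr} gives $s \leq \tfrac{1}{3}$, and the integrality of $c_2(F)$ in $H^4(Y, \ZZ) \cong \ZZ \cdot \ell$ (with $H^2 = 2\ell$) forces $s \in \tfrac{1}{2}\ZZ$, leaving only $s = 0$. Replace $F$ by its reflexive hull $F^{\vee\vee}$, preserving $\ch_{\leq 2}$. The segment from $\Pi(F^{\vee\vee}) = (-\tfrac{1}{3}, 0)$ to $\Pi(\oh_Y(-2H)) = (-2, 2)$ crosses the vertical line $b = -1$ at $w = \tfrac{4}{5}$, inside $\widetilde{U}$. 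A wall analysis for $F^{\vee\vee}$ along this segment, ruling out possible destabilizing rank/$c_1$ patterns $(1,0)$ and $(2,-H)$ via the Bogomolov inequality on both factors, gives $\Hom(F^{\vee\vee}, \oh_Y(-2H)[1])^{*} = H^2(F^{\vee\vee}) = 0$. Combined with $H^0(F^{\vee\vee}) = H^3(F^{\vee\vee}) = 0$ from $\mu_H$-stability, we get $\chi(F^{\vee\vee}) = \tfrac{4}{3} + 2\ch_3(F^{\vee\vee}) \leq 0$, so $\ch_3(F^{\vee\vee}) \leq -\tfrac{2}{3}$. Combining with $\ch_3(F) \leq \ch_3(F^{\vee\vee})$ and the lower bound $c_3(F^{\vee\vee}) \geq 0$ for rank-$3$ reflexive sheaves (\cite[Proposition 2.6]{hartshorne:stable-reflexive-sheaves}) produces a contradiction.

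When $s = -\tfrac{1}{2}$, the identical wall-crossing applies to $F$: the segment from $\Pi(F) = (-\tfrac{1}{3}, -\tfrac{1}{6})$ to $\Pi(\oh_Y(-2H))$ crosses $b = -1$ at $w = \tfrac{7}{10}$, still inside $\widetilde{U}$, and no wall for $F$ meets it by the analogous analysis. Hence $H^2(F) = 0$, and combined with $H^0(F) = H^3(F) = 0$, the Euler characteristic $\chi(F) = \tfrac{1}{3} + 2t \leq 0$ gives $t \leq -\tfrac{1}{6}$. When both inequalities become equalities, $\chi(F) = 0$ together with the preceding vanishings forces $H^1(F) = 0$, so $\RHom(\oh_Y, F) = 0$. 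Reflexivity of $F$ then follows by applying the $\ch_3$ bound to $F^{\vee\vee}$: a nontrivial zero-dimensional quotient $F^{\vee\vee}/F$ would raise $\ch_3$ strictly above $-\tfrac{1}{6}H^3$, violating the bound for $F^{\vee\vee}$.

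To identify $F$ with $\cQ_Y$, we use the tilt-wall argument through $\Pi(F[1])$ and $\Pi(\oh_Y[1])$: since $F$ is a $\mu_H$-stable reflexive sheaf with $\RHom(\oh_Y, F) = 0$, the computation $\hom(F, \oh_Y) = \chi(F, \oh_Y) = 4$ (after vanishing higher $\Ext^i(F, \oh_Y)$ via wall-crossing applied to $F(-2H)$) produces an evaluation map whose dual $\oh_Y^{\oplus 4} \twoheadrightarrow F^{\vee}$ is surjective with kernel of Chern character $\ch(\oh_Y(-H))$. By \cite[Proposition 4.20]{feyz:desing} the kernel is $\oh_Y(-H)$, yielding the defining exact sequence of $\cQ_Y^{\vee}$ and hence $F \cong \cQ_Y$. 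The main technical obstacle throughout is the wall-crossing analysis: since $\rk F = 3$ admits richer destabilizing patterns than the rank-$2$ case of Lemma~\ref{rule_out}, ruling out walls across the relevant segments requires an explicit case analysis via \eqref{discr} and the geometry of $\widetilde{U}$.
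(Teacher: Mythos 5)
There is a genuine gap at the very first step, the bound $s\leq-\tfrac{1}{2}$. Your route is: classical Bogomolov plus integrality of $c_2$ reduces to $s=0$, and you then try to kill $s=0$ by showing $\ch_3(F^{\vee\vee})$ is very negative while invoking ``$c_3(F^{\vee\vee})\geq 0$ for rank-$3$ reflexive sheaves'' via \cite[Proposition 2.6]{hartshorne:stable-reflexive-sheaves}. That proposition is a statement about \emph{rank-two} reflexive sheaves: its proof identifies $c_3$ with the length of $\mathcal{E}xt^1(F,\omega)$ using the rank-two isomorphism $F^{\vee}\cong F(-c_1)$, and it does not extend to rank three (for a general reflexive sheaf one only gets $\ch_3(F)+\ch_3(F^{\vee})=\mathrm{length}\,\mathcal{E}xt^1(F,\oh_Y)\geq 0$, which gives no lower bound on $c_3(F)$ alone). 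Without that lower bound your upper bound $\ch_3(F^{\vee\vee})\leq-\tfrac{4}{3}$ produces no contradiction, so $s=0$ is not excluded. The paper does not need any of this: \cite[Proposition 3.2]{li:fano-picard-number-one} already forces $\Pi(F)=(-\tfrac{1}{3},\tfrac{s}{3})$ to lie on or below the tangent lines to $w=\tfrac{1}{2}b^2$ at \emph{both} $k=0$ and $k=-1$, and the latter reads $\tfrac{s}{3}\leq\tfrac{1}{3}-\tfrac{1}{2}$, i.e.\ $s\leq-\tfrac{1}{2}$ in one line. You should replace your whole first paragraph with that citation.

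The middle of your argument (the case $s=-\tfrac{1}{2}$, the Euler-characteristic bound $\chi(F)=\tfrac{1}{3}+2t\leq 0$, and reflexivity in the equality case) is sound, though note the paper gets $H^2(F)=0$ more cheaply by crossing the vertical line $b=-\tfrac{1}{2}$, where $\ch_1^{-1/2}(F).H^2=\tfrac{1}{2}H^3$ is minimal so no wall can occur, rather than doing a case analysis at $b=-1$. The final identification with $\cQ_Y$ also has an unjustified step: you dualize and assert that the four sections give a \emph{surjection} $\oh_Y^{\oplus 4}\twoheadrightarrow F^{\vee}$ with locally free kernel, but global generation of the rank-three sheaf $F^{\vee}$ by four sections is exactly what needs proving, and you have only shown $F$ reflexive, not locally free. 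The paper avoids this by forming the extension $\oh_Y^{\oplus 4}\to G\to F[1]$ inside the abelian category of $\nu_{b,w}$-semistable objects of fixed slope on the wall through $\Pi(F)$ and $\Pi(\oh_Y)$, where $G$ is automatically semistable of class $\ch(\oh_Y(1))$, and then applies \cite[Proposition 4.20]{feyz:desing} to $G$ rather than to a kernel whose semistability is unknown.
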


\begin{proof}
    We know $s\leq -\frac{1}{2}$ from Lemma \cite[Proposition 3.2]{li:fano-picard-number-one}. 
    When $s=-\frac{1}{2}$, since $\ch_1^{-\frac{1}{2}}(F)=\frac{1}{2}$, and the line segment connecting $\Pi(F)$ and $\Pi(\oh_Y(-2))$ intersects $b=-\frac{1}{2}$ inside $\widetilde{U}$, we know that $\Hom(F,\oh_Y(-2)[1]))=H^2(F)=0$. Since $H^0(F) = 0$ by the $\mu_H$-stability of $F$, we see $\chi(F)\leq 0$, which implies $\ch_3(F)\leq -\frac{1}{6}H^3$.

    Now assume that $s=-\frac{1}{2}$ and  $\ch_3(F)=-\frac{1}{6}H^3$. Then $F$ is reflexive by the previous results. Thus $F[1]$ is $\nu_{0,w}$-semistable for any $w>0$. Since the line segment connecting $\Pi(F)$ and $\Pi(\oh_Y(2))$ intersects with $b=0$ inside $\widetilde{U}$, we see $\Hom(\oh_Y(2),F[1])=\Hom(F,\oh_Y[2])=0$. Thus from $\chi(F,\oh_Y)=4$, we see $\hom(F,\oh_Y)\geq 4$. Pick four sections and consider the corresponding extension
    \begin{equation*}
        \oh_Y^{\oplus 4} \to G \to F[1]
    \end{equation*}
    Let $\ell$ be the line connecting $\Pi(F)$ and $\Pi(\oh_Y)$. We know $G$ is $\nu_{b,w}$-semistable for $(b,w) \in \ell \cap \widetilde{U}$ as $F[1]$ and $\oh_Y$ are $\nu_{b,w}$-stable of the same slope. Moreover, $\Hom(\oh_Y, F[1])=0$. Since $\ch(G) = \ch(\oh_Y(1))$, \cite[Proposition 4.20]{feyz:desing} implies that $G \cong \oh_Y(1)$. Thus $F \cong \cQ_Y$ as $h^0(G) = 4$ and $\Hom(\oh_Y, F[1])=0$. Note that the $\mu_H$-stability of $\cQ_Y$ follows from Lemma \ref{lem-Q-Y}.       
    
    
\end{proof}

\begin{lemma}\label{lem-serre-stability-d-2}
    Let $\sigma$ be a Serre-invariant stability condition on $\Ku(Y)$. Then $i^!\cQ_Y$ is $\sigma$-stable.
\end{lemma}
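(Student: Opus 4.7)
By the uniqueness of Serre-invariant stability conditions on $\Ku(Y_2)$ up to the action of $\widetilde{\mathrm{GL}}^+_2(\R)$, it suffices to prove $\sigma(b, w)$-stability of $i^!\cQ_Y$ for a single $(b, w) \in V$. The plan is to choose $(b, w)$ close to the corner point $(-1, \tfrac{1}{2})$ on $\overline{V}$, and to show that $i^!\cQ_Y[1]$ is a simple object of the heart $\cA(b, w)$ for that choice, which implies $\sigma$-stability of $i^!\cQ_Y[1]$ and hence of $i^!\cQ_Y$.

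First we verify that $i^!\cQ_Y[1]$ lies in $\cA(b, w)$. Shifting the triangle \eqref{exact. i!QY} by $[1]$ gives $\oh_Y(-H)[2] \to i^!\cQ_Y[1] \to \cQ_Y[1]$, so it suffices to check that both extremes lie in $\Coh^0_{b,w}(Y)$. A short $\nu_{b,w}$-slope computation places $\oh_Y(-H)[1]$ and $\cQ_Y$ in $\mathcal{F}_{b,w}$; applying Lemma~\ref{compare_stability}, combined with $\Ext^i(\oh_p, E) = 0$ for $i < 3$ whenever $E$ is locally free (applied to $E = \oh_Y(-H)$ and to the $\mu_H$-stable locally free sheaf $\cQ_Y$ of Lemma~\ref{lem-Q-Y}), yields the $\sigma^0_{b,w}$-stability of $\oh_Y(-H)[2]$ and $\cQ_Y[1]$ inside $\Coh^0_{b,w}(Y)$.

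Next, suppose for contradiction that there is a proper nonzero subobject $F \hookrightarrow i^!\cQ_Y[1]$ in $\cA(b, w)$. Writing $[F] = a\bv + c\bw$, the cokernel has class $(-2-a)\bv - c\bw$; the requirement that both $F$ and the cokernel have $\Im Z(b,w) \geq 0$, combined with a direct computation of $Z(b,w)(\bv)$ and $Z(b,w)(\bw)$ at the chosen $(b, w)$, forces $c = 0$ and $a \in \{-2, -1, 0\}$. The extreme cases $a \in \{-2, 0\}$ correspond to $F = i^!\cQ_Y[1]$ and $F = 0$, respectively. In the remaining case $a = -1$, $F$ is $\sigma$-stable of class $-\bv$ (by primitivity of $\bv$ in $\cN(\Ku(Y_2))$), and the classification of $\sigma$-stable objects of class $\bv$ in $\Ku(Y_2)$ as $\{i^*\oh_p : p \in Y\}$ gives $F \cong i^*\oh_p[1]$ for some $p \in Y$. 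By adjunction,
\[
\Hom_{\Ku(Y)}(i^*\oh_p[1], i^!\cQ_Y[1]) \cong \Hom_{\D^b(Y)}(\oh_p, i^!\cQ_Y),
\]
and the long exact sequence obtained by applying $\Hom(\oh_p, -)$ to $\oh_Y(-H)[1] \to i^!\cQ_Y \to \cQ_Y$ forces $\Hom(\oh_p, i^!\cQ_Y) = 0$, using $\Ext^i(\oh_p, \oh_Y(-H)) = \Ext^i(\oh_p, \cQ_Y) = 0$ for $i < 3$. This contradicts the existence of such $F$, so $i^!\cQ_Y[1]$ is simple in $\cA(b, w)$ and hence $\sigma$-stable.

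The main obstacle is the appeal to the classification of $\sigma$-stable objects of class $\bv$ in $\Ku(Y_2)$ as projections of skyscraper sheaves; this should follow from the analyses of Bridgeland moduli spaces of quartic double solids in prior work such as \cite{pertusi:some-remarks-fano-threefolds-index-two} and \cite{feyzbakhsh2021serre}.
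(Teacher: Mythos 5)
Your overall strategy (prove that $i^!\cQ_Y[1]$ has no proper subobjects in $\cA(b,w)$ for one well-chosen $(b,w)$) is stronger than what is needed, and the key numerical step does not hold. The degeneration that would force $c=0$ happens only \emph{at} the corner $(b,w)=(-1,\tfrac12)$, where $\Im Z^0(\bv)=2b^2-2w-1$ vanishes; but that point is excluded from $V$ (indeed $Z_{-1,\frac12}(\oh_Y(-1))=0$, so no stability condition is constructed there). For $(b,w)\in V$ near the corner one has $\Im Z^0(\bv)=-\delta$ for some small $\delta>0$ and $\Im Z^0(\bw)\approx 1$, so the two inequalities $\Im Z^0(a\bv+c\bw)\ge 0$ and $\Im Z^0((-2-a)\bv-c\bw)\ge 0$ only squeeze $c$ into an interval of width $2\delta/(1-2\delta+\cdots)$ \emph{centered near $a\delta$}: for every $c\ge 1$ there are integers $a\sim c/\delta$ satisfying both constraints. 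So the imaginary-part computation alone does not reduce you to $c=0$, $a\in\{-2,-1,0\}$; you would need to invoke the support property or a classification of which classes actually support semistable objects, and then compare slopes rather than prove outright simplicity. This is not a cosmetic issue: the paper's own proof (at $b=-\tfrac12$) finds a genuine candidate destabilizing quotient of class $-\bv+\bw$ that survives all numerical constraints and must be eliminated geometrically, using the classification of stable objects of that class as the $\mathcal{J}_\ell=\bO^{-1}(\cI_\ell)[1]$ and the vanishing $\Hom(i^!\cQ_Y[1],\oh_\ell(-1))=0$. Relatedly, Remark \ref{gluing_object_not_stable_double_tilted_heart} shows $i^!\cQ_Y[1]$ \emph{is} destabilized in the ambient heart $\Coh^0_{b,w}$ by $\oh_Y(-1)[2]$, so one should expect the stability inside $\Ku(Y)$ to require a genuinely categorical input, not just numerics.

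There is also a factual error in the one geometric step you do carry out: the $\sigma$-stable objects of class $\bv$ in $\Ku(Y_2)$ are (shifts of) ideal sheaves of lines $\cI_\ell$ by \cite[Theorem 1.1]{pertusi:some-remarks-fano-threefolds-index-two}, not the projections $i^*\oh_p$ of skyscraper sheaves — those have class $d\bv-\bw=2\bv-\bw$ (Proposition \ref{prop_moduli_w}). As it happens, $\Hom(\cI_\ell[1], i^!\cQ_Y[1])=\Hom(\cI_\ell,i^!\cQ_Y)=0$ as well (both $\Hom(\cI_\ell,\cQ_Y)$ and $\Ext^1(\cI_\ell,\oh_Y(-1))$ vanish), so the case $a=-1$ could still be closed, but the classification you cite is the wrong one. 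To repair the proof you should follow the route of the paper: fix $(b,w)=(-\tfrac12,w)$, verify $i^!\cQ_Y[1]\in\cA(-\tfrac12,w)$ via the triangle \eqref{exact. i!QY}, take the minimal destabilizing \emph{quotient} $F$, bound its slope from below using the subobject $\cQ_Y[1]$ (or \cite[Remark 5.12]{bayer:stability-conditions-kuznetsov-component}), solve the resulting pair of inequalities to pin down $[F]=-\bv+\bw$, and then kill that case with the $\Hom$-vanishing above.
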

\begin{proof}
We can assume that $\sigma=\sigma(-\frac{1}{2},w)$ for some $\frac{1}{4}>w>0$. As $\ch^{-1}(\cQ_Y[1]) = \ch^{-1}(\oh_Y(-1)[1]) = \frac{1}{2}$ is minimal, both $\cQ_Y$ and $\oh_Y(-1)[1]$ are $\nu_{b=-\frac{1}{2}, w}$-stable for any $w > 0$. Then Lemma \ref{compare_stability} implies that $\cQ_Y[1],\oh_Y(-1)[2] \in \Coh^0_{b = -\frac{1}{2}, w}$ and both are $\sigma^0_{b,w}$-stable. Thus by the exact sequence \eqref{exact. i!QY}, $i^!\cQ_Y[1]\in \cA(-\frac{1}{2},w)$. Suppose for a contradiction that $i^!\cQ_Y[1]$ is not $\sigma(-\frac{1}{2},w)$-semistable, and let $F$ be the destabilizing quotient object of minimum slope. We can write the class $[F]=x\bv+y\bw$ for $x,y\in \ZZ$. Then by taking $w=\frac{5}{32}$, one can check the only integers $x,y$ satisfying 
\begin{equation*}
    \Im(Z^0_{-\frac{1}{2},w}(i^!\cQ_Y[1]))\geq \Im(Z^0_{-\frac{1}{2},w}(F))> 0
\end{equation*}
and 
\begin{equation}\label{slope orders}
    \mu^0_{-\frac{1}{2},w}(\cQ_Y[1])\leq \mu^0_{-\frac{1}{2},w}(F)<\mu^0_{-\frac{1}{2},w}(i^!\cQ_Y[1])
\end{equation}
are $(x,y)=(-1,1)$. The left-hand inequality in \eqref{slope orders} comes from the short exact sequence \eqref{exact. i!QY} and the fact that $\mu_{b=-\frac{1}{2}, w}^0(\cQ_Y[1]) < \mu_{b=-\frac{1}{2}, w}^0(\oh_Y(-1)[2])$ for any $w>0$. By \cite[Theorem 1.1]{pertusi:some-remarks-fano-threefolds-index-two}, we know that $F$ fits into a triangle $\oh_Y(-1)[1]\to F\to \oh_l(-1)$ for a line $l\subset Y$. However $\Hom(i^!\cQ_Y[1],F)=\Hom(i^!\cQ_Y[1],\oh_l(-1))=0$, which makes a contradiction.
\end{proof}

\begin{remark}
\label{gluing_object_not_stable_double_tilted_heart}
Note that $i^!\cQ_Y[1]$ is not stable in double tilted heart $\mathrm{Coh}^0_{b=-\frac{1}{2},w}$. In fact, it is destabilized by $\oh_Y(-1)[2]$. There is no wall in the $(b,w)$-plane which would make $i^!\cQ_Y[1]$ stable. The objects $E$ fitting in a triangle $\cQ_Y[1]\rightarrow E[1]\rightarrow\oh_Y(-1)[2]$ are obtained from triangle~\eqref{exact. i!QY} as all possible extensions in the other direction. This corresponds to a blow up at the point $[i^!\cQ_Y]$ in the Bridgeland moduli space $\mathcal{M}_{\sigma}(\Ku(Y),2\bv)$ of $\sigma$-stable objects of class $2\bv$ in $\Ku(Y)$ with the exceptional locus parametrizing those semistable sheaves of rank two, $c_1=0, c_2=2$ and $c_3=0$ not in $\Ku(Y)$. For more details, see Section~\ref{section_classification_instanton_sheaves_d=2}.
\end{remark}

\section{Moduli spaces on cubic threefolds}\label{section_Y3}

In this section, we always fix $Y$ to be a del Pezzo threefold of degree three, i.e. a cubic threefold. The goal of this section is to prove Proposition \ref{prop_Y3_classify} which classifies Bridgeland semistable objects of class $3\bv$ in $\Ku(Y)$ .

\bigskip

Consider the line $\ell_{d=3}$ as defined in section \ref{subsection.instanton} which passes through $\Pi(\oh_Y(-H))$ and $\Pi(\bv)$. It is of the equation 
    	\begin{equation*}
    	w = - \frac{5}{6} b - \frac{1}{3}.
    \end{equation*}  
   and intersects $\partial \widetilde{U}$ at two points with $b$-values $b_1 = -1$ and $b_2 =-\frac{2}{5}$.  
We know by Lemma \ref{no_wall_lem} that there is no wall for an object $E$ of class $\ch_{\leq 2}(E) = (3, 0, -H^2)$ between the large volume limit ($b<0$ and $w \gg 0$) and the line $\ell_3$. The following Proposition describes the objects which gets destabilised along the wall $\ell_3$.   
\begin{proposition}\label{porp.wall-v3}
		Take a point $(b, w) \in \ell_3 \cap U$ and let $E$ be a strictly $\nu_{b, w}$-semistable object of class $\ch_{\leq 2}(E)= (3, 0, -H^2)$ which is unstable in one side of the wall $\ell_3$. Then 
	the destabilising sequence is $E_1 \rightarrow E \rightarrow E_2$ where one of the factors $E_i$ is $\oh_Y(-H)[1]$ and the other one $E_j$ is a $\mu_H$-stable sheaf of class $\ch_{\leq 2}(E_j)=(4,-H,-\frac{1}{2}H^2)$. In particular, we have $\ch_3(E)\leq 0$.	
	\end{proposition}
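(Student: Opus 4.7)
My plan is to pin down the destabilising factors numerically, identify them up to isomorphism, and then extract the inequality $\ch_3(E)\leq 0$.

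First, both factors $E_1,E_2$ are $\nu_{b,w}$-semistable of slope equal to that of $\ell_3$, so their $\Pi$-images lie on $\ell_3$ outside $\widetilde{U}$ by \cite[Proposition 3.2]{li:fano-picard-number-one}, i.e.\ on the two half-lines $b\leq -1$ or $b\geq -\tfrac{2}{5}$. Moreover, since $E$ is unstable on one side of $\ell_3$, a wall-crossing argument forces both $\Pi(E_i)$ to differ from $\Pi(E)=(0,-\tfrac{1}{3})$ (otherwise the destabilising SES would persist on every wall through $\Pi(E)$, contradicting $\nu$-stability at large volume). Combining Chern-character additivity $\ch_{\leq 2}(E_1)+\ch_{\leq 2}(E_2)=(3, 0,-H^2)$, integrality of $\ch_1$ on $Y_3$, the Bogomolov inequalities $\Delta_H(E_i)\geq 0$, the condition $E_i\in\Coh^b(Y)$ for $b\in(-1,-\tfrac{2}{5})$ (which restricts the $\mu_H$-slopes of $\cH^{-1}(E_i)$ and $\cH^0(E_i)$ into disjoint intervals), and the fact that $E$ is itself $\mu_H$-semistable by Lemma~\ref{Gstability_lvls_coincide} (so any subsheaf $E_1\hookrightarrow E$ satisfies $\mu_H(E_1)\leq 0$), a case analysis on $\ch_0(E_1)\in\ZZ$ leaves (up to swap) the unique admissible pair
\[
\ch_{\leq 2}(E_1)=(-1,H,-\tfrac{1}{2}H^2)=\ch_{\leq 2}(\oh_Y(-H)[1]),\qquad \ch_{\leq 2}(E_2)=(4,-H,-\tfrac{1}{2}H^2).
\]

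Since $\Delta_H$ vanishes on $\ch_{\leq 2}(E_1)$ and $\Pi(E_1)=(-1,\tfrac{1}{2})$ lies on the boundary tangent $w=-b-\tfrac{1}{2}$ of $\widetilde{U}$, every $\nu_{b,w}$-semistable object in $\Coh^b(Y)$ with this Chern character is a shift of a line-bundle twist of $\oh_Y$; matching the precise Chern character and the membership $E_1\in\Coh^b(Y)$ for $b\in(-1,-\tfrac{2}{5})$ pins down $E_1\cong\oh_Y(-H)[1]$. For the factor $E_2$ of class $(4,-H,-\tfrac{1}{2}H^2)$, a similar case analysis (or the observation that any wall for $E_2$ above $\ell_3$ would induce a wall for $E$ above $\ell_3$, contradicting the hypothesis combined with Lemma~\ref{no_wall_lem}) shows $E_2$ has no wall between $\ell_3\cap\widetilde{U}$ and the large volume limit. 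Hence $E_2$ is $\mu_H$-semistable by Lemma~\ref{Gstability_lvls_coincide}; coprimality $\gcd(4,3)=1$ upgrades this to $\mu_H$-stability.

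For the bound $\ch_3(E)\leq 0$: one computes $\ch_3(\oh_Y(-H)[1])=\tfrac{1}{2}$, so it suffices to show $\ch_3(E_2)\leq -\tfrac{1}{2}$. Slope stability of $E_2$ (with $\mu_H(E_2)=-\tfrac{3}{4}$) gives $\Hom(\oh_Y,E_2)=0$, and Serre duality with $\mu_H(E_2)>-2=\mu_H(\oh_Y(-2H))$ gives $\Hom(\oh_Y,E_2[3])=0$. For $\Hom(\oh_Y,E_2[2])\cong\Hom(E_2,\oh_Y(-2H)[1])^{*}$, the line joining $\Pi(E_2)=(-\tfrac{1}{4},-\tfrac{1}{8})$ and $\Pi(\oh_Y(-2H))=(-2,2)$ enters $\widetilde{U}$ strictly above $\ell_3$, and at a point just below this line but still above $\ell_3$ both $E_2$ and $\oh_Y(-2H)[1]$ are $\nu_{b,w}$-stable with $\nu_{b,w}(E_2)>\nu_{b,w}(\oh_Y(-2H)[1])$, yielding the desired vanishing. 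Riemann--Roch then gives $\chi(\oh_Y,E_2)=\tfrac{1}{2}+\ch_3(E_2)\leq 0$, so $\ch_3(E)\leq 0$.

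The hard part will be the numerical case analysis in the first step: several rank pairs (e.g.\ $(\ch_0(E_1),\ch_1(E_1))=(1,2H)$ with $E_2$ of class $(2,-2H,3l)$) pass Bogomolov and the position constraint on $\ell_3\cap\widetilde{U}^c$ at first glance, and eliminating them requires combining the $\Coh^b(Y)$-constraint on $\mu_H$-slopes with the rigidity forced by $\Delta_H=0$ on complementary factors (which tends to produce line-bundle shifts of the wrong parity to lie in $\Coh^b(Y)$ for $b\in(-1,-\tfrac{2}{5})$), together with the bound $\mu_H(E_1)\leq 0$ inherited from the $\mu_H$-semistability of $E$.
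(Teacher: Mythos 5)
Your skeleton is the same as the paper's (pin down the numerical classes of the factors from the position of the wall and the heart constraints, identify the rank~$-1$ factor as $\oh_Y(-H)[1]$, show the rank~$4$ factor is a $\mu_H$-stable sheaf, and deduce $\ch_3(E)\leq 0$ from $\chi(\oh_Y,E_2)\leq 0$), and you land on the correct pair of classes. But there is a genuine gap in the first step: the toolkit you list (Bogomolov, position on $\ell_3\setminus\widetilde U$, the $\Coh^b$ slope constraints, and $\mu_H$-semistability of $E$) does \emph{not} close the case analysis. The configuration with $E_1$ a sheaf of class $\ch_{\leq 2}(E_1)=(3,-H,-\tfrac16 H^2)$ and $E_2$ of rank zero with $\ch_{\leq 2}(E_2)=(0,H,-\tfrac56 H^2)$ survives all of your constraints: $\Pi(E_1)=(-\tfrac13,-\tfrac1{18})$ lies on $\ell_3$ with $b\geq-\tfrac25$ and outside $\widetilde U$, $\Delta_H(E_1)=2(H^3)^2>0$, the rank-zero factor has constant $\nu$-slope $-\tfrac56$ equal to the slope of $\ell_3$ (so it genuinely can define this wall and is invisible to your ``$\Pi(E_i)$ on the two half-lines'' framing, since $\Pi$ is undefined for it), and $\mu_H(E_1)=-\tfrac13<0$ is consistent with $E_1\hookrightarrow E$. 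The paper kills this case only by invoking a separate non-existence result (Lemma~\ref{lem.no semistable-rank 3}: no $\mu_H$-stable sheaf of class $(3,\pm H,sH^2)$ with $s\geq -\tfrac16$), whose proof is an $\Ext^2$-vanishing/Euler-characteristic argument of a different nature from everything on your list. The same lemma is also needed to justify your claim that the rank-$4$ factor has no wall above $\ell_3$ (your ``a wall for $E_2$ above $\ell_3$ would induce a wall for $E$'' is not a valid implication; the paper instead proves Lemma~\ref{lem.no semistable-rank 4} directly, and its wall analysis again falls back on Lemma~\ref{lem.no semistable-rank 3}). A second, smaller omission of the same kind: the classes you end up with can also be realized with the ``wrong'' internal structure, namely $\cH^{-1}(E_i)=\oh_Y(-H)$ for \emph{both} factors, forcing a $\mu_H$-stable sheaf of class $(5,-2H,0)$ whose $\Pi$ sits exactly on $\partial\widetilde U$; excluding it needs the boundary case of \cite[Proposition 3.2]{li:fano-picard-number-one}, not the open inequality.

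The rest is essentially fine modulo small slips: Lemma~\ref{Gstability_lvls_coincide} is stated only for classes $n\bv$, so it is not the right citation for concluding that the large-volume-limit semistable object of class $(4,-H,-\tfrac12 H^2)$ is a sheaf (though the conclusion itself is standard for positive-rank classes at $w\gg0$); $\mu_H(E_2)=-\tfrac14$, not $-\tfrac34$; and your identification $E_1\cong\oh_Y(-H)[1]$ quietly assumes $\ch_3(E_1)$ is also pinned down, i.e.\ that a semistable object of class $\ch(\oh_Y(-H)[1])+(0,0,0,*)$ has no skyscraper contribution --- this is exactly \cite[Proposition 4.20]{feyz:desing}, which you should cite rather than attribute to ``$\Delta_H=0$ rigidity.'' Your coprimality upgrade from $\mu_H$-semistability to $\mu_H$-stability of the rank-$4$ factor and your $\ch_3$ computation are correct and match the paper's conclusion.
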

	
\begin{proof}
Let $E_1 \rightarrow E \rightarrow E_2$ be a destabilising sequence along the wall. If the destabilising factors $E_1$ and $E_2$ are both sheaves, then $-\frac{2}{5} =b_2 \leq \mu_H(E_i)$ for $i=1, 2$. Moreover, the location of the wall implies that $\mu_H(E_i) \neq 0$. Thus $\ch_{\leq 1}(E_1) = (3, -H)$ up to relabeling the factors. Moreover $\ch_2(E_1) = -\frac{1}{6}H^2$ because $\Pi(E_1)$ lies on $\ell_3$. We know the wall $\ell_3$ passes through the vertical line $b =-\frac{1}{2}$ at a point inside $\widetilde{U}$, thus $E_1$ is $\nu_{b=-\frac{1}{2}, w}$-semistable for some $w >0$. This implies $E_1$ is $\nu_{b=-\frac{1}{2}, w}$-stable for any $w >0$ by \cite[Lemma 3.5]{feyz:slope-stability-of-restriction}, and so $E_1$ is a $\mu_H$-stable sheaf which is not possible by Lemma \ref{lem.no semistable-rank 3}. Thus $E_1$ or $E_2$ are not both sheaves.

Let $(r, cH) = \ch_{\leq 1}(\cH^{-1}(E_1)) + \ch_{\leq 1}(\cH^{-1}(E_2))$, then \eqref{ineq.slope} gives
	\begin{equation*}
	-\frac{2}{5}(r+3) \leq c \leq -r. 
	\end{equation*}
    Thus either $(r, c)$ is equal to $(2, -2)$ or $(1, -1)$.
    
    \textbf{Case I.} First assume $(r, c)$ is equal to $(2, -2)$. We know $\cH^{-1}(E_i)$ are torsion-free sheaves. They are even reflexive, otherwise there is a torsion sheaf $T$ supported in co-dimension at least 2 with embedding $T \hookrightarrow \cH^{-1}(E_i)[1] \hookrightarrow E_i$ in $\Coh^b(Y)$. This is not possible as $\nu_{b,w}$-slope of semistable factors $E_i$ 's are equal to $E$ which is not $+\infty$. Thus one of the following cases can happen: 
    \begin{itemize}
        \item[(a)] $\ch_{\leq 1}(\cH^{-1}(E_i)) = (1, -H)$ for $i=1, 2$, or
        \item[(b)] $\cH^{-1}(E_1) = 0$ and $\ch_{\leq 1}(\cH^{-1}(E_2)) = (2, -2H)$.  
    \end{itemize}
    On the other hand, we have
    \begin{equation*}
    \ch_{\leq 1}(\cH^0(E_1)) + \ch_{\leq 1}(\cH^0(E_2)) = (5, -2H). 
    \end{equation*}
    Since for $i=1, 2$,
    \begin{equation}\label{eq.slope min}
        \mu_H(\cH^0(E_i)) \geq \mu_H^-(\cH^0(E_i)) \geq -\frac{2}{5},
    \end{equation}
    the sheaf $\cH^0(E_i)$ is torsion supported in dimension at most 1 for either $i=1$ or $i=2$.
    
    In case (a), we have $\cH^{-1}(E_i) = \oh_Y(-H)$ for $i=1, 2$. 
    By relabelling the factors, we may assume $\ch^0(E_2)$ is a torsion sheaf. We know $\Pi(E_2)$ lies on the line $\ell_d$ and 
    \begin{align*}
        \ch_{\leq 2}(E_2) =\  & \ch_{\leq 2}(\cH^0(E_2)) - \ch_{\leq 2}(\cH^{-1}(E_2))\\
        = \ & \big(0, 0, \ch_2(\cH^0(E_2)) \big) - \left(1, -H, \frac{H^2}{2}\right). 
    \end{align*}
    This implies that $\ch_2(\cH^0(E_2)) = 0$, and so 
    \begin{equation*}
        \ch_2(\cH^0(E_1)) = \ch_2(\cH^{-1}(E_1)) + \ch_2(\cH^{-1}(E_2)) + \ch_2(E) =0
    \end{equation*}
    which implies $\ch_{\leq 2}(\cH^0(E_1)) = (5, -2H, 0)$. Thus $\Pi(\cH^0(E_1))$ lies on the boundary of $\widetilde{U}$ which is not possible by \cite[Proposition 3.2]{li:fano-picard-number-one} as \eqref{eq.slope min} implies that $\cH^0(E_1)$ is a $\mu_H$-stable sheaf.

    In case (b), we have $E_1 \cong \cH^0(E_1)$. Thus $\cH^0(E_1)$ cannot be supported in dimension 1, and so $\ch_{\leq 1}(E_1) = \ch_{\leq 1}(\cH^0(E_1)) = (5, -2H)$. Since $\Pi(E_1)$ lies on $\ell_d$, we have $\ch_2(E_1) = 0$ which is not again possible by the same argument as in case (a).


    \bigskip
    
    \textbf{Case II.} Now suppose $(r, c) = (1, -1)$, so by relabelling the factors, we may assume $\cH^{-1}(E_1) = 0$ and $\cH^{-1}(E_2) = \oh_Y(-H)$. Moreover,
    \begin{equation}\label{eq.sum ch}
    \ch_{\leq 2}(\cH^0(E_1)) + \ch_{\leq 2}(\cH^0(E_2)) = \left(4, -H , -\frac{1}{2}H^2\right). 
    \end{equation}
    Let $\ch_{\leq 2}(E_1) = (r_1, c_1H, s_1H^2)$. Since $\mu_H(\cH^0(E_i)) \geq -\frac{2}{5}$, we gain
    \begin{equation*}
        -\frac{2}{5} r_1 \leq\  c_1\ \leq -\frac{2}{5} r_1 + \frac{3}{5}.  
    \end{equation*}
    Thus $(r_1, c_1)$ is equal to $(0, 0)$, $(1, 0)$, $(3, -1)$, or $(4, -1)$. The first case cannot happen as torsion sheaves supported in dimension $\leq 1$ cannot make a wall. If $(r_1, c_1) = (1, 0)$, then since $\Pi(E_1)$ lies on $\ell_d$, we have $s_1 = -\frac{1}{3}$, thus $E_1$ has the same $\nu_{b,w}$-slope as $E$ with respect to any $(b, w)$, thus it cannot make a wall. If $(r_1, c_1) = (3, -1)$, then $s_1 =- \frac{1}{6}$. We know the wall $\ell_3$ passes through the vertical line $b =-\frac{1}{2}$ at a point inside $\widetilde{U}$, thus \cite[Lemma 3.5]{feyz:slope-stability-of-restriction} implies that $E_1$ is a $\mu_H$-stable sheaf which is not possible by Lemma \ref{lem.no semistable-rank 3}. Thus we have  
	 \begin{equation}\label{eq-sum ch}
    \ch_{\leq 2}(E_1)  = \left(4, -H , -\frac{1}{2}H^2\right),  
    \end{equation} 
    and $\cH^0(E_2)$ is a skyscraper sheaf. Then \cite[Proposition 4.20]{feyz:desing} implies that $E_2 \cong \oh_Y(-H)[1]$. Since $E_1$ is $\nu_{b,w}$-semistable on $\ell_3$, it is $\nu_{b=-\frac{1}{2}, w = \frac{1}{12}}$-semistable. Thus by Lemma \ref{lem.no semistable-rank 4}, $E_1$ is a $\mu_H$-stable reflexive sheaf. Finally, the last statement follows from Lemma \ref{lem.uniquness of K} that $\ch_3(E_1)\leq -\frac{1}{6}H^3$.
\end{proof}

\begin{lemma}\label{lem.no semistable-rank 3}
    There is no $\mu_H$-stable sheaf $E$ of class $\ch_{\leq 2}(E) = (3, H, sH^2)$ for $s \geq -\frac{1}{6}$. 
\end{lemma}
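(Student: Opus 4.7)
The plan is to derive a numerical contradiction by combining a Hirzebruch--Riemann--Roch computation of $\chi(E,E)$ with a Hom-vanishing coming from wall-crossing, so that the resulting bound $\chi(E,E) \le 1 + \hom(E,E[2])$ fails for $s \ge -\tfrac16$.

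First, using $\ch(E^\vee)\cdot\ch(E) = (9,\,0,\,(6s-1)H^2,\,0)$ and the Riemann--Roch formula
\[
\chi(F) \,=\, \ch_0(F) + \tfrac{d+3}{3d}\,H^2\ch_1(F) + H\cdot\ch_2(F) + \ch_3(F)
\]
on $Y = Y_3$ (so $d=3$), a short calculation gives $\chi(E,E) = 9 + 3(6s-1) = 6+18s \ge 3$ for $s \ge -\tfrac16$. Since $K_Y = -2H$ and $E$ is $\mu_H$-stable, Serre duality and the $\mu_H$-stability of $E(-2H)$ yield $\hom(E,E[3]) = \hom(E,E(-2H))^\ast = 0$, hence $\chi(E,E) \le 1 + \hom(E,E[2])$. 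Thus it suffices to prove $\hom(E,E[2])=0$: indeed this would force $6+18s \le 1$, i.e.~$s \le -\tfrac{5}{18}$, contradicting $s \ge -\tfrac16$.

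Equivalently (by tensoring with $\oh_Y(H)$), I would prove $\hom(E(H), E(-H)[1])=0$ via the weak-stability setup of Section~\ref{survey_weak_stabilitycon}. Consider the line
\[
\ell \colon \; w \,=\, \tfrac{b}{3} + \tfrac{s}{3} + \tfrac{7}{18}
\]
passing through $\Pi(E(H)) = (\tfrac{4}{3},\,\tfrac{s}{3}+\tfrac{5}{6})$ and $\Pi(E(-H)) = (-\tfrac{2}{3},\,\tfrac{s}{3}+\tfrac{1}{6})$. A short comparison against the tangent bounds $w = kb - k^2/2$ of $\widetilde U$ shows, for $s \ge -\tfrac16$, that $\ell \cap \widetilde U$ contains a nonempty open subsegment lying in the strip $\{-\tfrac23 < b < \tfrac43\}$; on this subsegment both $E(H)$ and $E(-H)[1]$ belong to the heart $\Coh^{b}(Y)$. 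On $\ell$ the slopes satisfy $\nu_{b,w}(E(H)) = \nu_{b,w}(E(-H)[1])$, and a direct computation from \eqref{noo} shows $\nu_{b,w}(E(H)) > \nu_{b,w}(E(-H)[1])$ just below $\ell$. By Proposition~\ref{locally finite set of walls}, every wall for $E(\pm H)$ passes through $\Pi(E(\pm H))$; a destabilizing-subobject enumeration in the spirit of Cases I/II of Proposition~\ref{porp.wall-v3}, constrained by Bogomolov and by the slope bounds $-\tfrac23 \le \mu_H^\pm \le \tfrac43$, rules out any wall strictly above $\ell$ for either $E(H)$ or $E(-H)[1]$. Hence both objects remain $\nu_{b,w}$-semistable in a chamber just below $\ell$, and the standard slope-inequality Hom-vanishing in $\Coh^{b}(Y)$ gives $\hom(E(H), E(-H)[1]) = 0$, closing the argument.

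The main obstacle is this last wall-crossing enumeration: one must rule out every candidate wall through $\Pi(E(\pm H))$ that meets $\ell \cap \widetilde U$ from above. However, because $E(\pm H)$ have rank $3$ and first Chern class $\pm H + 3H$, the numerical constraints (rank, $H^2$-degree, Bogomolov) confine any potential destabilizer to a short finite list of Chern characters, each excluded by the same style of case analysis already carried out in Section~\ref{results_general_Wallcross} and in the proof of Proposition~\ref{porp.wall-v3}.
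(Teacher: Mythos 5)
Your overall strategy is exactly the one the paper uses, up to the harmless twist by $\oh_Y(H)$: compute $\chi(E,E)=18s+6\geq 3$, kill $\Ext^3$ by $\mu_H$-stability, and kill $\Ext^2$ by showing the two twists are tilt-(semi)stable along the line joining $\Pi(E(H))$ and $\Pi(E(-H))$. The Riemann--Roch computation, the vanishing of $\hom(E,E[3])$, and the line equation are all correct. However, the proof as written has two genuine gaps, both located at the one step that carries all the technical weight.

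First, you never reduce to the case that $E$ is reflexive. This reduction is harmless (replace $E$ by $E^{\vee\vee}$: it is still $\mu_H$-stable and $\ch_2$ can only increase, so the hypothesis $s\geq -\tfrac16$ is preserved), but it is also necessary: if $E$ is not reflexive, then $E(-H)[1]$ receives a nonzero map from a zero-dimensional sheaf, which has $\nu_{b,w}=+\infty$ in $\Coh^b(Y)$ and destabilizes $E(-H)[1]$ everywhere; your claim that $E(-H)[1]$ is $\nu_{b,w}$-semistable near $\ell$ is then simply false. Second, the wall-exclusion -- which is the entire content of the lemma -- is asserted rather than proved, and the brute-force enumeration you defer to is not the efficient tool. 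The clean argument (the one the paper uses, via \cite[Lemma 3.5]{feyz:slope-stability-of-restriction}) is that $\ell$ meets the vertical lines $b=1$ and $b=-\tfrac12$ inside $\widetilde{U}$ (this is where $s\geq -\tfrac16$ enters), and along those vertical lines $\ch_1^{bH}(E(H))\cdot H^2=H^3$ and $\ch_1^{bH}(E(-H)[1])\cdot H^2=\tfrac12 H^3$ take the minimal positive values among all classes, so no wall can cross either vertical line; combined with reflexivity this makes both objects $\nu_{b,w}$-stable at a point of $\ell$, and two non-isomorphic stable objects of equal slope have no Homs -- no perturbation below $\ell$ is needed. Indeed, your ``just below $\ell$'' variant introduces a further problem: $\ell$ passes through $\Pi(E(-H))$ and so is itself a candidate wall for $E(-H)[1]$, in which case semistability of $E(-H)[1]$ in the chamber just below $\ell$ is precisely what you are not entitled to assume. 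You should work on $\ell$ itself with stability (not semistability), after the reflexivity reduction.
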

\begin{proof}
    Assume there is such a stable sheaf $E$. By replacing $E$ with its double dual, we may assume $E$ is a reflexive sheaf. Consider the line $\ell$ passing through $\Pi(E)$ and $\Pi(E(-2H))$ which is of equation \begin{equation*}
        w= -\frac{2}{3} b + \frac{s}{3} + \frac{2}{9}. 
    \end{equation*} 
    Since $s \geq -\frac{1}{6}$, it crosses the vertical lines $b= 0$ and $b=-\frac{3}{2}$ at points inside $\widetilde{U}$. Thus \cite[Lemma 3.5]{feyz:slope-stability-of-restriction} implies that both $E$ and $E(-2H)[1]$ are $\nu_{b,w}$-stable of the same slope for $(b,w) \in \ell \cap \widetilde{U}$. This implies $\hom(E, E(-2H)[1]) = \hom(E, E[2]) = 0$ which is a contradiction as $\hom(E, E) =1$ and $\chi(E, E) = 18s +6\geq 3$.
\end{proof}
\begin{lemma}\label{lem.no semistable-rank 4}
   Let $b_0 = -\frac{1}{2}$ and pick $w \geq \frac{1}{12}$ (note that the point $(b_0, \frac{1}{12}) \in \widetilde{U} \cap \ell_3$). There is no $\nu_{b_0, w}$-semistable object $E$ of class $\ch_{\leq 2}(E) = (4, -H, sH^2)$ for $s > -\frac{1}{2}$. 
  Moreover, if $s = -\frac{1}{2}$, then $\nu_{b_0, w}$-semistablility of $E$ at some $w \geq \frac{1}{12}$ implies that it is $\nu_{b_0, w}$-stable for any $w \geq \frac{1}{12}$. In particular, in this case, $E$ is a $\mu_H$-stable reflexive sheaf.
  \end{lemma}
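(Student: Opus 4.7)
My strategy is to reduce to the case that $E$ is a $\mu_H$-stable reflexive sheaf and then combine Hirzebruch--Riemann--Roch with a Serre-duality vanishing $\mathrm{Ext}^2(E,E) = 0$ obtained by comparing $E$ and $E(-2H)[1]$ along the line joining their projections $\Pi$.

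The first step exploits that $\ch_1^{b_0 H}(E) = H$ is indivisible in the $H$-lattice, so by the standard argument (\emph{cf.}\ the proof of Lemma \ref{rule_out}) no wall for $E$ meets the vertical line $b = b_0$ inside $\widetilde{U}$. Hence the hypothesized $\nu_{b_0, w^\ast}$-semistability propagates to $\nu_{b_0, w}$-semistability for every $w > 0$ with $(b_0, w) \in \widetilde{U}$, and in particular at the large-volume limit. This identifies $E$ with a $\mu_H$-(semi)stable sheaf (since $\ch_0(E) = 4 > 0$, it is an honest sheaf), and any $\mu_H$-destabilising subsheaf would have class $(4k, -kH, \ldots)$ with $0 < 4k < 4$, which is impossible, so $E$ is $\mu_H$-stable. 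Because the Euler form $\chi(E,E) = 13 + 24s$ depends only on $\ch_{\leq 2}$, and the no-walls conclusion of this step uses only $\ch_{\leq 2}$, I may replace $E$ by its reflexive hull $E^{\vee\vee}$ and assume $E$ is reflexive.

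Next, Serre duality together with $\mu_H$-stability gives $\hom(E, E[3]) = \hom(E, E(-2H))^\ast = 0$ and $\hom(E, E[2])^\ast = \hom(E, E(-2H)[1])$. I would kill the last group via the line $\ell$ joining $\Pi(E) = (-\tfrac{1}{4}, \tfrac{s}{4})$ and $\Pi(E(-2H)) = (-\tfrac{9}{4}, \tfrac{s}{4} + \tfrac{5}{2})$, of slope $-\tfrac{5}{4}$ and equation $w = -\tfrac{5}{4} b - \tfrac{5}{16} + \tfrac{s}{4}$. It crosses $b = b_0$ at $w = \tfrac{5 + 4s}{16} \geq \tfrac{3}{16} > \tfrac{1}{12}$ whenever $s \geq -\tfrac{1}{2}$, so $(b_0, \tfrac{1}{12})$ lies strictly below $\ell$, and a direct calculation yields $\nu_{b_0, 1/12}(E) > \nu_{b_0, 1/12}(E(-2H)[1])$. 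Applying the same no-walls principle to $E(-2H)[1]$ at $b' = -2$ (where $\ch_1^{-2H}(E(-2H)[1]) = H$ is indivisible) shows that $E(-2H)[1]$ is $\nu_{-2, w}$-stable for every admissible $w$; a Bogomolov-discriminant analysis of candidate destabilising classes---whose $\Pi$'s must lie on walls through $\Pi(E(-2H))$---then rules out any $E(-2H)[1]$-wall separating the stability region on $b = -2$ from $(b_0, \tfrac{1}{12})$. With both $E$ and $E(-2H)[1]$ tilt-semistable in $\Coh^{b_0}(Y)$ and in the correct slope order, $\hom(E, E(-2H)[1]) = 0$ follows.

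Finally, Hirzebruch--Riemann--Roch gives $\chi(E, E) = 13 + 24 s$, and combined with $\hom(E,E) = 1$ and the vanishings $\hom(E, E[2]) = \hom(E, E[3]) = 0$ this forces $\chi(E, E) \leq 1$, hence $s \leq -\tfrac{1}{2}$. For $s > -\tfrac{1}{2}$ this contradicts the hypothesis; for $s = -\tfrac{1}{2}$, all inequalities are tight, so $\hom(E, E[1]) = 0$ and $E$ is rigid, which together with the no-walls propagation upgrades the semistability assumed at one $w \geq \tfrac{1}{12}$ to stability at every $w \geq \tfrac{1}{12}$, and the $\mu_H$-stable reflexive sheaf conclusion is automatic. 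The main obstacle I anticipate is the wall-exclusion for $E(-2H)[1]$ at $b = b_0$: unlike $E$, its $\ch_1^{b_0 H}$ equals $7H$ and is not indivisible, so the clean single-vertical-line argument from the first step does not transplant, and walls through $\Pi(E(-2H))$ meeting $b = b_0$ must be excluded by an explicit case analysis anchored at the no-walls line $b = -2$.
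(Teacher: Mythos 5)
Your overall architecture (reduce to a $\mu_H$-stable reflexive sheaf, kill $\Ext^2(E,E)$ by comparing $E$ with $E(-2H)[1]$, and conclude from $\chi(E,E)=13+24s\le 1$) is the same as the paper's, but your foundational first step contains a genuine gap. You claim that because $\ch_1^{b_0H}(E)=H$ is ``indivisible'', no wall for $E$ can meet the vertical line $b=b_0=-\tfrac12$. This is false reasoning: the no-wall-on-a-vertical-line argument requires $\ch_1^{bH}(E).H^2$ to be \emph{minimal} among the positive values attainable by objects of $\Coh^{b}(Y)$, and at the half-integral value $b=-\tfrac12$ that minimum is $\tfrac12 H^3$ (achieved, e.g., by $\oh_Y$ or any odd-rank subobject), not $H^3$. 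Since $\Im[Z_{b_0,w}(E)]=H^3=\tfrac12 H^3+\tfrac12 H^3$, destabilising subobjects with $\ch_1^{b_0H}=\tfrac12 H$ are not excluded, and walls can a priori cross $b=-\tfrac12$. This is exactly the scenario the paper's proof is forced to confront: it classifies the possible destabilising factors as $(r_1,c_1)=(3,-1)$ or $(5,-2)$ and eliminates them using Lemma~\ref{lem.no semistable-rank 3} and Li's numerical inequality. (Contrast with Lemma~\ref{rule_out}, where $\ch_1^{-H}(F).H^2=H^3$ \emph{is} minimal at the integral value $b=-1$, so the quick argument is legitimate there.) Without this wall exclusion your propagation to the large-volume limit, the identification of $E$ as a $\mu_H$-stable sheaf, the semistability of $E$ at $(b_0,\tfrac1{12})$ needed for the Hom-vanishing, and the ``Moreover'' clause all fail.

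Two secondary points. First, you correctly flag that the analogous issue arises for $E(-2H)[1]$ (whose $\ch_1^{b_0H}=7H$), but the identical problem already afflicts $E$ itself, and it is there that the real work lies. Second, your closing claim that rigidity ($\ext^1(E,E)=0$ when $s=-\tfrac12$) ``upgrades'' semistability at one $w$ to stability at every $w\ge\tfrac1{12}$ does not follow: rigid objects can be strictly semistable or can be destabilised across a wall (cf.\ $\oh_Y^{\oplus 2}$), so the stability for all $w\ge\tfrac1{12}$ must come from the explicit wall exclusion, as in the paper, not from a dimension count.
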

  \begin{proof}
 Let $E$ be a $\nu_{b_0, w}$-semistable object of class $\ch_{\leq 1}(E) = (4, -H)$ such that $\ch_2(E)H \geq -\frac{H^3}{2}$. 
  We first claim $E$ is $\nu_{b_0, w}$-stable for any $w \geq \frac{1}{12}$. If not, there is a wall $\ell$ for $E$ passing through $\nu_{b_0, w}$ for some $w \geq \frac{1}{12}$. Let $E_1$ be a destabilising factor of class $(r_1, c_1H, s_1)$ such that $r_1 > 0$. We have
  \begin{equation*}
      0\ <\ \Im[Z_{b = -\frac{1}{2}, w_0}(E_1)] = c_1 + \frac{1}{2}r_1 \ <\ \Im[Z_{b = -\frac{1}{2}, w_0}(E_1)] =1. 
  \end{equation*}
  Thus $c_1 + \frac{1}{2}r_1 = \frac{1}{2}$. If $\frac{c_1}{r_1} < -\frac{2}{5}$, then position of the wall implies that $\Pi(E_1)$ lies in $\widetilde{U}$ which is not possible. Thus 
  \begin{equation*}
     -\frac{2}{5} \leq  \frac{c_1}{r_1} = -\frac{1}{2} + \frac{1}{2r_1}
  \end{equation*}
  which implies $(r_1, c_1)$ is equal to $(3, -1)$, or $(5, -2)$. We know $\Pi(E_1)$ lies above or on the line $\ell_3$. Thus the first cannot happen by Lemma \ref{lem.no semistable-rank 3}. In the latter, $s_1= 0$ and $\Pi(E_1)$ lies on the boundary $\partial \widetilde{U}$ which is not again possible by \cite[Proposition 3.2]{li:fano-picard-number-one}. Therefore, $E$ is $\nu_{b_0, w}$-stable for $w \geq \frac{1}{12}$ and so a $\mu_H$-stable sheaf.

  To complete the proof, we only need to show that we cannot have $s > -\frac{1}{2}$. Assume otherwise, then we may assume $E$ is a reflexive sheaf, so $E(-2H)[1]$ is $\nu_{b, w}$-stable for $b > -\frac{9}{4}$ and $w \gg 0$. Since $s \in \frac{1}{6}\mathbb{Z}$, we have $s \geq -\frac{1}{3}$. We know there is no wall for $E(-2H)[1]$ crossing the vertical line $b = -2$ for $w > 2$. Thus one can check that $E$ and $E(-2H)[1]$ are $\nu_{b, w}$-stable of the same phase for $(b, w) \in \ell \cap U$ where $\ell$ is the line passing through $\Pi(E)$ and $\Pi(E(-2H))$. Hence, $\hom(E, E[2]) =0$ but $\chi(E, E) \geq 5$, a contradiction. 
  \end{proof}
  
\begin{lemma}\label{lem.uniquness of K}
  	Let $E$ be a $\mu_H$-stable sheaf on $Y$ of class 
  	\begin{equation*}
  	\ch(E) = \left(4, -H, -\frac{1}{2}H^2 , sH^3\right).
  	\end{equation*}
  	Then $s\leq -\frac{1}{6}$. Moreover $s=-\frac{1}{6}$ if and only if $E\cong \cQ_Y=\bL_{\oh_Y}\oh_Y(1)[-1]$.
  \end{lemma}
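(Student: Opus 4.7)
My approach parallels the degree-two case of Lemma~\ref{rk3_deg2}. The ``if'' direction is immediate: $\cQ_Y$ is $\mu_H$-stable by Lemma~\ref{lem-Q-Y} with $\ch_3(\cQ_Y) = -\frac{1}{6}H^3$ by~\eqref{chern-Q-Y}. For the converse, I argue as follows.

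\textbf{Inequality $s \leq -\frac{1}{6}$.} Since $E^{\vee\vee}/E$ is zero-dimensional, $E^{\vee\vee}$ is $\mu_H$-stable with the same $\ch_{\leq 2}$ as $E$ and $\ch_3(E^{\vee\vee}) \geq \ch_3(E)$. So we may assume $E$ is reflexive. Hirzebruch--Riemann--Roch then gives $\chi(E) = \frac{1}{2} + 3s$. As $\mu_H(E) = -\frac{1}{4} < 0$, we have $H^0(E) = 0$. For $H^2(E) = \Hom(E, \oh_Y(-2H)[1])^*$, consider the line $\ell$ joining $\Pi(E) = (-\frac{1}{4}, -\frac{1}{8})$ and $\Pi(\oh_Y(-2H)) = (-2, 2)$; it crosses $b = -\frac{1}{2}$ at $w = \frac{5}{28}$, which lies inside $\widetilde{U}$. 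By Lemma~\ref{lem.no semistable-rank 4}, $E$ is $\nu_{-1/2, 5/28}$-stable, and $\oh_Y(-2H)[1]$ is $\nu_{-1/2, w}$-stable for every $w > 0$. Both lie on $\ell$, so they share the same $\nu_{-1/2, 5/28}$-phase; as non-isomorphic stable objects, $\Hom(E, \oh_Y(-2H)[1]) = 0$. Therefore $\chi(E) = -h^1(E) \leq 0$, yielding $s \leq -\frac{1}{6}$.

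\textbf{Equality case $s = -\frac{1}{6}$.} Applying the above to $E^{\vee\vee}$ gives $\ch_3(E^{\vee\vee}) \leq -\frac{1}{6}H^3$; combined with $\ch_3(E^{\vee\vee}) \geq \ch_3(E) = -\frac{1}{6}H^3$, we conclude $E = E^{\vee\vee}$. A Euler pairing computation yields $\chi(E, \oh_Y) = 5$. By Serre duality and $\mu_H$-stability, $\Ext^3(E, \oh_Y) = H^0(E(-2H))^* = 0$. To bound $\Ext^2(E, \oh_Y) = \Hom(\oh_Y(2H), E[1])^*$, consider the line $\ell'$ through $\Pi(E)$ and $\Pi(\oh_Y(2H)) = (2, 2)$, which crosses $b = 0$ at $w = \frac{1}{9} \in \widetilde{U}$. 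Since $\ch_1^0(E[1]) = H$ is minimal positive and $E$ is $\mu_H$-stable reflexive, $E[1]$ admits no walls crossing the vertical line $b = 0$, and so is $\nu_{0, w}$-stable for all $w > 0$. On $\ell'$, $E[1]$ and $\oh_Y(2H)$ share the same $\nu$-phase; a slight perturbation off the wall yields $\nu(\oh_Y(2H)) > \nu(E[1])$, giving $\Hom(\oh_Y(2H), E[1]) = 0$. Hence $\hom(E, \oh_Y) \geq 5$.

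\textbf{Reconstruction of $\cQ_Y$.} Picking $5$ linearly independent elements of $\Hom(E, \oh_Y)$, the corresponding extension triangle $\oh_Y^{\oplus 5} \to G \to E[1]$ in the appropriate tilted heart has $\ch(G) = \ch(\oh_Y(H))$. Arguing as in Lemma~\ref{rk3_deg2} via the semistability of $\oh_Y^{\oplus 5}$ and $E[1]$ on $\ell'$, \cite[Proposition 4.20]{feyz:desing} forces $G \cong \oh_Y(H)$. Rotating the triangle produces the short exact sequence $0 \to E \to \oh_Y^{\oplus 5} \to \oh_Y(H) \to 0$, realizing $E$ as the kernel in~\eqref{eq. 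Q}; hence $E \cong \cQ_Y$. The main technical obstacle lies in verifying the absence of walls for $E[1]$ at $b = 0$ (and the compatibility at the wall $\ell'$), which requires an enumeration of potential destabilizing Chern characters analogous to that of Lemma~\ref{lem.no semistable-rank 4}, using the minimality $\ch_1^0(E[1]) = H$.
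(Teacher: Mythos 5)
Your proof is correct and follows essentially the same route as the paper's: vanishing of $H^0$, $H^2$, $H^3$ via $\mu_H$-stability and tilt-stability along the segment to $\Pi(\oh_Y(-2H))$ (using Lemma \ref{lem.no semistable-rank 4}) to get $\chi(E)=\tfrac12+3s\le 0$, then in the equality case $\hom(E,\oh_Y)\ge\chi(E,\oh_Y)=5$ from the absence of walls for $E[1]$ on $b=0$, and reconstruction of $\cQ_Y$ from five sections via \cite[Proposition 4.20]{feyz:desing}. The extra details you supply (the explicit intersection point $w=\tfrac{5}{28}$, the double-dual reduction, and the minimality of $\ch_1^0(E[1])\cdot H^2=H^3$ ruling out walls at $b=0$) are all consistent with, and merely elaborate on, the paper's argument.
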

  \begin{proof}
  By $\mu_H$-stability of $E$, we have $\Hom(\oh_Y, E)= 0 = \Hom(\oh_Y, E[3])=\Hom(E,\oh_Y(-2))$. And since the line segment connecting $\Pi(E)$ and $\Pi(\oh_Y(-2))$ intersects $b=-\frac{1}{2}$ at a point with $w > \frac{1}{12}$, by Lemma \ref{lem.no semistable-rank 4} we have $ 0 = \Hom(E,\oh_Y(-2)[1])=\Hom(\oh_Y, E[2])$, which gives $\chi(E)=-\hom^1(\oh_Y, E)\leq 0$ and $s\leq -\frac{1}{6}$.


  Now assume that $s=-\frac{1}{6}$. Then $E$ is reflexive by Lemma \ref{lem.no semistable-rank 4} and the previous result. Thus its shift $E[1]$ is $\nu_{b, w}$-stable for $b > -\frac{1}{4}$ and $w \gg 0$. We know there is no wall for $E[1]$ passing through the vertical line $b = 0$. Therefore $\hom(E, \oh_Y[2]) = \hom(\oh_Y(2H), E[1]) = 0$ and so
  \begin{equation*}
      \hom(E, \oh_Y) \geq \chi(E, \oh_Y) = 5
  \end{equation*}
  Hence the first wall $\ell$ for $E[1]$ will be made by $\oh_Y[1]$. Pick five linearly independent elements from $\Hom(E, \oh_Y)$, and let $G$ be the kernel of the evaluation map in the abelian category of $\nu_{b, w}$-semistable objects of the same slope as $E[1]$ and $\oh_Y[1]$ for $(b, w) \in \ell \cap U$:
  \begin{equation*}
      G \hookrightarrow E[1] \twoheadrightarrow \oh_Y^{\oplus 5}[1]. 
  \end{equation*}
  We know $\ch(G) = \ch(\oh_Y(1))$, so $G \cong \oh_Y(1)$ by \cite[Proposition 4.20]{feyz:desing} and the claim follows.
  \end{proof}

Finally, we can describe Bridgeland stable objects with class $3\bv$ in $\Ku(Y)$.
\begin{proposition} \label{prop_Y3_classify}
Let $\sigma$ be a Serre-invariant stability condition on $\Ku(Y)$ and $E\in \Ku(Y)$ be a $\sigma$-(semi)stable object of class $3\bv$. Then up to a shift, $E$ is either a Gieseker-(semi)stable sheaf or $i^!\cQ_Y$.
\end{proposition}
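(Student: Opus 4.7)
The plan is to follow the template of the proof of Proposition \ref{prop_bridgeland_Y2}. By uniqueness of Serre-invariant stability conditions up to the $\widetilde{\mathrm{GL}}^+_2(\R)$-action together with Remark \ref{remark-the same orbit}, I may take $\sigma = \sigma(b,w)$ for any fixed $(b,w) \in V$ and, after a shift, assume $E \in \cA(b,w)$ has class $-3\bv$ with $\Im Z^0_{b,w}(E) \geq 0$. In the easy case, where $E$ is $\sigma^0_{b_0,w_0}$-semistable for some $(b_0, w_0) \in V$, Lemma \ref{compare_stability} yields an exact triangle $F[1] \to E \to T$ in $\mathrm{Coh}^0_{b_0,w_0}(Y)$ with $F \in \Coh^{b_0}(Y)$ being $\nu_{b_0,w_0}$-semistable and $T \in \Coh_0(Y)$. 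Since $\rk(F) = 3 \leq d+1$, Lemma \ref{no_wall_lem} forces $\ch_3(F) \leq 0$, so $T = 0$ and $E = F[1]$ is Gieseker-semistable by Lemma \ref{Gstability_lvls_coincide}.

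The substantive case is when $E$ fails to be $\sigma^0_{b,w}$-semistable for every $(b,w) \in V$. By \cite[Proposition 2.2.2]{bayer:bridgeland-stability-conditions-on-threefolds}, I would fix an open set $U' \subset V$ accumulating at a suitable boundary point $(b_*, w_*)$ (for instance, the intersection of the vanishing locus $\{w = b^2 - \frac{1}{3}\}$ of $\Im Z^0_{b,w}(-3\bv)$ with the top boundary $\{w = b^2 + b + \frac{1}{2}\}$ of $V$, which occurs at $(-5/6,\,13/36)$) on which the HN filtration of $E$ with respect to $\sigma^0_{b,w}$ is locally constant. Writing $A \to E \to B$ for the destabilizing sequence with $B$ the quotient of minimum slope, the inequalities
\[ 0 \leq \Im Z^0_{b,w}(A),\ \Im Z^0_{b,w}(B) \leq \Im Z^0_{b,w}(E), \qquad \mu^0_{b,w}(A) > \mu^0_{b,w}(E) > \mu^0_{b,w}(B), \]
together with the minimum-slope bound $\mu^0_{b,w}(B) \geq \min\{\mu^0_{b,w}(E),\ \mu^0_{b,w}(\oh_Y),\ \mu^0_{b,w}(\oh_Y(H))\}$ from \cite[Remark 5.12]{bayer:stability-conditions-kuznetsov-component} and integrality of Chern classes, should cut the possibilities for $(\ch_{\leq 2}(A), \ch_{\leq 2}(B))$ down to a short finite list. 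I would rule out the unwanted candidates via Lemmas \ref{lem.no semistable-rank 3} and \ref{lem.no semistable-rank 4}, \cite[Proposition 3.2]{li:fano-picard-number-one}, and Proposition \ref{porp.wall-v3}. The one surviving case should be $\ch_{\leq 2}(A) = (1, -H, \frac{1}{2}H^2)$ and $\ch_{\leq 2}(B) = (-4, H, \frac{1}{2}H^2)$.

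In this surviving case, the orthogonality $E \in \Ku(Y)$ combined with $\sigma^0_{b,w}$-semistability of the HN factors gives $\RHom(\oh_Y, B) = \RHom(\oh_Y(H), B) = 0$, which pins down $\ch_3(B) = -\frac{1}{6}H^3$; Lemma \ref{lem.uniquness of K} then identifies $B \cong \cQ_Y[1]$. A parallel argument on $A$ (using that $A$ is itself $\sigma^0_{b,w}$-semistable by a limiting argument as in Case 3 of the proof of Proposition \ref{prop_bridgeland_Y2}) together with Chern character matching forces $A \cong \oh_Y(-H)[2]$, and the one-dimensional extension space computed in \eqref{ext-1} combined with the triangle \eqref{exact. i!QY} then identifies $E \cong i^!\cQ_Y[1]$. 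The $\sigma$-stability of $i^!\cQ_Y$ itself will be verified by a direct analog of Lemma \ref{lem-serre-stability-d-2}, ruling out destabilizing quotients of class $x\bv+y\bw$ by the same imaginary-part and slope inequalities combined with the description of stable objects of small class from \cite[Theorem 1.1]{pertusi:some-remarks-fano-threefolds-index-two}. The main obstacle will be the case analysis in the second paragraph: selecting $(b_*, w_*)$ so as to maximally constrain the numerical data, and carefully sifting through the finite (but larger than in the degree two case) list of candidate pairs to either discard them or recognize the distinguished pair $(\oh_Y(-H)[2], \cQ_Y[1])$.
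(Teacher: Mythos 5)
Your proposal transplants the two-case structure of the degree two proof (Proposition \ref{prop_bridgeland_Y2}), but that structure does not survive the passage to $d=3$, and the resulting gap sits in what you call the easy case. In degree two the line $\ell_2$ coincides with $\partial\widetilde{U}$, so every point of $V$ lies above $\ell_2$ and "$F$ is $\nu_{b_0,w_0}$-semistable for some $(b_0,w_0)\in V$" does propagate to the large volume limit. In degree three, $\ell_3$ enters the interior of $\widetilde{U}$ and meets $V$, and it is a genuine wall for the class $(3,0,-H^2)$: this is precisely the content of Proposition \ref{porp.wall-v3}. Your step "Lemma \ref{no_wall_lem} forces $\ch_3(F)\leq 0$, so $T=0$ and $E=F[1]$ is Gieseker-semistable" is therefore wrong as stated — Lemma \ref{no_wall_lem} rules out walls \emph{above} $\ell_3$ and its conclusion $\ch_3\le 0$ is for objects semistable at the large volume limit, which $\nu_{b_0,w_0}$-semistability at a point on or below $\ell_3$ does not give you. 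The object this breaks on is exactly $i^!\cQ_Y[1]$: it is the extension of the tilt-semistable objects $\oh_Y(-H)[1]$ and $\cQ_Y$, which have equal $\nu_{b,w}$-slope along $\ell_3$ (both $\Pi(\oh_Y(-H))$ and $\Pi(\cQ_Y)$ lie on $\ell_3$), so $i^!\cQ_Y[1]$ is $\sigma^0_{b_0,w_0}$-semistable at the point $(b_0,w_0)=(-\tfrac{5}{6},\tfrac{13}{36})\in \ell_3\cap V$ where $\Im Z^0_{b_0,w_0}(3\bv)=0$, hence falls squarely into your easy case, yet it is not a shift of a Gieseker-semistable sheaf.

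A second, related issue: your substantive case is vacuous. At the point $(b_0,w_0)$ above, every subobject and quotient of $E$ in $\Coh^0_{b_0,w_0}(Y)$ has vanishing imaginary part, so any $\sigma(b_0,w_0)$-semistable $E$ of class $-3\bv$ is automatically $\sigma^0_{b_0,w_0}$-semistable; there is no object of this class that fails $\sigma^0$-semistability everywhere on $V$. The elaborate HN-factor analysis you sketch, which is where you expect $i^!\cQ_Y$ to appear, never fires. The correct dichotomy — and the one the paper uses — happens one level down, at the tilt-stability wall $\ell_3$: after obtaining $\sigma^0_{b_0,w_0}$-semistability for free and passing to the $\nu_{b_0,w_0}$-semistable object $F=E[-1]$ via Lemma \ref{compare_stability}, either $F$ remains semistable above $\ell_3$ (then Lemmas \ref{no_wall_lem} and \ref{Gstability_lvls_coincide} give a Gieseker-semistable sheaf), or $F$ is destabilized along $\ell_3$, in which case Proposition \ref{porp.wall-v3} and Lemma \ref{lem.uniquness of K} identify the destabilizing factors as $\oh_Y(-H)[1]$ and $\cQ_Y$ and \eqref{ext-1} pins down $E\cong i^!\cQ_Y[1]$. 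You cite all the right ingredients, but you deploy them in the wrong case; to repair the argument you should discard the "not $\sigma^0$-semistable anywhere" branch and instead split the easy case according to whether $\ell_3$ destabilizes $F$.
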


\begin{proof}
By the uniqueness of Serre-invariant stability conditions on $\Ku(Y)$, we can take $\sigma=\sigma(b_0, w_0)$, where $(b_0, w_0)=(-\frac{5}{6}, \frac{13}{36})$. And we can assume $E\in \cA(b_0,w_0)$ of class $-3\bv$. We have chosen the point $(b_0, w_0) \in V$ so that $\mu^0_{b_0, w_0}(-3\bv)=+\infty$. Thus $E$ is $\sigma^0_{b_0, w_0}$-semistable, then Lemma \ref{compare_stability} implies that $E$ lies in the exact triangle 
\begin{equation*}
    F[1] \to E \to T
\end{equation*}
where $F \in \Coh^{b_0}(Y)$ is $\nu_{b_0, w_0}$-semistable and $T \in \Coh_0(X)$. So we have $\ch(F) = 3\bv + \ch(T)$. As the point $(b_0, w_0)$ lies on $\ell_3$, either (i) $F$ is strictly $\nu_{b_0, w_0}$-semistable and unstable above the wall $\ell_3$, or (ii) it is semistable above the line $\ell_3$ and so it's a large volume limit semistable sheaf by Lemma \ref{no_wall_lem}. 

In case (i), Proposition \ref{porp.wall-v3} implies that $\ch_3(F) \leq 0$ and so $T=0$. Also combining it with Lemma \ref{lem.uniquness of K} implies that $E [-1] =F$ lies in the non-trivial exact sequence
$$\oh_Y(-1)[1] \to E[-1] \to \cQ_Y.$$
Since $\Hom(\cQ_Y, \oh_Y(-1)[2]) = 1$ by \eqref{ext-1}, we get $E = i^{!}\cQ_Y[1]$. 

In case (ii), Lemma \ref{no_wall_lem} shows that $F$ is large volume limit semistable and $\ch_3(F) \leq 0$, so $T=0$. Hence $E[-1] = F$ is a Gieseker-semistable sheaf by Lemma \ref{Gstability_lvls_coincide}. 



\end{proof}

\section{Brill--Noether reconstruction 
}\label{Brill_Noether_reconstruction_dP}
Let $Y \coloneqq Y_d$ be a del Pezzo threefold of Picard rank one of degree $d \geq 2$. 
In this section, we prove Theorem~\ref{main_theorem_BN_reconstruction_del_Pezzo_3fold} in the introduction in Theorem \ref{Brill--Noether reconstruction}.

Let $\oh_p$ be the skyscraper sheaf at any point $p \in Y$. We know $\bL_{\oh_Y(1)}\oh_p\cong \cI_p(1)[1]$, and so  
\begin{equation}\label{o-p}
    i^*\oh_p \cong\bL_{\oh_Y}(\cI_p(1))[1]. 
\end{equation}
We have $\ch(i^*\oh_p) = \left(d, -H, -\frac{1}{2}H^2 , (\frac{1}{d} - \frac{1}{6})H^3 \right) = d\bv-\bw$. The following proposition characterises stable objects in $\Ku(Y)$ of class $d\bv-\bw$. 
\begin{proposition}[{\cite{rota:moduli-space-index-two}}] \label{prop_moduli_w}
    Let $F\in \Ku(Y)$ be a $\sigma$-stable object of class $d\bv -\bw$ for a Serre-invariant stability condition $\sigma$. Then up to a shift, $F$ is either isomorphic to 
    $i^*\oh_p$ for a point $p \in Y$, or it is of the form $\bO\left(j_* T\right)$ where $T$ is a  Gieseker-stable reflexive sheaf supported on a hyperplane section $j \colon S \hookrightarrow Y$.
    This induces a well-defined map 
    \begin{align}\label{psi}
    \Psi \colon Y &\hookrightarrow\mathcal{M}_{\sigma}(\mathcal{K}u(Y), d\bv-\bw)\\
    p & \mapsto i^*\oh_p \nonumber
\end{align}
    which gives an embedding of $Y$ into the moduli space $\cM_{\sigma}(\mathcal{K}u(Y), d\bv-\bw)$ as a smooth subvariety.
\end{proposition}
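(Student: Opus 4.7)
The plan is to follow the same strategy as in the proof of Proposition~\ref{prop_bridgeland_Y2}: use uniqueness of Serre-invariant stability conditions modulo $\widetilde{\mathrm{GL}}^+_2(\R)$ to reduce to a convenient $\sigma(b_0,w_0)$, perform a wall-crossing analysis in the tilted heart $\Coh^0_{b_0,w_0}(Y)$, and match the resulting $F$ against the two candidate objects. A direct Chern-character check using \eqref{o-p} and the defining triangle of the rotation functor $\bO$ confirms that both $i^*\oh_p$ and $\bO(j_*\cL)$, for $\cL$ a line bundle on a hyperplane section $j\colon S\hookrightarrow Y$, have class $d\bv-\bw$, so it is enough to show these are the only options.

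The analysis splits according to whether $F$ is $\sigma^0_{b_0,w_0}$-semistable in the ambient heart $\Coh^0_{b_0,w_0}(Y)$. If it is, Lemma~\ref{compare_stability} gives a triangle $G[1]\to F\to T$ with $G\in\Coh^{b_0}(Y)$ being $\nu_{b_0,w_0}$-semistable and $T$ supported in dimension~$0$. Lemma~\ref{no_wall_lem} extends $\nu$-semistability of $G$ to the large volume limit, so $G[-1]$ (or an appropriate twist) is $2$-Gieseker-semistable by Lemma~\ref{Gstability_lvls_coincide}. Combining this with the vanishings $\RHom(\oh_Y,F)=\RHom(\oh_Y(H),F)=0$ defining $\Ku(Y)$ forces $F$ either to be the Kuznetsov projection of a structure sheaf $\oh_p$, giving $F\simeq i^*\oh_p$, or, when $G$ is supported on a hyperplane section, to be the $\oh_Y$-mutation of a line bundle twisted by $\oh_Y(H)$, giving $F\simeq \bO(j_*\cL)$. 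If instead $F$ is not $\sigma^0_{b_0,w_0}$-semistable, I would repeat the argument of Proposition~\ref{prop_bridgeland_Y2}: take the destabilising quotient $B$ of minimum slope, deform $(b,w)$ toward a boundary point of $V$, and use the inequalities
\[
0<\Im Z^0_{b,w}(B)\leq \Im Z^0_{b,w}(F), \qquad \mu^0_{b,w}(F)>\mu^0_{b,w}(B)\geq \min\{\mu^0_{b,w}(\oh_Y),\,\mu^0_{b,w}(\oh_Y(H))\}
\]
to restrict $\ch_{\leq 2}(B)$ to finitely many lattice vectors, each of which is then either ruled out or identified with one of the two asserted forms by analogs of Lemmas~\ref{rule_out} and~\ref{rk3_deg2} on small-rank $\mu_H$-stable sheaves.

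For the embedding statement, $\Psi$ is well defined because $i^*\oh_p$ is $\sigma$-stable, as follows from applying the classification to $F=i^*\oh_p$. Injectivity follows because the triangle $\oh_Y^{\oplus(d+1)}[-1]\to \cI_p(1)[1]\to i^*\oh_p$ coming from \eqref{o-p}, combined with the semiorthogonal decomposition, recovers $\cI_p$ and hence the point $p$ from $i^*\oh_p$. For smoothness, the semiorthogonality $\RHom(r r^{!}\oh_p, i^*\oh_p)=0$ (since $rr^{!}\oh_p\in\langle\oh_Y,\oh_Y(H)\rangle$ and $i^*\oh_p\in\Ku(Y)$) gives
\[
\Ext^1_{\Ku(Y)}(i^*\oh_p,i^*\oh_p) = \Ext^1_{\D^b(Y)}(\oh_p, i^*\oh_p);
\]
applying $\RHom(\oh_p,-)$ to the triangle $rr^{!}\oh_p\to \oh_p \to i^*\oh_p$ and noting that $\Ext^i(\oh_p, \oh_Y(k))=0$ for $i\leq 2$ and $k=0,1$ shows the natural map $\Ext^1_Y(\oh_p,\oh_p)\cong T_pY\hookrightarrow \Ext^1_{\Ku(Y)}(i^*\oh_p,i^*\oh_p)$ is injective. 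Hence the differential of $\Psi$ is injective; combined with injectivity on points and properness of $Y$, the image $\Psi(Y)$ is a smooth threefold in $\cM_\sigma(\Ku(Y),d\bv-\bw)$.

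The main obstacle is the wall-crossing dichotomy above: unlike the class $2\bv$ treated in Proposition~\ref{prop_bridgeland_Y2}, the class $d\bv-\bw$ genuinely supports two distinct families of $\sigma$-stable objects, so both must be produced simultaneously from the Harder--Narasimhan data, and one must rule out any further HN types. This is the technical heart of the argument, and it essentially reproduces the reconstruction result of \cite{rota:moduli-space-index-two} in a form compatible with the Brill--Noether perspective developed in the subsequent sections.
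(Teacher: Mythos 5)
Your overall strategy is plausible, but it diverges from the paper's proof in a way that leaves a genuine gap at the technical core. The paper does \emph{not} analyse the class $d\bv-\bw$ directly. It first uses the fact that Serre-invariant stability conditions are $\bO$-invariant to replace $F$ by an object of class $\bO^{-1}(d\bv-\bw)=\bw$ (up to sign), and then works at the specific point $b=-\tfrac12$, where $\Im Z^0_{-\frac12,w}$ of that class vanishes and hence $\mu^0_{-\frac12,w}=+\infty$. This forces every $\sigma$-stable object of class $-\bw$ to be automatically $\sigma^0_{-\frac12,w}$-semistable, so the ``not $\sigma^0$-semistable'' branch of your dichotomy never occurs; the classification then reduces, via \cite[Lemma 4.15, Proposition 4.7]{rota:moduli-space-index-two}, to a single wall (the one through $\Pi(\oh_Y(-1))$ of slope $-\tfrac12$), whose destabilising factors are pinned down as $\oh_Y(-1)[1]$ and $\cI_p$. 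Applying $\bO$ then produces exactly the two families $i^*\oh_p=\bO(E_p)$ and $\bO(j_*T)$. Your proposal instead keeps the class $d\bv-\bw$ and defers the hard case to ``analogs of Lemmas~\ref{rule_out} and~\ref{rk3_deg2}''; but those lemmas are specific to $d=2$, the proposition must hold for all $d\geq 2$, and you neither identify the relevant wall for $d\bv-\bw$ nor explain how the two distinct families emerge from your Harder--Narasimhan analysis. As written, the classification step is a sketch rather than a proof, and carrying it out without the rotation trick would be substantially harder than the argument you are modelling it on.

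Two smaller points. First, the objects in the second family are $\bO(j_*T)$ with $T$ a Gieseker-stable \emph{reflexive rank-one} sheaf on a (possibly singular) hyperplane section; writing ``line bundle'' is too restrictive. Second, your argument for the embedding is fine and is in fact more self-contained than the paper's (which simply invokes \cite[Lemma 4.8]{rota:moduli-space-index-two} together with the isomorphism induced by $\bO$): the semiorthogonality $\RHom(r r^{!}\oh_p,\,i i^{*}\oh_p)=0$ and the vanishing $\Ext^{\leq 2}(\oh_p,\oh_Y(k))=0$ do give $\Ext^1(\oh_p,\oh_p)\cong\Ext^1(i^*\oh_p,i^*\oh_p)$ in the relevant degree, so injectivity of the differential, injectivity on points, and properness of $Y$ yield a closed immersion. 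That part can stand; the classification part cannot, until the wall-crossing for $d\bv-\bw$ (or, better, for the rotated class $\bw$) is actually executed.
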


\begin{proof}
Since all stability conditions $\sigma(b,w)$ for $(b,w) \in V$ lie in the same orbit with respect to the action of $\widetilde{\mathrm{GL}}^+_2(\mathbb{R})$ and they are $\bO$-invariant, we can consider $\sigma\left(-\frac{1}{2}, w_0\right)$ where $\left(b=-\frac{1}{2}, w_0\right)\in V$, and characterise $\sigma\left(-\frac{1}{2}, w_0\right)$-stable objects of class $\bO^{-1}(d\bv-\bw) = \bw$.

Take a $\sigma\left(-\frac{1}{2}, w_0\right)$-stable object $E\in \cA\left(-\frac{1}{2},w\right)$ of class $-\bw$. Since $\mu^0_{-\frac{1}{2},w}(E)=+\infty$, we know $E$ is $\sigma^0_{-\frac{1}{2},w}$-semistable. Then by \cite[Lemma 4.15]{rota:moduli-space-index-two}, $E[-1]$ is $\nu_{-\frac{1}{2},w_0}$-semistable. By the proof of \cite[Proposition 4.7]{rota:moduli-space-index-two}, the only wall for $E[-1]$ intersecting $b=-\frac{1}{2}$ is the line $\ell$ passing through $\Pi(\oh_Y(-1))$ of slope $-\frac{1}{2}$. Thus when we move up from the point $\left(-\frac{1}{2}, w_0\right)$ along the line $b=-\frac{1}{2}$, either 
\begin{enumerate}
    \item [(i)] $E[-1]$ is $\nu_{b=-\frac{1}{2}, w}$-semistable for all $w \gg 0$, i.e. it is a Gieseker-stable sheaf, or 
    \item [(ii)] $E[-1]$ gets destabilised along the wall $\ell$.
\end{enumerate}
In case (ii), the destabilizing sequence is of form $A\to E[-1]\to B$, where $\ch_{\leq 2}(B)=\ch_{\leq 2}(\oh_Y)$ as in the proof of \cite[Proposition 4.7]{rota:moduli-space-index-two}. Hence $\ch_{\leq 2}(A)=\ch_{\leq 2}(\oh_Y(-1)[1])$. Since $\Delta_H(A)=\Delta_H(B)=0$, $A$ and $B$ are $\nu_{-\frac{1}{2},w}$-semistable for any $w$. This proves $A=\oh_Y(-1)[1]$ and $B=\cI_p$ for a point $p\in Y$. Thus $E[-1] = E_p$ where $E_p$ is the unique extension
\begin{equation}\label{E-p}
    \oh_Y(-1)[1]\to E_p\to \cI_p\,.
\end{equation}
Thus $\bO(E[-1]) = \bO(E_p)\cong i^*\oh_p$ as claimed. Hence $\Psi$ is a well-defined map which is the composition of the embedding $Y\hookrightarrow \cM_{\sigma}(\Ku(Y),-\bw)$ given in \cite[Lemma 4.8]{rota:moduli-space-index-two} (which sends $p \in Y$ to $E_p$), and the isomorphism $\cM_{\sigma}(\Ku(Y),-\bw)\to \cM_{\sigma}(\Ku(Y),d\bv-\bw)$ given by $\bO$. In particular, $\Psi$ is an embedding. 
\end{proof}
Note that although in \cite{rota:moduli-space-index-two}, $Y$ is assumed to be general, the above results hold for any smooth Fano threefold $Y$ of index 2 and degree $d$. Their aim for the generality assumption is to get an explicit description for Gieseker-stable sheaves of class $\bw$ using roots on del Pezzo surfaces, which we do not need in this paper.

\begin{theorem}[Brill--Noether reconstruction for del Pezzo threefolds]
\label{Brill--Noether reconstruction}
Let $\sigma$ be a Serre-invariant stability condition on $\Ku(Y)$. Then the map $\Psi$ defined in \eqref{psi} induces an isomorphism between $Y$ and the \emph{Brill--Noether locus} 
$$\mathcal{BN}_Y\coloneqq \{F\in\mathcal{M}_{\sigma}(\mathcal{K}u(Y), [i^*\oh_p]) \, \colon \ \dim_{\mathbb{C}}\Hom(F,\ i^{!}\QY) \geq d+1 
\}
$$
where $\oh_p$ is the skyscraper sheaf supported at a point $p\in Y$. 
\end{theorem}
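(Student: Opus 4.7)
The plan is to establish the two inclusions $\Psi(Y) \subseteq \mathcal{BN}_Y$ and $\mathcal{BN}_Y \subseteq \Psi(Y)$ separately, with the classification of stable objects from Proposition~\ref{prop_moduli_w} driving the second step.

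For the forward inclusion, I would fix $p \in Y$, set $F = i^*\oh_p$, and compute $\Hom_{\Ku(Y)}(F, i^!\cQ_Y[k])$ for each $k$. By the adjunction $i^* \dashv i$, this coincides with $\Hom_{\D^b(Y)}(\oh_p, i\,i^!\cQ_Y[k])$. Applying $\RHom(\oh_p, -)$ to the triangle \eqref{exact. i!QY} and combining the local Ext-vanishings $\Ext^i_Y(\oh_p, E) = 0$ for $i \neq 3$ with $\Ext^3_Y(\oh_p, \cQ_Y) \cong \cQ_Y|_p \otimes \omega_Y^{-1}|_p$ of dimension $d+1$, a short diagram chase would yield $\Hom(i^*\oh_p, i^!\cQ_Y[3]) = \CC^{d+1}$, with only a one-dimensional contribution in degree two. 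The Euler characteristic $\chi([i^*\oh_p], [i^!\cQ_Y]) = -d$ obtained from \eqref{eq_eulerform} provides a cross-check: $(+1)(1) + (-1)(d+1) = -d$.

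For the reverse inclusion, Proposition~\ref{prop_moduli_w} shows that any $F \in \cM_\sigma(\Ku(Y), [i^*\oh_p])$ is, up to shift, either $i^*\oh_p$ for some $p \in Y$ (already in $\Psi(Y)$) or of the form $\bO(j_*T)$ for a Gieseker-stable reflexive sheaf $T$ on a hyperplane section $j \colon S \hookrightarrow Y$. It suffices to rule out the second type. For such an $F$, the adjunction $i \dashv i^!$ rewrites the Brill--Noether Hom-space as $\Hom_{\D^b(Y)}(\bO(j_*T), \cQ_Y[k])$. Unfolding $\bO(j_*T) = \bL_{\oh_Y}(j_*T \otimes \oh_Y(H))$ via its defining triangle, together with the vanishing $\RHom(\oh_Y, \cQ_Y) = 0$ (automatic from $\cQ_Y \in \oh_Y^{\perp}$ in the semiorthogonal decomposition), identifies this with $\Ext^*_Y(j_*(T(H)), \cQ_Y)$. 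Grothendieck--Serre duality along $j$, with $\omega_{S/Y} \cong \oh_S(H)$, pushes the computation down to $\Ext^{*-1}_S(T, j^*\cQ_Y \otimes \oh_S(H))$ on the del Pezzo surface $S$. Stability of $T$ combined with Kodaira-type cohomology vanishing on $S$ should cap these Hom-spaces below $d+1$ in every degree.

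The main obstacle will be this last step: one must uniformly bound $\dim \Ext^*_S(T, j^*\cQ_Y \otimes \oh_S(H))$ across all hyperplane sections $S \subset Y$ and all Gieseker-stable reflexive sheaves $T$ of the prescribed Chern character. Special configurations---singular sections $S$, boundary strata in the moduli of $T$, or sections where $j^*\cQ_Y$ splits non-generically---will need to be handled carefully. A degree-by-degree analysis ($d = 2, 3, 4$ versus $d = 5$) using the explicit structure of the Serre-invariant stability condition from Section~\ref{survey_weak_stabilitycon}, together with Serre duality in $\Ku(Y)$ to transfer Ext-computations between different shifts, will likely be essential to handle all cases uniformly.
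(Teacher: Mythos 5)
Your overall architecture matches the paper's: both inclusions, with the classification of Proposition~\ref{prop_moduli_w} reducing the reverse inclusion to ruling out the objects $\bO(j_*T)$, and adjunction plus duality pushing the computation down to the hyperplane section $S$. Your forward inclusion is essentially the paper's argument (the paper runs the long exact sequence on the triangle defining $i^*\oh_p$ rather than the one defining $i^!\cQ_Y$, but the two are interchangeable), and your Euler-characteristic cross-check is correct.

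The gap is in the last step of the reverse inclusion, and you have correctly located it yourself: you need to bound $\dim\Ext^k_S(T,\cQ_Y|_S)$ (note the $\oh_S(H)$ twists actually cancel after Grothendieck duality, since you are dualizing $j_*(T(H))$ against $\cQ_Y$ with $\omega_{S/Y}\cong\oh_S(H)$), but ``stability of $T$ plus Kodaira-type vanishing'' will not deliver it. Kodaira vanishing says nothing here: $T$ is a rank-one reflexive sheaf on a possibly singular degree-$d$ del Pezzo surface, $\cQ_Y|_S$ is a rank $d{+}1$ bundle, and there is no positivity hypothesis in play. The missing ingredient is the structure of $\cQ_Y|_S$ itself: the paper's Lemma~\ref{lem_restriction_semistable} produces the extension $0\to\oh_S\to\cQ_Y|_S\to\cQ_S\to 0$ with $\cQ_S=\bL_{\oh_S}\oh_S(1)[-1]$ slope-semistable of slope $-\tfrac{1}{d}$ (valid for \emph{every} hyperplane section, smooth or not, which disposes of your worry about singular $S$). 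With that in hand, the vanishing of $\Hom_S(\cQ_Y|_S,T(-1))$ and, via Serre duality on $S$, of $\Hom_S(T,\cQ_Y|_S)$ follows from a pure slope comparison against $\ch_{\leq 2}(j_*T(\mp 1))$; one never needs to control the middle Ext group directly, because once the two end degrees vanish the Euler characteristic $\chi(j_*T(1),\cQ_Y)=d$ forces $\RHom(\bO(j_*T),i^!\cQ_Y)=\CC^d[-2]$, which falls short of the threshold $d+1$. Without the semistable filtration of $\cQ_Y|_S$ (or an equivalent statement), your argument does not close; with it, the case analysis over $d$ and over strata of the moduli of $T$ that you anticipate becomes unnecessary.
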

\begin{proof}
    Recall that $\cQ_Y \coloneqq \bL_{\oh_Y}\oh_Y(1)[-1]$ as defined in \eqref{eq. Q} which is a vector bundle when $d\geq 2$. By adjunction of $i^*$ and $i^!$, we have $\RHom(F,\ i^{!}\QY)=\RHom(F,\QY)$. Up to a shift, by Proposition \ref{prop_moduli_w}, we can assume $F$ is either (i) isomorphic to $i^*\oh_p$ for a point $p\in Y$, or (ii) of the form 
    $\bO(j_*T)$ where $T$ is a Gieseker-stable sheaf supported on a hyperplane section $j \colon S \hookrightarrow Y$. 

    In case (i), since $\RHom(\oh_Y, \cQ_Y)=0$, by \eqref{o-p}, we only need to compute $\RHom(\cI_p(1), \cQ_Y)$. Since $\cQ_Y$ is a bundle of rank $d+1$, we get $\RHom(\oh_p,\cQ_Y)=\mathbb{C}^{d+1}[-3]$. Now applying $\Hom(-,\cQ_Y)$ to the exact sequence $0\to \cI_p(1)\to \oh_Y(1)\to \oh_p\to 0$, since $\RHom(\oh_Y(1),\cQ_Y)=\mathbb{C}[-1]$, we see $\RHom(\cI_p(1),\cQ_Y)=\mathbb{C}[-1]\oplus \mathbb{C}^{d+1}[-2]$. Hence there exists $k \in \ZZ$, so that $\Psi(p)[k]\in \mathcal{BN}_Y$ for any point $p \in Y$.  
    \vspace{.2 cm}

    In case (ii), by definition of the rotation functor $\bO$ in \eqref{rotation}, we only need to compute $\RHom(j_*T(1), \cQ_Y)$ as $\RHom(\oh_Y, \cQ_Y)=0$. Clearly $\Hom(j_*T(1),\cQ_Y)=0$ and 
    \begin{equation}\label{equality}
        \hom\left(j_*T(1),\cQ_Y[k]\right)=\hom\left(\cQ_Y,j_*T(-1)[3-k]\right)=\hom_S\left(\cQ_Y|_S,T(-1)[3-k]\right).
    \end{equation}
Now we apply next Lemma \ref{lem_restriction_semistable} to show that the above $\Hom$-spaces vanish for $k=3, 1$, so we get $\RHom(j_*T(1), \cQ_Y)=\mathbb{C}^d[-2]$ as $\chi(j_*T(1), \cQ_Y)=d$. 

\textbf{$k=3$:} Since $S \in |H|$ is irreducible, Lemma \ref{lem_restriction_semistable} implies that both $j_*\oh_S$ and $j_*\cQ_S$ are 2-Gieseker semistable of classes
\begin{equation*}
    \ch(j_*\oh_S) = \left(0,\ H,\ -\frac{H^2}{2}, \ \frac{H^3}{6} \right) \qquad \text{and} \qquad \ch_{\leq 2}(j_*\cQ_S) = \left(0,\ (d+1)H,\ -\frac{d+3}{2}H^2 \right). 
\end{equation*}
Since $\ch_{\leq 2}(j_*T(-1)) = \left(0, H, -\frac{3}{2}H^2\right)$, comparing slopes implies that $$\Hom(j_*\oh_S, j_*T(-1)) = 0 = \Hom(j_*\cQ_S, j_*T(-1)). $$
Thus the short exact  sequence \eqref{short-exact} implies that  
$\Hom(j_*\cQ_Y|_S, j_*T(-1)) = 0$. 

\textbf{$k=1$:} By Serre-duality on $S$, we know $\hom_S(\cQ_Y|_S,T(-1)[2])=\hom_S(T,\cQ_Y|_S)$ which vanishes as 
$$\Hom(j_*T, j_*\oh_S) = 0 = \Hom(j_*T, j_*\cQ_S) $$
by comparing slopes. 

Totally we get $j_*T \notin \mathcal{BN}_Y$ and so $\Psi(Y)=\mathcal{BN}_Y$, then the claim follows from Proposition 
\ref{prop_moduli_w}.

\end{proof}

\begin{remark}
 The proof of Theorem \ref{Brill--Noether reconstruction} also shows that $\mathcal{BN}_Y$ can be  written as
 $$\mathcal{BN}_Y= \{F\in\mathcal{M}_{\sigma}(\mathcal{K}u(Y), [i^*\oh_p]) \, \colon \ \RHom(F,\ i^{!}\QY) \text{ is a two-term complex}
\}.
$$
\end{remark}


\begin{lemma} \label{lem_restriction_semistable}
Let $Y$ be a del Pezzo threefold of Picard rank one of degree $d\geq 2$, and let $S\hookrightarrow Y$ be a hyperplane section. Then $\cQ_Y|_S$ fits into an exact sequence
\begin{equation}\label{short-exact}
    0\to \oh_S\to \cQ_Y|_S\to \cQ_S\to 0,
\end{equation}
where $\cQ_S:=\bL_{\oh_S}\oh_S(1)[-1] \in \Coh(S)$ is a $H|_S$-$\mu_H$-semistable bundle on $S$.
\end{lemma}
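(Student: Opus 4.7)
The plan is to obtain the asserted short exact sequence by restricting the defining resolution of $\cQ_Y$ to $S$ and applying the snake lemma, and then to deduce $\mu_{H|_S}$-semistability from the classical stability theory of exceptional bundles on del Pezzo surfaces.

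First I would restrict the defining sequence \eqref{eq. Q} of $\cQ_Y$ to $S$. Since every term is locally free and $S \subset Y$ is a Cartier divisor, tensoring with $\oh_S$ preserves exactness, giving
\begin{equation*}
0 \to \cQ_Y|_S \to H^0(\oh_Y(1)) \otimes \oh_S \to \oh_S(1) \to 0.
\end{equation*}
Next I would analyse the restricted evaluation map. Twisting $0 \to \oh_Y(-H) \to \oh_Y \to \oh_S \to 0$ by $\oh_Y(1)$ and using the Fano vanishing $H^1(\oh_Y) = 0$ shows that $H^0(\oh_Y(1)) \to H^0(\oh_S(1))$ is surjective with one-dimensional kernel spanned by the defining equation $s$ of $S$. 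Since $s|_S = 0$, the map $H^0(\oh_Y(1)) \otimes \oh_S \to \oh_S(1)$ factors through $H^0(\oh_S(1)) \otimes \oh_S$, and the induced evaluation is surjective because $\oh_S(1) = -K_S$ is base-point free for $d \geq 2$. Applying the snake lemma to the evident diagram then yields
\begin{equation*}
0 \to \oh_S \to \cQ_Y|_S \to \cQ_S \to 0,
\end{equation*}
with $\cQ_S := \ker\bigl(H^0(\oh_S(1)) \otimes \oh_S \to \oh_S(1)\bigr)$. Kodaira vanishing on the del Pezzo surface $S$ gives $H^i(\oh_S(1)) = 0$ for $i > 0$, so $\cQ_S$ coincides with $\bL_{\oh_S}\oh_S(1)[-1]$ and is a vector bundle of rank $d$.

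For $\mu_{H|_S}$-semistability of $\cQ_S$, I would observe that $(\oh_S, \oh_S(1))$ is an exceptional pair on the del Pezzo surface $S$, so $\cQ_S$ is itself an exceptional bundle. The classical theorem of Gorodentsev--Rudakov and Kuleshov--Orlov asserts that every exceptional bundle on a del Pezzo surface is $\mu$-stable with respect to the anticanonical polarization $-K_S = H|_S$, from which the required semistability follows.

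The main obstacle is the last step: the $\mu$-stability of $\cQ_S$ is not elementary. A direct proof would need to rule out destabilising subsheaves of intermediate slope in $(-1,0)$, which one can try to handle using the observation that $\Hom(\oh_S, \cQ_S) = 0$ (as the defining sections are linearly independent in $H^0(\oh_S(1))$) together with a saturation analysis inside the polystable ambient bundle $\oh_S^{d+1}$; however, invoking the exceptional-bundle theorem bypasses this analysis cleanly.
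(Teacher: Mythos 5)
Your construction of the short exact sequence is fine and is essentially the paper's argument: restricting the locally free resolution \eqref{eq. Q} to $S$ stays exact, the equation of $S$ spans the kernel of $H^0(\oh_Y(1))\to H^0(\oh_S(1))$ and gives the (unique) section $\oh_S\hookrightarrow \cQ_Y|_S$, and the snake lemma identifies the cokernel with $\cQ_S$. The problem is the semistability step. The pair $(\oh_S,\oh_S(1))$ is \emph{not} an exceptional pair on $S$: since $K_S=\oh_S(-1)$, Serre duality gives
\begin{equation*}
\Ext^2(\oh_S(1),\oh_S)\;\cong\;H^2(S,\oh_S(-1))\;\cong\;H^0(S,\oh_S)^{*}\;\neq\;0,
\end{equation*}
so semiorthogonality fails and the mutation $\cQ_S=\bL_{\oh_S}\oh_S(1)[-1]$ is not an exceptional object. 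Concretely, applying $\Hom(-,\cQ_S)$ and $\Hom(\oh_S(1),-)$ to $0\to\cQ_S\to\oh_S^{\oplus d+1}\to\oh_S(1)\to 0$ and using $\RHom(\oh_S,\cQ_S)=0$ gives $\Ext^1(\cQ_S,\cQ_S)\cong\Ext^2(\oh_S(1),\cQ_S)\cong\CC^{\,d+1}\neq 0$. So the Gorodentsev--Rudakov / Kuleshov--Orlov theorem simply does not apply, and the step you flagged as "bypassed cleanly" is in fact not available. A second, independent obstruction: the lemma is needed for an \emph{arbitrary} hyperplane section $S$ (in the application $S$ is the support of a Gieseker-stable sheaf), and such $S$ need not be smooth — the paper only establishes that $S$ is normal and Gorenstein — so even a correct statement about exceptional bundles on smooth del Pezzo surfaces would not cover all cases.

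The fallback you sketch in your last sentence is exactly what is needed, and it is short: if $F\subset\cQ_S$ is a $\mu_{H|_S}$-stable destabilising subsheaf, then $\mu_{H|_S}(\cQ_S)<\mu_{H|_S}(F)\leq\mu_{H|_S}(\oh_S)=0$ because $F$ also sits inside the semistable sheaf $\oh_S^{\oplus d+1}$ of slope $0$; after saturating $F$ in $\cQ_S$ (hence in $\oh_S^{\oplus d+1}$) one forces $\mu_{H|_S}(F)=0$, and then uniqueness of the Jordan--H\"older factors of $\oh_S^{\oplus d+1}$ gives $F\cong\oh_S^{\oplus\rk F}$, contradicting $\Hom(\oh_S,\cQ_S)=0$. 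You should carry this out rather than cite the exceptional-bundle classification.
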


\begin{proof}
By the restriction of the exact sequence \eqref{eq. Q}, we get the exact sequence  
$$0\to \cQ_Y|_S\to \oh_S^{\oplus d+2}\to \oh_S(1)\to 0$$ 
on $S$. This gives $\RHom_S(\oh_S,\cQ_Y|_S)=\mathbb{C}$. Take a non-zero section $s\colon \oh_S\to \cQ_Y|_S$, then we get the following commutative diagram with exact rows
\[\begin{tikzcd}
	0 & {\oh_S} & {\oh_S} & 0 \\
	0 & {\cQ_Y|_S} & {\oh_S^{\oplus d+2}} & {\oh_S(1)} & 0.
	\arrow[shift right=1, no head, from=1-2, to=1-3]
	\arrow[from=2-1, to=2-2]
	\arrow[from=2-2, to=2-3]
	\arrow[from=2-3, to=2-4]
	\arrow[from=2-4, to=2-5]
	\arrow[from=1-1, to=1-2]
	\arrow[from=1-3, to=1-4]
	\arrow[from=1-4, to=2-4]
	\arrow[hook, from=1-3, to=2-3]
	\arrow["s", hook, from=1-2, to=2-2]
	\arrow[no head, from=1-2, to=1-3]
\end{tikzcd}\]
By taking the cokernel, we get an exact sequence 
\begin{equation}\label{ex.11}
    0\to \cok(s)\to \oh_S^{\oplus d+1}\to \oh_S(1)\to 0.
\end{equation}
This implies $\cok(s)\cong \cQ_S$ as $\RHom_S(\oh_S, \cok(s))=0$. To complete the proof, we only need to show $\cQ_S$ is $\mu_{H|_S}$-semistable. Assume otherwise, and let $F$ be a destabilising subsheaf. We may assume $F$ is $\mu_{H|_S}$-stable. Then the exact sequence \eqref{ex.11} implies that 
\begin{equation*}
    -\frac{1}{d} = \mu_{H|_S}(\cQ_S) < \mu_{H|_S}(F) \leq 
    \mu_{H|_S}(\oh_S) =0.
\end{equation*}
Since $\rk(F) < d$, we must have $\mu_{H|_s}(F) = 0$. 
We can assume that $F$ is saturated in $\cQ_S$, hence is saturated in $\oh_S^{d+1}$ as well. By the uniqueness of Jordan--H\"older factors, we get $F\cong \oh_S^{\oplus \rk F}$. Thus $\Hom_S(\oh_S, \cQ_S)\neq 0$, which contradicts the construction of $\cQ_S$.
\end{proof}

\subsection{Classical moduli spaces on curves and Brill--Noether reconstruction}\label{classical_d=4}
Let $Y$ be a smooth degree $4$ del Pezzo threefold, which is the intersection of two quadrics in $\mathbb{P}^5$. There is an FM equivalence $\Phi_{\mathcal{S}} \colon D^b(C)\ \xrightarrow{\cong} \Ku(Y)$ for a genus two curve $C$. Denote by $M_C(2,\mathcal{L}_1)$ the moduli space of stable vector bundle of rank two with fixed determinant $\mathcal{L}_1$ such that degree $d(\mathcal{L}_1)=1$. By \cite[Theorem 1]{newstead1968stable} we know 
\begin{equation}\label{one side}
    Y\cong M_C(2,\mathcal{L}_1)
\end{equation} 
Note that $\mathcal{S}$ is the universal spinor bundle on $C \times Y$. On the other hand, by Theorem~\ref{Brill--Noether reconstruction} and action of inverse of the rotation functor $\bO$, we get   
\begin{equation*}
   Y\cong \bO^{-1}(\mathcal{BN}_Y) = \{E\in\mathcal{M}_{\sigma}(\Ku(Y),\bw)\, \colon \ \dim_{\mathbb{C}}\Hom(F,\ i^{!}\oh_Y) \geq 5\}. 
\end{equation*}
 By \cite[Lemma 5.9]{Kuznetsov:instanton-bundle-Fano-threefold}, $\Phi_{\mathcal{S}}^{-1}(i^!\oh_Y) \cong\mathcal{R}[1]$ where $\mathcal{R}$ is a \emph{second Raynaud bundle}, which is a semistable vector bundle of rank $4$ and degree $4$ on $C$. Moreover, it is unique up to a twist by a line bundle of degree $0$, see \cite[Section 5.4]{Kuznetsov:instanton-bundle-Fano-threefold}. By \cite[Section 5.2]{rota:moduli-space-index-two}, the equivalence $\Phi$ sends the Bridgeland moduli space $\mathcal{M}_{\sigma}(\Ku(Y),\bw)$ to $M_C(2,1)$. Thus
 \begin{align}\label{second side}
     Y\cong &\ \{F\in M_C(2,1)\, \colon \ \dim_{\mathbb{C}}\Hom(F,\ \mathcal{R}[1]) \geq 5\}\nonumber\\
     \cong &\ \{F\in M_C(2,1)\, \colon \ \dim_{\mathbb{C}}\Hom(F,\ \mathcal{R}) \geq 1\}
 \end{align}
 as $\chi(F, \mathcal{R}) = -4$. Comparing \eqref{one side} and \eqref{second side} gives the impression that fixing determinant of $F \in M_C(2, 1)$ is equivalent to imposing the \emph{Brill--Noether} condition. 
 \bigskip

Let $J(Y)$ be the intermediate Jacobian of $Y$. As in \cite[Section 4.4 \& Section 5.2]{rota:moduli-space-index-two}, we consider the map 
\begin{align*}
    \Psi \colon \mathcal{M}_{\sigma}(\Ku(Y),\bw) &\ \to \ J(Y)\\
    E & \ \mapsto \  \widetilde{c}_2(E) -H^2
\end{align*}
where $\widetilde{c}_2(E)$ is the second Chern class of $E$ up to rational equivalence. We know $\Psi(E_p) = 0$ and we know $\Psi^{-1}(0)$ is isomorphic to $Y$, thus $\Psi(\bO(T)) \neq 0$ where $T$ is a Gieseker-stable sheaf supported on a hyperplane $S$.

By \cite[Section 5.2]{rota:moduli-space-index-two} $\Psi^{-1}(0)\cong Y\subset\mathcal{M}_{\sigma}(\Ku(Y),\bw)$ such that $Y\cong\{E_p,p\in Y\}$(See \cite[Proposition 4.7]{rota:moduli-space-index-two} for definition of $E_p$). Then $Y\cong\bO^{-1}(\mathcal{BN}_Y)\cong\mathcal{BN}_Y$. 

There is an equivalence $\Phi_1 \colon \Pic^1(C) \to J(Y)$ so that $\Phi_1(\mathcal{L}_1) = 0$ and it induces the commutative diagram \cite[Theorem 4.14(c’)]{Ried:thesis}
\[
\xymatrix{
M_C(2, 1) \ar[r]^{\det} \ar[d]^{\Phi_{\mathcal{S}}} & \Pic^1(C) \ar[d]^{\Phi_1} \\
\mathcal{M}_{\sigma}(\Ku(Y),\bw) \ar[r]^{\Psi} & J(Y).
}
\]
This shows that we have an isomorphism 
\begin{equation*}
    M_C(2, \mathcal{L}_1) \cong\ \text{det}^{-1}(\mathcal{L}_1) \cong \Psi^{-1}(0) \ \cong \mathcal{BN}_Y.
\end{equation*}


\section{Uniqueness of the gluing object}\label{section_categorical_Torelli_theorem}

In this section, we prove the following Theorem. 


\begin{theorem}\label{prop_gluing_data_fixed}
 Let $\Phi \colon \Ku(Y)\simeq\Ku(Y')$ be an exact equivalence of Kuznetsov components of del Pezzo threefolds of the same degree $d$ where $2\leq d\leq 4$.  
\begin{enumerate}
    \item [(i)] If $d =2, 3$, there exist a unique pair of integers $m_1, m_2 \in \ZZ$ with $0\leq m_1\leq 3$ when $d=2$ and $0\leq m_1\leq 5$ when $d=3$, so that    $$\Phi(i^!\QY)\cong\bO^{m_1}({i'}^!\mathcal{Q}_{Y'})[m_2].$$
    \item [(ii)] If $d=4$, there exists a unique pair of integers $m_1,m_2$ and a unique auto-equivalence $T_{\mathcal{L}_0} \in \mathrm{Aut}^0(\Ku(Y'))$ (see Section \ref{degree 4-subsection} for definition) so that 
 $$\Phi(i^!\cQ_Y)\cong \bO^{m_1}\circ T_{\mathcal{L}_0}({i'}^!\mathcal{Q}_{Y'})[m_2]. $$
\end{enumerate}
 Here $i' \colon \Ku(Y') \hookrightarrow \D^b(Y')$ is the inclusion functor.
\end{theorem}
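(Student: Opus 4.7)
The cases $d \in \{2,3\}$ and $d = 4$ require genuinely different methods, reflecting the distinction between the fractional Calabi--Yau categories $\Ku(Y_2), \Ku(Y_3)$ and the geometric category $\Ku(Y_4) \simeq \D^b(C_2)$. In both regimes the strategy is to single out $i^!\cQ_Y$ inside its moduli of stable objects by an intrinsic property that is preserved by any equivalence $\Phi$.

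For $d = 2, 3$, I would first normalize $\Phi$ using powers of $\bO$ and a shift so that $\Phi_*$ acts trivially on the numerical Grothendieck group $\mathcal{N}(\Ku(Y))$. The $\bO_*$-action on $\mathcal{N}$ is recorded in Section~\ref{survey_weak_stabilitycon}: the orbit of $\bv$ has length $4$ for $d = 2$ and length $3$ for $d = 3$. Combined with the relation $S^{-1} = \bO^2[-3]$ (so $\bO^6 = [4]$ for $d = 3$ via $S^3 = [5]$, and the analogous identity for $d = 2$), the subgroup $\langle \bO_*, [1]_* \rangle$ in the relevant automorphism group of $\mathcal{N}$ is cyclic of finite order matching the stated bound on $m_1$. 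Since $\Phi_*$ preserves the Euler form and Serre-invariant stability conditions form a single $\widetilde{\mathrm{GL}}_2^+(\mathbb{R})$-orbit by \cite[Proposition 3.5]{pertusi:some-remarks-fano-threefolds-index-two}, there is a unique such pair $(m_1, m_2)$ after which $\Phi(i^!\cQ_Y)$ is $\sigma$-stable of class $d\bv$ in $\Ku(Y')$.

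The core step is a characterization of $i^!\cQ_Y$ among the $\sigma$-stable objects of class $d\bv$. By Propositions~\ref{prop_bridgeland_Y2} and~\ref{prop_Y3_classify}, any such object is, up to shift, either a Gieseker-stable sheaf or $i^!\cQ_Y$. Applying $\RHom(\oh_p, -)$ to the defining triangle $\oh_Y(-H)[1] \to i^!\cQ_Y \to \cQ_Y$ yields $\RHom(\oh_p, i^!\cQ_Y) = \mathbb{C}[-2] \oplus \mathbb{C}^{d+1}[-3]$, which is genuinely non-zero in two degrees, while for a locally free Gieseker-stable $E$ of rank $d$ the corresponding complex is $\mathbb{C}^d[-3]$, concentrated in a single degree. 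Since $\RHom(i^*\oh_p, E) = \RHom(\oh_p, E)$ for $E \in \Ku(Y)$ (obtained from the sequence $0 \to \cI_p(1) \to \oh_Y(1) \to \oh_p \to 0$ together with $\RHom(\oh_Y(1), E) = 0$), the two classes of objects are distinguished by whether $\RHom(i^*\oh_p, E)$ is genuinely two-term for all $p \in Y$. To apply this characterization to $\Phi(i^!\cQ_Y)$, use the adjunction $\RHom(i'^*\oh_{p'}, \Phi(i^!\cQ_Y)) = \RHom(\Phi^{-1}(i'^*\oh_{p'}), i^!\cQ_Y)$ and invoke Proposition~\ref{prop_moduli_w}: the preimage $\Phi^{-1}(i'^*\oh_{p'})$ is, up to shift, either $i^*\oh_p$ (the immediate case) or $\bO(j_*T)$ for a Gieseker-stable sheaf $T$ on a hyperplane section, the latter being controlled by the $\RHom$-computation appearing in the proof of Theorem~\ref{Brill--Noether reconstruction}. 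Uniqueness of $(m_1, m_2)$ follows because the twists $\bO^{m_1}(i'^!\cQ_{Y'})[m_2]$ are pairwise non-isomorphic within the stated range.

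For $d = 4$, I would transport $\Phi$ across the equivalence $\Ku(Y_4) \simeq \D^b(C_2)$ of \cite[Theorem 5.1]{Kuznetsov:instanton-bundle-Fano-threefold}; by \cite[Lemma 5.9]{Kuznetsov:instanton-bundle-Fano-threefold}, $i^!\cQ_{Y_4}$ corresponds to a second Raynaud bundle $\mathcal{R}$ on $C_2$, unique up to twist by $\Pic^0(C_2)$. The Bondal--Orlov theorem for curves of genus $\geq 2$ expresses any equivalence between the two copies of $\D^b(C_2)$ as a composition of a curve isomorphism, a line-bundle twist, and a shift; line-bundle twists by $\Pic^0(C_2)$ are exactly the auto-equivalences $T_{\mathcal{L}_0} \in \Aut^0(\Ku(Y_4))$ of Section~\ref{degree 4-subsection}, while the residual discrete ambiguity is absorbed by $\bO^{m_1}[m_2]$. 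The main obstacle throughout is the $d \in \{2,3\}$ verification step: guaranteeing that the $\Hom$-theoretic characterization survives transport by $\Phi$ requires a clean treatment of both components of $\cM_\sigma(\Ku(Y'), [i'^*\oh_{p'}])$ (the Brill--Noether locus of Theorem~\ref{Brill--Noether reconstruction} and the locus of hyperplane-supported sheaves), with the argument resting on the exact cohomological degrees where each $\Hom$-complex is concentrated.
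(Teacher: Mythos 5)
Your overall architecture (normalize by $\bO^{m_1}[m_2]$ using the numerical action, classify stable objects of class $d\bv$, then distinguish $i^!\cQ_Y$ from Gieseker-stable sheaves by a $\Hom$-theoretic test against the moduli space of class $d\bv-\bw$) matches the paper, and your $d=4$ argument via the second Raynaud bundle is essentially the paper's. But for $d=2,3$ there is a genuine gap at the step you describe as "the latter being controlled by the $\RHom$-computation appearing in the proof of Theorem~\ref{Brill--Noether reconstruction}." That computation gives $\RHom(\bO(j_*T),\, i^{!}\cQ_Y)=\CC^{d}[-2]$, i.e.\ it concerns maps into $i^{!}\cQ_Y$. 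What your argument actually needs, in the contradiction scenario $\Phi(i^{!}\cQ_Y)\cong E'[\ell]$ with $E'$ Gieseker-stable, is control of $\RHom(\bO(j'_*T'),\, E')$ for a Gieseker-stable \emph{sheaf} $E'$ of class $d\bv'$ — a different and much harder computation. Concretely, nothing in your proposal rules out the scenario in which $\Phi$ carries the three-dimensional locus $\{i^*\oh_p\}\cong Y$ inside $\cM_{\sigma}(\Ku(Y),d\bv-\bw)$ into the (equally three-dimensional) locus of hyperplane-supported objects $\bO(j'_*T')$, and vice versa. In that scenario $\Phi^{-1}(i'^*\oh_{p'})\cong \bO(j_*T)[1]$ for all $p'$, so your adjunction gives $\RHom(\oh_{p'},E')=\CC^{d}[-3]$ for every $p'$ — exactly what a locally free $E'$ produces — and no contradiction arises. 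A dimension count does not exclude this: both loci are three-dimensional and their generic points have the same $\ext^1$.

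The paper closes precisely this gap with degree-specific intrinsic characterizations. For $d=3$ it quotes \cite{feyz:desing} that $\mathcal{BN}_{Y'}$ is the union of all rational curves in $\cM_{\sigma}(\Ku(Y'),3\bv'-\bw')$, which forces $\Phi(\{i^*\oh_p\})=\{i'^*\oh_{p'}\}$ and then yields the torsion-freeness contradiction on a three-dimensional family. For $d=2$ it uses the singular locus of $\cM_{\sigma}(\Ku(Y),2\bv-\bw)$ (Lemma~\ref{singular_locus}), which consists of the two-dimensional branch divisor $R$ together with $\tau$-invariant objects $\bO(j_*F)$; if $\Phi(R)\subset R'$ one gets the contradiction from the non-locally-free locus of $E'$ being at most one-dimensional, and otherwise one needs Proposition~\ref{prop_bundle}, the statement $\RHom(\bO(j_*F),E')=\CC^2[-2]$, whose proof requires the $\tau$-invariance hypothesis, Lemmas~\ref{lem_smooth_torsion} and~\ref{lem_ext_sheaf_nonzero}, and the classification of non-reflexive sheaves in Appendix~\ref{section_classification_instanton_sheaves_d=2}. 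None of this machinery is replaceable by the $\RHom(\,\cdot\,, i^{!}\cQ_Y)$ computation you cite, so you should either import these intrinsic characterizations of the $\{i^*\oh_p\}$-locus or supply an independent proof that $\Ext^1(\bO(j'_*T'),E')=0$ for the relevant $T'$.
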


\begin{remark} \label{fix_i!O}
Theorem \ref{prop_gluing_data_fixed} also holds if we replace $i^!\cQ_Y$ and $i'^!\cQ_{Y'}$ by $i^!\oh_Y$ and $i'^!\oh_{Y'}$, respectively. The reason is that $\bO(i^!\oh_Y)\cong i^!\cQ_Y$ and the proof only uses the properties of Bridgeland moduli spaces with respect to Serre-invariant stability conditions and objects in them, which are all preserved by $\bO$. 
\end{remark}

\begin{remark}\label{rmk_trivial}
 The proof of Theorem \ref{prop_gluing_data_fixed} actually shows that if $\Phi$ maps $\bv$ and $\bw$ to $\bv'$ and $\bw'$ respectively, then $\Phi(i^!\cQ_Y)=i'^!\cQ_{Y'}$ up to shift and action of $T_{\mathcal{L}_0}$ (when $d=4$).
\end{remark}

We first discuss the action of equivalences on the numerical Grothendieck groups, and then investigate each degree separately. 

\begin{lemma} \label{numerical_lemma}
    Let $Y$ and $Y'$ be two del Pezzo threefolds of Picard rank ones of degree $d$ and $\Phi:\Ku(Y)\to \Ku(Y')$ an equivalence. Let $\phi:\mathcal{N}(\Ku(Y))\to \mathcal{N}(\Ku(Y'))$ be the induced isometry. Then 
    \begin{enumerate}
        \item[(a)] If $\phi(m\bv)=m\bv'$ for a non-zero integer $m$, then $\phi(\bv)=\bv'$ and $\phi(\bw)=\bw'$.
        \item[(b)] Up to composing with $\bO$ and $[1]$, $\phi$ maps classes $\bv$ and $\bw$ to $\bv'$ and $\bw'$, respectively.
    \end{enumerate}

\end{lemma}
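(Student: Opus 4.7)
The plan is to establish part (a) by a direct Euler-form computation with one extra geometric input needed in the $d = 2$ case, and then to reduce part (b) to part (a) via an analysis of the integer solutions of a quadratic Diophantine equation and their identification with a finite orbit.

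For part (a): since $\mathcal{N}(\Ku(Y'))$ is torsion-free, the hypothesis $\phi(m\bv) = m\bv'$ immediately gives $\phi(\bv) = \bv'$. Writing $\phi(\bw) = a\bv' + b\bw'$ with $a, b \in \mathbb{Z}$ and imposing that $\phi$ preserves the Euler form \eqref{eq_eulerform}, the identities $\chi(\bv,\bw) = -1$ and $\chi(\bw,\bv) = 1-d$ translate into the linear system $a + b = 1$ and $(2-d)b = 2-d$. When $d \neq 2$ this forces $(a, b) = (0, 1)$ and hence $\phi(\bw) = \bw'$. When $d = 2$ the second equation is vacuous, so I also impose the remaining quadratic relation $\chi(\bw, \bw) = -2$, which together with $a + b = 1$ narrows the integer solutions to $(a, b) \in \{(0, 1),(2, -1)\}$; the ambiguous case $\phi(\bw) = 2\bv' - \bw'$ remains to be eliminated.

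To eliminate this ambiguity I will use that $\phi$ is induced by an exact equivalence $\Phi$. By \cite{pertusi:some-remarks-fano-threefolds-index-two}, under any Serre-invariant stability condition the $\sigma$-stable objects of class $\bv$ (resp.~$\bw - \bv$) in $\Ku(Y)$ are precisely the ideal sheaves $\cI_l$ of lines $l \subset Y$ (resp.~the objects $\cJ_l := \bO^{-1}(\cI_l)[1]$) up to even shifts. If $\phi(\bw) = 2\bv' - \bw'$ then $\phi(\bw - \bv) = -(\bw' - \bv')$, so $\Phi$ must send $\cI_l$ and $\cJ_l$ to $\cI_{l_1'}[2m]$ and $\cJ_{l'}[2k+1]$ respectively, for some lines $l_1', l' \subset Y'$ and integers $m, k$. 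Choosing $l_1, l_2 \subset Y$ with $\Hom(\cI_{l_1}, \cJ_{l_2}) \neq 0$ (which exists by a standard incidence argument) and using that $\Hom(\cI_{l_1}, \cJ_{l_2}[n]) = 0$ outside $n \in \{0, 1\}$ together with $\chi(\cI_{l_1}, \cJ_{l_2}) = 0$, the preservation of Hom spaces by $\Phi$ forces the nonzero Hom support $\{0, 1\}$ to coincide with an odd translate of itself, a parity contradiction.

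For part (b): any isometry $\phi$ sends $\bv$ to $p\bv' + q\bw'$ with $\chi(\phi(\bv), \phi(\bv)) = -1$, which unfolds to the Diophantine equation $p^2 + d p q + d q^2 = 1$. A direct enumeration shows this has exactly $4$ integer solutions for $d = 2$ and exactly $6$ for $d = 3$, and a short check identifies the solution set with the orbit of $\bv'$ under the finite group generated by the numerical actions of $\bO$ and $[1]$. Hence after composing $\phi$ on the left with a suitable $\bO^{-m_1} \circ [-m_2]$ we may assume $\phi(\bv) = \bv'$, and part (a) then yields $\phi(\bw) = \bw'$. The principal obstacle is the parity argument in the $d = 2$ subcase of (a), which is the one step genuinely using that $\Phi$ is an equivalence rather than merely a numerical isometry; for $d = 4$ the equation $(p + 2q)^2 = 1$ has infinitely many integer solutions, so the finite-group argument of (b) collapses, and the auto-equivalences $T_{\mathcal{L}_0} \in \mathrm{Aut}^0(\Ku(Y'))$ appearing in Theorem \ref{prop_gluing_data_fixed}(ii) must be brought into play to compensate.
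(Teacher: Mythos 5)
Your part (a) is essentially the paper's own proof: the same torsion-freeness reduction, the same Euler-form linear system forcing $(a,b)=(0,1)$ for $d\neq 2$, and for $d=2$ the same elimination of $(a,b)=(2,-1)$ via a pair of intersecting lines, the objects $\cJ_l=\bO^{-1}(\cI_l)[1]$, the vanishing of $\Hom(\cI_{l_1},\cJ_{l_2}[n])$ outside $n\in\{0,1\}$, and the resulting parity contradiction. Your part (b) for $d=2,3$ is also the paper's argument made slightly more explicit: you enumerate the $(-1)$-classes via $p^2+dpq+dq^2=1$ and match them with the orbit of $\bv'$ under $\bO$ and $[1]$, where the paper lists the classes directly (for $d\leq 2$) or cites \cite[Corollary 4.2]{liu-zhang:moduli-space-cubic-x14} (for $d\geq 3$).

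The genuine gap is your treatment of $d=4$ in part (b). The lemma is invoked for $d=4$ in the proof of Theorem \ref{prop_gluing_data_fixed}(ii), and what is needed there is precisely that $\bO$ and $[1]$ alone normalize $\phi$ on classes. The auto-equivalences $T_{\mathcal{L}_0}$ with $\mathcal{L}_0\in\Pic^0(C')$ act \emph{trivially} on $\mathcal{N}(\Ku(Y'))$ (a degree-zero twist changes neither rank nor degree on the curve), so they cannot ``compensate'' for anything at the numerical level; they enter only later, at the level of objects, because the second Raynaud bundle is unique merely up to such a twist. What actually saves part (b) for $d=4$ is that, although the solution set of $(p+2q)^2=1$ is infinite, the numerical action of $\bO$ on $\mathcal{N}(\Ku(Y'))\cong\mathcal{N}(\D^b(C'))$ is, up to sign, twisting by a degree $-1$ line bundle (point (a) of Section \ref{degree 4-subsection}), hence of \emph{infinite} order; its orbit on $\bv'$, together with the sign coming from $[1]$, is exactly the set of classes $\pm(1,e)$ with $e\in\ZZ$, i.e. all $(-1)$-classes. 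So the ``finite group'' in your argument should be replaced by the (possibly infinite) orbit of $\langle\bO,[1]\rangle$, and your final sentence should be deleted: as written, your proof does not establish (b) for $d=4$, which is needed downstream.
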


\begin{proof}

Recall that the numerical Grothendieck group $\mathcal{N}(\Ku(Y'))$ has no torsion. In part (a), from $\phi(m\bv)=m\bv'$ we have $\phi(\bv)=\bv'$. Now we assume that $\phi(\bw)=a\bv'+b\bw'$ for $a,b\in \ZZ$. Using $\chi(\bv,\bw)=-1$ and $\chi(\bw,\bv)=1-d$, we get $\chi(\bv',a\bv'+b\bw')=-1$ and $\chi(a\bv'+b\bw',\bv')=1-d$. Thus we obtain $-a-b=-1$ and $-a+(1-d)b=1-d$, which gives $(a,b)=(0,1)$ when $d\neq 2$. When $d=2$, using $\chi(\bw,\bw)=\chi(a\bv'+b\bw',a\bv'+b\bw')=-d$, we obtain $(a,b)=(0,1)$ or $(2,-1)$. We claim the latter cannot happen, otherwise  
\begin{equation*}
 \phi(\bv)= \bv' \qquad \text{and} \qquad \phi(\bv-\bw)= -(\bv'-\bw').    
\end{equation*}
For any line $l\subset Y$, we define $\cJ_l:=\bO^{-1}(\cI_l)[1]\in \Ku(Y)$ as in \cite{pertusi:some-remarks-fano-threefolds-index-two}. Fix two lines $l_1, l_2\subset Y$ such that $l_1\cap l_2\neq \varnothing$. Then by \cite[Remark 4.8]{pertusi:some-remarks-fano-threefolds-index-two}, we have $\Hom(\cI_{l_1}, \cJ_{l_2})\neq 0$. Since $\chi(\cI_{l_1}, \cJ_{l_2})=0$ and $\Hom(\cI_{l_1}, \cJ_{l_2}[n])=0$ when $n\leq -1$ and $n\geq 2$, we get $\Hom(\cI_{l_1}, \cJ_{l_2}[1])\neq 0$.

Let $\sigma$ be a Serre-invariant stability condition on $\Ku(Y)$, then by \cite[Theorem 1.1]{pertusi:some-remarks-fano-threefolds-index-two} any $\sigma$-stable object of class $[\cI_{l}]$ in $\Ku(Y)$ is the shifted ideal sheaf $\cI_{l'}[k]$ for some line $l'$ on $Y$. The same claim also holds for objects of class $[\cJ_l]=-[\bO^{-1}(\cI_l)]$ as $\sigma$ is $\bO$-invariant. Recall that there is a unique Serre-invariant stability condition on $\Ku(Y)$ up to $\widetilde{\mathrm{GL}}^+_2(\mathbb{R})$-action. Since $\Phi$ commutes with the Serre functor, $\Phi. \sigma$ is also a Serre-invariant stability condition on $\Ku(Y')$. Thus up to a shift, we can assume that $\Phi(\cI_{l_1})=\cI_{l_1'}$ and $\Phi(\cJ_{l_2})=\cJ_{l_2'}[k]$ for lines $l_1',l_2'\subset Y'$ and an odd integer $k$. Thus we get $\Hom(\cI_{l_1'}, \cJ_{l_2'}[k])=\Hom(\cI_{l_1'}, \cJ_{l_2'}[1+k])\neq 0$. This implies $k=0$ and makes a contradiction which completes the proof of part (a). 

For part (b), we claim that up to composing with $\bO$ and $[1]$, $\bv$ maps to $\bv'$. Indeed, the image of $\bv$ is still a $(-1)$-class in $\mathcal{N}(\Ku(Y))$ since $\Phi$ is an equivalence. Then the claim for $d\geq 3$ follows from \cite[Corollary 4.2]{liu-zhang:moduli-space-cubic-x14}. And up to sign, a $(-1)$-class is either $\bv'$ or $\bv'-\bw'$ for $d=2$, and $\bv',\bw'$ or $\bv'-\bw'$ for $d=1$. They are permuted by rotation functor $\bO$ and the claim follows. Thus the result follows from part (a) and the claim above.
\end{proof}

\subsection{Degree 2 case}\label{degree 2-subsection} We first consider a del Pezzo threefold $Y$ of degree $2$ which is a quartic double solid. It is a double cover $\pi: Y\to \mathbb{P}^3$ which is ramified over a smooth surface $R\subset \PP^3$ of degree $4$. The branch divisor of $\pi$ maps isomorphic to $R$, which we also denote by $R\subset Y$. The involution on $Y$ given by the double cover is denoted by $\tau$. The Serre functor of $\Ku(Y)$ is  $S_{\Ku(Y)}=\tau[2]$. Moreover we have $\oh_Y(R)=\oh_Y(2)$. The key idea to prove Theorem \ref{prop_gluing_data_fixed} is to investigate the singular locus of a suitable moduli space in $\Ku(Y)$.


\begin{lemma}\label{singular_locus}
Let $\sigma$ be a Serre-invariant stability condition on $\Ku(Y)$. Then the singular locus of the moduli space $\cM_{\sigma}(\Ku(Y), 2\bv-\bw)$ is at least two dimensional, consists of objects of form $i^*\oh_p$ such that $p\in R$, and $\bO(j_*F)$ where $j\colon S\hookrightarrow Y$ is a hyperplane section and $F$ is a reflexive sheaf on $S$ with $\tau(j_*F)\cong j_*F$.
\end{lemma}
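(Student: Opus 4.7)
The plan is to characterize the singular points of $\cM_\sigma(\Ku(Y), 2\bv-\bw)$ via obstruction theory: a point $[E]$ is smooth if and only if the obstruction space $\Ext^2_{\Ku(Y)}(E,E)$ vanishes. Since the Serre functor of $\Ku(Y)$ is $S_{\Ku(Y)} = \tau\circ[2]$ (recalled in Section~\ref{survey_weak_stabilitycon}), Serre duality inside $\Ku(Y)$ yields
$$\Ext^2(E,E) \,\cong\, \Hom(E,\tau E)^{*}.$$
The involution $\tau$ is an automorphism of $Y$ fixing $H$, so it preserves Chern characters and acts trivially on $\mathcal{N}(\Ku(Y))$; and since $\sigma$ is Serre-invariant and $\tau = S_{\Ku(Y)}[-2]$, the functor $\tau$ preserves $\sigma$-stability up to an element of $\widetilde{\mathrm{GL}}_2^+(\R)$. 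Hence $\tau E$ is $\sigma$-stable of the same phase and numerical class as $E$, so by stability $\Hom(E,\tau E)\neq 0$ forces $E\cong \tau E$. In summary, $[E]$ is singular if and only if $E$ is $\tau$-invariant.

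Next I would invoke Proposition~\ref{prop_moduli_w} to split the analysis into two cases. If $E = i^*\oh_p$ for some $p\in Y$, then since $\tau$ fixes both $\oh_Y$ and $\oh_Y(H)$ it preserves the semiorthogonal decomposition $\D^b(Y) = \langle\Ku(Y),\oh_Y,\oh_Y(H)\rangle$ and therefore commutes with the projection $i^*$, giving $\tau(i^*\oh_p) = i^*\oh_{\tau(p)}$; the two are isomorphic precisely when $\tau(p)=p$, i.e.\ when $p\in R$. If instead $E = \bO(j_*T)$ for a hyperplane section $j\colon S\hookrightarrow Y$ and a Gieseker-stable reflexive sheaf $T$ on $S$, then $\tau^*\oh_Y\cong\oh_Y$ and $\tau^*\oh_Y(H)\cong\oh_Y(H)$ imply that $\tau$ commutes with the rotation functor $\bO(-) = \bL_{\oh_Y}(-\otimes\oh_Y(H))$, so $\tau E\cong E$ is equivalent to $\tau(j_*T)\cong j_*T$. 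These are exactly the two families listed in the lemma.

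For the dimension bound, the embedding $Y\hookrightarrow \cM_\sigma(\Ku(Y),2\bv-\bw)$, $p\mapsto i^*\oh_p$ from Proposition~\ref{prop_moduli_w} restricts to a closed embedding of the ramification divisor $R$, which is a smooth surface in $Y$; this exhibits a two-dimensional closed subset of the singular locus, establishing the at-least-two-dimensional claim. The key point that needs explicit verification is the commutation $\tau\circ\bO = \bO\circ\tau$, which is formal from $\tau^*\oh_Y\cong\oh_Y$ and $\tau^*\oh_Y(H)\cong\oh_Y(H)$, together with the Serre-invariance of $\sigma$; the rest is a direct case analysis from Proposition~\ref{prop_moduli_w} combined with the Serre-duality reduction above.
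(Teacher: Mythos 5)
Your proposal is correct and follows essentially the same route as the paper: Serre duality $\Ext^2(E,E)\cong\Hom(E,\tau E)^*$ via $S_{\Ku(Y)}=\tau[2]$, the stability argument showing this is nonzero exactly when $E\cong\tau E$, and a case analysis through Proposition~\ref{prop_moduli_w}. The only (harmless) difference is bookkeeping --- the paper first transports everything by $\bO^{-1}$ to the moduli space of class $-\bw$ and checks $\tau(E_p)=E_{\tau(p)}$ there, whereas you work directly in class $2\bv-\bw$ by observing that $\tau$ commutes with $i^*$ and $\bO$ --- and you additionally make explicit the two-dimensionality via the embedded ramification surface $R$, which the paper leaves implicit.
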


\begin{proof}
Since $\sigma$ is $\bO$-invariant, the functor $\bO$ makes an isomorphism $\cM_{\sigma}(\Ku(Y), -\bw) \cong \cM_{\sigma}(\Ku(Y), 2\bv-\bw))$. Thus for any $F\in\cM_{\sigma}(\Ku(Y), 2\bv-\bw)$, there exists $E\in \cM_{\sigma}(\Ku(Y), -\bw)$ so that $F=\bO(E)$. Since $\RHom(F,F)=\RHom(E,E)$, we only need to consider the smoothness of $[E]$ in $\cM_{\sigma}(\Ku(Y), -\bw)$. By Proposition \ref{prop_moduli_w} and its proof, there are two possibilities: 
\newline
Case (i). $E = E_p$ for a point $p \in Y$ as defined in \eqref{E-p}. Since $\tau(E_p)=E_{\tau(p)}$, we know that $[E]$ is a singular point if and only if $\Ext^2(E_p, E_p)=\Hom(E_p,E_{\tau(p)})\neq 0$, which is equivalent to $p=\tau(p)$, i.e.~$p\in R$. 
\newline
Case (ii). $E=j_*F$ is a reflexive Gieseker-stable sheaf supported on a hyperplane section $j\colon S\hookrightarrow Y$. Then by $\sigma$-stability,  $\Ext^2(E,E)=\Hom(E,\tau E)\neq 0$ if and only if $\tau(j_*F)\cong j_*F$.  

\end{proof}

The next Proposition analyses further the second case in Lemma \ref{singular_locus}.

\begin{proposition} \label{prop_bundle}
Let $\sigma$ be a Serre-invariant stability condition and $j_*F\in \Ku(Y)$ be a $\sigma$-stable object of class $\bw$, where $j\colon S\hookrightarrow Y$ is a hyperplane section and $F$ is a reflexive sheaf on $S$. Let $E\in \Ku(Y)$ be a Gieseker-stable sheaf of class $2\bv$. Assume that $\tau(j_*F)\cong j_*F$,  then we have
\[\RHom(\bO(j_*F),E)=\mathbb{C}^2[-2].\]
\end{proposition}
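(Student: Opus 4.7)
The plan is to reduce $\RHom(\bO(j_*F), E)$ to a computation on the hyperplane section $S$ via adjunction, and then handle the resulting $\Ext$-vanishings using $\mu_h$-stability on $S$. First, I would apply $\RHom(-, E)$ to the defining triangle $\oh_Y \otimes \RHom(\oh_Y, j_*F(1)) \to j_*F(1) \to \bO(j_*F)$ of the rotation functor. Since $E \in \Ku(Y)$, we have $\RHom(\oh_Y, E) = 0$, which gives $\RHom(\bO(j_*F), E) \cong \RHom_Y(j_*F(1), E)$. Grothendieck duality for the smooth divisor $j \colon S \hookrightarrow Y$ (with normal bundle $\oh_S(1)$, so $j^!(-) \cong j^*(-) \otimes \oh_S(1)[-1]$) then identifies
\[
\RHom_Y(j_*F(1), E) \cong \RHom_S(F(1),\, j^*E(1)[-1]) \cong \RHom_S(F, E|_S)[-1].
\]
So the target statement is equivalent to $\Hom_S(F, E|_S) = \Ext^2_S(F, E|_S) = 0$ together with $\Ext^1_S(F, E|_S) \cong \mathbb{C}^2$.

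For the numerical input, the Euler form on $\mathcal{N}(\Ku(Y))$ and the identity $\bO(\bw) = -2\bv + \bw$ give $\chi(\bO(j_*F), E) = 2$, hence $\chi_S(F, E|_S) = -2$. A Riemann--Roch computation on the del Pezzo surface $S$ of degree $2$ (with $-K_S = h := H|_S$), using $\chi(\oh_Y, j_*F) = 0$ and $\chi(\oh_Y, j_*F(1)) = 2$ read off from $\ch(j_*F) = \bw$, pins $F$ down as a line bundle $\oh_S(D)$ with $D \cdot h = 0$ and $D^2 = -2$; in particular $\deg_h F = 0$. Since $E$ is Gieseker-stable of class $2\bv$ and lies in $\Ku(Y)$, the classification in Section~\ref{results_general_Wallcross} and Appendix~\ref{section_classification_instanton_sheaves_d=2} identifies $E$ as a charge-$2$ instanton bundle, so $E|_S$ is a rank-$2$ torsion-free sheaf on $S$ with $c_1 = 0$ and $\mu_h$-slope zero. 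Assuming strict $\mu_h$-stability of $E|_S$, the vanishing $\Hom_S(F, E|_S) = 0$ follows because any nonzero map (automatically injective, as $F$ is a line bundle and $E|_S$ is torsion-free) would produce a rank-$1$ subsheaf of $E|_S$ of slope zero; and by Serre duality on $S$ with $\omega_S = \oh_S(-h)$, the isomorphism $\Ext^2_S(F, E|_S) \cong \Hom_S(E|_S, F(-h))^*$ yields the other vanishing, since any nonzero such map would give a rank-$1$ quotient of $E|_S$ of slope $\leq -2$. The final claim $\Ext^1_S(F, E|_S) \cong \mathbb{C}^2$ then drops out of $\chi_S = -2$.

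The core obstacle is establishing strict $\mu_h$-stability of $E|_S$ on the particular $S$ at hand, since the restriction of a $\mu_H$-stable bundle to a non-generic hyperplane section need not remain stable, and Mehta--Ramanathan only delivers the generic case. This is precisely where the hypothesis $\tau(j_*F) \cong j_*F$ enters. Because $\oh_Y(1) = \pi^*\oh_{\mathbb{P}^3}(1)$, every $S \in |H|$ is automatically $\tau$-invariant, and $\tau(j_*F) \cong j_*F$ is equivalent to $\tilde\tau^* F \cong F$ for $\tilde\tau = \tau|_S$. Combined with the acyclicity conditions $H^*(S, F) = H^*(S, F(-h)) = 0$ (which follow from $j_*F \in \Ku(Y)$), this restricts $F$ to a $\tilde\tau$-invariant root-type line bundle on $S$. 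Using the explicit description of charge-$2$ instantons on quartic double solids from Appendix~\ref{section_classification_instanton_sheaves_d=2} --- in particular their behaviour under $\tau$ and their restriction to $\tau$-invariant divisors --- I would rule out any destabilizing rank-$1$ subbundle of $E|_S$ of slope zero, thereby securing the required stability. An alternative route that avoids surface stability entirely would be to re-express the two target vanishings as $\RHom_Y$-vanishings through the defining triangle of $\bO(j_*F)$ and conclude via $\sigma$-stability of $\bO(j_*F)$ in $\Ku(Y)$ together with the moduli-theoretic inputs of Sections~\ref{results_general_Wallcross} and~\ref{section_Y2}.
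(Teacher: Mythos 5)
Your opening reductions are fine: using $\RHom(\oh_Y,E)=0$ to get $\RHom(\bO(j_*F),E)\cong\RHom_Y(j_*F(1),E)$, passing to $S$ by adjunction, and the Euler characteristic $\chi(\bO(j_*F),E)=2$ all match the paper. The gap is in the heart of the argument, the vanishing $\Hom_S(F,E|_S)=\Ext^1_Y(j_*F(1),E)=0$. You model the situation as a line bundle $F=\oh_S(D)$ on a \emph{smooth} degree-two del Pezzo surface $S$ and try to conclude from $\mu_h$-stability of $E|_S$. But the hypothesis $\tau(j_*F)\cong j_*F$ is equivalent (by $\sigma$-stability and $S_{\Ku(Y)}=\tau[2]$) to $\Ext^2(j_*F,j_*F)\neq 0$, and Lemma \ref{lem_ext_sheaf_nonzero} shows this forces $\mathcal{E}xt^1_S(F,F)\neq 0$ at a point. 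For a line bundle on a smooth surface this sheaf vanishes identically, so the case you analyze is \emph{empty}: under the stated hypotheses $S$ is necessarily singular at some point and $F$ is a rank-one reflexive sheaf that is not locally free there. This is exactly the situation the proposition is designed to handle (see the Remark following the degree-two proof, which points out that the smooth-$S$ argument of \cite{rota:moduli-space-index-two} does not apply here).

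Even granting the reduction, your key input --- strict $\mu_h$-stability of $E|_S$ on this particular (singular, non-generic) hyperplane section --- is asserted but not established, and the mechanism you propose cannot supply it: since $\oh_Y(1)=\pi^*\oh_{\PP^3}(1)$, \emph{every} $S\in|H|$ is $\tau$-invariant, so $\tau$-invariance of $F$ places no constraint on $S$ and has no bearing on whether $E|_S$ is stable. The paper never proves such a restriction statement. Instead it assumes a nonzero map $s\colon F\to j^*E$ exists, proves (via a diagram chase using $2$-Gieseker-stability of $E$ and Lemma \ref{rule_out}) that $G=\cok(s)$ is torsion-free, and then, when $E$ is locally free, uses $\mathcal{E}xt^1_S(F,F)\cong\mathcal{E}xt^2_S(G,F)\neq 0$ together with the local-to-global spectral sequence analysis on the normal Gorenstein surface $S$ (Lemma \ref{lem_smooth_torsion}) to produce $\Ext^3(j_*G,j_*F)\neq 0$, i.e.\ $\Hom(j_*F,j_*G(-2))\neq 0$ by Serre duality, contradicting Gieseker stability. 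The non-locally-free case of $E$ is handled separately via the resolution $0\to E\to\oh_Y^{\oplus 2}\to Q\to 0$ from Appendix \ref{section_classification_instanton_sheaves_d=2}; your proposal does not address this case either. So the $\tau$-invariance hypothesis enters not to control $E|_S$ but to make $F$ singular enough that a map $F\to E|_S$ is self-contradictory --- a genuinely different mechanism from the one you outline.
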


\begin{proof}
By Lemma \ref{Gstability_lvls_coincide}, $E$ is 2-Gieseker-stable. Thus $j^*E$ is a sheaf by the torsion-freeness of $E$. Since $F\in \Ku(Y)$, we see $\RHom(\bO(j_*F),E)=\RHom(j_*F(1),E)$.
It is clear that $\Hom(j_*F(1),E)=0$. 

We claim $\Ext^3(j_*F(1),E)=\Hom(E,j_*F(-1))=0$. If not, there is a nonzero map $\pi\colon E\to j_*F(-1)$ with $\ch_{\leq 1}(\ker(\pi))=(2,-H)$ and $H.\ch_2(\ker(\pi))\geq 1$. Thus by \cite[Proposition 3.2]{li:fano-picard-number-one}, $\ker(\pi)$ cannot be $\mu_H$-semistable. But since it is torsion-free, it has a two-term HN filtration $E_1 \hookrightarrow \ker(\pi) \twoheadrightarrow E_2$. Since $E_1$ is a subsheaf of $E$ as well, we have $\ch_{\leq 2}(E_1) = (1, 0, \frac{a}{2}H^2)$ where $a \leq -2$. Thus $\ch(E_2) = (1, -H)$ and $\ch_2(E_2).H = \ch_2(\ker(\pi)).H -a \geq 3$, which is not possible.    


Therefore we get $-\ext^1(\bO(j_*F),E) + \ext^2(\bO(j_*F),E) = \chi(\bO(j_*F),E)=2$, so we only need to show $\Ext^1(\bO(j_*F),E)=0$. Note that \[\Ext^1(\bO(j_*F),E)=\Ext^1(j_*F(1),E)=\Hom_S(F,j^*E)=\Hom(j_*F,j_*j^*E).\]
Assume there is a non-zero map $s\in \Hom_S(F,j^*E)$. Since $F$ is torsion-free of rank one on $S$, $s$ is injective. Let $G:=\cok(s)$. 
\newline
\textbf{Claim:} 
$G$ is a torsion-free sheaf on $S$. As $G$ has rank one on $S$, this implies $j_*G$ is Gieseker-stable. To this end, we consider a commutative diagram of exact triangles
\[\begin{tikzcd}
	0 & {j_*F} & {j_*F} \\
	E & {j_*j^*E} & {E(-1)[1]}
	\arrow[from=1-1, to=2-1]
	\arrow[from=2-1, to=2-2]
	\arrow[from=2-2, to=2-3]
	\arrow["j_*(s)", from=1-2, to=2-2]
	\arrow[from=1-3, to=2-3]
	\arrow[shift right=1, no head, from=1-2, to=1-3]
	\arrow[from=1-1, to=1-2]
	\arrow[no head, from=1-2, to=1-3]
\end{tikzcd}\]
By taking cones, we get a commutative diagram with rows and columns exact
\[\begin{tikzcd}
	0 & {j_*F} & {j_*F} \\
	E & {j_*j^*E} & {E(-1)[1]} \\
	E & {j_*G} & {K[1]}
	\arrow[from=1-1, to=2-1]
	\arrow[from=2-1, to=2-2]
	\arrow[from=2-2, to=2-3]
	\arrow["j_*(s)", from=1-2, to=2-2]
	\arrow[from=1-3, to=2-3]
	\arrow[shift right=1, no head, from=1-2, to=1-3]
	\arrow[from=1-1, to=1-2]
	\arrow[no head, from=1-2, to=1-3]
	\arrow["a", from=3-1, to=3-2]
	\arrow[from=3-2, to=3-3]
	\arrow[shift left=1, no head, from=2-1, to=3-1]
	\arrow[from=2-2, to=3-2]
	\arrow[from=2-3, to=3-3]
	\arrow[no head, from=2-1, to=3-1]
\end{tikzcd}\]
Here $K$ is a sheaf since it is an extension of $j_*F$ and $E(-1)$ from the construction. Thus $a$ is surjective and $K=\ker(a)$. Note that $\ch(K)=2\bv-\bw$. We consider two cases: 
\begin{itemize}
    \item If $K$ is $\mu_H$-stable, by Lemma \ref{rule_out} $K$ is locally free. Since $E$ is torsion-free,  
    we get torsion-freeness of $G$ on $S$.
    \item If $K$ is not $\mu_H$-semistable, then there is a destabilising sequence $K_1 \to K \to K_2$ where both $K_1$ and $K_2$ are rank one $\mu_H$-stable sheaf. Note that since $K$ is a subsheaf of $E$, it is torsion-free. The composition of injections $K_1 \to K \to E$ and $2$-Gieseker stability of $E$ implies that $\ch_{\leq 2}(K_1) = (1, 0, -\frac{a+2}{2}H^2)$ where $a \geq 0$. Since $K_2$ is torsion-free with class $\ch_{\leq 2}(K_2) = (1, -H, \frac{1 +a}{2}H^2 )$, we get $a = 0$. Thus $K_2 \cong \cI_{p_2}(-H)$ for some points $p_2$ on $Y$. 
    We denote $W:=\cok(K_1\hookrightarrow E)$. Then we have a commutative diagram 
\[\begin{tikzcd}
	& 0 & 0 \\
	0 & {K_1} & {K_1} & 0 \\
	0 & K & E & {j_*G} & 0 \\
	0 & {K_2} & W & {j_*G} & 0 \\
	& 0 & 0 & 0
	\arrow[from=1-2, to=2-2]
	\arrow[from=2-2, to=3-2]
	\arrow[from=3-2, to=4-2]
	\arrow[from=4-2, to=5-2]
	\arrow[from=1-3, to=2-3]
	\arrow[from=2-3, to=3-3]
	\arrow[from=3-3, to=4-3]
	\arrow[from=2-1, to=2-2]
	\arrow[shift right=1, no head, from=2-2, to=2-3]
	\arrow[from=2-3, to=2-4]
	\arrow[from=2-4, to=3-4]
	\arrow[no head, from=3-4, to=4-4]
	\arrow[from=4-4, to=5-4]
	\arrow[from=4-1, to=4-2]
	\arrow[from=4-2, to=4-3]
	\arrow[from=4-3, to=4-4]
	\arrow[from=4-4, to=4-5]
	\arrow[from=3-1, to=3-2]
	\arrow[from=3-2, to=3-3]
	\arrow[from=3-3, to=3-4]
	\arrow[from=3-4, to=3-5]
	\arrow[no head, from=2-2, to=2-3]
	\arrow[shift left=1, no head, from=3-4, to=4-4]
	\arrow[from=4-3, to=5-3]
\end{tikzcd}\]
with rows and columns exact. Since $\RHom(\oh_Y,j_*F)=\RHom(\oh_Y, j_*j^*E)=0$, we get the vanishing $\RHom(\oh_Y,j_*G)=0$. In particular, $G$ has no zero-dimensional torsion. We know $\ch_{\leq 2}(W) = (1, 0, 0)$, from $2$-Gieseker-stability of $E$, we see that the torsion part of $W$ is zero-dimensional, which is not possible as $G$ has no zero-dimensional subsheaf. Thus $W \cong \cI_{p}$ for some points $p$ in $Y$, so the third row in the above diagram gives the short exact sequence $\cI_{p_2}(-H) \hookrightarrow \cI_{p} \twoheadrightarrow j_*G$ which implies $j_*G$ is pure. 
\end{itemize}
Hence $G$ is torsion-free as claimed. Thus $j^*E$ is also torsion-free as $F$ and $G$ are. 

We divide the rest of the proof into two cases.

\textbf{Case 1.} First assume $E$ is not locally free. By Proposition \ref{classification_non_reflexive_sheaf_quartic_double_solid}, we have an exact sequence
\[0\to E\to \oh_Y^{\oplus 2}\to Q\to 0,\]
where $Q$ is supported on a curve. Hence we get a triangle $j^*E\to \oh_S^{\oplus 2}\to j^*Q$ on $S$. Since $Q$ is supported on a curve, $\cH^i_{\Coh(S)}(j^*Q)$ is at most one-dimensional for each $i$ by \cite[Lemma 3.29]{huyb-book-FM}. Using the fact that $j^*E$ is torsion-free, we see $j^*Q\in \Coh(S)$ and hence $j^*E\subset \oh^{\oplus 2}_S$. Thus $F\subset \oh^{\oplus 2}_S$, which implies that $\Hom_S(F,\oh_S)=\Hom(j_*F,j_*\oh_S)\neq 0$. Hence $\Hom(j_*F,\oh_Y(-1)[1])\neq 0$, which contradicts $j_*F\in \Ku(Y)$.

\textbf{Case 2.} Now assume $E$ is locally free, and so $j^*E$ is locally free. Then taking $\mathcal{H}om_S(-,F)$ from the short exact sequence $F \to j^*E \to G$ gives $\mathcal{E}xt_S^1(F,F)=\mathcal{E}xt_S^2(G,F)$. 
By Lemma \ref{lem_ext_sheaf_nonzero}, we get  $\mathcal{E}xt_S^2(G,F)\neq 0$, which implies $\Ext^3(j_*G,j_*F)\neq 0$ from Lemma \ref{lem_smooth_torsion}. However, by Serre duality we get $\Hom(j_*F, j_*G(-2))\neq 0$, which contradicts the Gieseker-stability of $j_*F$ and $j_*G$.

\end{proof}

\begin{lemma}\label{lem_smooth_torsion}
    Let $j\colon S\hookrightarrow Y$ be a hyperplane section and $E,F$ be two coherent sheaves on $S$ with $E$ torsion-free. Let $n\geq 2$ be the maximal integer with $\mathcal{E}xt^n_S(E,F)\neq 0$. Then $\Ext^{n+1}(j_*E,j_*F)\neq 0$.
\end{lemma}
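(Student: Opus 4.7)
My plan is a two-step argument: first use adjunction to split $\RHom_Y(j_*E,j_*F)$ in terms of $\RHom$-groups on $S$, then combine the maximality hypothesis with the torsion-freeness of $E$ to produce a nonzero class in degree $n+1$.

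Since $S$ is a hyperplane section of the smooth threefold $Y$, it is a Cartier divisor, so $j$ is a regular closed embedding of codimension one. Hence $j^{!}G\cong Lj^{*}G\otimes \oh_S(S)[-1]$ for every $G\in \D^b(Y)$. Applied to $G=j_*F$, the two-term Koszul resolution $0\to \oh_Y(-S)\to \oh_Y\to j_*\oh_S\to 0$ yields $Lj^{*}j_*F\cong F\oplus F(-S)[1]$, because after derived restriction the differential becomes multiplication by the local equation of $S$, which annihilates $j_*F$. Combining these, $j^{!}j_*F\cong F\oplus F(S)[-1]$, and by the adjunction $(j_*,j^{!})$ one obtains the splitting
\[
\Ext^{k}_{Y}(j_*E,j_*F)\;\cong\;\Ext^{k}_{S}(E,F)\;\oplus\;\Ext^{k-1}_{S}(E,F(S))
\]
for every $k$, where $F(S):=F\otimes \oh_S(S)$. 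In particular, it suffices to show $\Ext^{n}_{S}(E,F(S))\neq 0$.

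Because $\oh_S(S)$ is a line bundle, the maximality hypothesis transfers to $F(S)$: one has $\mathcal{E}xt^{n}_S(E,F(S))\cong \mathcal{E}xt^{n}_S(E,F)\otimes \oh_S(S)\neq 0$ while $\mathcal{E}xt^{q}_S(E,F(S))=0$ for $q>n$. I would then run the local-to-global spectral sequence
\[
E_2^{p,q}\;=\;H^p\!\left(S,\mathcal{E}xt^{q}_{S}(E,F(S))\right)\;\Longrightarrow\;\Ext^{p+q}_{S}(E,F(S)),
\]
and show that the top-row term $E_2^{0,n}$ is nonzero and survives to $E_\infty^{0,n}$. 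Torsion-freeness of $E$ gives $\operatorname{depth}_{\oh_{S,s}}(E_s)\geq 1$, and hence $\operatorname{pd}_{\oh_{S,s}}(E_s)\leq 1$ at every smooth point $s$ of $S$, so $\mathcal{E}xt^{q}_S(E,-)$ vanishes on the smooth locus of $S$ for all $q\geq 2$. Since a hyperplane section of a smooth del Pezzo threefold is a del Pezzo surface with at worst isolated ADE singularities, the sheaf $\mathcal{E}xt^{q}_S(E,F(S))$ is supported on a finite set of points for every $q\geq 2$. A nonzero zero-dimensional coherent sheaf has nontrivial $H^0$ and vanishing higher cohomology, so $E_2^{0,n}\neq 0$ while $E_2^{p,q}=0$ whenever $p\geq 1$ and $q\geq 2$. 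All outgoing differentials from $E_2^{0,n}$ thus vanish, which gives $E_\infty^{0,n}=E_2^{0,n}\neq 0$ and therefore $\Ext^{n}_{S}(E,F(S))\neq 0$.

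The main obstacle is the vanishing of the outgoing differentials on the top-row class $E_2^{0,n}$: this is forced by the joint support analysis of the $\mathcal{E}xt$-sheaves, combining the torsion-freeness of $E$ (which yields $\operatorname{pd}\leq 1$ at smooth points) with the isolated-singularity property of hyperplane sections of smooth del Pezzo threefolds.
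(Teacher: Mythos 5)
Your route is genuinely different from the paper's and, where it works, cleaner. The paper computes $R\mathcal{H}om_Y(j_*E,j_*F)=j_*R\mathcal{H}om_S(Lj^*j_*E,F)$ via the \emph{left} adjoint, runs one spectral sequence to identify the $\mathcal{E}xt$-sheaves on $Y$, and then a second (local-to-global on $Y$) to extract the global class; you instead use the \emph{right} adjoint and the splitting $j^!j_*F\cong F\oplus F(1)[-1]$ to get the direct-sum decomposition $\Ext^k_Y(j_*E,j_*F)\cong\Ext^k_S(E,F)\oplus\Ext^{k-1}_S(E,F(1))$, which reduces everything to a single local-to-global spectral sequence on the surface $S$. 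That decomposition is correct and buys you a shorter argument. One caveat on your geometric input: you assert that hyperplane sections of smooth del Pezzo threefolds have at worst isolated ADE singularities. Only the \emph{isolated} part is needed, and for the degree-two case (where this lemma is actually applied) it is not a formal consequence of Bertini/Zak-type statements, because $\oh_Y(1)$ is not very ample on a quartic double solid; the paper proves finiteness of $\mathrm{Sing}(S)$ by passing to the double cover $S\to\PP^2$ and applying \cite[Corollary 3.4.19]{lazarsfeld:positivity-in-AG-I} to the branch quartic. The ADE refinement is not needed and is dubious in general. You should supply the finiteness argument rather than assert it.

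The genuine gap is in the degeneration step, and it occurs exactly in the case $n=2$ that the paper uses (Proposition \ref{prop_bundle} and Lemma \ref{lem_ext_sheaf_nonzero}). The only outgoing differential from $E_2^{0,n}$ that is not killed by Grothendieck vanishing ($E_2^{p,q}=0$ for $p>2$) is $d_2\colon E_2^{0,n}\to E_2^{2,n-1}$. Your stated vanishing, $E_2^{p,q}=0$ for $p\geq 1$ and $q\geq 2$, covers the target only when $n-1\geq 2$, i.e.\ $n\geq 3$. For $n=2$ the target is $E_2^{2,1}=H^2\bigl(S,\mathcal{E}xt^1_S(E,F(1))\bigr)$, and your depth/projective-dimension argument says nothing about $\mathcal{E}xt^1$: at a smooth point where $E$ has $\operatorname{pd}=1$ the sheaf $\mathcal{E}xt^1$ is genuinely nonzero. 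The statement is still true — $\mathcal{E}xt^1_S(E,F(1))$ is supported on the non-locally-free locus of $E$, which is a proper closed subset of the integral surface $S$ (indeed finite once $S$ is known to be normal, since $E$ is torsion-free and hence free at all codimension-one points), so it has dimension at most one and its $H^2$ vanishes — but this is an additional step you must include, not a consequence of what you wrote. With that one extra vanishing, and the finiteness of $\mathrm{Sing}(S)$ properly justified, your proof is complete.
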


\begin{proof}
We first show that any hyperplane section $S\in |\oh_Y(1)|$ is normal and Gorenstein. Since $Y$ is Gorenstein, $S$ is too. Then by Serre's criterion, to prove the normality of $S$, we only need to prove $S$ has only finitely many singular closed points. Note that $S=\pi^{-1}(P)$ is a double cover ramified over $R\cap P$ for a projective plane $P\subset \PP^3$. By the property of double cover, we only need to show $R\cap P$ has isolated singularties. This follows from applying \cite[Corollary 3.4.19]{lazarsfeld:positivity-in-AG-I} to $R$.

Since $S$ is normal, the non-locally free locus of $E$ has codimension two. Thus $\mathcal{E}xt_S^i(E,F)$ is supported on points for any $i>0$.
Now we compute $\mathcal{E}xt_Y^i(j_*E,j_*F):=\cH^i(R\mathcal{H}om_Y(j_*E,j_*F))$. By adjunction, we have
\[R\mathcal{H}om_Y(j_*E,j_*F)=j_*R\mathcal{H}om_S(j^*j_*E,F).\]
Since $\cH^0(j^*j_*E)\cong E$ and $\cH^{-1}(j^*j_*E)\cong E(-1)$, using \cite[(3.8)]{huyb-book-FM}, we have a spectral sequence convergent to $\mathcal{E}xt_S^{p+q}(j^*j_*E,F)$ with $E_2^{p,0}=\mathcal{E}xt^p_S(E,F)$, $E_2^{p,1}=\mathcal{E}xt^p_S(E,F)(1)$ and $E_2^{p,q}=0$ for $p\neq 0,1$. Therefore, we see that $\mathcal{E}xt_S^{i}(j^*j_*E,F)$ is supported on points for $i\geq 2$. Moreover, the term $E_2^{n,1}$ survives, hence $E_2^{n,1}=E_{\infty}^{n,1}\neq 0$ implies that $\mathcal{E}xt_S^{n+1}(j^*j_*E,F)\neq 0$. Thus $\mathcal{E}xt^i_Y(j_*E,j_*F)$ is supported on $S$, and furthermore supported on points for $i\geq 2$ with $\mathcal{E}xt^{n+1}_Y(j_*E,j_*F)\neq 0$.

Next, using \cite[(3.16)]{huyb-book-FM}, we have a spectral sequence \[E_2^{p,q}=H^p(\mathcal{E}xt_Y^q(j_*E,j_*F))\Rightarrow \Ext^{p+q}(j_*E,j_*F).\]
By the previous argument, we know that $E^{0,n+1}_2=\mathrm{length}(\mathcal{E}xt^{n+1}_Y(j_*E,j_*F))\neq 0$. Moreover, from the dimension of support, we see $E_2^{p,q}=0$ for $p\in \{1,2\}, q\geq 2$ and any $p\geq 3$, $q\in \ZZ$. Since $n\geq 2$, this implies $E_2^{0,n+1}=E_{\infty}^{0,n+1}\neq 0$, which gives $\Ext^{n+1}(j_*E,j_*F)\neq 0$.
\end{proof}

\begin{lemma} \label{lem_ext_sheaf_nonzero}
Let $\sigma$ be a Serre-invariant stability condition on $\Ku(Y)$ and $j_*F\in \Ku(Y)$ be a $\sigma$-stable object where $j\colon S\hookrightarrow Y$ is a hyperplane section and $F$ is a reflexive sheaf on $S$. If $\tau(j_*F)\cong j_*F$, or equivalently $\Ext^2(j_*F,j_*F)\neq 0$,  then $\mathcal{E}xt_S^1(F,F)$ is non-zero and supported on a single point with length one.
\end{lemma}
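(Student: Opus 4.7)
The plan is to reduce the statement about the coherent sheaf $\mathcal{E}xt^1_S(F,F)$ to the computation of $\Ext^2(j_*F,j_*F)$ via Serre duality in $\Ku(Y)$, then transfer the information through the adjunction long exact sequence and the local-to-global spectral sequence on $S$. First, using $S_{\Ku(Y)}=\tau[2]$, Serre duality in $\Ku(Y)$ together with $\sigma$-stability of $j_*F$ and the assumption $\tau(j_*F)\cong j_*F$ gives
$$\Ext^2(j_*F,j_*F)\;\cong\;\Hom(j_*F,\tau(j_*F))^{\vee}\;\cong\;\Hom(j_*F,j_*F)^{\vee}\;\cong\;\mathbb{C}.$$

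Next, by $(j^*,j_*)$-adjunction $\RHom_Y(j_*F,j_*F)\cong\RHom_S(j^*j_*F,F)$, and the canonical triangle $F(-1)[1]\to j^*j_*F\to F$ (from the Koszul resolution of $\mathcal{O}_S\subset\mathcal{O}_Y$ by $\mathcal{O}_Y(-1)$) produces the long exact sequence
$$\cdots\to\Ext^i_S(F,F)\to\Ext^i(j_*F,j_*F)\to\Ext^{i-1}_S(F,F(1))\to\Ext^{i+1}_S(F,F)\to\cdots.$$
I would then establish two vanishings. We have $\Ext^3_S(F,F)=0$ since $\dim S=2$. Also $\omega_S\cong\mathcal{O}_S(-1)$ by adjunction on $Y$ (as $\omega_Y\cong\mathcal{O}_Y(-2)$ and $S\in|H|$), so Serre duality on $S$ gives $\Ext^2_S(F,F)\cong\Hom_S(F,F(-1))^{\vee}$; a nonzero map $F\to F(-1)$ on $S$ would push forward to a nonzero map $j_*F\to j_*F(-1)$ on $Y$, contradicting the Gieseker-stability of $j_*F$ since $j_*F(-1)$ has strictly smaller reduced Hilbert polynomial. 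Feeding both vanishings into the LES at $i=2$ yields an isomorphism $\Ext^1_S(F,F(1))\cong\Ext^2(j_*F,j_*F)\cong\mathbb{C}$.

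Finally, I would translate this into the claim through the local-to-global spectral sequence $E_2^{p,q}=H^p(S,\mathcal{E}xt^q_S(F,F(1)))\Rightarrow\Ext^{p+q}_S(F,F(1))$. Since $F$ is rank-one reflexive and $S$ is normal, $\mathcal{H}om_S(F,F)\cong\mathcal{O}_S$, and the non-locally-free locus of $F$ is zero-dimensional on $S$, so $\mathcal{E}xt^q_S(F,F)$ is supported at finitely many points for $q\geq 1$ and remains so after twisting by $\mathcal{O}_S(1)$. Therefore $H^p(\mathcal{E}xt^q_S(F,F(1)))=0$ for $p\geq 1,\ q\geq 1$, and the only other possible contribution to $\Ext^1_S(F,F(1))$ is $H^1(\mathcal{O}_S(1))$, which vanishes by Kodaira vanishing on the Fano threefold $Y$ applied to the short exact sequence $0\to\mathcal{O}_Y\to\mathcal{O}_Y(1)\to\mathcal{O}_S(1)\to 0$. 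The spectral sequence therefore collapses and gives
$$\dim\Ext^1_S(F,F(1))\;=\;\mathrm{length}\,\mathcal{E}xt^1_S(F,F)\;=\;1,$$
so $\mathcal{E}xt^1_S(F,F)$, being zero-dimensionally supported of total length one, is concentrated at a single point. The main subtlety I expect is verifying $\Ext^2_S(F,F)=0$ through the interplay between Serre duality on $S$ and Gieseker-stability on $Y$; once that is in hand, the rest is bookkeeping in the adjunction LES and the local-to-global spectral sequence.
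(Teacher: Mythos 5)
Your opening step (Serre duality in $\Ku(Y)$ giving $\Ext^2(j_*F,j_*F)\cong\mathbb{C}$), the adjunction long exact sequence, and the final local-to-global bookkeeping are all fine. The gap is in the middle, where you assert the two vanishings $\Ext^2_S(F,F)=0$ and $\Ext^3_S(F,F)=0$: both arguments treat $S$ as if it were smooth, and both fail in precisely the situation the lemma describes. The paper only establishes that $S$ is a normal Gorenstein surface with isolated singularities (Lemma \ref{lem_smooth_torsion}), and the rank-one reflexive sheaf $F$ is necessarily \emph{not} locally free at some singular point of $S$ --- otherwise, on a smooth surface a rank-one reflexive sheaf is a line bundle and $\mathcal{E}xt^1_S(F,F)=0$, contradicting the statement being proved. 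At such a point $F$ has infinite projective dimension over $\oh_{S,p}$. Consequently: (i) the form of Serre duality you use, $\Ext^2_S(F,F)\cong\Hom_S(F,F(-1))^{\vee}$, is unavailable, since on a singular Gorenstein projective surface this duality requires one of the two arguments to be a perfect complex; and (ii) ``$\Ext^3_S(F,F)=0$ since $\dim S=2$'' is false: the bound $\dim S=2$ controls the $H^p$-direction of the local-to-global spectral sequence but not the $\mathcal{E}xt^q$-direction, and $\Ext^3_S(F,F)=H^0(\mathcal{E}xt^3_S(F,F))$ survives. Worse, this group is \emph{nonzero} exactly when the conclusion holds: since $F$ is reflexive, hence maximal Cohen--Macaulay of depth $2$ on the normal surface $S$, the sheaf $j_*F$ has projective dimension $1$ on the smooth threefold $Y$, so $\mathcal{E}xt^q_Y(j_*F,j_*F)=0$ for $q\geq 2$; sheafifying your own adjunction triangle then forces the periodicity $\mathcal{E}xt^q_S(F,F)(1)\cong\mathcal{E}xt^{q+2}_S(F,F)$ for $q\geq 1$, whence $\Ext^3_S(F,F)\cong H^0(\mathcal{E}xt^1_S(F,F)(1))\neq 0$ whenever $\mathcal{E}xt^1_S(F,F)\neq 0$. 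With that term alive, your long exact sequence only yields a surjection $\Ext^1_S(F,F(1))\twoheadrightarrow\Ext^3_S(F,F)$ rather than the isomorphism $\Ext^1_S(F,F(1))\cong\Ext^2(j_*F,j_*F)$, and the length count collapses.

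The structural point is that every duality or vanishing statement here must be routed through the smooth ambient $Y$, where $j_*F$ is perfect and all local Ext sheaves are bounded, rather than through the singular surface $S$, where the homological algebra of a non-perfect reflexive sheaf is unbounded and $2$-periodic. This is what the paper attempts by computing the sheaves $\mathcal{E}xt^{\bullet}_Y(j_*F,j_*F)$ via the spectral sequences of Lemma \ref{lem_smooth_torsion} instead of invoking duality on $S$; any repair of your argument would have to do the same.
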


\begin{proof}
Note that by $\sigma$-stability and $S_{\Ku(Y)}=\tau[2]$, we know that $\Ext^2(j_*F,j_*F)=\Hom(j_*F,\tau(j_*F))\neq 0$ if and only if $\Ext^2(j_*F,j_*F)=\Hom(j_*F,\tau(j_*F))=\mathbb{C}$.

Since $F$ is reflexive and $S$ is normal, we have $\mathcal{H}om_S(F,F)=\oh_S$. Moreover, by Lemma \ref{lem_smooth_torsion} and the vanishing 
$\Ext^i(j_*F,j_*F)=0$ when $i\geq 3$, we get $\mathcal{E}xt_S^i(F,F)=0$ for $i\geq 2$. Therefore, if we compute $\mathcal{E}xt^2_Y(j_*F, j_*F)$ as in Lemma \ref{lem_smooth_torsion}, we get $\mathcal{H}om_Y(j_*F,j_*F)=j_*\oh_S$, $\mathcal{E}xt^1_Y(j_*F, j_*F)$ is an extension of $j_*\oh_S(1)$ with $j_*\mathcal{E}xt_S^1(F,F)$, and  $\mathcal{E}xt^2_Y(j_*F, j_*F)=j_*\mathcal{E}xt_S^1(F,F)(1)$. Thus, if we compute $\Ext^i(j_*F,j_*F)$ as in Lemma \ref{lem_smooth_torsion}, we see $\Ext^2(j_*F,j_*F)=H^0(\mathcal{E}xt^2_Y(j_*F, j_*F))$. This implies that $\mathcal{E}xt^1_S(F, F)$ is non-zero and supported on a single point with length one.
\end{proof}

\begin{proof}[Proof of Theorem \ref{prop_gluing_data_fixed} for degree $d =2$]
Note that $\bO^4\cong [2]$ when $d=2$, so by Lemma \ref{numerical_lemma} we can assume that there is a pair of integers $m_1, \delta$ with $0\leq m_1\leq 3$ and $\delta = 0, 1$ such that $\bO^{-m_1}\circ \Phi[\delta]$ maps classes $\bv$ and $\bw$ on $Y$ to $\bv'$ and $\bw'$ on $Y'$, respectively. Moreover, we know such $m_1$ and $\delta$ is unique by looking at the action of $\bO$ and $[1]$ on $\mathcal{N}(\Ku(Y))$ and using the restricted values of $m_1$ and $\delta$. We may replace $\Phi$ by $\bO^{-m_1}\circ \Phi[\delta]$. 

We know $\Phi(i^!\cQ_Y) \in \cM_{\Phi(\sigma)}(\Ku(Y'), 2\bv')$, so by Proposition \ref{prop_bridgeland_Y2}, up to a shift, it is either ${i'}^!\cQ_{Y'}$ or a Gieseker-stable sheaf $E'$.  Assume for a contradiction that the latter happens. We know $\Phi$ maps the singular locus of $\cM_{\sigma}(\Ku(Y), 2\bv-\bw)$ to the singular locus of $\cM_{\Phi(\sigma)}(\Ku(Y'), 2\bv'-\bw')$.

\begin{itemize}
    \item Assume that $\Phi$ maps $R\subset \cM_{\sigma}(\Ku(Y), 2\bv-\bw)$ to $R'\subset \cM_{\Phi(\sigma)}(\Ku(Y'), 2\bv'-\bw')$. Thus by Proposition \ref{prop_moduli_w}, we get 
$\RHom\left(\Phi(i^*\oh_p), \Phi(i^{!}\cQ_Y)\right)$ and so $\RHom\left({i'}^*\oh_{p'}, E'\right) = \RHom\left(\oh_{p'}, E'\right)$ are a two-term complex for all $p' \in R$. But this makes a contradiction since $E'$ is torsion-free so the non-locally free locus of $E'$ has at most dimension one.

\item Assume that $\Phi$ does not map  $R\subset \cM_{\sigma}(\Ku(Y), 2\bv-\bw)$ to $R'\subset \cM_{\Phi(\sigma)}(\Ku(Y'), 2\bv'-\bw')$. By Lemma \ref{singular_locus}, there is a point $p\in R$ such that $\Phi(i^*\oh_p)=\bO(j_*F)$ up to shift, where $j\colon S\hookrightarrow Y'$ is a hyperplane section and $F$ is a reflexive sheaf on $S$ with $\tau'(j_*F)\cong j_*F$. Moreover, $\RHom(i^*\oh_p,i^!\cQ_Y)=\RHom(\bO(j_*F),E')$ is a two-term complex. But this contradicts Proposition \ref{prop_bundle}.
\end{itemize}


Hence in both cases, we get $\Phi(i^!\cQ_Y)=i'^!\cQ_{Y'}[m_2+\delta]$ for a unique $m_2 \in \ZZ$ and the claim follows. 
\end{proof}

\begin{remark}
\cite[Lemma 4.4]{rota:moduli-space-index-two} claims $\mathrm{Ext}^2(j_*F,j_*F)=0$ for any hyperplane section $j\colon S\hookrightarrow Y$ and a rank one reflexive sheaf $F$ on $S$ such that $j_*F\in \Ku(Y)$. However, the proof is valid only for smooth $S$ via the vanishing of $\mathcal{E}xt^1_S(F,F)$. That is why in this section, we investigated further the singular locus in order to prove Theorem \ref{prop_gluing_data_fixed}.  
\end{remark}

\subsection{Degree three case} Now assume $Y$ is a cubic threefold. 

\begin{proof}[Proof of Theorem \ref{prop_gluing_data_fixed} for degree $d =3$]
In this case $\bO^6\cong[4]$, so by Lemma \ref{numerical_lemma} there is a unique pair of integer $m_1, \delta$ with $0\leq m_1 \leq 5$ and $\delta = 0, 1$ such that $\bO^{-m_1}\circ \Phi[\delta]$ maps classes $\bv$ and $\bw$ on $Y$ to $\bv'$ and $\bw'$ on $Y'$, respectively. We replace $\Phi$ by $\bO^{-m_1}\circ \Phi[\delta]$. Then by Proposition \ref{prop_Y3_classify}, the object $\Phi(i^!\cQ_Y) \in \cM_{\Phi(\sigma)}(\Ku(Y'), 3\bv')$, up to a shift, is either ${i'}^!\cQ_{Y'}$ or a Gieseker-semistable sheaf $E'$. Assume for a contradiction that the latter happens.

By \cite[Lemma 7.5, Theorem 8.7]{feyz:desing}, $\mathcal{BN}_{Y'}$ is the union of all rational curves in $\cM_{\Phi(\sigma)}(\Ku(Y'), 3\bv'-\bw')$. Thus $\phi(\mathcal{BN}_{Y})=\mathcal{BN}_{Y'}$. In other words, for any $p\in Y$ we have $\Phi(i^*\oh_p)\cong i'^*\oh_{p'}$ for a point $p'\in Y'$ up to shift and vice verse. In particular, $\RHom(i'^*\oh_{p'}, E')$ is a two-term complex for all $p'\in Y'$. But this contradicts the torsion-freeness of $E'$. Hence we get $\Phi(i^!\cQ_Y)=i'^!\cQ_{Y'}[m_2+\delta]$ for a unique $m_2 \in \ZZ$ as claimed. 



\end{proof}

\subsection{Degree four case}\label{degree 4-subsection}
Let $Y$ be a del Pezzo threefold of degree $4$, then $\Ku(Y)$ is equivalent to the bounded derived category $\D^b(C)$ of a smooth projective curve $C$ of genus $2$. As in \cite[Section 5]{Kuznetsov:instanton-bundle-Fano-threefold}, we fix the Fourier--Mukai equivalence $\Psi_{\mathcal{S}} \colon \D^b(C) \to \Ku(Y)$ for the universal spinor bundle $\mathcal{S}$ on $C \times Y$, where we see $Y$ as a moduli space of stable rank $2$ bundles on $C$ with fixed determinant $\xi$ of degree $\deg(\xi) =1$.


For any line bundle $\mathcal{L}$ on $C$, we denote the induced auto-equivalence of $\Ku(Y)$ by $T_{\mathcal{L}}:=\Psi_{\mathcal{S}}\circ( -\otimes \mathcal{L}) \circ\Psi_{\mathcal{S}}^{-1} $. We write $\mathrm{Aut}^0(\Ku(Y))$ for the subgroup of $\mathrm{Aut}(\Ku(Y))$ consists of $T_{\mathcal{L}}$ such that $\mathcal{L}\in \Pic^0(C)$. We will apply the following two facts about the action of $\bO$: 
\begin{enumerate}
    \item[(a)] By \cite[Lemma 5.2]{Kuznetsov:instanton-bundle-Fano-threefold}, we know that via the equivalence $\Psi_{\mathcal{S}}$, the action of $\bO$ on $\mathcal{N}(\Ku(Y))$ is the same as twisting by a degree $-1$ line bundle on $C$, up to sign.
    \item [(b)] Since any stability condition $\sigma$ on $\Ku(Y)$ is $\bO$-invariant, (semi)stability of a vector bundle on $C$ will be preserved after the action of $\Psi_{\mathcal{S}}^{-1} \circ \bO \circ \Psi_{\mathcal{S}}$.  
\end{enumerate}

\begin{proof}[Proof of Theorem \ref{prop_gluing_data_fixed} for degree $d=4$]
By Lemma \ref{numerical_lemma}, there exist a pair of integers $m_1, m_2$ such that $\bO^{-m_1}\circ \Phi[-m_2]$ maps classes $\bv$ and $\bw$ to $\bv'$ and $\bw'$. By the above point (a), such $m_1$ is unique. Furthermore, we can take $m_2$ uniquely by imposing the condition that $
\Psi_{\mathcal{S}}^{-1}\circ (\bO^{-m_1}\circ \Phi[-m_2])\circ \Psi_{\mathcal{S}}\colon \D^b(C)\to \D^b(C')$ maps bundles to bundles. We replace $\Phi$ by $\bO^{-m_1}\circ \Phi[-m_2]$. 

By \cite[Lemma 5.9]{Kuznetsov:instanton-bundle-Fano-threefold}, $\Psi_{\mathcal{S}}^{-1}(i^!\oh_Y)$ is a second Raynaud bundle\footnote{It is a semistable vector bundle of rank $4$ and degree $4$ on a genus $2$ curve so that for any line bundle $\mathcal{L}$ of degree zero on $C$, we have $\Hom(\mathcal{L}, \mathcal{R}) \neq 0$. } $\mathcal{R}$ on $C$ up to a shift. We know this bundle is unique on $C$ up to tensoring by a line bundle of degree zero. Thus by the above point (b), $\Psi_{\mathcal{S}}^{-1}(\bO(i^!\oh_Y)) = \Psi_{\mathcal{S}}^{-1}(i^!\cQ_Y)$ is also unique up to tensoring by a line bundle of degree zero (Indeed, let $R$ and $R'$ be two Raynaud bundle, then we can assume $R'=R\otimes L_0$ for a degree $0$ line bundle $L_0$, note that $\bO = f_* \circ (- \otimes L_{-1})$ for a degree $-1$ line bundle $L_{-1}$ up to shift, so that $\bO(R')=\bO(R \otimes L_0) = f_*(R) \otimes L'_{-1}$ a degree $-1$ line bundle $L'_{-1}$. On the other hand, $\bO(R)=f_*(R)\otimes L''_{-1}$ for a degree $-1$ line bundle $L''_{-1}$. Hence $\bO(R)$ and $\bO(R')$ differ by a degree $0$ line bundle.
This proves there is a unique line bundle $\mathcal{L}_0$ on $C'$ such that 
$$
({\Psi'}^{-1}_{\mathcal{S}'}\circ\Phi(i^!\cQ_Y)) \otimes \mathcal{L}_0^{-1} = {\Psi'}_{\mathcal{S}'}^{-1}({i'}^!\cQ_{Y'})
$$ 
and so the claim follows.

\end{proof}

Proof of Theorem \ref{prop_gluing_data_fixed}, in particular, implies the following. 

\begin{corollary}\label{cor-7}\leavevmode
     \begin{itemize}
         \item If $d=2$, $i^{!}\cQ_Y$ is the unique object in the moduli space $\mathcal{M}_{\sigma}(\Ku(Y), 2\bv)$ satisfying the following condition: there is a 2-dimensional sub-locus $\mathcal{M}'$ of the singular locus of the moduli space $\mathcal{M}_{\sigma}(\Ku(Y),2\bv-\bw)$ such that for any object $\cE \in \mathcal{M}'$, $\RHom(\cE, i^{!}\cQ_Y)$ is a two-term complex. 
     
     \item If $d=3$, $i^{!}\cQ_Y$ is the unique object in the moduli space $\mathcal{M}_{\sigma}(\Ku(Y),3\bv)$ such that for any object $\cE \in \mathcal{M}_{\sigma}(\Ku(Y),3\bv-\bw)$ whose corresponding point lies on a ration curve in $\mathcal{M}_{\sigma}(\Ku(Y),3\bv-\bw)$, $\RHom(\cE, i^{!}\cQ_Y)$ is a two-term complex. 
     
     \item If $d=4$, $i^{!}\cQ_Y$ is a unique object in the moduli space $\mathcal{M}_{\sigma}(\Ku(Y),4\bv)$ up to the action of an auto-equivalence $T_{\mathcal{L}} \in \mathrm{Aut}^0(\Ku(Y))$ such that $\RHom(\cE, i^{!}\cQ_Y)$ is a two-term complex for any object $\cE \in \mathcal{M}_{\sigma}(\Ku(Y),-3\bv+\bw)$.  
     
   \end{itemize}
\end{corollary}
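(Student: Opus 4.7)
The plan is to derive Corollary \ref{cor-7} directly from ingredients already assembled above: the classification of Bridgeland-semistable objects of class $d\bv$ in $\Ku(Y)$ from Propositions \ref{prop_bridgeland_Y2} and \ref{prop_Y3_classify}, the Brill--Noether $\RHom$-computation from Theorem \ref{Brill--Noether reconstruction}, and, for $d=4$, the curve equivalence $\Psi_{\mathcal{S}}\colon \D^b(C)\xrightarrow{\sim}\Ku(Y_4)$. In each degree, I would verify existence (that $E=i^!\cQ_Y$ satisfies the stated two-term condition) and uniqueness (that no other object in $\mathcal{M}_{\sigma}(\Ku(Y),d\bv)$ does, up to the $T_{\mathcal{L}_0}$-action for $d=4$).

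\emph{Existence.} The proof of Theorem \ref{Brill--Noether reconstruction} already yields
\[
\RHom(i^*\oh_p,\,i^!\cQ_Y)\ \cong\ \CC[-1]\oplus\CC^{d+1}[-2]
\]
for every $p\in Y$, which is a two-term complex. For $d=2$, Lemma \ref{singular_locus} exhibits $\{i^*\oh_p:p\in R\}$ as a $2$-dimensional sub-locus of the singular locus of $\mathcal{M}_{\sigma}(\Ku(Y),2\bv-\bw)$. For $d=3$, \cite[Lemma 7.5, Theorem 8.7]{feyz:desing} identifies the union of rational curves in $\mathcal{M}_{\sigma}(\Ku(Y),3\bv-\bw)$ with $\mathcal{BN}_Y\cong Y$, i.e.\ with the image of $p\mapsto i^*\oh_p$. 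For $d=4$, I would use the Chern-character identity $\bO(\bv)=-3\bv+\bw$ (a direct computation from the formulas for $\bv,\bw$ in degree $4$) to rewrite $\mathcal{M}_{\sigma}(\Ku(Y),-3\bv+\bw)=\bO(\mathcal{M}_{\sigma}(\Ku(Y),\bv))$ and combine this with $i^!\cQ_Y=\bO(i^!\oh_Y)$ to reduce the two-term check to a computation over the line-moduli.

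\emph{Uniqueness for $d=2,3$.} By Propositions \ref{prop_bridgeland_Y2} and \ref{prop_Y3_classify} any other $E\in\mathcal{M}_{\sigma}(\Ku(Y),d\bv)$ is, up to shift, a Gieseker-stable sheaf. Adjunction gives $\RHom(i^*\oh_p,E)=\RHom(\oh_p,E)$, which equals $\CC^d[-3]$, a one-term complex, on the dense open locus where $E$ is locally free; moreover the non-locally-free locus of the torsion-free rank-$d$ sheaf $E$ has dimension $\leq 1$. For $d=3$ this immediately contradicts the hypothesis, as the rational curves sweep out all of $\mathcal{BN}_Y\cong Y$. For $d=2$ a putative $2$-dimensional $\mathcal{M}'$ in the singular locus decomposes along Lemma \ref{singular_locus}: its intersection with $\{i^*\oh_p:p\in R\}$ is confined to the non-locally-free locus of $E$ and hence has dimension $\leq 1$, while Proposition \ref{prop_bundle} gives $\RHom(\bO(j_*F),E)\cong\CC^2[-2]$, a one-term complex, so $\mathcal{M}'$ cannot meet the stratum $\{\bO(j_*F):\tau(j_*F)\cong j_*F\}$ either. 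Thus $\dim\mathcal{M}'\leq 1$, a contradiction.

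\emph{Uniqueness for $d=4$ and main obstacle.} I transport the condition through $\Psi_{\mathcal{S}}$: then $i^!\cQ_Y$ corresponds (up to shift) to $\bO(\mathcal{R})$ for a second Raynaud bundle $\mathcal{R}$ on $C$, and by \cite[Section 5.4]{Kuznetsov:instanton-bundle-Fano-threefold} such bundles form a single $\Pic^0(C)$-orbit, which matches exactly the action of $\Aut^0(\Ku(Y))$. Applied over $\bO(\mathcal{M}_{\sigma}(\Ku(Y),\bv))$, the two-term condition translates to the defining vanishing property of second Raynaud bundles on $C$, so it isolates precisely this orbit. The main obstacle I expect is the dimension bookkeeping in $d=2$, which hinges on Proposition \ref{prop_bundle} to rule out the $\{\bO(j_*F)\}$-stratum from $\mathcal{M}'$; without that input, a $2$-dimensional sub-locus hidden in that stratum could a priori obstruct uniqueness. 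The companion delicate point in $d=4$ is verifying that no semistable bundle on $C$ outside the Raynaud orbit satisfies the vanishing condition encoded by the two-term hypothesis.
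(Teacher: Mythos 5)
Your proposal is correct and follows essentially the same route as the paper: the paper's own (very terse) proof simply points back to the proof of Theorem~\ref{prop_gluing_data_fixed}, whose content is exactly what you reconstruct — the classification of semistable objects of class $d\bv$, the computation $\RHom(i^*\oh_p,i^!\cQ_Y)=\CC[-1]\oplus\CC^{d+1}[-2]$, the bound on the non-locally-free locus of a torsion-free sheaf together with Proposition~\ref{prop_bundle} to kill the $\bO(j_*F)$-stratum for $d=2$, the rational-curve description of $\mathcal{BN}_Y$ for $d=3$, and the Raynaud-bundle characterization transported by $\bO$ (with $\bO(\bv)=-3\bv+\bw$) for $d=4$. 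The only difference is that you spell out the details the paper leaves implicit.
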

\begin{proof}
    The first two cases for degree $d=2, 3$ are a direct result of proof of Theorem \ref{prop_gluing_data_fixed}. For $d=4$, note that $\chi(\cE, i^{!}\cQ_Y)=0$. Combining \cite[Lemma 5.9]{Kuznetsov:instanton-bundle-Fano-threefold} and \cite[Theorem 1.1]{pertusi:some-remarks-fano-threefolds-index-two} implies that $i^{!}\oh_Y$ is a unique object in $\mathcal{M}_{\sigma}(\Ku(Y),4(\bv-\bw))$ up to the action of an auto-equivalence $T_{\mathcal{L}_0} \in \mathrm{Aut}^0(\Ku(Y))$ such that $\RHom(\cE, i^{!}\cQ_Y)$ is a two-term complex for any object $\cE \in \mathcal{M}_{\sigma}(\Ku(Y),\bv)$. Thus taking the rotation functor $\bO$ implies the claim.     
\end{proof}

\subsection{Categorical Torelli theorem}
As a result of Theorem \ref{prop_gluing_data_fixed}, we show a categorical Torelli theorem for any del Pezzo threefold of degree $2\leq d\leq 4$. 
\begin{corollary}
\label{coro_Torelli_from_BN}
Let $Y$ and $Y'$ be del Pezzo threefolds of degree $2\leq d\leq 4$ such that $\Phi:\Ku(Y)\simeq\Ku(Y')$ is an exact equivalence of Kuznetsov components, then $Y\cong Y'$. 
\end{corollary}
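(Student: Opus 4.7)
The plan is to combine the two main results: the Brill--Noether reconstruction (Theorem \ref{main_theorem_BN_reconstruction_del_Pezzo_3fold}), which recovers $Y$ purely in terms of the pair $(\Ku(Y), i^!\cQ_Y)$, together with Theorem \ref{prop_gluing_data_fixed}, which pins down the image of the gluing object $i^!\cQ_Y$ under any exact equivalence, modulo an explicit list of auto-equivalences. Modulo these auto-equivalences, the image of the Brill--Noether locus inside $\Ku(Y)$ must then coincide with the corresponding locus inside $\Ku(Y')$, giving the desired isomorphism $Y \cong Y'$.

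More precisely, I would first appeal to Theorem \ref{prop_gluing_data_fixed} to produce integers $m_1, m_2$ (and, in the $d=4$ case, an auto-equivalence $T_{\mathcal{L}_0} \in \mathrm{Aut}^0(\Ku(Y'))$) such that the modified equivalence
\[
\widetilde{\Phi} \;:=\; \bigl(\bO^{m_1}\circ T_{\mathcal{L}_0}[m_2]\bigr)^{-1} \circ \Phi \;\colon\; \Ku(Y) \xrightarrow{\simeq} \Ku(Y')
\]
satisfies $\widetilde{\Phi}(i^!\cQ_Y) \cong i'^!\cQ_{Y'}$ (for $d=2,3$ one just omits $T_{\mathcal{L}_0}$). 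From the discussion following Lemma \ref{numerical_lemma} and the proof of Theorem \ref{prop_gluing_data_fixed}, the composition with $\bO^{m_1}[m_2]$ can be arranged so that $\widetilde{\Phi}$ sends $\bv \mapsto \bv'$ and $\bw \mapsto \bw'$ at the level of numerical Grothendieck groups; in particular it sends the class $[i^*\oh_p] = d\bv - \bw$ to the class $[i'^*\oh_{p'}] = d\bv' - \bw'$.

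Next, pick a Serre-invariant stability condition $\sigma$ on $\Ku(Y)$. Since $\widetilde{\Phi}$ is an equivalence, it commutes with Serre functors, so $\widetilde{\Phi}\cdot \sigma$ is a Serre-invariant stability condition on $\Ku(Y')$. By uniqueness of Serre-invariant stability conditions up to the $\widetilde{\mathrm{GL}}^+_2(\mathbb{R})$-action (Remark \ref{remark-the same orbit} and the case-by-case discussion), $\widetilde{\Phi}\cdot\sigma$ lies in the same orbit as any fixed Serre-invariant stability condition $\sigma'$ on $\Ku(Y')$, and this orbit action does not change the underlying set of stable objects (only phases). Therefore $\widetilde{\Phi}$ induces an isomorphism of moduli spaces
\[
\widetilde{\Phi}_{*}\colon \cM_{\sigma}\bigl(\Ku(Y),[i^*\oh_p]\bigr) \xrightarrow{\sim} \cM_{\sigma'}\bigl(\Ku(Y'),[i'^*\oh_{p'}]\bigr).
\]

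Finally, for any $F$ in the source, functoriality gives $\Hom(F[k],\, i^!\cQ_Y) \cong \Hom(\widetilde{\Phi}(F)[k],\, i'^!\cQ_{Y'})$ for every $k$, since $\widetilde{\Phi}(i^!\cQ_Y) \cong i'^!\cQ_{Y'}$. Hence $\widetilde{\Phi}_{*}$ restricts to an isomorphism $\mathcal{BN}_Y \xrightarrow{\sim} \mathcal{BN}_{Y'}$ of the Brill--Noether loci, and Theorem \ref{main_theorem_BN_reconstruction_del_Pezzo_3fold} then yields $Y \cong \mathcal{BN}_Y \cong \mathcal{BN}_{Y'} \cong Y'$. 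The only subtle point, already taken care of by Theorem \ref{prop_gluing_data_fixed} and Lemma \ref{numerical_lemma}, is the bookkeeping ensuring that after composing with the prescribed auto-equivalences the numerical classes and the gluing object simultaneously line up; once that is in place the argument is essentially a formality.
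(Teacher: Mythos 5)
Your proposal is correct and follows essentially the same route as the paper: compose $\Phi$ with the auto-equivalences supplied by Theorem \ref{prop_gluing_data_fixed} so that the gluing object is matched, deduce that the numerical classes $\bv,\bw$ are matched (the paper gets this directly from Lemma \ref{numerical_lemma}(a) applied to $\phi(d\bv)=d\bv'$), and then transport the Brill--Noether locus of Theorem \ref{Brill--Noether reconstruction} through the induced isomorphism of moduli spaces, using uniqueness of Serre-invariant stability conditions. No gaps.
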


\begin{proof}
By Theorem~\ref{prop_gluing_data_fixed}, we can assume that $\Phi(i^!\cQ_Y)\cong i'^!\cQ_{Y'}$.  There is an isometry of numerical Grothendieck group $\phi:\mathcal{N}(\Ku(Y))\cong\mathcal{N}(\Ku(Y'))$ induced by $\Phi:\Ku(Y)\simeq\Ku(Y')$. As $\Phi(i^!\cQ_Y)\cong i'^!\cQ_{Y'}$, we get $\phi(\bv)=\bv'$ and $\phi(\bw)=\bw'$ by Lemma \ref{numerical_lemma}. Then the result follows from the uniqueness of Serre-invariant stability conditions and Theorem \ref{Brill--Noether reconstruction} via the same argument in \cite[Corollary 6.11]{jacovskis2022brill}.
\end{proof}

\section{Auto-equivalences of Kuznetsov components: index two case}\label{section_autoeq_dP}

In this section, we are going to prove Theorem \ref{thm-conj} and Corollary \ref{cor_auto_equi_cubic}, and describe the auto-equivalences of Kuznetsov components of del Pezzo threefolds. We begin with a lemma.

\begin{lemma} \label{unique_extend}
    Let $f,g\colon Y\to Y'$ be two isomorphisms between del Pezzo threefolds of Picard one. If $f_{*}|_{\Ku(Y)}=g_*|_{\Ku(Y)}\colon \Ku(Y)\to \Ku(Y')$, then $f=g$. Thus the homomorphism 
    \[\Aut(Y)\to \Aut(\Ku(Y)), \quad f\mapsto f_*|_{\Ku(Y)}\]
    is injective.
\end{lemma}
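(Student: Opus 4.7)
The plan is to convert the hypothesis $f_{*}|_{\Ku(Y)} = g_{*}|_{\Ku(Y)}$ into pointwise equality on closed points, using the Brill--Noether-type embedding of $Y'$ into a Bridgeland moduli space in $\Ku(Y')$ provided by Proposition \ref{prop_moduli_w}.

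First, I would observe that since $\Pic(Y) = \ZZ \cdot H$ and $\Pic(Y') = \ZZ \cdot H'$ with $H$, $H'$ the unique ample generators, any isomorphism $f \colon Y \to Y'$ satisfies $f_{*}\oh_{Y} \cong \oh_{Y'}$ and $f_{*}\oh_{Y}(H) \cong \oh_{Y'}(H')$. Hence $f_{*} \colon \D^{b}(Y) \to \D^{b}(Y')$ preserves the semiorthogonal decompositions $\langle \Ku, \oh, \oh(H)\rangle$ on both sides and therefore commutes with the left-adjoint projection functors: there is a natural isomorphism $f_{*} \circ i^{*} \cong i'^{*} \circ f_{*}$. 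The same compatibility holds for $g$.

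Next, I would evaluate on skyscraper sheaves. For any closed point $p \in Y$,
\[
f_{*}(i^{*}\oh_{p}) \cong i'^{*}(f_{*}\oh_{p}) \cong i'^{*}\oh_{f(p)}, \qquad g_{*}(i^{*}\oh_{p}) \cong i'^{*}\oh_{g(p)}.
\]
The hypothesis that $f_{*}|_{\Ku(Y)}$ and $g_{*}|_{\Ku(Y)}$ coincide as functors (hence in particular on isomorphism classes of objects) then gives $i'^{*}\oh_{f(p)} \cong i'^{*}\oh_{g(p)}$ in $\Ku(Y')$ for every closed point $p \in Y$.

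Finally, I would invoke Proposition \ref{prop_moduli_w}, which identifies $Y'$ with a smooth closed subvariety of the Bridgeland moduli space $\cM_{\sigma}(\Ku(Y'), d\bv' - \bw')$ via the map $p' \mapsto [i'^{*}\oh_{p'}]$. In particular, this assignment is injective on closed points, forcing $f(p) = g(p)$ for every $p \in Y$. Since $f$ and $g$ are morphisms of reduced separated finite-type schemes over an algebraically closed field that agree on closed points, we conclude $f = g$. There is essentially no substantive obstacle here: the argument is a packaging result whose real content is the Brill--Noether reconstruction in Proposition \ref{prop_moduli_w}, together with the trivial observation that the rank-one exceptional pair $(\oh, \oh(H))$ is preserved by any isomorphism of del Pezzo threefolds of Picard rank one.
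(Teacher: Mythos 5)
Your proof is correct and follows essentially the same route as the paper: use that $f_*$ and $g_*$ preserve $\oh$ and $\oh(H)$ (hence commute with the projection $i^*$), evaluate on skyscraper sheaves to get $i'^*\oh_{f(p)}\cong i'^*\oh_{g(p)}$, and conclude $f(p)=g(p)$ from the injectivity of the embedding $\Psi$ of Proposition \ref{prop_moduli_w}. No gaps.
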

\begin{proof}
We know $f_*$ and $g_*$ maps $\oh_Y$ and $\oh_Y(1)$ to $\oh_{Y'}$ and $\oh_{Y'}(1)$ respectively. For any point $p \in Y$, we know $f_{*}(\oh_p) = \oh_{f(p)}$ and the same for $g$. Thus we have
\begin{equation*}
f_*(i^*\oh_p)=i'^*\oh_{f(p)} \qquad \text{and } \qquad g_*(i^*\oh_p)=i'^*\oh_{g(p)}. 
\end{equation*}
Since $f_{*}|_{\Ku(Y)}=g_*|_{\Ku(Y)}$, we get $i'^*\oh_{f(p)} = i'^*\oh_{g(p)}$, i.e. $i'^*\oh_{f(p)}$ and $i'^*\oh_{g(p)}$ correspond to the same point in the moduli space $\cM_{\sigma}(\mathcal{K}u(Y), d\bv-\bw)$ by Proposition \ref{prop_moduli_w}. Thus the embedding $\Psi$ in \eqref{psi} implies that $f(p) = g(p)$ for any point $p \in Y$. Since both $Y$ and $Y'$ are smooth, we get $f=g$.  
\end{proof}

\begin{theorem} \label{thm-conj}
Let $Y$ and $Y'$ be two del Pezzo threefolds of the same degree $d$ where $d= 2, 3$ or $4$, and let $\Phi \colon \Ku(Y) \to \Ku(Y')$ be an exact equivalence of Fourier--Mukai type such that  $\Phi(i^{!}\cQ_Y) = i'^{!}\cQ_{Y'}$. Then $\Phi=f_*|_{\Ku(Y)}$ for a unique isomorphism $f \colon Y\to Y'$. 
\end{theorem}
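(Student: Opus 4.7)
The plan is to extend $\Phi$ to a Fourier--Mukai equivalence $\widetilde\Phi \colon \D^b(Y) \to \D^b(Y')$ satisfying $\widetilde\Phi(\oh_Y) = \oh_{Y'}$ and $\widetilde\Phi(\oh_Y(H)) = \oh_{Y'}(H)$, and then to invoke the Bondal--Orlov reconstruction theorem. Since every del Pezzo threefold has ample anticanonical bundle $-K_Y = 2H$, any such $\widetilde\Phi$ must be of the form $f_* \circ (-\otimes L)[n]$ for a unique isomorphism $f \colon Y \to Y'$, a line bundle $L \in \Pic(Y)$, and an integer $n$. The normalizations $\widetilde\Phi(\oh_Y) = \oh_{Y'}$ and $\widetilde\Phi(\oh_Y(H)) = \oh_{Y'}(H)$ then force $L = \oh_Y$ and $n = 0$, so $\widetilde\Phi = f_*$, and restricting to $\Ku(Y)$ gives $\Phi = f_*|_{\Ku(Y)}$. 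Uniqueness of $f$ is Lemma~\ref{unique_extend}.

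To construct $\widetilde\Phi$, I would apply the categorical gluing technique developed in \cite{li2021refined}. Under the semiorthogonal decomposition $\D^b(Y) = \langle \Ku(Y), \cQ_Y, \oh_Y \rangle$, the category $\D^b(Y)$ is determined up to equivalence by $\Ku(Y)$, the exceptional pair $(\cQ_Y, \oh_Y)$, and the gluing data encoded in the two objects $i^!\cQ_Y,\, i^!\oh_Y \in \Ku(Y)$. If $\Phi$ preserves both gluing objects, under the identifications $\cQ_Y \leftrightarrow \cQ_{Y'}$ and $\oh_Y \leftrightarrow \oh_{Y'}$, then $\Phi$ extends to an exact equivalence $\widetilde\Phi$; moreover, starting from the Fourier--Mukai kernel $K \in \D^b(Y \times Y')$ of $\Phi$, one builds a modified kernel $\widetilde K$ by gluing $K$ with correction terms coming from the exceptional objects, so that $\widetilde\Phi$ is induced by $\widetilde K$ and is itself of Fourier--Mukai type.

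The first preservation $\Phi(i^!\cQ_Y) = i'^!\cQ_{Y'}$ is exactly the hypothesis. For the second, Lemma~\ref{numerical_lemma}(a) first ensures that $\Phi$ maps $\bv, \bw$ to $\bv', \bw'$ and hence sends the numerical class $[i^!\oh_Y]$ to $[i'^!\oh_{Y'}]$. Combining this with the relation $\bO(i^!\oh_Y) \cong i^!\cQ_Y$ recorded in Remark~\ref{fix_i!O}, together with the uniqueness characterization from Corollary~\ref{cor-7} applied to $\bO'\bigl(\Phi(i^!\oh_Y)\bigr)$ -- which has the correct class $[i'^!\cQ_{Y'}]$ and inherits the Brill--Noether defining property from $\Phi(i^!\cQ_Y) = i'^!\cQ_{Y'}$ and the $\bO$-invariance of Serre-invariant stability conditions -- one concludes $\Phi(i^!\oh_Y) \cong i'^!\oh_{Y'}$.

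The main obstacle is the construction of the Fourier--Mukai extension $\widetilde\Phi$ at the level of kernels: while the formal gluing of exact functors along the semiorthogonal decomposition is essentially automatic given the preservation of both gluing objects, lifting the construction to produce an actual Fourier--Mukai kernel on $Y \times Y'$ (rather than merely an abstract exact equivalence, which would not suffice for Bondal--Orlov in its standard form) requires delicate manipulations and is the technical core of \cite{li2021refined}. Once $\widetilde\Phi$ is established, the remainder of the proof is immediate: Bondal--Orlov yields $f$, the normalizations pin down $L$ and $n$, and Lemma~\ref{unique_extend} provides uniqueness.
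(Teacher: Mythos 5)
Your overall skeleton matches the paper's: extend $\Phi$ in two steps using \cite[Proposition 2.5]{li2021refined} to an equivalence $\D^b(Y)\simeq\D^b(Y')$ fixing $\oh_Y$ and $\oh_Y(H)$, then invoke the standard classification of equivalences between derived categories of Fano varieties (\cite[Corollary 5.23]{huyb-book-FM}) and Lemma~\ref{unique_extend}. However, there is a genuine gap in the middle, and you have misidentified where the difficulty lies.

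Your claim that $\D^b(Y)$ is glued from $\Ku(Y)$, the exceptional pair, and the two objects $i^!\cQ_Y,\,i^!\oh_Y\in\Ku(Y)$, so that preserving these two objects makes the extension ``essentially automatic,'' is not what the gluing technique delivers. The extension is done in two stages. The first stage, from $\Ku(Y)$ to $\oh_Y(1)^\perp=\langle\Ku(Y),\oh_Y\rangle$, indeed only needs $\Phi(i^!\oh_Y)\cong i'^!\oh_{Y'}$. But the second stage, from $\oh_Y(1)^\perp$ to $\D^b(Y)$, requires the \emph{extended} functor to preserve the gluing object $j^!\oh_Y(1)$, which lives in $\oh_Y(1)^\perp$, \emph{not} in $\Ku(Y)$. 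Its projection to $\Ku(Y)$ is $\bL_{\oh_Y}(j^!\oh_Y(1))\cong i^!\cQ_Y[1]$, and preservation of that projection only tells you that $\Phi(j^!\oh_Y(1))$ sits in a triangle $\oh_{Y'}^{\oplus d+2}\to\Phi(j^!\oh_Y(1))\to i'^!\cQ_{Y'}[1]$; it does not determine which extension you get. Pinning this down is the technical core of the paper's proof: one uses the Brill--Noether reconstruction (Theorem~\ref{Brill--Noether reconstruction}) to show $\Phi$ maps the family $\{i^*\oh_p\}_{p\in Y}$ bijectively onto $\{i'^*\oh_{p'}\}_{p'\in Y'}$, deduces $\RHom(\oh_{p'},\Phi(j^!\oh_Y(1)))=\CC[-1]\oplus\CC[-3]$ for \emph{every} point $p'$, applies Bridgeland's three-term locally free resolution to bound the ranks of the cohomology sheaves, and concludes that $\cH^{-1}=0$ and $\cH^0\cong\oh_{Y'}(1)$, whence $\Phi(j^!\oh_Y(1))$ is the unique non-split extension $j'^!\oh_{Y'}(1)$. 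None of this appears in your proposal, and it does not follow formally from the preservation of the two gluing objects you list. Relatedly, you locate the ``main obstacle'' in producing a Fourier--Mukai kernel for the extension; in fact that is exactly what \cite{li2021refined} supplies (and why the Fourier--Mukai-type hypothesis is in the statement), so the genuine work is elsewhere.

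A smaller caveat: your derivation of $\Phi(i^!\oh_Y)\cong i'^!\oh_{Y'}$ via Corollary~\ref{cor-7} is workable for $d=2,3$, but for $d=4$ that corollary only characterizes $i^!\cQ_Y$ up to the action of $\Aut^0(\Ku(Y))$, so you would still need to argue that the residual twist $T_{\cL_0}$ is trivial (which is where the exact hypothesis $\Phi(i^!\cQ_Y)=i'^!\cQ_{Y'}$, rather than an isomorphism up to auto-equivalence, is used).
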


\begin{proof}
Since $[i^{{!}}\cQ_Y] = d\bv \in \mathcal{N}(\Ku(Y))$, Lemma \ref{numerical_lemma}~(a) implies that $\Phi$ maps $\bv$ and $\bw$ to $\bv'$ and $\bw'$, respectively. Then Theorem \ref{Brill--Noether reconstruction} shows that for any $p\in Y$, there is a point $p'\in Y'$ such that 
\begin{equation}\label{o-p2}
  \Phi(i^*\oh_{p})\cong i'^*\oh_{p'}.  
\end{equation}
Conversely, for any $p'\in Y'$, there is $p \in Y$ such that the above holds. From Remark \ref{fix_i!O}, we also have $\Phi(i^!\oh_Y)=i'^!\oh_{Y'}$. Thus using  \cite[Proposition 2.5 \& Remark 2.2]{li2021refined}, $\Phi$ can be extended to an equivalence $\oh_Y(1)^{\perp}\cong \oh_{Y'}(1)^{\perp}$, denoted again by $\Phi$, so that 
$\Phi(\oh_Y)\cong \oh_{Y'}$. Since $i^*=\bL_{\oh_Y}\bL_{\oh_Y(1)}$, \eqref{o-p2} implies that $\Phi(\bL_{\oh_Y(1)}(\oh_p))\cong \bL_{\oh_{Y'}(1)}(\oh_{p'})$.

Let $j:\oh_Y(1)^{\perp} \hookrightarrow \D^b(Y)$ and $j': \oh_{Y'}(1)^{\perp}\hookrightarrow \D^b(Y')$ be the natural inclusions. We know $$j^! \oh_Y(1) = \bR_{\oh_Y(-1)}(\oh_Y(1)),$$ so it lies in the triangle 
	\begin{equation}\label{eq_j}
	    \oh_Y(-1)[2]\to j^!\oh_Y(1)\to \oh_Y(1).
	\end{equation}
	
	The next step is to compute $i^*(j^!\oh_Y(1)) = \bL_{\oh_Y}(j^!\oh_Y(1))$. Using the triangle above, it is easy to see $\RHom(\oh_{Y}, j^!\oh_Y(1)) = \mathbb{C}^{d+2}$, so we have an triangle
	\begin{equation}\label{eq_equiv}
	    \oh_Y^{\oplus d+2} \rightarrow  j^!\oh_Y(1) \rightarrow \bL_{\oh_Y}(j^!\oh_Y(1)).
	\end{equation}
  Thus by taking cohomology we obtain 
	\begin{equation*}
	\oh_Y(-1)[2] \rightarrow \bL_{\oh_Y}(j^!\oh_Y(1)) \rightarrow \cQ_Y[1]
	\end{equation*} 
	and so $\bL_{\oh_Y}(j^!\oh_Y(1)) = i^{!}\cQ_Y[1]$. Therefore, we know that $\Phi(\bL_{\oh_Y}(j^!\oh_Y(1))) = \bL_{\oh_{Y'}}({j'}^!\oh_{Y'}(1))$. Applying $\Phi$ to \eqref{eq_equiv} gives a triangle
	\begin{equation} \label{eq_equiv_1}
	\oh_{Y'}^{\oplus d+2} \to \Phi(j^!\oh_Y(1)) \to {i'}^!\cQ_{Y'}[1]. 
	\end{equation}
This implies that $\cH^{-2}(\Phi(j^!\oh_Y(1))) = \oh_{Y'}(-1)$ and we have the long exact sequence 
	\begin{equation}\label{eq_long_exact}
	0 \rightarrow \cH^{-1}(\Phi(j^!\oh_Y(1))) \to \cQ_{Y'} \to \oh_{Y'}^{\oplus d+2} \to \cH^0(\Phi(j^!\oh_Y(1))) \to 0. 
	\end{equation}
	Since $j^!\oh_Y(1)\in \oh_Y(1)^{\perp}$, by the adjunction of mutations, we have $\RHom(\bL_{\oh_Y(1)}(\oh_p), j^!\oh_Y(1))=\RHom(\oh_p, j^!\oh_Y(1))$ for any $p\in Y$. Thus we have
\[\RHom(\oh_p, j^!\oh_Y(1))=\RHom(\bL_{\oh_Y(1)}(\oh_p), j^!\oh_Y(1))=\RHom(\Phi(\bL_{\oh_Y(1)}(\oh_p)), \Phi(j^!\oh_Y(1)))\]
\[=\RHom(\bL_{\oh_{Y'}(1)}(\oh_{p'}), \Phi(j^!\oh_{Y}(1)))=\RHom(\oh_{p'}, \Phi(j^!\oh_Y(1))).\]
Using \eqref{eq_j}, we know that $\RHom(\oh_p, j^!\oh_Y(1))=\mathbb{C}[-1]\oplus \mathbb{C}[-3]$. Hence $\RHom(\oh_{p'}, \Phi(j^!\oh_Y(1)))=\mathbb{C}[-1]\oplus \mathbb{C}[-3]$ for any $p'\in Y'$. By Serre-duality, we have
\begin{equation}\label{rhom_phi}
    \RHom(\Phi(j^!\oh_Y(1)),\oh_{p'})=\mathbb{C}\oplus \mathbb{C}[-2].
\end{equation}
Then from \cite[Proposition 5.4]{bridgeland:K3-and-elliptic-fibration}, $\Phi(j^!\oh_Y(1))$ is quasi-isomorphic to a complex
\begin{equation}\label{complex}
A_{-2}\to A_{-1}\xra{\alpha} A_0,    
\end{equation}
where $A_k$ is a bundle of rank $r_k$ sitting in degree $k$ in the complex. Note that \eqref{complex} is a locally-free resolution of $\Phi(j^!\oh_Y(1))$. Therefore, we have $\cH^0(\Phi(j^!\oh_Y(1)))\cong \cok(\alpha)$ and by applying $\Hom(-,\oh_{p'})$ to \eqref{complex}, we have a complex 
\[\Hom(A_0,\oh_{p'})=\mathbb{C}^{r_0}\xra{\overline{\alpha}} \Hom(A_{-1},\oh_{p'})\to \Hom(A_{-2},\oh_{p'}).\]
Since $\Hom(\Phi(j^!\oh_Y(1)),\oh_{p'})=\mathbb{C}$, we get $\ker(\overline{\alpha})=\mathbb{C}$. But note that $\overline{\alpha}$ can be factored as $\Hom(A_0,\oh_{p'})\to \Hom(\im(\alpha),\oh_{p'})\hookrightarrow \Hom(A_{-1},\oh_{p'})$ which implies
\[\hom((\im(\alpha),\oh_{p'}))\geq r_0-1.\]
Since $p'\in Y'$ is an arbitrary closed points, we have $\rk(\im(\alpha))\geq r_0-1$. Thus $\rk(\cH^0(\Phi(j^!\oh_Y(1))))\leq 1$. Since $\cH^0(\Phi(j^!\oh_Y(1)))$ sits in an exact sequence \eqref{eq_long_exact} and $\rk(\cQ_{Y'}) =d+1$, we have $\rk(\cH^0(\Phi(j^!\oh_Y(1))))=1$, which implies \[\rk(\cH^{-1}(\Phi(j^!\oh_Y(1))))=0.\] 
Since $\cQ_{Y'}$ is torsion-free, we have $\cH^{-1}(\Phi(j^!\oh_Y(1)))=0$ and $\cH^{0}(\Phi(j^!\oh_Y(1)))=\oh_{Y'}(1)$ by definition \eqref{eq. Q}. Thus $\Phi(j^!\oh_Y(1))$ lies in the exact triangle
\begin{equation} \label{triangle_1}
    \oh_{Y'}(-1)[2] \to \Phi(j^!\oh_Y(1)) \to \oh_{Y'}(1). 
\end{equation}
Note that $\Hom(\Phi(j^!\oh_Y(1)), \Phi(j^!\oh_Y(1))) = \Hom(j^!\oh_Y(1),j^!\oh_Y(1))=\Hom(j^!\oh_Y(1),\oh_Y(1))=\mathbb{C}$ by \eqref{eq_j}, so the exact triangle \eqref{triangle_1} is non-splitting. Since $\Hom(\oh_{Y'}(1), \oh_{Y'}(-1)[3])=1$, we get $$\Phi(j^!\oh_Y(1))\cong j^!\oh_{Y'}(1).$$
Then applying again \cite[Proposition 2.5]{li2021refined} shows that the equivalence $\Phi\colon \oh_Y(1)^{\perp}\to \oh_{Y'}(1)^{\perp} 
$ can be extended to an equivalence $\Phi\colon \D^b(Y)\xra{\cong} \D^b(Y')$ such that $\Phi(\oh_Y(1))\cong \oh_{Y'}(1)$. Then \cite[Corollary 5.23]{huyb-book-FM} implies that $\Phi$ is the composition of $f_*$ for an isomorphism $f \colon Y \to Y'$ with the twist by a line bundle on $Y$. Since we know $\Phi(\oh_Y) = \oh_{Y'}$, we get $\Phi = f_*$. 
Finally, such isomorphism $f$ is unique by Lemma \ref{unique_extend}.

\end{proof}

\begin{remark}\label{application_categorical_Torelli}
Combing Theorem~\ref{prop_gluing_data_fixed} with Theorem~\ref{thm-conj} provides an alternative proof of \emph{Categorical Torelli theorem} for del Pezzo threefold of degree $2\leq d\leq 4.$
\end{remark}

As an application, we obtain a complete description of the group $\mathrm{Aut}_{\mathrm{FM}}(\Ku(Y))$ of exact auto-equivalences of $\Ku(Y)$ of Fourier--Mukai type.


\begin{corollary}\label{cor_auto_equi_cubic}
Let $Y$ be a del Pezzo threefold of Picard rank one and degree $d$, and $\Phi\in \mathrm{Aut}_{\mathrm{FM}}(\Ku(Y))$ be an auto-equivalence of $\Ku(Y)$ of Fourier--Mukai type.

\begin{enumerate}
    \item [(i)] If $2\leq d \leq 3$, there exist a unique $f\in \Aut(Y)$ and unique pair of integers $m_1, m_2 \in \ZZ$ with $0\leq m_1\leq 3$ when $d=2$ and $0\leq m_1\leq 5$ when $d=3$, so that    $$\Phi=\bO^{m_1}\circ f_*|_{\Ku(Y)}\circ [m_2].$$
    \item [(ii)] If $d=4$, there exists a unique $f\in \Aut(Y)$ and unique pair of integers $m_1,m_2$ and a unique auto-equivalence $T_{\mathcal{L}_0} \in \mathrm{Aut}^0(\Ku(Y))$ (see Section \ref{degree 4-subsection} for definition) so that 
 $$\Phi= \bO^{m_1}\circ T_{\mathcal{L}_0}\circ f_*|_{\Ku(Y)}\circ [m_2]. $$
\end{enumerate}
\end{corollary}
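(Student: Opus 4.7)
The plan is to reduce the classification of $\mathrm{Aut}_{\mathrm{FM}}(\Ku(Y))$ to Theorem~\ref{prop_gluing_data_fixed} (which pins down the action on the gluing object $i^!\cQ_Y$) together with Theorem~\ref{thm-conj} (which promotes an equivalence fixing $i^!\cQ_Y$ to a geometric automorphism of $Y$). Since $\Phi$ is an exact equivalence $\Ku(Y)\simeq \Ku(Y)$, Theorem~\ref{prop_gluing_data_fixed} applied with $Y=Y'$ yields unique integers $m_1,m_2$ in the prescribed ranges (and, in case $d=4$, a unique $T_{\mathcal{L}_0}\in \Aut^0(\Ku(Y))$) such that, defining
\[
\Psi\ \coloneqq\
\begin{cases}
\bO^{-m_1}\circ \Phi \circ [-m_2], & 2\le d\le 3,\\[2pt]
T_{\mathcal{L}_0}^{-1}\circ \bO^{-m_1}\circ \Phi \circ [-m_2], & d=4,
\end{cases}
\]
we have $\Psi(i^!\cQ_Y)\cong i^!\cQ_Y$.

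Next I would verify that $\Psi$ is itself an auto-equivalence of Fourier--Mukai type. The rotation functor $\bO=\bL_{\oh_Y}(-\otimes \oh_Y(H))$ is a composition of kernel operations (tensor and mutation along an exceptional object) and is therefore of Fourier--Mukai type; so is its inverse. The shift $[m_2]$ is trivially FM. For $d=4$, the auto-equivalence $T_{\mathcal{L}_0}=\Psi_{\mathcal{S}}\circ(-\otimes \mathcal{L}_0)\circ \Psi_{\mathcal{S}}^{-1}$ is FM by construction, as $\Psi_{\mathcal{S}}$ is an FM equivalence and tensoring by a line bundle on $C$ is FM. Since the composition of FM functors is FM, $\Psi$ is of Fourier--Mukai type.

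With $\Psi(i^!\cQ_Y)\cong i^!\cQ_Y$ and $\Psi$ an FM auto-equivalence, Theorem~\ref{thm-conj} (applied with $Y=Y'$) produces a unique isomorphism $f\colon Y\to Y$, i.e.~$f\in \Aut(Y)$, such that $\Psi=f_*|_{\Ku(Y)}$. Re-substituting gives
\[
\Phi\ =\ \bO^{m_1}\circ f_*|_{\Ku(Y)}\circ [m_2]\qquad (d=2,3),
\]
respectively $\Phi=\bO^{m_1}\circ T_{\mathcal{L}_0}\circ f_*|_{\Ku(Y)}\circ [m_2]$ for $d=4$, which is the desired presentation.

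Finally, I would address uniqueness. The uniqueness of $m_1,m_2$ (and of $T_{\mathcal{L}_0}$ when $d=4$) is precisely the uniqueness clause in Theorem~\ref{prop_gluing_data_fixed}. Given these, the uniqueness of $f\in\Aut(Y)$ follows from Lemma~\ref{unique_extend}: any two automorphisms $f,g$ with $f_*|_{\Ku(Y)}=g_*|_{\Ku(Y)}$ agree on $Y$. I do not expect a serious obstacle in this argument—the entire content has already been built up—but the one point that deserves care is confirming that all the ``correction'' functors $\bO$, $[m_2]$, and $T_{\mathcal{L}_0}$ are genuinely of Fourier--Mukai type, so that Theorem~\ref{thm-conj} is applicable to $\Psi$; this is where the FM hypothesis in the corollary is used essentially.
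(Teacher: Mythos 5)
Your proposal is correct and follows exactly the route the paper takes: the paper's proof of Corollary~\ref{cor_auto_equi_cubic} is simply ``the result follows from Theorem~\ref{prop_gluing_data_fixed} and Theorem~\ref{thm-conj},'' and your write-up fills in the same reduction (correct by the rotation/shift/$T_{\mathcal{L}_0}$ factors pinned down by Theorem~\ref{prop_gluing_data_fixed}, check the corrected functor is still of Fourier--Mukai type, apply Theorem~\ref{thm-conj}, and get uniqueness of $f$ from Lemma~\ref{unique_extend}). No gaps; your explicit verification that $\bO$, $[m_2]$ and $T_{\mathcal{L}_0}$ are of Fourier--Mukai type is a detail the paper leaves implicit.
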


\begin{proof}
The result follows from Theorem \ref{prop_gluing_data_fixed} and Theorem \ref{thm-conj}.
\end{proof}

\begin{remark}
 \label{lemma_strengthen_Sasha_result}
 Assume $Y'=Y$, Then Remark \ref{rmk_trivial} and Theorem \ref{thm-conj} 
show that the homomorphism 
$$\mathrm{Aut}(Y)\rightarrow\mathrm{Aut}_{\mathrm{FM}}(\Ku(Y)), \quad f\mapsto f_*|_{\Ku(Y)}$$ is injective, and its image together with $[2]$ generates the sub-group of auto-equivalences that act trivially on $\mathcal{N}(\Ku(Y))$. This strengthens a result \cite[Lemma B.2.3]{kuznetsov2018hilbert}. 
\end{remark}

\begin{remark}
\label{auto_equivalence_d245}

 For a del Pezzo threefold $Y$ of degree $5$, its Kuznetsov component $\Ku(Y)$ is equivalent to the derived category of representations of $3$-Kronecker quiver. It is known the  group of auto-equivalences of $\Ku(Y)$ is $\mathbb{Z}\times (\mathbb{Z}\rtimes\mathrm{PGL}_3(\mathbb{C}))$ by \cite[Theorem 4.3]{miyachi2001derived}.

\end{remark}


\section{Auto-equivalences of Kuznetsov components: index one case}
\label{sec-index-one}

Our goal in this section is to prove Corollary \ref{cor_aut_genus_8} and Corollary \ref{cor_aut_genus_6}, and describe the auto-equivalences of Kuznetsov components of index one prime Fano threefolds of genus $6$ and $8$.

Let $X$ be an index one prime Fano threefold of even genus $g\geq 6$. Then we have semiorthogonal decompositions
\[\D^b(X)=\langle \Ku(X), \cE_X, \oh_X \rangle\]
and
\[\D^b(X)=\langle \cA_X, \oh_X, \cE^{\vee}_X\rangle,\]
where $\cE_X$ is a unique rank two bundle on $X$ with a certain Chern character, obtained by pulling back the tautological subbundle of $\Gr(2,\frac{g}{2}+2)$ via a closed immersion $X\hookrightarrow \Gr(2,\frac{g}{2}+2)$ (cf.~\cite[Remark 2.6]{kuznetsov:derived-category-fano-threefold}). By \cite[Lemma 3.6]{jacovskis2021categorical}, there is an equivalence $\Xi\colon \Ku(X)\xra{\simeq} \cA_X$.

Let $i\colon \Ku(X)\hookrightarrow \D^b(X)$ be the inclusion. The object $i^!\cE_X$ is also called the gluing object and plays the same role in \cite{jacovskis2022brill} as $i^!\cQ_Y$ in the current paper. In the following, we will prove an analog for Theorem \ref{prop_gluing_data_fixed}. 

We begin with a lemma, which is a generalization of the argument in \cite[Section 4]{liu2023autoeq}. 

\begin{lemma} \label{lem_kernel_trivial}
    Let $X$ be a smooth projective variety and $i\colon \cA\hookrightarrow \D^b(X)$ be an admissible subcategory with a right adjoint $i^*$. Assume furthermore that there exists $n\in \ZZ$ such that $i^*\oh_x[-n]=K^x$ is a Gieseker-stable sheaf for any $x\in X$ and $i^*$ induces a closed immersion
    \[\pi\colon X\hookrightarrow M_X([K^x]),\quad x\mapsto i^*\oh_x[-n]=K^x\]
    where $M_X([K^x])$ is a fine moduli space of Gieseker-semistable sheaves of class $[K^x]$. Assume furthermore that for any non-trivial line bundle $L$ on $X$, there is a point $x\in X$ with
    \begin{equation}\label{assumption}
        K^x\neq i^*(K^x\otimes L).
    \end{equation}
    Then a Fourier--Mukai type functor $\Phi\colon \cA\to \cA$ is isomorphic to $\identity_{\cA}$ if and only if $\Phi(K^x)\cong K^x$ for any point $x\in X$.
\end{lemma}

\begin{proof}
The 'only if' part is obvious. We now prove the 'if' part.

By \cite[Lemma 3.31]{huyb-book-FM}, the kernel of $\Phi$ is isomorphic to a coherent sheaf $\mathcal{K}[n]$ on $X\times X$ flat over the first factor and $i^*_x\cK\cong K^x$ for any $x\in X$ and the closed immersion $i_x\colon \{x\}\times X \hookrightarrow X\times X$. Since $M_X([K^x])$ is fine, we have an induced morphism $\pi'\colon X\to M_X([K^x])$. Let $\cP[n]$ be the kernel of $\i\circ i^*\colon \D^b(X)\to \D^b(X)$. Note that $\identity_{\cA}=i\circ i^*|_{\cA}$. Then $\cP$ also induces a morphism $X\to M_X([K^x])$, which is exactly $\pi$.
    
     It is clear that $\pi$ and $\pi'$ have the same image as continuous maps. Then by taking the schematical image of $\pi'$ and the uniqueness of the reduced scheme structure of a closed subset, we see that $\pi'$ can be factored as $\pi\circ f$, where $f\colon X\to X$ is a bijective morphism. Since $X$ is smooth and connected, we deduce that $f=\identity_X$, hence $\pi'$ is isomorphic to $\pi$. By the definition of the moduli functor, this implies $\cK$ and $\cP$ only differ by a line bundle $L$ on $X$ (cf.~\cite[Section 4.1]{huybrechts:geometry-of-moduli-space-of-sheaves}).

    We claim that $L\cong \oh_X$ and hence $\Phi\cong \identity_{\cA}$. Indeed, if we denote $q_i$ the projection of $X\times X$ to the $i$-th factor, then we have
    \[K^x\cong \Phi(K^x)\cong q_{2*}(\cP\otimes q_1^*(K^x\otimes L))\cong i^*(K^x\otimes L)\in \cA\]
    for any point $x\in X$. Then by the assumption \eqref{assumption}, we see $L\cong \oh_X$ and the result follows.
\end{proof}

\begin{remark}
Note that by \cite[Theorem 7.1]{kuznetsov:base-change}, the functor $i\circ i^*\colon \D^b(X)\to \D^b(X)$ is of Fourier--Mukai type.
\end{remark}

\begin{example} \label{example_index_one}
    Let $X$ be a prime Fano threefold of even genus $g\geq 6$. When $g=6$, we furthermore assume that $X$ is ordinary. Then if we take $\cA=\Ku(X)$, \cite[Lemma 5.3]{jacovskis2022brill} and the same argument in \cite[Theorem 5.14]{jacovskis2022brill} show that $i^*$ induces the closed immersion $\pi$ in Lemma \ref{lem_kernel_trivial}. Moreover, $M_X([K^x])$ is a fine moduli space by the Chern character reason (cf.~\cite[Remark 5.2]{jacovskis2022brill}). Finally, it is straightforward to check that
    \[\ch(K^x)\neq \ch(i^*(K^x\otimes L))\]
    for any $L\neq \oh_X\in \Pic(X)=\ZZ\oh_X(1)$. Hence the assumption \eqref{assumption} is also satisfied in this case.
\end{example}

We also need  following two lemmas.

\begin{lemma} \label{lem_bundle}
Let $Y$ and $Y'$ be two del Pezzo threefolds of Picard rank one and degree $2\leq d\leq 3$. Let $\Psi\colon \Ku(Y)\xra{\simeq} \Ku(Y')$ be an exact equivalence that maps classes $\bv$ and $\bw$ to $\bv'$ and $\bw'$, respectively. If $E\in \Ku(Y)$ is a vector bundle, $\Psi(E)$ is also a bundle up to shift.
\end{lemma}

\begin{proof}
    By Remark \ref{rmk_trivial}, we know that after replacing $\Psi$ with $\Psi\circ [n]$ for some $n$, we can assume that $\Psi(i^!\cQ_Y)\cong i'^!\cQ_{Y'}$. Then using Theorem \ref{Brill--Noether reconstruction}, we know that for any $p'\in Y$, there is a point  $p\in Y$ such that $\Psi(i^*\oh_p)\cong i'^*\oh_{p'}$. Then we have
    \[\RHom(\oh_{p'}, \Psi(E))=\RHom(i'^*\oh_{p'}, \Psi(E))=\RHom(i^*\oh_{p}, E)=\RHom(\oh_p, E),\]
where the first and the last equality follows from the fact that $i^*$ and $i'^*$ are left adjoint to $i$ and $i'$, respectively. Then the locally-freeness of $\Psi(E)$ follows from the locally-freeness of $E$ and \cite[Proposition 5.4]{bridgeland:K3-and-elliptic-fibration}.
\end{proof}

\begin{lemma} \label{unique_extend_index_one}
    Let $f,g\colon X\to X'$ be two isomorphisms between index one prime Fano threefolds of genus $g\geq 6$. If $f_{*}|_{\Ku(X)}=g_*|_{\Ku(X)}\colon \Ku(X)\to \Ku(X')$, then $f=g$. Thus the homomorphism 
    \[\Aut(X)\to \Aut(\Ku(X)), \quad f\mapsto f_*|_{\Ku(X)}\]
    is injective.
\end{lemma}
\begin{proof}
We know $f_*$ and $g_*$ maps $\oh_X$ and $\cE_X$ to $\oh_{X'}$ and $\cE_{X'}$ respectively. For any point $p \in X$, we know $f_{*}(\oh_p) = \oh_{f(p)}$ and the same for $g$. Thus we have
\begin{equation*}
f_*(i^*\oh_p)=i'^*\oh_{f(p)} \qquad \text{and } \qquad g_*(i^*\oh_p)=i'^*\oh_{g(p)}. 
\end{equation*}
Since $f_{*}|_{\Ku(Y)}=g_*|_{\Ku(Y)}$, we get $i'^*\oh_{f(p)} = i'^*\oh_{g(p)}$, i.e. $i'^*\oh_{f(p)}$ and $i'^*\oh_{g(p)}$ correspond to the same point in the moduli space $\cM_{\sigma}(\mathcal{K}u(X), [i^*\oh_p])$ by \cite[Theorem 5.14]{jacovskis2022brill}. Thus the embedding in \cite[Theorem 5.14]{jacovskis2022brill} implies that $f(p) = g(p)$ for any point $p \in X$. Since both $X$ and $X'$ are smooth, we get $f=g$.  
\end{proof}

\subsection{Genus $8$}

We start with genus $g=8$ prime Fano threefold case.

\begin{theorem} \label{thm_genus_8}
    Let $X$ be an index one prime Fano threefold of genus $8$. Then for any exact auto-equivalence $\Phi\colon \Ku(X)\xra{\simeq} \Ku(X)$, after composing with the Serre functor $S_{\Ku(X)}$ and shift functor, we have
    \[\Phi(i^!\cE_X)\cong i^!\cE_X.\]
\end{theorem}

\begin{proof}
  We define an equivalence $\Phi'\colon \cA_X\xra{\simeq} \cA_X$ by $\Phi':=\Xi\circ \Phi\circ \Xi^{-1}$. Let $Y$ be the Phaffian cubic threefold associated with $X$. Then by \cite[Theorem 4.7]{kuznetsov:derived-category-fano-threefold} and \cite[Proposition 8.9]{jacovskis2022brill}, there is an equivalence of Fourier--Mukai type $\Theta\colon \cA_X\xra{\simeq} \Ku(Y)$ which maps $\Xi(i^!\cE_X)$ to an instanton bundle on $Y$ up to shift. We define $\Psi\colon \Ku(Y)\xra{\simeq} \Ku(Y)$ by $\Psi:=\Theta\circ \Phi'\circ \Theta^{-1}$. By Lemma \ref{numerical_lemma}, after composing $\Phi$ with the Serre functor $S_{\Ku(X)}$ and shift functor, we can assume that $\Psi$ acts trivially on $\mathcal{N}(\Ku(Y))$. Hence using Lemma \ref{lem_bundle}, $\Psi(\Xi(i^!\cE_X))$ is also a bundle up to shift. Moreover, $\Psi(\Xi(i^!\cE_X))$ is stable with respect to every Serre-invariant stability condition on $\Ku(Y)$ since $\Xi(i^!\cE_X)$ is (cf.~\cite[Theorem 7.6]{liu-zhang:moduli-space-cubic-x14}). By \cite[Theorem 7.6]{liu-zhang:moduli-space-cubic-x14} and the locally-freeness of $\Psi(\Xi(i^!\cE_X))$, we can assume that $\Psi(\Xi(i^!\cE_X))$ is an instanton bundle on $Y$ after composing $\Phi$ with shift functor. Then from \cite[Theorem 4.7]{kuznetsov:derived-category-fano-threefold} and \cite[Proposition 8.9]{jacovskis2022brill}, there is another index one genus $8$ prime Fano threefold $X'$ with an equivalence $\Theta'\colon \cA_{X'}\xra{\simeq} \Ku(Y)$ such that $\Theta'(\Xi(i'^!\cE_{X'}))\cong \Psi(\Xi(i^!\cE_X))$. 
  
  If $\Phi(i^!\cE_X)$ is not isomorphic to $i^!\cE_X$, then $\Psi(\Xi(i^!\cE_X))$ is not isomorphic to $\Xi(i^!\cE_X)$ and hence $X$ is not isomorphic to $X'$ as well by \cite[Theorem 4.7]{kuznetsov:derived-category-fano-threefold}. But we get an equivalence $\Ku(X)\xra{\simeq} \Ku(X')$ which maps $i^!\cE_X$ to $i'^!\cE_{X'}$, which contradicts with \cite[Theorem 1.3]{jacovskis2022brill}. 
\end{proof}

\begin{corollary} \label{cor_aut_genus_8}
     Let $X$ be an index one prime Fano threefold of genus $8$. Then we have
     \[\Aut_{\mathrm{FM}}(\Ku(X))= \langle \Aut(X), S_{\Ku(X)}, [1] \rangle.\]
\end{corollary}

\begin{proof}
By Lemma \ref{unique_extend_index_one}, we have an injection 
\[\Aut(X)\to \Aut_{\mathrm{FM}}(\Ku(X)), \quad f\mapsto f_*|_{\Ku(X)}.\]
Note that $\Aut(X)$ acts trivially on $\cN(\Ku(X))$, while the only elements in $\langle S_{\Ku(Y)},[1]\rangle$ act trivially on $\cN(\Ku(X))$ is of form $[2m]$. Hence $\Aut(X)\cap \langle S_{\Ku(Y)},[1]\rangle=\identity_{\Ku(X)}$ and we see that the induced homomorphism
\[\eta\colon \Aut(X)\to \frac{\Aut_{\mathrm{FM}}(\Ku(X))}{\langle S_{\Ku(Y)},[1]\rangle}\]
is also injective. On the other hand, we have a homomorphism
\[\eta'\colon \frac{\Aut_{\mathrm{FM}}(\Ku(X))}{\langle S_{\Ku(Y)},[1]\rangle}\to \Aut(X)\]
given by Theorem \ref{thm_genus_8} and \cite[Theorem 1.1]{jacovskis2022brill}. Then any element in the kernel of $\eta'$ can be represented by an auto-equivalence $\Phi$ such that $\Phi(i^*\oh_p)\cong i^*\oh_p$ for any point $p\in X$. Then by Lemma \ref{lem_kernel_trivial} and Example \ref{example_index_one}, we have $\Phi\cong\identity_{\Ku(X)}$ and $\eta'$ is injective as well. It is straightforward to check that $\eta'\circ \eta=\identity_{\Aut(X)}$, hence $\eta$ and $\eta'$ are inverse to each other and the result follows.
\end{proof}

The Corollary~\ref{cor_aut_genus_8} has an immediate application on group of automorphism of Fano threefolds. 

\begin{corollary} \label{cor_aut_variety}
    Let $X$ be an index one prime Fano threefold of genus $8$ and $Y$ be the Phaffian cubic threefold associated with $X$. Then we have
    \[\Aut(X)\cong\Aut(Y).\]
\end{corollary}

\begin{proof}
    By \cite[Theorem 4.7]{kuznetsov:derived-category-fano-threefold}, we have an equivalence of Fourier--Mukai type $\Ku(X)\simeq \Ku(Y)$. Hence we get an isomorphism 
    \[s\colon \Aut_{\mathrm{FM}}(\Ku(X))\cong \Aut_{\mathrm{FM}}(\Ku(Y)).\]
    Since an exact equivalence commutes with the Serre functor and shift functor, we have $s(\langle S_{\Ku(X)}, [1] \rangle)=\langle S_{\Ku(Y)}, [1] \rangle$. Note that $S_{\Ku(Y)}=\bO[1]$. Then taking quotient on both sides, by  Corollary \ref{cor_aut_genus_8} we get an induced isomorphism
    \[\Aut(X)\cong\Aut(Y).\]
\end{proof}

\subsection{Genus $6$}

Now let $X$ be an index one genus $6$ prime Fano threefold, which is also called a Gushel--Mukai threefold.

\begin{theorem} \label{thm_genus_6}
    Let $X$ be a general ordinary Gushel--Mukai threefold. Then for any exact auto-equivalence $\Phi\colon \Ku(X)\xra{\simeq} \Ku(X)$, after composing with the Serre functor $S_{\Ku(X)}$ and shift functor, we have
    \[\Phi(i^!\cE_X)\cong i^!\cE_X.\]
\end{theorem}

\begin{proof}
Let $\Psi:=\Xi\circ \Phi\circ \Xi^{-1}\colon \cA_X\simeq \cA_X$ be the induced equivalence. Since $X$ is general, $X$ is not the period dual of itself since the involution on the period domain defined in  \cite[(1.0.14)]{ogrady:double-EPW-period} is non-trivial. Hence by \cite[Theorem 10.3]{jacovskis2021categorical} and its proof, up to shift, $\Psi$ and $\Phi$ act trivially on $\cN(\cA_X)$ and $\cN(\Ku(X))$, respectively. Moreover, by \cite[Theorem 7.13]{jacovskis2021categorical},  $\Psi$ induces an automorphism $\cC_m(X)\cong \cC_m(X)$ of the minimal model of the Fano surface of conics on $X$, which is either the identity map or an involution (cf.~\cite[Corollary 9.2]{debarre2012period}). Then by \cite[Remark 7.4]{jacovskis2021categorical}, up to composing with $S_{\cA_X}$ and shift functor, $\Psi$ induces an automorphism $\cC_m(X)\cong \cC_m(X)$ such that maps the point $\Xi(i^!\cE_X)$ to $\Xi(i^!\cE_X)$, and the result follows.
\end{proof}

\begin{corollary} \label{cor_aut_genus_6}
     Let $X$ be a general ordinary Gushel--Mukai threefold. Then we have
     \[\Aut_{\mathrm{FM}}(\Ku(X))=   \langle \Aut(X), S_{\Ku(X)},  [1]\rangle.\]
\end{corollary}

\begin{proof}
By Lemma \ref{unique_extend_index_one}, we have an injection 
\[\Aut(X)\to \Aut_{\mathrm{FM}}(\Ku(X)), \quad f\mapsto f_*|_{\Ku(X)}.\]
Since $\Aut(X)\cap \langle S_{\Ku(Y)},[1]\rangle=\identity_{\Ku(X)}$, we see that the induced homomorphism
\[\eta\colon \Aut(X)\to \frac{\Aut_{\mathrm{FM}}(\Ku(X))}{\langle S_{\Ku(Y)},[1]\rangle}\]
is injective as well. On the other hand, we have a homomorphism
\[\eta'\colon \frac{\Aut_{\mathrm{FM}}(\Ku(X))}{\langle S_{\Ku(Y)},[1]\rangle}\to \Aut(X)\]
given by Theorem \ref{thm_genus_6} and \cite[Theorem 1.1]{jacovskis2022brill}. Then any element in the kernel of $\eta'$ can be represented by an auto-equivalence $\Phi$ such that $\Phi(i^*\oh_p)\cong i^*\oh_p$ for any point $p\in X$. Then by Lemma \ref{lem_kernel_trivial} and Example \ref{example_index_one}, we have $\Phi\cong\identity_{\Ku(X)}$ and $\eta'$ is also injective. It is straightforward to check that $\eta'\circ \eta=\identity_{\Aut(X)}$, hence $\eta$ and $\eta'$ are inverse to each other and the result follows.
\end{proof}

\begin{remark}
    One can not drop the generality assumption in Corollary \ref{cor_aut_genus_6}. Indeed, in the setting of Corollary \ref{cor_aut_genus_6}, any element in $\Aut_{\mathrm{FM}}(\Ku(X))$ acts on $\mathcal{N}(\Ku(X))$ by $\identity_{\mathcal{N}(\Ku(X))}$ up to sign. However, when the period point of $X$ is a fixed point of the involution on the period domain, then $X$ is the period dual of itself. Then by \cite[Theorem 1.3]{kuznetsov:categorical-cone} there is an equivalence $\cA_X\to \cA_X$, and one can check it maps a numerical class of rank one to a class of rank two.
\end{remark}

Corollary~\ref{cor_aut_genus_6} has a nice application on Kuznetsov's Fano threefold conjecture \cite[Conjecture 3.7]{kuznetsov:derived-category-fano-threefold}. It was disproved in \cite{bayer2022kuznetsov} and \cite{zhang:kuznetsov-conjecture} independently in its most general form. By assuming the Gushel-Mukai threefold is \emph{general}, we present a simple disproof.

\begin{corollary} \label{cor_ku_conj}
    Let $X$ be a general Gushel--Mukai threefold and $Y$ a quartic double solid. Then $\Ku(X)$ is not equivalent to $\Ku(Y)$. 
\end{corollary}

\begin{proof}
    Assume that there is an exact equivalence $\Ku(X)\simeq \Ku(Y)$. By \cite[Theorem 1.3]{li2022derived}, such an equivalence is of Fourier--Mukai type. Hence it induces an isomorphism of the numerical Grothendieck groups and 
    \[\Aut_{\mathrm{FM}}(\Ku(X))\cong \Aut_{\mathrm{FM}}(\Ku(Y)).\]
However, Corollary \ref{cor_aut_genus_6} shows that any element in $\Aut_{\mathrm{FM}}(\Ku(X))$ acts on $\cN(\Ku(X))$ by $\identity_{\mathcal{N}(\Ku(X))}$ up to sign, while the action of the rotation functor $\bO\colon \Ku(Y)\to \Ku(Y)$ on $\mathcal{N}(\Ku(Y))$ is not. Thus we get a contradiction.
\end{proof}

\begin{appendix}

\section{Moduli space of instanton sheaves on quartic double solids}\label{section_classification_instanton_sheaves_d=2}
In this section, we fix $Y$ to be a quartic double solid and study the moduli space $M_Y(2,0,2)$ of semistable sheaves of rank two, $c_1=0, c_2=2,c_3=0$ and the Bridgeland moduli space $\mathcal{M}_{\sigma}(\Ku(Y),2\bv)$ of semistable objects of class $2\bv$ in the Kuznetsov component $\Ku(Y)$. 

\subsection{Classifications}

As is shown in Proposition~\ref{prop_bridgeland_Y2} that up to shift, the $\sigma$-stable objects of class $2\bv$ in the Kuznetsov component $\Ku(Y)$ of a quartic double solid $Y$ is either a two-term complex $i^!\cQ_Y$ or a Gieseker semistable sheaf of rank two, $c_1=0, c_2=2$ and $c_3=0$. Denote by $E$ such a sheaf. It is clear that $H^1(Y,E(-1))=0$ since $E\in\Ku(Y)$. Then it is an instanton sheaf in the sense of \cite[Definition 6.2]{liu-zhang:moduli-space-cubic-x14}. To study the geometric structure and properties of the Bridgeland moduli space $\mathcal{M}_{\sigma}(\Ku(Y),2\bv)$, first we classify sheaves in the moduli space $M_Y^{inst}(2,0,2)$ of instanton sheaves on $Y$.

\begin{proposition}
\label{classification_instanton_sheaves}
Let $E\in M_Y(2,0,2)$. Then $E\notin \Ku(Y)$ if and only if it is a locally free sheaf fitting into an exact sequence
\begin{equation} \label{non_bundle_seq}
    0\to \oh_Y(-1)\to \cQ_Y\to E\to 0.
\end{equation}

If $E\in\Ku(Y)$, then $E$ is \begin{enumerate}
    \item either a strictly Gieseker-semistable sheaf, which is an extension of two ideal sheaves of lines,
    
    \item or a non-locally free sheaf fitting into a short exact sequence
    $$0\rightarrow E\rightarrow\oh_Y^{\oplus 2}\rightarrow Q\rightarrow 0,$$ where $Q=\theta_C(1)$ is the theta characteristic of a smooth conic $C$, or $Q$ is a sheaf on  a codimension two linear section $C$ of $Y$ given by
    \[0\to \oh_C\to Q\to R\to 0,\]
    where $R$ is a zero-dimensional sheaf on $C$ of length two,
    
    \item or a $\mu_H$-stable vector bundle that $E(1)$ is globally generated and fits into the short exact sequence
    $$0\rightarrow\oh_Y(-H)\rightarrow E\rightarrow I_D(H)\rightarrow 0,$$
    where $D$ is the zero locus of a generic section of $H^0(E(1))$, which is a degree $4$ smooth elliptic curve.
\end{enumerate}
\end{proposition}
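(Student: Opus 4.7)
The plan is to first reduce the classification to a cohomological dichotomy. Using Riemann--Roch on $Y_2$ and the formulas for $\chi(\oh_Y, F)$ recalled at the start of Section~\ref{results_general_Wallcross}, one computes $\chi(E) = \chi(E(-1)) = 0$. Gieseker semistability gives $H^0(E) = H^0(E(-1)) = 0$ (the quotients $\oh_Y, \oh_Y(1)$ have larger reduced Hilbert polynomial), and by Serre duality $H^3(E) = H^3(E(-1)) = 0$ as well. Hence $h^1 = h^2$ on both $E$ and $E(-1)$, and $E\in \Ku(Y)$ precisely when these groups vanish.

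For the case $E\notin \Ku(Y)$, I would show $h^2(E(-1)) \neq 0$, so Serre duality gives a non-split extension
\begin{equation*}
0\to \oh_Y(-1)\to F\to E\to 0, \qquad \ch(F) = \ch(\cQ_Y) = \bigl(3,-H,-\tfrac{1}{2}H^2,-\tfrac{1}{6}H^3\bigr).
\end{equation*}
The key step is to show $F$ is $\mu_H$-stable: any destabilizing subsheaf $F'\subset F$ of slope $>-\tfrac{1}{3}$ would either project nontrivially to $E$ (violating $\mu_H$-semistability of $E$, whose slope is $0$) or factor through $\oh_Y(-1)$ (slope $-1$). Lemma~\ref{rk3_deg2} then forces $F\cong \cQ_Y$, giving \eqref{non_bundle_seq}. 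Local freeness of $E$ follows because any zero-dimensional torsion of $E=\cQ_Y/\oh_Y(-1)$ contributes positively to $c_3(E)$, contradicting $c_3(E)=0$. The remaining case $h^1(E)\neq 0$ with $h^1(E(-1)) = 0$ is ruled out by a symmetric Serre-duality argument, so $h^1(E)\neq 0$ in fact implies $h^1(E(-1)) \neq 0$ for our Chern character.

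For the case $E\in \Ku(Y)$, Proposition~\ref{prop_bridgeland_Y2} identifies $\sigma$-semistable objects of class $2\bv$ with Gieseker-(semi)stable sheaves or $i^!\cQ_Y$; since $E$ is a sheaf, $E$ is Gieseker-semistable. If $E$ is strictly Gieseker-semistable, the Jordan--H\"older factors are rank-one torsion-free sheaves of reduced Hilbert polynomial $\tfrac{1}{2}p_E$, which the Chern character calculation forces to be ideal sheaves $\cI_\ell$ of lines $\ell\subset Y$, yielding type~(1).

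Finally, for Gieseker-stable $E$, cohomology vanishings combining $E\in \Ku(Y)$, Serre duality, and the wall-crossing results of Section~\ref{section_Y2} give $h^i(E(1)) = 0$ for $i\geq 1$ and thus $h^0(E(1)) = \chi(E(1)) = 8$. When $E$ is locally free, $E(1)$ is globally generated (by a standard Grassmannian/degeneracy argument using the size of the section space versus rank), so a general section of $E(1)$ vanishes on a smooth codimension-two subscheme $D$ of degree $c_2(E(1))\cdot H = 4$; adjunction forces $g(D) = 1$, and the Koszul sequence of $s\in H^0(E(1))$ after twisting by $-H$ is exactly the sequence of type~(3). When $E$ is not locally free, a parallel calculation produces $\hom(E,\oh_Y) \geq 2$, and the evaluation $E \to \oh_Y^{\oplus 2}$ is injective by $\mu_H$-stability with cokernel $Q$ of Chern character $(0,0,H^2,0)$, so $Q$ is supported on a degree-two curve $C\subset Y$. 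The two possibilities for $C$ (a smooth conic, or a degree-two codimension-two linear section of $Y$) are distinguished by the local structure of $Q$ along $C$ and produce respectively $Q\cong \theta_C(1)$ or the claimed extension $0\to \oh_C\to Q\to R\to 0$. The main obstacle will be this last geometric step — distinguishing the two subcases and pinning down $Q$ precisely — which I would carry out by adapting the arguments of Druel~\cite{druel:instanton-sheaves-cubic-threefold} and Qin~\cite{qin:instanton-sheaves-degree-four} to the specific geometry of the double cover $\pi\colon Y\to \mathbb{P}^3$.
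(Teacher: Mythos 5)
Your proposal follows the same overall architecture as the paper's proof (a cohomological criterion for membership in $\Ku(Y)$, a rank-three sheaf interpolating between $\oh_Y(-1)$ and $E$, Jordan--H\"older factors for the strictly semistable case, the Serre construction for stable bundles, and an embedding $E\hookrightarrow\oh_Y^{\oplus 2}$ in the non-locally-free case), but several of the key steps have genuine gaps. The most serious one is your proof that the extension $F$ is $\mu_H$-stable. A saturated rank-one subsheaf $F'\subset F$ with $c_1(F')=0$ and $F'\cap\oh_Y(-1)=0$ would destabilize $F$ (slope $0>-\tfrac{1}{3}$), and such an $F'$ injects into $E$ as a sheaf of the form $\cI_Z$ with $Z$ containing a curve of degree $\ge 1$. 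Contrary to your parenthetical, this does \emph{not} violate $\mu_H$-semistability of $E$ (a slope-$0$ subsheaf of a slope-$0$ semistable sheaf is allowed), nor Gieseker semistability; indeed the non-reflexive sheaves of your type~(2) contain exactly such subsheaves. Excluding them requires information about $E$ (reflexivity, or $H^0(E^{\vee\vee})=0$) that you only establish afterwards. The paper avoids this circularity by applying $\Hom(-,E)$ to \eqref{eq. Q} to get $\Hom(\cQ_Y,E)\cong H^1(E(-1))\neq 0$ and then analyzing a nonzero map $\cQ_Y\to E$ using the \emph{known} stability of $\cQ_Y$ (Lemma \ref{lem-Q-Y}). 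Relatedly, your local-freeness argument does not work as stated: $\ch_3(E)=0$ is forced by the exact sequence \eqref{non_bundle_seq} and therefore cannot detect a degeneracy of $s\in H^0(\cQ_Y(1))$ along a curve, where the cokernel is torsion-free but non-reflexive. One must first prove reflexivity (the paper shows a non-reflexive $E$ would have $E^{\vee\vee}\cong\oh_Y^{\oplus2}$, contradicting $\Hom(E,\oh_Y)=0$ computed from \eqref{non_bundle_seq}) and only then invoke $c_3=0$ for rank-two reflexive sheaves.

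Two further steps would fail as written. In case~(3), global generation of $E(1)$ cannot come from a section count: $h^0(E(1))=\chi(E(1))=4$, not $8$, and the locus where $\oh_Y^{\oplus4}\to E(1)$ fails to be surjective has expected codimension $3$, so a dimension count predicts it is \emph{nonempty}; the paper instead checks $H^i(E(1-i))=0$ for $i>0$ and applies Castelnuovo--Mumford regularity (Lemma \ref{instanton_bundles_quartic_double_solid}). In case~(2), injectivity of $E\to\oh_Y^{\oplus2}$ ``by $\mu_H$-stability'' is not available, since a non-reflexive Gieseker-stable sheaf of this class is a priori only $\mu_H$-semistable; the working route (Proposition \ref{classification_non_reflexive_sheaf_quartic_double_solid}) is to take the double dual, pin down $E^{\vee\vee}\cong\oh_Y^{\oplus2}$ via Lemma \ref{lem-y-2-no stable reflexive sheaf}, and observe that $E\hookrightarrow E^{\vee\vee}$ \emph{is} your evaluation map. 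The classification of the quotient $Q$, which you defer to adapting Druel, is indeed the substantial geometric step and is carried out in Lemma \ref{lem_torsion_sheaf} using the double-cover geometry of $Y\to\mathbb{P}^3$.
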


\begin{proof}
If $E$ is strictly Gieseker-semistable, then the result follows from applying Lemma \ref{no_wall_lem} to Jordan--H\"older factors. If $E$ is Gieseker-stable, the result follows from Proposition \ref{classification_non_reflexive_sheaf_quartic_double_solid}, Lemma \ref{instanton_bundles_quartic_double_solid} and Proposition \ref{classfication_semistable_sheaves} below.
\end{proof}

In the following, we are going to prove the results used in Proposition \ref{classification_instanton_sheaves}. We only need to consider Gieseker-stable one.

\begin{lemma}\label{lem_slope_stable_instanton}
Let $E$ be a $\mu_H$-semistable reflexive sheaf of rank two, $c_1(E)=0$ and $H^0(E)$=0. Then $E$ is $\mu_H$-stable. 
\end{lemma}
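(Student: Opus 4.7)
The plan is to argue by contradiction. Suppose $E$ is strictly $\mu_H$-semistable. Since $E$ has rank two, there exists a rank one subsheaf $F \subset E$ with $\mu_H(F) = \mu_H(E) = 0$; by replacing $F$ with its saturation inside $E$ we may assume that the quotient $G \coloneqq E/F$ is torsion-free (of rank one with $c_1(G)=0$). The strategy is to pin down $F$ explicitly as $\oh_Y$, which will then force $H^0(E)\ne 0$ and contradict the assumption.

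First I would check that $F$ is itself reflexive. This is a standard depth calculation: for any closed point $x\in Y$, the short exact sequence $0\to F\to E\to G\to 0$ yields the inequality
\[\mathrm{depth}_x(F) \ \geq\  \min\bigl(\mathrm{depth}_x(E),\ \mathrm{depth}_x(G)+1\bigr).\]
Because $Y$ is a smooth threefold and $E$ is reflexive we have $\mathrm{depth}_x(E)\geq 2$, while $G$ being torsion-free on a smooth variety gives $\mathrm{depth}_x(G)\geq 1$, so $\mathrm{depth}_x(F)\geq 2$. By Serre's criterion this shows $F$ is reflexive.

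Next, a reflexive sheaf of rank one on the smooth (hence locally factorial) threefold $Y$ is invertible, so $F$ is a line bundle. Since $Y$ has Picard rank one with $\Pic(Y)=\mathbb{Z}\cdot H$, and $c_1(F)=0$, I would conclude $F\cong \oh_Y$. The inclusion $\oh_Y \cong F \hookrightarrow E$ then produces a non-zero global section of $E$, contradicting $H^0(Y,E)=0$. Therefore no such saturated destabilising subsheaf exists and $E$ is $\mu_H$-stable.

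There is no real obstacle here; the only subtle point is the reflexivity of the saturation, which is handled by the depth estimate above, and the identification of rank one reflexive sheaves with line bundles on the smooth threefold $Y$ of Picard rank one.
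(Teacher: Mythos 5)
Your proof is correct and follows essentially the same strategy as the paper: identify the rank-one destabilising subsheaf with $\oh_Y$ (using $\Pic(Y)=\ZZ H$ and $c_1=0$) and derive a contradiction with $H^0(E)=0$. The paper gets there by passing to the double dual $E_1^{\vee\vee}\cong\oh_Y$ of the subsheaf and using $E^{\vee\vee}=E$, whereas you saturate and check reflexivity of the saturation via the depth lemma — a purely cosmetic difference.
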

\begin{proof}
If not, its Jordan--H\"older filtration with respect to $\mu_H$-stability has two terms $E_1 \hookrightarrow E \twoheadrightarrow E_2$ where $E_1$ and $E_2$ are $\mu_H$-stable sheaves with $\ch_{\leq 1}(E_i) = (1, 0)$. Then $E_1^{\vee \vee}=\oh_Y$ since $\Pic(Y)=\mathbb{Z}H$. Then taking the double dual, we get a non-zero map $\oh_Y\to E^{\vee \vee}=E$, which contradicts $H^0(E)=0$.
\end{proof}

\begin{lemma}\label{lem-y-2-no stable reflexive sheaf}
There is no $\mu_H$-semistable reflexive sheaf  $E$ of classes

\begin{enumerate}
    \item $\ch(E)=(2, 0, -\frac{1}{2}H^2, \alpha_1H^3)$,
    
    \item $\ch(E)=(2, 0, -H^2, \alpha_2H^3)$ where $\alpha_2 \neq 0$, and
    
    \item $\ch(E)=(2, 0, 0, \alpha_3H^3)$ where $\alpha_3 \neq 0$.
\end{enumerate}

Moreover, if $\ch(E)=2\ch(\oh_Y)$, then $E\cong \oh_Y^{\oplus 2}$.   
\end{lemma}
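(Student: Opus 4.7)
The proof combines Hirzebruch--Riemann--Roch, Serre duality (using $E^{\vee} \cong E$ for rank-two reflexive sheaves with $c_1 = 0$, via the pairing $E \otimes E \to \det E = \oh_Y$), and restriction to generic hyperplane sections. A basic input is Hartshorne's bound $c_3(E) \geq 0$ for rank-two reflexive sheaves on a smooth threefold, which gives $\alpha_i \geq 0$ with equality if and only if $E$ is locally free. HRR yields $\chi(\oh_Y, E(-H)) = 2\alpha_i$ in each of Cases 1, 2, 3, and $\mu_H$-semistability gives $H^0(E(-H)) = H^3(E(-H)) = H^0(E(-3H))^{*} = 0$, so $h^2(E(-H)) - h^1(E(-H)) = 2\alpha_i$.

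For Cases 1--3 with $\alpha_i > 0$, so $E$ is non-locally free, we restrict $E$ to a generic hyperplane section $S \in |H|$, which is a degree-two del Pezzo surface (a double cover of $\mathbb{P}^2$ branched over a smooth plane quartic), normal and Gorenstein by the proof of Lemma~\ref{lem_smooth_torsion}. For $S$ avoiding the zero-dimensional non-locally-free locus of $E$, the restriction $j^{*}E$ is a locally-free rank-two sheaf on $S$ with the restricted Chern character. In Case~3 ($c_2(E) = 0$), $j^{*}E$ has trivial Chern classes on the rational simply connected surface $S$; the Kobayashi--Hitchin correspondence (or a direct Jordan--H\"older argument using $\Ext^1(\oh_S, \oh_S) = H^1(\oh_S) = 0$) forces $j^{*}E \cong \oh_S^{\oplus 2}$, and varying $S$ over $|H|$ then forces $E \cong \oh_Y^{\oplus 2}$, contradicting $c_3(E) > 0$. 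In Cases~1 and 2, a parallel but more delicate argument combining restriction to $S$ with a Serre-construction sequence $0 \to \oh_Y(-kH) \to E \to \cI_C(kH) \to 0$ (for an effective curve $C$ of the class determined by $c_2(E)$) and a case analysis on $k$ yields the contradiction.

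For Case~1 with $\alpha_1 = 0$, $E$ is locally free with $c_2(E) = \tfrac{1}{2}H^2$, the class of a line on $Y$; an analogous Serre-construction argument forces an inclusion $\oh_Y \hookrightarrow E(kH)$ incompatible with $\mu_H$-stability of $E$. For the ``moreover'' claim ($\ch(E) = 2 \ch(\oh_Y)$), $c_3 = 0$ so $E$ is locally free with trivial Chern classes, and if $E$ were $\mu_H$-stable, the restriction argument above would force $E \cong \oh_Y^{\oplus 2}$, contradicting stability. Hence $E$ is strictly $\mu_H$-semistable; its Jordan--H\"older factors are rank-one $\mu_H$-stable sheaves with trivial $c_1$, so ideal sheaves $\cI_{Z_i}$ with $\sum [Z_i] = 0$, forcing $Z_i = \varnothing$ and the factors to be $\oh_Y$. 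The extension splits since $\Ext^1(\oh_Y, \oh_Y) = H^1(\oh_Y) = 0$, yielding $E \cong \oh_Y^{\oplus 2}$. The main obstacle is Case~2 with $\alpha_2 > 0$, where the restriction and Serre-construction arguments on the del Pezzo surface $S$ require the most careful wall-crossing compatibility between the non-locally-free structure of $E$ on $Y$ and $\mu$-stability of $j^{*}E$ on $S$.
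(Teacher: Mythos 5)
Your proposal has genuine gaps, and the central one is the absence of the cohomology vanishing that actually drives the paper's proof. Your displayed computation $\chi(\oh_Y,E(-H))=2\alpha_i$ together with $H^0(E(-H))=H^3(E(-H))=0$ only yields $h^2(E(-H))-h^1(E(-H))=2\alpha_i\ge 0$, which is no contradiction at all: for a reflexive non-locally-free rank-two sheaf the excess of $h^2$ over $h^1$ is exactly absorbed by $H^0(\mathcal{E}xt^1(E,\oh_Y))$, whose length is $c_3(E)$, so this identity carries no information beyond Hartshorne's bound you already invoked. The paper instead computes the \emph{untwisted} Euler characteristic in Case (1), $\chi(\oh_Y,E)=1+2\alpha_1>0$, and gets the contradiction from the extra vanishing $H^2(E)=\Hom(E,\oh_Y(-2H)[1])=0$, which it extracts from tilt-stability (there is no wall for $E$ crossing $b=-\tfrac12$, so $E$ stays $\nu_{b,w}$-semistable down to where its slope can be compared with that of $\oh_Y(-2H)[1]$). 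Without some substitute for that vanishing your framework cannot close in any of the three cases. Relatedly, your one concrete claim in Case (1) with $\alpha_1=0$ — that an inclusion $\oh_Y\hookrightarrow E(kH)$ contradicts $\mu_H$-stability — is false for $k\ge 1$, since then $\mu_H(\oh_Y)=0<k=\mu_H(E(kH))$; you would need a section of $E$ itself, i.e.\ $H^0(E)\neq 0$, which is precisely what the $\chi(E)>0$ plus $H^2(E)=H^3(E)=0$ argument delivers.

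Beyond that, Cases (1) with $\alpha_1>0$ and (2) with $\alpha_2>0$ are deferred to "a parallel but more delicate argument" and "a case analysis on $k$" that are never carried out — whereas the paper's Case (2) is immediate from Lemma~\ref{no_wall_lem} (the class $2\bv$ falls in the wall-free range $n\le d+1$, forcing $\ch_3(E)\le 0$, hence $\alpha_2=0$). Your restriction argument in Case (3) also has two unaddressed points: $\mu$-semistability of $E|_S$ for a general $S\in|H|$ is not automatic for restriction to a degree-one linear system (and $H$ is not very ample on $Y_2$, so Flenner-type bounds do not apply directly; the paper uses tilt-stability restriction lemmas for such statements elsewhere); and the step "varying $S$ forces $E\cong\oh_Y^{\oplus 2}$" requires lifting the two sections of $E|_S\cong\oh_S^{\oplus 2}$ to $E$, which again needs $H^1(E(-H))=0$ or $h^0(E)\ge 2$ — the same missing vanishing. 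The only part that genuinely goes through as written is the "moreover" statement via Jordan--H\"older factors and $\Ext^1(\oh_Y,\oh_Y)=0$, which matches the paper's argument.
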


\begin{proof}
Note that being rank two and reflexive implies $c_3(E)\geq 0$ by \cite[Proposition 2.6]{hartshorne:stable-reflexive-sheaves}, hence $\alpha_i\geq 0$. Then the case (2) follows from Lemma \ref{lem_slope_stable_instanton} and Lemma \ref{no_wall_lem}. And case (3) follows from the same argument as in \cite[Proposition 4.20]{feyz:desing}. 

So we only need to prove (1). Assume for a contradiction that $E$ is a $\mu_H$-semistable reflexive sheaf of classes $\ch(E)=(2, 0, -\frac{1}{2}H^2, \alpha_1H^3)$ with $\alpha_1\geq 0$. We know that there is no wall for $E$ crossing the vertical line $b= - \frac{1}{2}$, so $\Hom(E,\oh_Y(-2)[1])=H^2(E)=0$. And by $\mu_H$-semistability, we get $H^0(E)=H^3(E)=0$, which implies 
\begin{equation*}
    2\alpha_1 +1 = \chi(\oh_Y, E) = -\hom(\oh_Y, E[1]) \leq 0 
\end{equation*}
which makes a contradiction.  If $\ch(E)=2\ch(\oh_Y)$, then 
\begin{equation*}
    \hom(\oh_Y, E) -\hom(\oh_Y, E[1]) = 2. 
\end{equation*}
Thus Jordan--H\"older factors of $E$ with respect to the $\mu_H$-stability are all $\oh_Y$, and the result follows.
\end{proof}

\begin{proposition}\label{classification_non_reflexive_sheaf_quartic_double_solid}
Let $E\in \Ku(Y)$ be a non-reflexive Gieseker-stable sheaf of character $2\bv$, then $E$ fits into a short exact sequence
    $$0\rightarrow E\rightarrow\oh_Y^{\oplus 2}\rightarrow Q\rightarrow 0,$$ where $Q=\theta_C(1)$ is the theta characteristic of a smooth conic $C$, or $Q$ is a sheaf on  a codimension two linear section $C$ of $Y$ given by
    \[0\to \oh_C\to Q\to R\to 0,\]
    where $R$ is a zero-dimensional sheaf on $C$ of length two.

\end{proposition}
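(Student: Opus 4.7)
The strategy is to first classify the reflexive hull $E^{\vee\vee}$, and then analyse the torsion quotient $Q := E^{\vee\vee}/E$. Since $E$ is Gieseker-stable it is torsion-free, so the canonical map $E \hookrightarrow E^{\vee\vee}$ is injective with torsion quotient $T$ supported in codimension $\geq 2$ and non-zero by non-reflexivity. The double dual $E^{\vee\vee}$ is a $\mu_H$-semistable reflexive sheaf with $\ch_{\leq 1}(E^{\vee\vee}) = (2, 0)$, and the effectivity of $T$ forces $\ch_2(T) \cdot H \geq 0$, so $\ch_2(E^{\vee\vee}) \cdot H \geq -H^3 = -2$. Combined with the half-integrality of $c_2$ (which lies in $\ZZ \cdot [\text{line}]$ and thus $\ch_2$ lies in $\tfrac{1}{2}\ZZ \cdot H^2$), this restricts $\ch_2(E^{\vee\vee}) \in \{-H^2,\, -\tfrac{1}{2}H^2,\, 0\}$. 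I then invoke Lemma \ref{lem-y-2-no stable reflexive sheaf}: part~(1) rules out the value $-\tfrac{1}{2}H^2$; the value $-H^2$ forces $T$ to be $0$-dimensional of positive length, giving $\ch_3(E^{\vee\vee}) > 0$ and contradicting part~(2); part~(3) together with the \emph{moreover} clause then forces $\ch(E^{\vee\vee}) = 2\ch(\oh_Y)$ and $E^{\vee\vee} \cong \oh_Y^{\oplus 2}$.

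This produces the desired short exact sequence $0 \to E \to \oh_Y^{\oplus 2} \to Q \to 0$ with $\ch(Q) = (0, 0, H^2, 0)$ and $\chi(Q) = 2$. Using $E \in \Ku(Y)$ (so $H^i(E) = 0$ for all $i$) together with the vanishing $H^{>0}(\oh_Y^{\oplus 2}) = 0$, the cohomology long exact sequence yields $h^0(Q) = 2$ and $H^{>0}(Q) = 0$. In particular, the two standard generators of $\oh_Y^{\oplus 2}$ project to a basis of $H^0(Q)$, so $Q$ is globally generated by two sections, and its scheme-theoretic support $C$ is a curve of degree $C \cdot H = H \cdot \ch_2(Q) = 2$.

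The classification then splits by the structure of $C$. If $C$ is a smooth conic, then $Q$ is a rank-one torsion-free sheaf on $C \cong \PP^1$ with $\chi(Q) = 2$, which forces $Q \cong \oh_{\PP^1}(1) = \theta_C(1)$. Otherwise I claim $C$ is a codimension-two linear section $V(s_1, s_2) \subset Y$ with $s_1, s_2 \in H^0(\oh_Y(1))$; the Koszul resolution $0 \to \oh_Y(-2H) \to \oh_Y(-H)^{\oplus 2} \to \oh_Y \to \oh_C \to 0$ then yields $\chi(\oh_C) = 1 - 0 + (-1) = 0$. Choosing a section of $Q$ whose annihilator cuts out $C$ realises $\oh_C$ as a subsheaf of $Q$, and the quotient $R := Q/\oh_C$ is $0$-dimensional of length $\chi(Q) - \chi(\oh_C) = 2$, producing the asserted extension. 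The main obstacle is precisely this last dichotomy: excluding other degree-two support curves, such as a reducible rational conic $\ell_1 \cup \ell_2$ meeting at a single point (arithmetic genus $0$, not a linear section) or a double line. Here Gieseker-stability of $E$ is the key input, since any such decomposable support would force $E$ to split as a direct sum of twisted ideal sheaves $\cI_{\ell_1} \oplus \cI_{\ell_2}$ of the components, violating stability.
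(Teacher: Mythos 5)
Your first half is correct and is exactly the paper's argument: the reflexive hull $E^{\vee\vee}$ is a $\mu_H$-semistable reflexive sheaf whose $\ch_2$ is pinned down by effectivity of the quotient together with the Bogomolov-type bound, and Lemma~\ref{lem-y-2-no stable reflexive sheaf} (all three parts plus the ``moreover'') forces $E^{\vee\vee}\cong\oh_Y^{\oplus 2}$, hence the sequence $0\to E\to\oh_Y^{\oplus 2}\to Q\to 0$ with $\ch(Q)=(0,0,H^2,0)$.

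The gap is in your classification of $Q$, and it is twofold. First, you only ever use $\RHom(\oh_Y,E)=0$; the other half of the condition $E\in\Ku(Y)$, namely $\RHom(\oh_Y(1),E)=0$, is what yields $H^0(Q(-1))=0$, and this is the indispensable input (it is precisely how the paper sets up its Lemma~\ref{lem_torsion_sheaf}). Without it, ``$Q$ is a rank-one torsion-free sheaf on the smooth conic $C$'' is unjustified: the sheaf $Q=\oh_C\oplus\CC_p$ with $p\in C$ a point of a smooth conic has $\ch(Q)=(0,0,H^2,0)$, $\chi(Q)=2$, $h^0(Q)=2$, is globally generated by two sections, and has scheme-theoretic support exactly $C$, yet it is not $\theta_C(1)$; it is ruled out only because $H^0(Q(-1))\supseteq H^0(\CC_p)\neq 0$. (One can also rule it out by checking that the kernel of any surjection $\oh_Y^{\oplus 2}\to\oh_C\oplus\CC_p$ contains $\cI_p$ and is therefore destabilised, but you did not do this, and for non-split extensions of $\oh_C$ by $\CC_p$ that route is less clear.) Second, your mechanism for excluding reducible conics and double lines --- ``any such decomposable support would force $E$ to split as $\cI_{\ell_1}\oplus\cI_{\ell_2}$'' --- is false as a principle: a sheaf supported on a reducible curve need not decompose (e.g.\ $\oh_{\ell_1\cup\ell_2}$ is indecomposable, and there are non-split modules over a reducible conic). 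Splitting of $E$ is forced only when $Q$ is itself an extension of structure sheaves of lines, which is indeed the one case killed by Gieseker-stability. For the remaining cases the paper's Lemma~\ref{lem_torsion_sheaf} does the real work: following Druel's argument, if $Q$ is a nontrivial $\oh_Z$-module over a conic $Z$ with $\cok(\oh_Y\to Q)\neq 0$ then $Z$ is forced to be \emph{smooth}; and the non-reduced ``double line'' with $\chi(\oh_Z)=0$ is not excluded at all --- by Lemma~\ref{lem_elliptic} it is itself a codimension-two linear section $\pi^{-1}(\text{line})$, so it lands in your second case rather than needing to be ruled out. Your final paragraph therefore replaces the genuinely delicate case analysis with an incorrect shortcut.
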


\begin{proof}
Taking the reflexive hull of $E$ gives the exact sequence 
\begin{equation}\label{reflexive hull}
    E \rightarrow E^{\vee \vee} \rightarrow Q
\end{equation}
where $E^{\vee \vee}$ is a reflexive $\mu_H$-semistable sheaf and $Q$ is a torsion sheaf supported in dimension at most one. Applying Lemma \ref{lem-y-2-no stable reflexive sheaf} to the exact sequence \eqref{reflexive hull} shows that $E^{\vee \vee} = \oh_Y^{\oplus 2}$ and $Q$ is a torsion sheaf of class $\ch(Q) = (0, 0, H^2, 0)$. Since $E\in \Ku(Y)$ and $\RHom(\oh_Y(1), E^{\vee\vee})=0$, we know that $H^0(Q(-1))=0$. Then the result follows from Lemma \ref{lem_torsion_sheaf}.
\end{proof}

\begin{lemma} \label{lem_line}
Let $Z\subset Y$ be a one-dimensional closed subscheme with $H.Z=1$. If $Z$ is pure, then $Z$ is a line.
\end{lemma}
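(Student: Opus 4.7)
The plan is to combine the purity hypothesis with the degree-one bound on $H\cdot Z$ to reduce to the case where $Z$ is integral, and then to use the double cover $\pi\colon Y \to \mathbb{P}^3$ to identify $Z$ as a line.

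First, decompose the fundamental $1$-cycle $[Z] = \sum_i m_i[C_i]$, where the $C_i$ are the reduced irreducible components of $Z_{\mathrm{red}}$ and $m_i \geq 1$ are the geometric multiplicities. Since $H$ is ample, $H \cdot C_i \geq 1$, so the identity
\[
1 \;=\; H\cdot Z \;=\; \sum_i m_i\,(H \cdot C_i)
\]
forces a unique component $C$ with $m = 1$ and $H \cdot C = 1$. Purity gives that $\eta_C$ is the only associated prime of $\oh_Z$, and the multiplicity-one condition means $\oh_{Z,\eta_C}$ is a field; the canonical injection $\oh_Z \hookrightarrow \oh_{Z,\eta_C}$ then shows that $\oh_Z$ is reduced, so $Z = C$ is integral.

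Next, set $L := \pi(Z) \subset \mathbb{P}^3$. The projection formula gives
\[
1 \;=\; H\cdot Z \;=\; \pi^{\ast}H' \cdot Z \;=\; H' \cdot \pi_{\ast}Z \;=\; \deg(\pi|_Z)\cdot \deg L,
\]
so $L$ is a line and $\pi|_Z\colon Z \to L$ is finite and birational; in particular, $Z$ is an integral subscheme of the degree-two curve $\pi^{-1}(L)$. When $L \subset R$, the preimage $\pi^{-1}(L)$ is locally defined by $w^2 = 0$ over $L$, so it is the non-reduced double thickening of a section $\tilde L \subset R$ isomorphic to $L \cong \mathbb{P}^1$; since $Z$ is reduced with the same support, $Z = \tilde L \cong \mathbb{P}^1$. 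When $L \not\subset R$, the discriminant of $w^2 - f|_L$ is nonzero (where $f$ defines $R$), so $\pi^{-1}(L)$ is reduced of degree $2$; an irreducible $\pi^{-1}(L)$ would admit no proper $1$-dimensional integral subscheme, so $\pi^{-1}(L)$ must split into two components, each mapping isomorphically to $L \cong \mathbb{P}^1$, and $Z$ is one of them. In every case, $Z \cong \mathbb{P}^1$ with $H \cdot Z = 1$, i.e.\ $Z$ is a line on $Y$.

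The only delicate point is the scheme-theoretic reducedness in the first step: one must combine purity (no embedded primes) with the multiplicity-one condition at $\eta_C$ to rule out any global nilpotents. The remaining analysis is a standard local calculation on the double cover using the explicit equation $w^2 = f$.
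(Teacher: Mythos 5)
Your proof is correct and follows essentially the same route as the paper's: purity together with $H\cdot Z=1$ forces $Z$ to be integral, the projection $\pi$ maps $Z$ birationally onto a line $L\subset\mathbb{P}^3$, and $Z$ is then identified as a component of the degree-two curve $\pi^{-1}(L)$. If anything, your explicit case split between $L\subset R$ and $L\not\subset R$ is slightly more careful than the paper's argument, which asserts that $\pi^{-1}(\pi(Z))$ is reducible and a union of two lines, glossing over the non-reduced case where $L$ lies in the branch locus.
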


\begin{proof}
Since $H.Z=1$, we see $Z$ is irreducible since it is pure. Then $H.Z_{red}=1$, which implies that $\ker(\oh_Z\to \oh_{Z_{red}})$ is zero-dimensional. But this is impossible since $\oh_Z$ is pure. Hence $Z$ is integral, and $\pi(Z)\subset \PP^3$ is also an integral subscheme of degree one, which is a line. Since $Z\subset \pi^{-1}(\pi(Z))$ is an irreducible component, $\pi^{-1}(\pi(Z))$ is reducible. Hence $\pi^{-1}(\pi(Z))$ is union of two lines on $Y$, which implies that $Z$ is a line. 
\end{proof}

\begin{lemma} \label{lem_elliptic}
Let $C\subset Y$ be a pure one-dimensional closed subscheme with $H.C=2$ and $\chi(\oh_C)=0$. Then $C$ is irreducible and is the intersection of two hyperplane sections of $Y$. Moreover,  $C=\pi^{-1}(\pi(C))$ and $\pi(C)\subset \PP^3$ is a line.
\end{lemma}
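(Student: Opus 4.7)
The plan is to show that $\pi(C)$ equals a line in $\PP^3$, after which the statement follows by adjunction for complete intersections. First I would write $[C]=\sum_i m_i[C_i]$ as a $1$-cycle with reduced irreducible components $C_i$ and generic multiplicities $m_i\geq 1$. Since $H=\pi^*h$, the projection formula gives
\[
2=H\cdot C=\sum_i m_i\deg(\pi|_{C_i})\deg\pi(C_i),
\]
so each $\pi(C_i)$ is a plane curve in $\PP^3$ of degree at most $2$. The main step is to rule out that any $\pi(C_i)$ is a conic. Since $\pi(C_i)$ is irreducible and every irreducible plane conic is smooth, the only way a conic image arises is when $C$ itself is irreducible with $\pi|_C$ birational onto a smooth conic $Q\cong\PP^1$; in that case $m_1=\deg(\pi|_C)=1$, so $C$ is reduced (pure plus generically reduced), and a finite birational morphism from $C$ to the smooth normal curve $Q$ must be an isomorphism by Zariski's main theorem. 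This would force $\chi(\oh_C)=1$, contradicting the hypothesis.

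Next I would handle the case of two distinct line images. If $C$ had two components $C_1,C_2$ mapping birationally onto distinct lines $\ell_1\neq\ell_2$ in $\PP^3$, the degree identity would force $m_i=\deg(\pi|_{C_i})=1$ and each $C_i\cong\ell_i\cong\PP^1$. The intersection $C_1\cap C_2$ would be contained in $\pi^{-1}(\ell_1\cap\ell_2)$, and since each $C_i\to\ell_i$ is bijective, it would contain at most one point of $Y$. At any such meeting point $\tilde p$, the tangent vectors $v_i\in T_{Y,\tilde p}$ of $C_i$ project under $d\pi$ to the tangent directions of $\ell_i$ at $p=\pi(\tilde p)$, which are linearly independent in $T_{\PP^3,p}$ because two distinct lines through a common point in $\PP^3$ span a $2$-plane. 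Consequently $v_1,v_2$ are linearly independent, the scheme-theoretic intersection at $\tilde p$ has length at most $1$, and $\chi(\oh_C)=2-\operatorname{length}(C_1\cap C_2)\geq 1$, again contradicting $\chi(\oh_C)=0$. Hence all $\pi(C_i)$ coincide with a single line $\ell\subset\PP^3$, and $C$ is scheme-theoretically contained in $D_\ell:=\pi^{-1}(\ell)$.

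Finally I would wrap up via adjunction. Writing $\ell\subset\PP^3$ as the zero locus of two linear forms $f_1,f_2$, the preimage $D_\ell$ is the complete intersection in $Y$ of the two hyperplane sections cut out by $\pi^*f_1$ and $\pi^*f_2$; adjunction together with $K_Y=-2H$ yields $\omega_{D_\ell}=(K_Y+2H)|_{D_\ell}=\oh_{D_\ell}$, so $\chi(\oh_{D_\ell})=0$. Combined with $H\cdot D_\ell=H^3=2$, both $C$ and $D_\ell$ have Hilbert polynomial $2t$, and the surjection $\oh_{D_\ell}\twoheadrightarrow\oh_C$ induced by $C\subseteq D_\ell$ has kernel of vanishing Hilbert polynomial, hence is zero, giving $C=D_\ell=\pi^{-1}(\ell)$. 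This delivers all required conclusions simultaneously: $C$ is the intersection of two hyperplane sections of $Y$; $\pi(C)=\ell$ is a line in $\PP^3$; $C=\pi^{-1}(\pi(C))$; and $C=D_\ell$ is irreducible, since $D_\ell$ is either a connected degree-$2$ cover of $\ell$ branched over the non-empty divisor $\ell\cap R$ when $\ell\not\subset R$, or the non-reduced double thickening of the unique reduced line in $Y$ above $\ell$ when $\ell\subset R$.

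The hard part will be the two-line sub-case of the middle paragraph: verifying that the scheme-theoretic intersection of $C_1$ and $C_2$ has length at most one rests on the tangent-direction analysis, whose crucial geometric input is that distinct lines through a common point in $\PP^3$ have linearly independent tangent directions there, and that this independence lifts to $Y$ via the differential $d\pi$ even at ramification points of $\pi$.
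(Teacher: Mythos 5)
Your overall route (project to $\PP^3$, classify the possible images, then compare Hilbert polynomials with $\pi^{-1}(\ell)$) is close in spirit to the paper's, and your treatment of the conic‑image case and of two components with distinct line images is fine. The genuine gap is the sentence asserting that, once every component of $C$ maps to a single line $\ell$, ``$C$ is scheme‑theoretically contained in $D_\ell=\pi^{-1}(\ell)$''. Your case analysis does not exclude the possibility that $C$ is a multiplicity‑two structure (a ribbon) on a single line $l\subset Y$ with $\pi(l)=\ell$, i.e.\ $[C]=2[l]$, and for such a $C$ the cycle‑level computation only gives $C_{\mathrm{red}}=l\subseteq D_\ell$, not $C\subseteq D_\ell$. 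A ribbon on $l$ is determined by a thickening direction inside the rank‑two normal bundle $N_{l/Y}$, and nothing so far forces that direction to be the one cut out by $D_\ell$; indeed, if $\pi^{-1}(\ell)$ happens to be the reduced curve $l\cup l'$, then no subscheme of it has cycle $2[l]$ (the generic multiplicity along $l$ is $1$), so the claimed containment is simply false in that configuration and one must instead rule the configuration out. This is exactly the case the paper treats by a separate cohomological argument: purity together with $\chi(\oh_C)=0$ yields the extension $0\to\oh_l(-2)\to\oh_C\to\oh_l\to 0$, hence $h^0(\oh_C(1))=2$ and $h^0(\cI_C(1))\geq 2$, so $C$ lies in the complete intersection of two hyperplane sections; only after that does your Hilbert‑polynomial comparison close the argument. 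You need to supply this step (or an equivalent one) before invoking $C\subseteq D_\ell$.

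A secondary problem is the final sentence: connectedness of $D_\ell$ does not give irreducibility. If $\ell$ is a bitangent line of the branch quartic $R$, so that $R|_\ell$ is the square of a section of $\oh_\ell(2)$ with two distinct zeros, then $\pi^{-1}(\ell)=l\cup l'$ is a union of two lines meeting in a length‑two scheme; it is connected and reducible, and still satisfies $H.C=2$ and $\chi(\oh_C)=0$. So the irreducibility assertion requires a different justification (note that only the ``complete intersection'' conclusion of the lemma is used later in the paper).
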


\begin{proof}
If $C$ is reducible, then from $H.C=2$, each component is pure-dimensional with degree one, which is a line by Lemma \ref{lem_line}. Then these two components are either disjoint which implies $\chi(\oh_C)=2$, or intersect at a single point, which gives $\chi(\oh_C)=1$. Hence $C$ is irreducible.

If $H.C_{red}=2$, then $C$ is reduced since $\oh_C$ is pure. Then $\pi(C)$ is also integral. If the degree of $\pi(C)$ is two, then $C\cong \pi(C)$ which contradicts \cite[Corollary 1.38]{sanna:instanton-on-Y5} since $\chi(\oh_C)=0$. Thus $\pi(C)$ is a line, and $C\subset \pi^{-1}(\pi(C))$. Since $\pi^{-1}(\pi(C))$ is also a degree two curve of genus one, we have $C=\pi^{-1}(\pi(C))$.

If $H.C_{red}=1$, then $C_{red}=l$ is a line, and we have an exact sequence $0\to \oh_l(-2)\to \oh_C\to \oh_l\to 0$. Thus $h^0(\oh_C(1))=2$. Therefore, we have $h^0(\cI_C(1))\geq 2$ and $C$ is contained in two different hyperplane sections $S,S'$ of $Y$. This implies that $C\subset S\cap S'$. Since $S\cap S'$ is also a degree two curve of genus one, we have $C=S\cap S'=\pi^{-1}(l)$. 
\end{proof}

\begin{lemma} \label{lem_torsion_sheaf}
    Let $Q$ be a coherent sheaf on $Y$ of class $\ch(Q)=(0,0,H^2,0)$ with $H^0(Q(-1))=0$ on $Y$. Then $Q$ is either 
    
    \begin{enumerate}
        \item an extension of structure sheaves of lines on $Y$,
        
        \item $Q=\theta_C(1)$, where $\theta_C$ is the theta characteristic of a smooth conic $C$ on $Y$, or
        
        \item $Q$ is a sheaf on  a codimension two linear section $C$ of $Y$ given by
    \[0\to \oh_C\to Q\to R\to 0,\]
    where $R$ is a zero-dimensional sheaf on $C$ of length two.
    \end{enumerate}
    
\end{lemma}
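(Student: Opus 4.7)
The plan is first to establish purity of $Q$, then case-split by the structure of the scheme-theoretic support $Z = \mathrm{Supp}(Q)$, which satisfies $H \cdot Z = H^3 = 2$ by the Chern character assumption, and whose reduced Hilbert polynomial can be read off from $\chi(Q(n))=2n+2$ and $\chi(Q(-1))=0$.

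First I would argue that $Q$ is pure of dimension one. If $T \subset Q$ were a non-zero $0$-dimensional subsheaf, then $T \otimes \oh_Y(-1) \cong T$ would give $0 \neq H^0(T) \subseteq H^0(Q(-1))$, contradicting the hypothesis. In particular $H^1(Q(-1)) = 0$ as well. Next, if $Z_{red}$ is reducible, or if $Z$ has multiplicity $>1$ along a single line, I would construct a short exact sequence $0 \to Q_1 \to Q \to Q_2 \to 0$ in which each $Q_i$ is a pure $1$-dimensional sheaf with reduced Hilbert polynomial $n+1$ (so in particular $H \cdot \mathrm{Supp}(Q_i) = 1$). By Lemma \ref{lem_line} each $\mathrm{Supp}(Q_i)$ is then a line $l_i$, and the Hilbert polynomial forces $Q_i \cong \oh_{l_i}$. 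This produces case (1), allowing the possibility $l_1 = l_2$.

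It remains to treat the case in which $Z$ is irreducible of degree $2$. There are two sub-cases, distinguished by whether $\pi(Z) \subset \PP^3$ is a conic or a line. In the first sub-case $Z$ maps isomorphically to a conic $C \subset Y$ (since $\pi$ is a finite double cover and $H\cdot Z=2$), and $\chi(\oh_C)=1$. A pure sheaf on a smooth conic with Hilbert polynomial $2n+2$ is a degree-$1$ line bundle, which up to the chosen isomorphism $C \cong \PP^1$ is $\theta_C(1)$, giving case (2); the singular conic is handled identically once one rules it out via $H^0(Q(-1))=0$. In the second sub-case, $\pi(Z)$ is a line $\ell$ and Lemma \ref{lem_elliptic} gives $Z = \pi^{-1}(\ell)$, an arithmetic-genus-$1$ complete intersection with $\chi(\oh_Z)=0$. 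Since $\chi(Q) = 2$ and $H^1(Q(-1))=0$, a Riemann--Roch / cohomological count produces a non-zero section $\oh_Y \to Q$ that factors through $\oh_Z \to Q$. Purity of $Q$ on the irreducible $Z$ makes this section injective, and the cokernel $R$ has $\chi(R)=2$ and is necessarily zero-dimensional by comparing the Hilbert polynomials of $Q$ and $\oh_Z$, giving case (3).

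\textbf{Main obstacle.} The hardest point will be the non-reduced and degenerate support cases: a first-order thickening of a line, two lines meeting at a point, or a degenerate conic. In each such configuration one must use both vanishings $H^0(Q(-1)) = H^1(Q(-1)) = 0$ to exhibit either a sub-structure sheaf $\oh_{l}\hookrightarrow Q$ (reducing to case (1)) or the section $\oh_Z\hookrightarrow Q$ of case (3), and then to check that the associated quotient has the required form (pure of dimension $1$ on a line, respectively zero-dimensional of length $2$). A secondary technicality will be to verify that the section produced in the genus-$1$ sub-case is generically injective on every irreducible component of $Z$, so that the cokernel is truly $0$-dimensional rather than having an embedded $1$-dimensional piece; this again follows from purity of $Q$ and the cohomological vanishing above.
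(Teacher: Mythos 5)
Your overall strategy (purity of $Q$, then a case analysis on the support) runs parallel to the paper's, which instead pivots on the image $\oh_Z$ of a global section $s\in H^0(Q)$ (nonzero since $\chi(Q)=2$) and on its cokernel $R$; but the step you yourself flag as the main obstacle is exactly where your sketch breaks, and your proposed resolution is wrong in one direction and unsubstantiated in the other. You claim that whenever $Z_{\mathrm{red}}$ is reducible, or $Z$ has multiplicity $>1$ along a single line, $Q$ admits a two-step filtration with graded pieces $\oh_{l_1},\oh_{l_2}$, i.e.\ lands in case (1). This fails for thickened lines: the paper's argument shows that an irreducible non-reduced $Z$ with $Z_{\mathrm{red}}$ a line and $\chi(\oh_Z)=0$ is forced, via the argument of Lemma \ref{lem_elliptic}, to be the (possibly non-reduced) complete intersection $\pi^{-1}(\pi(Z_{\mathrm{red}}))$, and $Q$ then sits in $0\to\oh_Z\to Q\to R\to 0$ with $R$ of length $2$ --- that is case (3), not case (1). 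Since $\oh_Z$ itself is an extension of $\oh_l$ by $\oh_l(-2)$ rather than by $\oh_l$, there is no reason for your filtration to exist; moreover, pinning down $\chi(\oh_Z)=0$ in this configuration is not free --- the paper needs the Castelnuovo-type bound of \cite[Lemma 4.3]{liu-ruan:cast-bound} to get $0\le\chi(\oh_Z)\le 1$, and then excludes $\chi(\oh_Z)=1$ by the smoothness-of-the-conic argument from \cite[Lemma 3.3]{druel:instanton-sheaves-cubic-threefold}. Your ``Main obstacle'' paragraph hedges between cases (1) and (3) here but supplies no mechanism for deciding between them.

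Second, in the reducible case ($Z_{\mathrm{red}}=l_1\cup l_2$ meeting in a point) you must actually exhibit a subsheaf isomorphic to $\oh_{l_1}$ with quotient isomorphic to $\oh_{l_2}$. For instance $Q=\oh_C(p)$ on the nodal conic $C=l_1\cup l_2$ with $p\in l_1$ smooth does satisfy $H^0(Q(-1))=0$ and is such an extension (restrict to $l_2$ and identify the kernel), but the analogous claim for an arbitrary pure $Q$ with $H^0(Q(-1))=0$ on $l_1\cup l_2$ is not formal: a priori the induced pieces could have degrees $(-1,1)$ rather than $(0,0)$, since the vanishing $H^0(Q(-1))=0$ passes to subsheaves but not to quotients. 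This is precisely the content of the argument from \cite[Lemma 3.3]{druel:instanton-sheaves-cubic-threefold} that the paper invokes, which simultaneously shows that the sheaves not filtered by structure sheaves of lines occur only on smooth conics, yielding $\theta_C(1)$. The remaining parts of your sketch --- purity, the disjoint-lines and irreducible reduced cases, and the $\chi$-count identifying $R$ as zero-dimensional of length $2$ --- are fine; note only that $H^1(Q(-1))=0$ follows from $\chi(Q(-1))=0$ together with the hypothesis, not from purity.
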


\begin{proof}
Since $\chi(Q)=2$, we have $H^0(Q)\neq 0$. Let $s\colon \oh_Y\to Q$ be a non-zero map. Then $\im(s)=\oh_Z$, where $Z\subset Y$ is a subscheme. Since $H^0(Q(-1))=0$, we see $H^0(\oh_Z(-1))=0$ and hence $Z$ is pure-dimensional. Note that if  $H.Z_{red}=H.Z$, then the kernel of $\oh_Z\to \oh_{Z_{red}}$ is zero-dimensional, which implies $Z=Z_{red}$ by $H^0(\oh_Z(-1))=0$. Let $R:=\cok(s)$.

\begin{itemize}
    \item Assume that $H.Z=1$. Then by Lemma \ref{lem_line}, $Z$ is a line and hence $H^1(\oh_Z(-1))=0$. Thus  $\ch(R)=(0,0,\frac{H^2}{2},0)$ and $H^0(R(-1))=0$. We claim that $R$ is also the structure sheaf of a line. Indeed, by  $\chi(R)=1$, we have a non-zero map $s'\colon \oh_Y\to R$. By the same argument above, we see $H^0(\im(s')(-1))=0$ and hence $\im(s')$ is the structure sheaf of line by Lemma \ref{lem_line}. By the reason of Chern characters, we see $\im(s')=R$ and the result follows.
    
    \item Assume that $H.Z=2$. First, we assume that $R=0$, hence $\oh_Z=Q$. If $H.Z_{red}=1$, then $Z_{red}$ is a line by Lemma \ref{lem_line}. Thus $\ker(\oh_Z\to \oh_{Z_{red}})$ satisfies properties of $R$ in the first case. The same argument shows that $\ker(\oh_Z\to \oh_{Z_{red}})$ is also the structure sheaf of a line. If $H.Z_{red}=2$, then the kernel of $\oh_Z\to \oh_{Z_{red}}$ is zero-dimensional, which implies $Z=Z_{red}$ by $H^0(\oh_Z(-1))=0$. Note that $Z$ is reducible, otherwise we have $h^0(\oh_Z)=1$, which contradicts $\chi(\oh_Z)=\chi(Q)=2$. Hence by Lemma \ref{lem_line}, each of the  irreducible components of $Z$ is a line. Since $\ch(\oh_Z)=(0,0,H^2,0)$, we see $Z$ is an extension of structure sheaves of two lines.
    
    Now we assume that $R\neq 0$. The same argument as in \cite[Lemma 3.3]{druel:instanton-sheaves-cubic-threefold} shows that $Q$ is a $\oh_Z$-module.

    \begin{itemize}
        \item If $Z$ is reducible, then each component of $Z$ has degree one. Hence $H.Z_{red}=H.Z=2$. This implies $Z=Z_{red}$ as above since $H^0(\oh_Z(-1))=0$. By Lemma \ref{lem_line}, $Z$ is a union of two lines. And from $R\neq 0$, we see these two lines intersect with each other. In other words, $Z$ is a reducible conic. Now since $Z$ is a conic, the same argument as in \cite[Lemma 3.3]{druel:instanton-sheaves-cubic-threefold} shows that $Z$ is a smooth conic and $Q=\theta_Z(1)$.
        
        \item If $Z$ is irreducible and $H.Z_{red}=2$, then we also have $Z=Z_{red}$, which implies that $h^0(\oh_Z)=1$ and $\chi(\oh_Z)\leq 1$. From \cite[Lemma 4.3]{liu-ruan:cast-bound}, we see $0\leq \chi(\oh_Z)\leq 1$. When $\chi(\oh_Z)=1$, $Z$ is also a conic, hence the same argument as in \cite[Lemma 3.3]{druel:instanton-sheaves-cubic-threefold} shows that $Z$ is a smooth conic and $Q=\theta_Z(1)$. When $\chi(\oh_Z)=0$, $Z$ is the intersection of two hyperplane sections by Lemma \ref{lem_elliptic} and hence $\mathrm{length}(R)=2$.  
        
        \item If $Z$ is irreducible and $H.Z_{red}=1$, then $Z_{red}$ is a line by Lemma \ref{lem_line}. Therefore, we have an exact sequence $0\to \oh_l(-n)\to \oh_Z\to \oh_l\to 0$, where $n\in \ZZ_{>0}$. In particular, we have $h^0(\oh_Z)=1$ which implies $\chi(\oh_Z)\leq 1$. From \cite[Lemma 4.3]{liu-ruan:cast-bound}, we see $0\leq \chi(\oh_Z)\leq 1$. When $\chi(\oh_Z)=1$, $Z$ is a conic. By \cite[Lemma 3.3]{druel:instanton-sheaves-cubic-threefold}, $Z$ is smooth and contradicts $H.Z_{red}=1$. When $\chi(\oh_Z)=0$, we have $\mathrm{length}(R)=2$ and the result follows.
    \end{itemize}

\end{itemize}

\end{proof}

Now assume $E$ is a Gieseker-semistable reflexive sheaf of class $2\bv$. It follows from \cite[Proposition 2.6]{hartshorne:stable-reflexive-sheaves} that $E$ is a locally free sheaf and it is a slope stable locally free sheaf by Lemma~\ref{lem_slope_stable_instanton}.

\begin{lemma}
\label{instanton_bundles_quartic_double_solid}
Let $E\in M_Y(2,0,2)$ be a bundle with $E\in \Ku(Y)$, then $E(1)$ is globally generated and it fits into the short exact sequence 
$$0\rightarrow\oh_Y(-H)\rightarrow E\rightarrow I_D(H)\rightarrow 0,$$
where $D$ is a degree 4 smooth elliptic curve as the zero locus of a general section of $E(1)$. 
\end{lemma}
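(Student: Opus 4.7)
The plan is to apply the Serre construction to a suitable section of $E(H)$, in three steps: establish cohomological vanishings and compute $h^0(E(H)) = 4$; produce the short exact sequence from a nonzero section and compute the invariants of $D$; deduce global generation of $E(H)$ and hence smoothness of a general $D$.

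By Lemma \ref{lem_slope_stable_instanton}, the sheaf $E$ is $\mu_H$-stable, and $E \cong E^{\vee}$ since $\det E = \oh_Y$. The hypothesis $E \in \Ku(Y)$ gives $H^i(E) = H^i(E(-H)) = 0$ for all $i$; Serre duality with $\omega_Y = \oh_Y(-2H)$ converts these into $H^i(E(-2H)) = 0$ for all $i$, and $\mu_H$-stability of $E$ forces $H^0(E(-3H)) = 0$. Consequently $H^3(E(H)) = H^0(E(-3H))^{\vee} = 0$. For the remaining vanishings $H^1(E(H)) = H^2(E(H)) = 0$, I would restrict twice: first to a general hyperplane section $j\colon S \hookrightarrow Y$ on which $E|_S$ is $\mu$-stable by Mehta--Ramanathan, and then to a smooth elliptic curve $C \in |H|_S|$ on $S$. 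Applying the exact sequence $0 \to E(-3H) \to E(-2H) \to E|_S(-2H|_S) \to 0$ together with the vanishings on $E(-2H)$ and the stability of $E|_S$ gives $H^1(E(-3H)) = 0$; using Serre duality on $S$ to identify $H^1(E|_S(-2H|_S)) \cong H^1(E|_S(H|_S))^{\vee}$, and observing that $E|_S(H|_S)|_C$ is semistable of slope $2 > 2g(C) - 2 = 0$ on the elliptic curve $C$ (hence has no $H^1$), one deduces $H^2(E(-3H)) = 0$. Combined with $\chi(E(H)) = 4$ from Riemann--Roch, this yields $h^0(E(H)) = 4$.

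For any nonzero $s \in H^0(E(H))$, the zero scheme $D := Z(s)$ has codimension two: if $s$ vanished on a divisor $nH$ with $n \geq 1$, it would factor as $\oh_Y \hookrightarrow \oh_Y(nH) \hookrightarrow E(H)$, giving an injection $\oh_Y((n-1)H) \hookrightarrow E$, which contradicts $H^0(E) = 0$ for $n = 1$ and the $\mu_H$-stability of $E$ for $n \geq 2$. Hence $D$ is a curve, and the Koszul-type sequence $0 \to \oh_Y \to E(H) \to I_D(2H) \to 0$, twisted by $\oh_Y(-H)$, yields the desired short exact sequence. Chern class and adjunction computations give $H \cdot D = c_2(E(H)) \cdot H = 2H^3 = 4$ and $\omega_D \cong (\omega_Y \otimes \det E(H))|_D \cong \oh_D$, so $p_a(D) = 1$.

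Finally, the vanishings $H^1(E) = H^2(E(-H)) = H^3(E(-2H)) = 0$ established above show that $E(H)$ is Castelnuovo--Mumford $0$-regular with respect to $H$ in the sense appropriate to the Fano threefold $Y$; via a Mukai-type regularity statement for Fano manifolds, I expect this to imply that $E(H)$ is globally generated. Bertini's theorem then gives smoothness of $D = Z(s)$ for a general section $s$, which combined with $p_a(D) = 1$ and $H \cdot D = 4$ identifies $D$ as a smooth elliptic curve of degree four. The main technical obstacle I anticipate is justifying global generation from the cohomological vanishings: while the analogous statement for $\PP^n$ is classical, on $Y_2$ (where $H$ is ample but not very ample, giving only the degree-two cover to $\PP^3$) some care is required. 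A fallback is a direct pointwise analysis via the sequence $0 \to E(H) \otimes I_p \to E(H) \to E(H)|_p \to 0$, reducing global generation at $p \in Y$ to $H^1(E(H) \otimes I_p) = 0$, which one can verify using Serre duality and the vanishings on $E(-kH)$ listed above.
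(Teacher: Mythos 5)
Your proposal is correct and follows essentially the same route as the paper: the vanishings $H^1(E)=H^2(E(-1))=H^3(E(-2))=0$ (the first two from $E\in\Ku(Y)$, the last from $H^3(E(-2))=H^0(E^{\vee})^{\vee}=H^0(E)^{\vee}=0$) give $0$-regularity of $E(1)$, hence global generation, after which Bertini and the Serre correspondence finish the argument. Your worry about Castelnuovo--Mumford regularity resolves without the fallback: Mumford's theorem only requires the ample line bundle to be globally generated, and $\oh_Y(1)$ is base-point free on a quartic double solid (it defines the double cover of $\PP^3$), a fact the paper already invokes in Lemma~\ref{lem-Q-Y}. The detour through Mehta--Ramanathan and restriction to an elliptic curve to get $h^0(E(1))=4$ is both unnecessary and the least rigorous part of your sketch (semistability of $E|_C$ is not justified); once $E(1)$ is $0$-regular, the vanishing of $H^{i}(E(1))$ for $i>0$ and the existence of sections come for free.
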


\begin{proof}
Note that $H^3(E(-2))=H^0(E^{\vee})=H^0(E)=0$ since $E^{\vee}\cong E$. Then from $E\in \Ku(Y)$, we see $H^i(E(1-i))=0$ for $i>0$, thus $E(1)$ is globally generated by Castelnuovo--Mumford regularity. Then the zero locus of a generic section of $E(1)$ is smooth. The remaining statement follows from the Serre correspondence.

\end{proof}

On the other hand, the next proposition characterizes a semistable sheaf of rank two, $c_1=0,c_2=2,c_3=0$, which is not in the Kuznetsov component $\Ku(Y)$. 

\begin{proposition}
\label{classfication_semistable_sheaves}
Let $E\in M_Y(2,0,2)$, then $E\not\in\Ku(Y)$ if and only if $E$ is locally free and fits into an exact sequence of the form \eqref{non_bundle_seq}. 
\end{proposition}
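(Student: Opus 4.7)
The proof splits into the two implications.

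For the $(\Leftarrow)$ direction, the argument is a direct computation. The Chern character is $\ch(E)=\ch(\cQ_Y)-\ch(\oh_Y(-1))=(2,0,-H^2,0)=2\bv$. Applying $\RHom(\oh_Y,-)$ to the triangle $\oh_Y(-1)\to \cQ_Y\to E$, and using $\RHom(\oh_Y,\cQ_Y)=0=\RHom(\oh_Y,\oh_Y(-1))$ (from the defining sequence of $\cQ_Y$ and the exceptionality of $\oh_Y(-1)$), gives $\RHom(\oh_Y,E)=0$; in particular $H^0(E)=0$. Lemma \ref{lem_slope_stable_instanton} then gives $\mu_H$-stability of $E$, so $E\in M_Y(2,0,2)$. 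Applying $\RHom(\oh_Y(1),-)$, Serre duality yields $\RHom(\oh_Y(1),\oh_Y(-1))=\CC[-3]$, and the defining sequence of $\cQ_Y$ yields $\RHom(\oh_Y(1),\cQ_Y)=\CC[-1]$; the induced connecting morphism $\CC[-3]\to \CC[-1]$ is automatically zero (there are no nontrivial morphisms of this shift between one-dimensional complexes), so $\RHom(\oh_Y(1),E)\cong \CC[-1]\oplus \CC[-2]\neq 0$, and hence $E\notin \Ku(Y)$.

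For the $(\Rightarrow)$ direction, suppose $E\in M_Y(2,0,2)$ with $E\notin \Ku(Y)$. First, I would rule out strictly Gieseker-semistable $E$: its Jordan--H\"older factors are torsion-free rank-one stable sheaves with reduced Hilbert polynomial matching that of $E$, and a direct Hilbert-polynomial computation together with Lemma \ref{lem_line} forces these factors to be ideal sheaves $\cI_l$ of lines on $Y$. Since each $\cI_l$ lies in $\Ku(Y)$, any such extension lies in $\Ku(Y)$, contradicting $E\notin \Ku(Y)$. Next, with $E$ Gieseker-stable, I would produce a nontrivial map $\phi\colon \cQ_Y\to E$. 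By $\mu_H$-stability together with Serre duality (using reflexivity and $c_1(E)=0$), one has $H^0(E)=H^3(E)=0$ and $H^0(E(-1))=H^3(E(-1))=0$, so $E\notin \Ku(Y)$ forces $H^1(E(-1))\neq 0$. The long exact sequence obtained by applying $\RHom(-,E)$ to $0\to \cQ_Y\to \oh_Y^{\oplus 4}\to \oh_Y(1)\to 0$ then gives a nonzero element of $\Hom(\cQ_Y,E)$. A Chern-character arithmetic identifies $\ch(\ker\phi)=\ch(\oh_Y(-1))$, and the $\mu_H$-stability of $\cQ_Y$ and of $E$, combined with the exceptionality of $\oh_Y(-1)$, force $\ker\phi\cong \oh_Y(-1)$; local freeness of $E$ then follows from a rank-drop analysis of the inclusion $\oh_Y(-1)\hookrightarrow \cQ_Y$ on the smooth threefold $Y$.

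The main obstacle will be the precise identification $\ker\phi\cong \oh_Y(-1)$, which requires ruling out twisted ideal sheaves of lower-dimensional subschemes with the same Chern character; this can be handled by a wall-crossing argument in the $(b,w)$-plane, invoking the uniqueness of $\oh_Y(-1)$ as a tilt-stable object of its class. A secondary delicate point is verifying that no non-reflexive Gieseker-stable $E$ with $E\notin \Ku(Y)$ evades the conclusion: one analyses the torsion quotient $Q=\oh_Y^{\oplus 2}/E$ as in Proposition \ref{classification_non_reflexive_sheaf_quartic_double_solid}, and shows that the condition $H^0(Q(-1))\neq 0$ (which is equivalent to $E\notin \Ku(Y)$ in the non-reflexive case) is incompatible with Gieseker-stability of $E$, forcing $E$ to be reflexive and then locally free by the vanishing of $c_3$.
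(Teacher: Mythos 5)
Your $(\Leftarrow)$ direction is correct and is essentially the paper's argument in a different dress: the paper uses the non-splitting of \eqref{non_bundle_seq} plus Serre duality to get $\Hom(E,\oh_Y(-1)[1])\neq 0$, while you compute $\RHom(\oh_Y(1),E)$ directly from the triangle; both give $H^\bullet(E(-1))\neq 0$. Your $(\Rightarrow)$ skeleton (produce a nonzero $\pi\colon\cQ_Y\to E$ and identify $\ker\pi\cong\oh_Y(-1)$) is also the paper's. The first genuine gap is at the very start of that skeleton: to conclude that $E\notin\Ku(Y)$ forces $H^1(E(-1))\neq 0$, and then to lift a nonzero class of $\Ext^1(\oh_Y(1),E)$ to $\Hom(\cQ_Y,E)$ via the long exact sequence of $\RHom(-,E)$ applied to $0\to\cQ_Y\to\oh_Y^{\oplus 4}\to\oh_Y(1)\to 0$, you need the full vanishing $\RHom(\oh_Y,E)=0$, in particular $H^1(E)=0$. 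Stability and Serre duality only give the degree $0$ and $3$ vanishings you list; without $\Ext^1(\oh_Y,E)=0$ the connecting map $\Ext^1(\oh_Y(1),E)\to\Ext^1(\oh_Y,E)^{\oplus 4}$ could have trivial kernel. The paper obtains $\RHom(\oh_Y,E)=0$ from the wall-crossing Lemma~\ref{no_wall_lem} together with Lemma~\ref{Gstability_lvls_coincide} and $\chi(\oh_Y,E)=0$; some such input is unavoidable here.

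The second, more serious gap is that your ``Chern-character arithmetic'' giving $\ch(\ker\pi)=\ch(\oh_Y(-1))$ silently assumes $\rk(\im\pi)=2$. The case $\rk(\im\pi)=1$ is a real possibility that must be excluded: there $\im\pi=\cI_Z$ with $\ch_{\leq 2}=(1,0,-\tfrac{a}{2}H^2)$ and $\ker\pi$ has rank $2$ and $c_1=-H$, and ruling this out is exactly where the substantive input enters --- 2-Gieseker stability of $E$ (Lemma~\ref{Gstability_lvls_coincide}) forces $a\geq 2$, whence $\ker\pi$ would be a $\mu_H$-stable sheaf of class $(2,-H,\tfrac{a-1}{2}H^2)$ contradicting \cite[Proposition 3.2]{li:fano-picard-number-one}. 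By contrast, the ``main obstacle'' you flag (distinguishing $\oh_Y(-1)$ from twisted ideal sheaves of the same class) is a non-issue: once $\rk(\im\pi)=2$, the image $\im\pi\subset E$ is torsion-free, so $\ker\pi$ is a saturated subsheaf of the bundle $\cQ_Y$, hence reflexive of rank one, hence a line bundle since $\Pic(Y)=\ZZ H$; no wall-crossing is needed. Finally, your plan for the non-reflexive case --- showing ``$H^0(Q(-1))\neq 0$ is incompatible with Gieseker-stability'' --- is not provable as stated from stability alone; the paper instead establishes \eqref{non_bundle_seq} for any torsion-free Gieseker-stable $E\notin\Ku(Y)$, deduces $\Hom(E,\oh_Y)=0$ from that sequence, and contradicts the inclusion $E\hookrightarrow E^{\vee\vee}=\oh_Y^{\oplus 2}$.
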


\begin{proof}
By Lemma \ref{no_wall_lem}, we have $\RHom(\oh_Y, E)=0$. Note that $H^0(E(-1))=H^3(E(-1))=0$ by Serre duality and stability. Thus from $\chi(E(-1))=0$, we see $E\not\in\Ku(Y)$ if and only if $H^1(E(-1))=H^2(E(-1))\neq 0$.

First we assume that $E$ fits into an exact sequence as above. Since $\cQ_Y$ is a $\mu_H$-stable vector bundle by Lemma \ref{lem-Q-Y}, it is clear that there is a non-zero morphism $E\rightarrow\oh_Y(-1)[1]$, then $\mathrm{Hom}(\oh_Y,E(-1)[2])=\mathrm{Hom}(E,\oh_Y(-1)[1])\neq 0$ by Serre duality. 

Now we assume that $H^1(E(-1))\neq 0$. Applying $\mathrm{Hom}(-,E)$ to \eqref{eq. Q} and using $\RHom(\oh_Y,E)=0$, we have  $\mathrm{Hom}(\cQ_Y,E)=H^1(E(-1))\neq 0$. Let $\pi\neq 0\in\mathrm{Hom}(\cQ_Y,E)$. We claim that $\pi$ is surjective and $\ker(\pi)\cong\oh_Y(-H)$. Indeed, if $\rk(\im(\pi))=2$, then $\ker(\pi)$ is a reflexive torsion-free sheaf of rank one since $\cQ_Y$ is locally free and $E$ is torsion-free. From the smoothness of $Y$, we know that $\ker(\pi)$ is a line bundle. By the $\mu_H$-semistability of $\cQ_Y$ and $E$, we know that $c_1(\im(\pi))=0$, i.e.~$c_1(\ker(\pi))=-H$ and $\ker(\pi)=\oh_Y(-H)$. Therefore, we only need to show that $\rk(\im(\pi))\neq 1$. 

To this end, we assume that $\rk(\im(\pi))=1$. Then by the $\mu_H$-semistability, we have $c_1(\im(\pi))=0$. Thus $\ch_{\leq 2}(\im(\pi))=(1,0,-\frac{a}{2}H^2)$ for $a\geq 1$. But we also know that Gieseker-stable implies 2-Gieseker-stable for $E$ by Lemma \ref{Gstability_lvls_coincide}. Thus the only possible case is $a\geq 2$. Then $\ch_{\leq 2}(\ker(\pi))=(2,-H,\frac{a-1}{2}H^2)$ with $a-1\geq 1$. But from the stability of $\cQ_Y$, we know that $\ker(\pi)$ is also $\mu_H$-stable. This contradicts \cite[Proposition 3.2]{li:fano-picard-number-one}.
Then the claim is proved. 

The only part we remain to show is the local freeness of $E$. Assume that $E$ fits into \eqref{non_bundle_seq}. If $E$ is not reflexive, then as in Proposition \ref{classification_non_reflexive_sheaf_quartic_double_solid}, we get $E^{\vee \vee}=\oh^{\oplus 2}_Y$. However, using \eqref{non_bundle_seq} we can compute that $\Hom(E, \oh_Y)=0$, which makes a contradiction. Thus $E$ is reflexive, and by $\rk(E)=2$ and $c_3(E)=0$, we see $E$ is locally free.
\end{proof}


\subsection{Singularities of moduli spaces}
In this section, we study singularities of stable moduli spaces $M^s_Y(2,0,2)$ and $\mathcal{M}^s_{\sigma}(\Ku(Y),2\bv)$. 


\begin{lemma} \label{lem_QY_coho}
    We have
    
    \begin{enumerate}
        \item $\RHom(\oh_Y(1), \cQ_Y)=\mathbb{C}[-1]$,
        
        \item $\RHom(\cQ_Y,\cQ_Y)=\mathbb{C}$, and 
        
        \item $\RHom(\oh_Y(-1), \cQ_Y)=\mathbb{C}^6\oplus \mathbb{C}[-1]$.
    \end{enumerate}

\end{lemma}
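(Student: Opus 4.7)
The plan is to derive all three $\RHom$-computations from the defining short exact sequence
\[
0 \to \cQ_Y \to \oh_Y \otimes H^0(\oh_Y(1)) \to \oh_Y(1) \to 0
\]
together with standard cohomology of line bundles on the quartic double solid $Y$. Using the double cover $\pi\colon Y\to\PP^3$, one records the inputs $H^*(\oh_Y(-1))=0$, $H^{\ge1}(\oh_Y)=H^{\ge1}(\oh_Y(1))=H^{\ge1}(\oh_Y(2))=0$, $h^0(\oh_Y(1))=4$, and $h^0(\oh_Y(2))=11$ (the last via $H^0(\oh_Y(2))\cong H^0(\oh_{\PP^3}(2))\oplus H^0(\oh_{\PP^3}(0))$).

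For (1), I would apply $\RHom(\oh_Y(1),-)$ to the defining sequence. Both $\RHom(\oh_Y(1),\oh_Y)=H^*(\oh_Y(-1))$ vanishes and $\RHom(\oh_Y(1),\oh_Y(1))=\CC$, so the induced triangle immediately forces $\RHom(\oh_Y(1),\cQ_Y)=\CC[-1]$.

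For (2), I would instead apply $\RHom(-,\cQ_Y)$ to the same sequence. The triangle reads
\[
\RHom(\oh_Y(1),\cQ_Y) \to \RHom(\oh_Y,\cQ_Y)^{\oplus 4} \to \RHom(\cQ_Y,\cQ_Y).
\]
Since $\cQ_Y\in\oh_Y^{\perp}$ the middle term vanishes, and by (1) the left term is $\CC[-1]$, yielding $\RHom(\cQ_Y,\cQ_Y)=\CC$.

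For (3), the analogous application of $\RHom(\oh_Y(-1),-)$ gives the exact sequence
\[
0 \to \Hom(\oh_Y(-1),\cQ_Y) \to H^0(\oh_Y(1))^{\oplus 4} \xrightarrow{m} H^0(\oh_Y(2)) \to \Ext^1(\oh_Y(-1),\cQ_Y) \to 0
\]
with all higher Ext's vanishing (since $H^{\ge 1}$ of $\oh_Y(1)$ and $\oh_Y(2)$ vanish). The map $m\colon V\otimes V\to H^0(\oh_Y(2))$, $V:=H^0(\oh_Y(1))$, is the multiplication map, so it factors through $\Sym^2 V$, automatically placing $\wedge^2 V\cong\CC^{6}$ in the kernel. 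The induced map $\Sym^2 V\to H^0(\oh_Y(2))$ is identified with the inclusion $H^0(\oh_{\PP^3}(2))\hookrightarrow H^0(\oh_Y(2))=H^0(\oh_{\PP^3}(2))\oplus \CC$, which is injective with cokernel $\CC$. Therefore $\ker(m)=\CC^{6}$ and $\cok(m)=\CC$, giving the claimed decomposition. Nothing here is delicate; the only thing to verify carefully is the identification of $m$ with the multiplication map and the use of the double-cover decomposition of $H^0(\oh_Y(2))$.
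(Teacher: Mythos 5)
Your proof is correct. Parts (1) and (2) are exactly the paper's argument: apply $\Hom(\oh_Y(1),-)$, respectively $\Hom(-,\cQ_Y)$, to the defining sequence \eqref{eq. Q} and use $\RHom(\oh_Y,\cQ_Y)=0$. For part (3) you take a genuinely different route. The paper identifies $\cQ_Y\cong\pi^*\Omega_{\PP^3}(1)$ (the evaluation sequence on $Y$ is the pullback of the Euler sequence, since $H^0(\oh_Y(1))=\pi^*H^0(\oh_{\PP^3}(1))$), reads off $h^0(\cQ_Y(1))=h^0(\Omega_{\PP^3}(2))+h^0(\Omega_{\PP^3})=6$ from $\pi_*\oh_Y=\oh_{\PP^3}\oplus\oh_{\PP^3}(-2)$, checks $H^{\geq 2}(\cQ_Y(1))=0$ from the defining sequence, and then gets $h^1=1$ from $\chi(\cQ_Y(1))=5$. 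You instead stay with the long exact sequence and compute the kernel and cokernel of the multiplication map $m\colon V\otimes V\to H^0(\oh_Y(2))$ directly: $m$ factors through $\Sym^2V$, killing $\wedge^2V\cong\CC^6$, and the induced map $\Sym^2V\to H^0(\oh_Y(2))$ is the isomorphism $\Sym^2H^0(\oh_{\PP^3}(1))\cong H^0(\oh_{\PP^3}(2))$ followed by the inclusion of the first summand of $H^0(\oh_Y(2))\cong H^0(\oh_{\PP^3}(2))\oplus\CC$. Both arguments ultimately rest on the same input, namely the decomposition $\pi_*\oh_Y=\oh_{\PP^3}\oplus\oh_{\PP^3}(-2)$; the paper's is shorter once one recognizes $\cQ_Y$ as the pullback of the twisted cotangent bundle and invokes Bott's computation of $H^*(\Omega_{\PP^3}(k))$, while yours avoids that identification at the cost of a small amount of explicit linear algebra, and has the minor advantage of exhibiting $\Hom$ and $\Ext^1$ separately rather than deducing $h^1$ from an Euler-characteristic count.
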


\begin{proof}
(1) follows from applying $\Hom(\oh_Y(1),-)$ to \eqref{eq. Q}. Note that $\RHom(\oh_Y,\cQ_Y)=0$, then (2) follows from (1) and applying $\Hom(-,\cQ_Y)$ to \eqref{eq. Q}.

For (3), recall that $\pi_*\oh_Y=\oh_{\PP^3}\oplus \oh_{\PP^3}(-2)$. Since $\cQ_Y=\pi^*\Omega_{\PP^3}(1)$, we have $H^0(\cQ_Y(1))=H^0(\Omega_{\PP^3}(2)\oplus \Omega_{\PP^3})$. Thus $h^0(\cQ_Y(1))=6$ by the standard result on $\PP^3$. And by \eqref{eq. Q}, we get $H^i(\cQ_Y(1))=0$ for $i>1$. Then the result follows from $\chi(\cQ_Y(1))=5$.
\end{proof}

\begin{lemma} \label{lem_ext_gluing}
    We have $\RHom(i^!\cQ_Y,i^!\cQ_Y)=\mathbb{C}\oplus \mathbb{C}^6[-1]\oplus \mathbb{C}[-2]$.
\end{lemma}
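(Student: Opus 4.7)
The plan is to extract $\RHom(i^!\cQ_Y, i^!\cQ_Y)$ from the defining triangle
\[
\oh_Y(-H)[1] \to i^!\cQ_Y \to \cQ_Y \to \oh_Y(-H)[2]
\]
of \eqref{exact. i!QY} by applying $\RHom(-, i^!\cQ_Y)$ and $\RHom(\oh_Y(-H), -)$ in turn, and then assembling the pieces using Lemma A.5 together with the semiorthogonality properties of the decomposition $\D^b(Y)=\langle \Ku(Y),\cQ_Y,\oh_Y\rangle$.

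First I would apply $\RHom(-, i^!\cQ_Y)$ to the defining triangle. Since $i^!\cQ_Y\in\Ku(Y)$ and $\cQ_Y$ sits to the right of $\Ku(Y)$ in the semiorthogonal decomposition $\langle \Ku(Y),\cQ_Y,\oh_Y\rangle$, we have $\RHom(\cQ_Y, i^!\cQ_Y)=0$. Consequently, the long exact sequence collapses to an isomorphism
\[
\RHom(i^!\cQ_Y, i^!\cQ_Y)\cong \RHom(\oh_Y(-H)[1], i^!\cQ_Y),
\]
so the remaining task is to compute the right-hand side.

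Next, I would apply $\RHom(\oh_Y(-H),-)$ to the same triangle to get an exact triangle
\[
\RHom(\oh_Y(-H),\oh_Y(-H)[1])\to \RHom(\oh_Y(-H),i^!\cQ_Y)\to \RHom(\oh_Y(-H),\cQ_Y).
\]
The first term is $\mathbb{C}[1]$ since $Y$ is Fano and $\RHom(\oh_Y(-H),\oh_Y(-H))=H^*(\oh_Y)=\mathbb{C}$ in degree $0$, and the third term is $\mathbb{C}^6\oplus\mathbb{C}[-1]$ by Lemma A.5(iii). The connecting morphism
\[
\RHom(\oh_Y(-H),\cQ_Y)\to \RHom(\oh_Y(-H),\oh_Y(-H)[1])[1]=\mathbb{C}[2]
\]
takes values in $\Ext^{i+2}(\oh_Y(-H),\oh_Y(-H))=H^{i+2}(\oh_Y)$, which vanishes for all relevant $i\ge 0$ (it is zero for $i\ge 0$ since $H^j(\oh_Y)=0$ for $j\ge 1$ on a Fano threefold). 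Hence the triangle splits on cohomology, giving
\[
\RHom(\oh_Y(-H), i^!\cQ_Y)=\mathbb{C}[1]\oplus\mathbb{C}^6\oplus\mathbb{C}[-1].
\]

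Finally, shifting by $[-1]$ and combining with the isomorphism in the first step yields
\[
\RHom(i^!\cQ_Y,i^!\cQ_Y)\cong\RHom(\oh_Y(-H)[1],i^!\cQ_Y)=\mathbb{C}\oplus\mathbb{C}^6[-1]\oplus\mathbb{C}[-2],
\]
as claimed. The only subtle point is checking that all connecting maps vanish, but this follows either from Serre-type vanishing $H^{\ge 1}(\oh_Y)=0$ on the Fano threefold $Y$, or, for the first reduction step, from the semiorthogonality $\RHom(\cQ_Y,\Ku(Y))=0$; both are essentially free. No further input beyond Lemma A.5 and the defining triangle \eqref{exact. i!QY} is required.
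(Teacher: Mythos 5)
Your proof is correct and uses essentially the same ingredients as the paper's: both reduce the computation to the triangle \eqref{exact. i!QY} and Lemma \ref{lem_QY_coho}(3), with all connecting maps vanishing for degree reasons. The only (cosmetic) difference is that the paper first invokes the adjunction $\RHom(i^!\cQ_Y,i^!\cQ_Y)=\RHom(i^!\cQ_Y,\cQ_Y)$ and then applies the triangle once in the first variable, whereas you invoke the semiorthogonality $\RHom(\cQ_Y,\Ku(Y))=0$ and apply the triangle once in each variable, replacing the input $\RHom(\cQ_Y,\cQ_Y)=\mathbb{C}$ by $\RHom(\oh_Y,\oh_Y)=\mathbb{C}$.
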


\begin{proof}
By the adjunction of $i$ and $i^!$, we have $\RHom(i^!\cQ_Y,\cQ_Y)=\RHom(i^!\cQ_Y,i^!\cQ_Y)$. Then the result follows from applying $\Hom(-,\cQ_Y)$ to \eqref{exact. i!QY} and using Lemma \ref{lem_QY_coho}.
\end{proof}

\begin{lemma}
\label{smoothness_semistablesheaves_notin_Ku}
Let $E\in M_Y(2,0,2)$ and $E\not\in\Ku(Y)$, then $\RHom(E,E)=\mathbb{C}\oplus \mathbb{C}^6[-1]\oplus \mathbb{C}[-2]$.
\end{lemma}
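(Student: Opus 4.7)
The plan is to exploit the short exact sequence
\begin{equation*}
0 \to \oh_Y(-1) \xrightarrow{\iota} \cQ_Y \xrightarrow{\pi} E \to 0 \qquad (\star)
\end{equation*}
provided by Proposition~\ref{classfication_semistable_sheaves}, together with the fact (already established above) that any such $E$ is locally free and $\mu_H$-stable (the stability coming from Lemma~\ref{lem_slope_stable_instanton} since $H^0(E)=0$). I would reduce the computation of $\RHom(E,E)$ to $\RHom$-groups between $\oh_Y(-1)$ and $\cQ_Y$. Lemma~\ref{lem_QY_coho} already gives $\RHom(\cQ_Y,\cQ_Y)=\mathbb{C}$ and $\RHom(\oh_Y(-1),\cQ_Y)=\mathbb{C}^6\oplus\mathbb{C}[-1]$, while $\RHom(\oh_Y(-1),\oh_Y(-1))=\mathbb{C}$ is obvious. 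The remaining group $\RHom(\cQ_Y,\oh_Y(-1))=\mathbb{C}[-2]$ follows from applying $\Hom(-,\oh_Y(-1))$ to the defining sequence $0\to\cQ_Y\to\oh_Y^{\oplus 4}\to\oh_Y(1)\to 0$ (here $h^0(\oh_Y(1))=4$), using $H^{\ast}(\oh_Y(-1))=0$ and $H^i(\oh_Y(-2))=\mathbb{C}$ iff $i=3$.

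Next, apply $\RHom(\cQ_Y,-)$ and $\RHom(\oh_Y(-1),-)$ to $(\star)$. One obtains
\begin{equation*}
\RHom(\cQ_Y,E)=\mathbb{C}\oplus\mathbb{C}[-1], \qquad \RHom(\oh_Y(-1),E)=\mathbb{C}^5\oplus\mathbb{C}[-1];
\end{equation*}
the first because the connecting map $\mathbb{C}[-2]\to\mathbb{C}$ vanishes for cohomological-degree reasons, and the second because the degree-zero map $\mathbb{C}\to\mathbb{C}^6$ sends $\mathrm{id}_{\oh_Y(-1)}$ to the nonzero $\iota$ and is therefore injective. Now apply $\RHom(-,E)$ to $(\star)$. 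Combined with $\Hom(E,E)=\mathbb{C}$ (from stability) and $\Ext^3(E,E)=\Hom(E,E(-2))^{\vee}=0$ (Serre duality and stability), the associated long exact sequence reads
\begin{equation*}
0 \to \mathbb{C} \to \mathbb{C} \xrightarrow{\beta_0} \mathbb{C}^5 \to \Ext^1(E,E) \to \mathbb{C} \xrightarrow{\beta_1} \mathbb{C} \to \Ext^2(E,E) \to 0.
\end{equation*}
The map $\beta_0$ vanishes because it is induced by $\pi\circ\iota=0$.

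The desired conclusion $\RHom(E,E)=\mathbb{C}\oplus\mathbb{C}^6[-1]\oplus\mathbb{C}[-2]$ is thus equivalent to the vanishing $\beta_1=0$, which is consistent with but not forced by $\chi(E,E)=-4$. This is the main obstacle. Since $\coker(\beta_1)=\Ext^2(E,E)$ is at most one-dimensional, it suffices to exhibit any nonzero class in $\Ext^2(E,E)$. Via Serre duality $\Ext^2(E,E)\cong\Ext^1(E,E(-2))^{\vee}$, the plan is to construct such a class by Yoneda-composing the extension class of $(\star)$ in $\Ext^1(E,\oh_Y(-1))=\mathbb{C}$ with a suitable element obtained from the twisted sequence $0\to\oh_Y(-3)\to\cQ_Y(-2)\to E(-2)\to 0$. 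An alternative route, which I expect to be cleaner, is to run the parallel LES from the dual short exact sequence $0\to E\to\cQ_Y^{\vee}\to\oh_Y(1)\to 0$ (obtained by dualizing $(\star)$ and using $E^{\vee}\cong E$), combine with $\RHom(\oh_Y,E)=0$ from Lemma~\ref{no_wall_lem}, and cross-compare the resulting connecting maps to pin down the ambiguity. Once $\beta_1=0$ is established, the long exact sequence yields $\ext^1(E,E)=6$ and $\ext^2(E,E)=1$, giving the claimed form of $\RHom(E,E)$.
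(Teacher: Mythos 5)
Your reduction is correct as far as it goes, and it is in substance the same computation the paper performs: the spectral sequence the paper invokes for \eqref{non_bundle_seq} has exactly the $\Ext$-groups you list on its first page, and the one piece of data that neither the $E_1$-page nor the Euler characteristic determines is the differential $d_2\colon E_2^{-1,2}\to E_2^{1,1}$, which is precisely your map $\beta_1\colon\Ext^1(\cQ_Y,E)\to\Ext^1(\oh_Y(-1),E)$. So you have correctly isolated where the entire content of the statement lies — but you have not proved $\beta_1=0$, and neither of your proposed completions can work. The Yoneda composition you describe would have to pass through $\Hom(\oh_Y(-1),E(-2))=H^0(E(-1))=0$, so it produces only the zero class in $\Ext^1(E,E(-2))$; and applying $\RHom(E,-)$ to the dual sequence $0\to E\to\cQ_Y^\vee\to\oh_Y(1)\to 0$ reproduces, verbatim, a map $\mathbb{C}\to\mathbb{C}$ whose cokernel is again $\Ext^2(E,E)$, so cross-comparing the two long exact sequences yields no new information.

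The more serious problem is that the missing step appears to be false, so no argument along these lines (nor, as written, the paper's spectral-sequence argument, which tacitly assumes $d_2=0$) can close the gap. Since $\Hom(\oh_Y(-1),\cQ_Y)=H^0(\cQ_Y(1))=H^0(\PP^3,\Omega_{\PP^3}(2))$ (the $\oh_{\PP^3}(-2)$-summand of $\pi_*\oh_Y$ contributes nothing) and $\cQ_Y=\pi^*\Omega_{\PP^3}(1)$, every $E$ as in \eqref{non_bundle_seq} is $\pi^*N$ with $N=\coker\bigl(\oh_{\PP^3}(-1)\to\Omega_{\PP^3}(1)\bigr)$; when $E$ is locally free, $N$ is a null correlation bundle. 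By adjunction and the projection formula,
\begin{equation*}
\RHom_Y(E,E)\;=\;\RHom_{\PP^3}(N,N)\oplus\RHom_{\PP^3}(N,N(-2)).
\end{equation*}
The first summand is $\mathbb{C}\oplus\mathbb{C}^5[-1]$. The second vanishes: $N\otimes N(-2)\cong\Sym^2N(-2)\oplus\oh_{\PP^3}(-2)$, and $\Sym^2N(-2)$ sits in $0\to\Omega_{\PP^3}(-2)\to\Sym^2\Omega_{\PP^3}\to\Sym^2N(-2)\to 0$, where both outer bundles are acyclic (Bott: each is an irreducible homogeneous bundle with Euler characteristic zero). Hence $\RHom(E,E)=\mathbb{C}\oplus\mathbb{C}^5[-1]$, i.e. $\beta_1$ is an isomorphism and $\Ext^2(E,E)=0$, contradicting the stated answer $\mathbb{C}\oplus\mathbb{C}^6[-1]\oplus\mathbb{C}[-2]$ (though both are consistent with $\chi(E,E)=-4$, which is why the long exact sequence cannot decide). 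Unless this computation is in error, the statement itself needs to be corrected, with knock-on effects for Remark \ref{locus_ni_Kuz} (the locus $M^{ni}$ would then be generically smooth rather than everywhere singular).
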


\begin{proof}
Since $E$ is stable, we have $\Hom(E,E)=\mathbb{C}$. And by stability we get $\Ext^3(E,E)=\Hom(E,E(-2))=0$. To prove the statement, we only need to show $\ext^2(E,E)=1$.

We compute $\mathrm{Ext}^2(E,E)$ via the standard spectral sequence (see e.g.~\cite[Lemma 2.27]{pirozhkov2020admissible}) and \eqref{non_bundle_seq}. We have a spectral sequence with the first page $$E_1^{p,q}=
\begin{cases}
	\mathrm{Ext}^q(\cQ_Y,\oh_Y(-1)), & p=-1 \\
	\mathrm{Ext}^q(\oh_Y(-1),\oh_Y(-1))\bigoplus\mathrm{Ext}^q(\cQ_Y,\cQ_Y), & p=0\\
	\mathrm{Ext}^q(\oh_Y(-1),\cQ_Y) , &  p=1 \\
    0 , &  p\leq -2, p\geq 2 
	\end{cases}$$
and convergent to $\Ext^{p+q}(E,E)$. Then using Lemma \ref{lem_QY_coho}, we obtain $\ext^2(E,E)=1$ and the result follows.	

\begin{remark}\label{locus_ni_Kuz}
Denote by $M^{ni}$ the locus of Gieseker-semistable sheaves $E\in M_Y(2,0,2)$ but $E\not\in\Ku(Y)$. By Lemma~\ref{smoothness_semistablesheaves_notin_Ku} the locus $M^{ni}$ is everywhere singular. But according to Lemma~\ref{lem_QY_coho} and \eqref{non_bundle_seq}, the reduction $M^{ni}_{red}$ of such locus is isomorphic to $\mathbb{P}\mathrm{Hom}(\oh_Y(-1),\cQ_Y)\cong\mathbb{P}^5$. In the following section~\ref{Bridgeland_mod_space_contraction}, we show it is contracted to a singular point in the Bridgeland moduli space $\mathcal{M}_{\sigma}(\Ku(Y),2\bv)$ via projection functor $i^*$. 
\end{remark}

\end{proof}




\subsection{Bridgeland moduli space}\label{Bridgeland_mod_space_contraction}

Finally, we study the relation between $M_Y(2,0,2)$ and $\mathcal{M}_{\sigma}(\Ku(Y),2\bv)$.

\begin{lemma} \label{lem_proj_equal_gluing}
    Let $E\in M_Y(2,0,2)$ such that  $E\not\in\Ku(Y)$. Then $i^*E\cong i^!\cQ_Y$.
\end{lemma}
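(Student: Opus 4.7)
The plan is to reduce the computation of $i^*E$ to that of $i^*\oh_Y(-1)$ via the defining sequence of $E$, and then identify $i^*\oh_Y(-1)[1]$ with $i^!\cQ_Y$ by a uniqueness argument using a $1$-dimensional extension space.

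First, I rewrite the short exact sequence of Proposition \ref{classfication_semistable_sheaves} as the exact triangle $\cQ_Y \to E \to \oh_Y(-1)[1]$ in $\D^b(Y)$. The defining triangle $\cQ_Y \to \oh_Y^{\oplus d+2} \to \oh_Y(1)$ exhibits $\cQ_Y$ as an object of the subcategory $\langle \oh_Y, \oh_Y(1) \rangle$, which forces $i^*\cQ_Y = 0$. Applying the exact functor $i^* = \bL_{\oh_Y}\circ \bL_{\oh_Y(1)}$ to the above triangle then gives $i^*E \cong i^*\oh_Y(-1)[1]$.

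Next, I compute $i^*\oh_Y(-1)$ directly. Serre duality gives $\RHom(\oh_Y(1), \oh_Y(-1)) = H^{*}(\oh_Y(-2)) = \mathbb{C}[-3]$, so $P := \bL_{\oh_Y(1)}\oh_Y(-1)$ fits in the triangle $\oh_Y(-1) \to P \to \oh_Y(1)[-2]$ and satisfies $\RHom(\oh_Y, P) = \mathbb{C}^{d+2}[-2]$. Writing $i^*\oh_Y(-1) = \cone(\oh_Y^{\oplus d+2}[-2] \to P)$ and running the long exact sequence of cohomology sheaves, with the induced map $\oh_Y^{\oplus d+2} \to \oh_Y(1)$ in degree $2$ identified as the evaluation map (whose kernel is $\cQ_Y$ by construction), I obtain $\cH^0(i^*\oh_Y(-1)) = \oh_Y(-1)$ and $\cH^1(i^*\oh_Y(-1)) = \cQ_Y$, with all other cohomology sheaves vanishing. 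The resulting Postnikov triangle, shifted by $[1]$, reads
\begin{equation*}
\oh_Y(-1)[1] \to i^*\oh_Y(-1)[1] \to \cQ_Y.
\end{equation*}

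This is precisely the shape of the triangle \eqref{exact. i!QY} defining $i^!\cQ_Y$. By \eqref{ext-1}, $\Ext^{1}(\cQ_Y, \oh_Y(-1)[1]) = \mathbb{C}$ is one-dimensional, so up to isomorphism there are only two such extensions: the split one $\oh_Y(-1)[1] \oplus \cQ_Y$ and a unique non-split one. Since $\RHom(\oh_Y(1), \oh_Y(-1)) \neq 0$, we have $\oh_Y(-1)[1] \notin \Ku(Y)$, so the split extension is not in $\Ku(Y)$ either. But both $i^*\oh_Y(-1)[1]$ and $i^!\cQ_Y$ do lie in $\Ku(Y)$ by construction, so each must realize the non-split extension, giving $i^*E \cong i^!\cQ_Y$. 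The main technical step is the cohomology computation for $i^*\oh_Y(-1)$ together with the identification of the connecting map with the evaluation map; once this is done, the final identification is forced by the one-dimensionality of the extension space.
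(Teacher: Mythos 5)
Your proof is correct and follows essentially the same route as the paper: reduce to $i^*\cQ_Y\cong 0$ via the defining sequence of $E$ (using that $\cQ_Y$ is generated by $\oh_Y$ and $\oh_Y(1)$), so that $i^*E\cong i^*\oh_Y(-1)[1]$. The only difference is that the paper simply asserts $i^*\oh_Y(-1)[1]\cong i^!\cQ_Y$, whereas you justify it by computing the cohomology sheaves of $i^*\oh_Y(-1)$ and invoking the one-dimensionality of $\Hom(\cQ_Y,\oh_Y(-1)[2])$ from \eqref{ext-1} together with the fact that the split extension cannot lie in $\Ku(Y)$ — a worthwhile detail, correctly carried out.
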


\begin{proof}
Note that $i^*\oh_Y(-1)[1]\cong i^!\cQ_Y$. Then applying $i^*$ to \eqref{non_bundle_seq}, we only need to show $i^*\cQ_Y\cong 0$. By definition, we get an exact triangle
\[\oh_Y(1)[-1]\xra{s} \cQ_Y\to \bL_{\oh_Y(1)}\cQ_Y,\]
where $s$ is the unique non-zero map in $\Hom(\oh_Y(1)[-1],\cQ_Y)$ up to scalar. We claim that the induced map
\[\bL_{\oh_Y}(s)\colon \bL_{\oh_Y} \oh_Y(1)[-1]\to \bL_{\oh_Y}\cQ_Y\]
is an isomorphism, which implies $i^*\cQ_Y\cong 0$. Indeed, we have an exact triangle 
\[\oh_Y(1)[-1]\xra{s} \cQ_Y\to \oh_Y^{\oplus 4}\]
which comes from \eqref{eq. Q}. Since $\bL_{\oh_Y}\oh_Y\cong 0$, the claim follows.
\end{proof}

\begin{proposition}
Let $Y$ be a quartic double solid and $\sigma$ be a Serre-invariant stability condition on $\Ku(Y)$. Then the projection functor $i^*$ induces a morphism
\[p\colon M_Y(2,0,2)\twoheadrightarrow \cM_{\sigma}(\Ku(Y), 2\bv)\]
such that contracts $M^{ni}$ to a singular point represented by $i^!\cQ_Y$, and is an isomorphism outside $M^{ni}$.
\end{proposition}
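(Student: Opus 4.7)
My plan is to build $p$ from the projection functor $i^*\colon \D^b(Y)\to \Ku(Y)$ and verify the required properties separately on the two strata given by Proposition~\ref{classification_instanton_sheaves}: the open substack $M_Y(2,0,2)\cap \Ku(Y)$ of sheaves already in the Kuznetsov component, and the closed substack $M^{ni}$ of sheaves not lying in $\Ku(Y)$. At the level of closed points I would set $p([E]):=[i^*E]$. For $E\in\Ku(Y)$ the projection acts as the identity, and Proposition~\ref{prop_bridgeland_Y2} combined with Lemma~\ref{Gstability_lvls_coincide} guarantees that any Gieseker-(semi)stable sheaf in $\Ku(Y)$ of class $2\bv$ is $\sigma$-(semi)stable. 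For $E\in M^{ni}$, Lemma~\ref{lem_proj_equal_gluing} yields $i^*E\cong i^!\cQ_Y$, which is $\sigma$-stable by Lemma~\ref{lem-serre-stability-d-2}. To upgrade this to a morphism of schemes I would apply $i^*$ to the universal family over $M_Y(2,0,2)$; because $i^*$ is a composition of left mutations through an exceptional collection, it is of Fourier--Mukai type and hence acts on flat families, and the universal property of $\cM_\sigma(\Ku(Y),2\bv)$ then produces $p$.

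Surjectivity of $p$ is again immediate from Proposition~\ref{prop_bridgeland_Y2}: every $\sigma$-(semi)stable object of class $2\bv$ is, up to shift, either a Gieseker-(semi)stable sheaf in $\Ku(Y)$ (hit by the identity on $M_Y(2,0,2)\cap \Ku(Y)$) or isomorphic to $i^!\cQ_Y$ (hit by any point of the nonempty locus $M^{ni}$, which is nonempty by Proposition~\ref{classfication_semistable_sheaves}). The contraction statement $p(M^{ni})=\{[i^!\cQ_Y]\}$ is then a direct restatement of Lemma~\ref{lem_proj_equal_gluing}. For the restriction of $p$ to $M_Y(2,0,2)\setminus M^{ni}$: on this stratum $p$ agrees with the tautological inclusion into the Bridgeland moduli space, and Proposition~\ref{prop_bridgeland_Y2} gives the required bijection onto $\cM_\sigma(\Ku(Y),2\bv)\setminus\{[i^!\cQ_Y]\}$. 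That $p$ is actually an isomorphism of schemes there (not merely a set-theoretic bijection) follows from the fact that $i^*$ restricts to the identity functor on $\Ku(Y)$, hence induces an isomorphism of deformation theories along that stratum.

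Finally, to see that $[i^!\cQ_Y]$ is a singular point, I would invoke Lemma~\ref{lem_ext_gluing}: the Zariski tangent space $\Ext^1(i^!\cQ_Y,i^!\cQ_Y)\cong \mathbb{C}^6$ has dimension strictly larger than the local dimension along the open smooth stratum $M_Y(2,0,2)\setminus M^{ni}$, while the non-vanishing obstruction space $\Ext^2(i^!\cQ_Y,i^!\cQ_Y)\cong \mathbb{C}$ accounts for the jump. Combined with the contraction of the positive-dimensional locus $M^{ni}$ onto this single point, this forces singularity.

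The main obstacle I anticipate is making the construction of $p$ rigorous as a morphism of moduli spaces rather than merely a set-theoretic map: although the Fourier--Mukai nature of $i^*$ yields flat families of complexes in $\Ku(Y)$, one still has to confirm that $\sigma$-semistability is an open condition on the parameter space and that the resulting classifying map lands inside $\cM_\sigma(\Ku(Y),2\bv)$. This is the technical crux, and I would address it by appealing to the openness of stability in families established in the general framework of Bridgeland moduli spaces (as developed by Bayer--Macr\`i and subsequent work) together with the universal property of $\cM_\sigma(\Ku(Y),2\bv)$.
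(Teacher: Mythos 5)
Your proposal follows the paper's proof essentially verbatim: both arguments rest on the classification of $\sigma$-(semi)stable objects of class $2\bv$ (Proposition~\ref{prop_bridgeland_Y2}, equivalently Proposition~\ref{classification_instanton_sheaves}), on Lemma~\ref{lem_proj_equal_gluing} for the contraction of $M^{ni}$, on Lemma~\ref{lem-serre-stability-d-2} for the $\sigma$-stability of $i^!\cQ_Y$, and on Lemma~\ref{lem_ext_gluing} for the singularity via $\ext^1=6>5$ and $\ext^2=1$; your extra care about openness of stability in families and the Fourier--Mukai construction of $p$ is a refinement of, not a departure from, the paper's route. The one citation to correct: Proposition~\ref{prop_bridgeland_Y2} and Lemma~\ref{Gstability_lvls_coincide} give the implication ``$\sigma$-stable $\Rightarrow$ Gieseker-stable or $i^!\cQ_Y$,'' whereas well-definedness of $p$ on $M_Y(2,0,2)\setminus M^{ni}$ and bijectivity there need the converse, which requires the wall-crossing argument (Lemma~\ref{no_wall_lem} plus Lemma~\ref{compare_stability}, with the caveat of Remark~\ref{gluing_object_not_stable_double_tilted_heart} that non-reflexive sheaves may fail $\Hom(\Coh_0(Y),E[1])=0$ and are only stable after restricting to $\Ku(Y)$); the paper is equally terse on this point, so it is a shared, fixable omission rather than a flaw in your approach.
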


\begin{proof}
Note that up to shift, all strictly $\sigma$-semistable objects are extensions of two ideal sheaves of lines, which are exactly all strictly Gieseker-semistable of class $2\bv$ by Theorem \ref{classification_instanton_sheaves}. Thus $i^*$ affects nothing on the strictly Gieseker-semistable locus. From Lemma \ref{lem-serre-stability-d-2}, we also know that $i^!\cQ_Y$ is $\sigma$-stable. Then the result follows from Theorem \ref{classification_instanton_sheaves},  Lemma \ref{lem_proj_equal_gluing} and Lemma \ref{lem_ext_gluing}.
\end{proof}

\begin{remark}
 It looks plausible that for generic quartic double solids $Y$, the only singular point in $\mathcal{M}_{\sigma}(\Ku(Y),2\bv)$ would be the point $[i^!\cQ_Y]$. As a result, up to composing with $\bO$ and $[1]$, any exact equivalence $\Phi:\Ku(Y)\simeq\Ku(Y')$ would send $i^!\cQ_Y$ to $i'^!\cQ_{Y'}$, then by Theorem~\ref{Brill--Noether reconstruction}, we can get an alternative proof of categorical Torelli theorem for \emph{generic} quartic double solids. 
\end{remark}

\end{appendix}

\bibliography{mybib}

\bibliographystyle{alpha}

\end{document}